\documentclass[11pt, a4paper]{article}

\usepackage{BA_Titelseite}

\usepackage[T1]{fontenc}
\usepackage[utf8]{inputenc}
\usepackage{amsmath}
\usepackage{amssymb}
\usepackage{amsthm}
\usepackage{colonequals}
\usepackage{mathtools}
\usepackage{hyperref}
\usepackage[inner=2.7cm, outer=2.7cm, top=2.7cm, bottom=2cm]{geometry}
\usepackage[backend=biber, style=alphabetic]{biblatex}
\usepackage{titlesec}
\usepackage{tikz}
\usepackage{pgfplots}
\usepackage{wrapfig}
\usepackage{booktabs}
\usepackage{enumitem}
\usepackage{caption}
\usepackage[x11names]{xcolor}
\usepackage{mathrsfs}

\pgfplotsset{compat=1.18}

\title{Transport by rough vector fields}
\author{Finn Rudolph}
\date{3. Juli 2026}

\geburtsdatum{13. Juli 2005}
\geburtsort{Neustadt a.d. Aisch}
\betreuer{Prof. Dr. Herbert Koch}
\zweitgutachter{Prof. Dr. Juan J. L. Velázquez}
\institut{Mathematisches Institut}
\ausarbeitungstyp{Bachelorarbeit Mathematik}

\newcommand{\N}{\mathbb{N}}
\newcommand{\Z}{\mathbb{Z}}

\newcommand{\R}{\mathbb{R}}

\newcommand{\eps}{\varepsilon}

\DeclarePairedDelimiter{\paren}{\lparen}{\rparen}
\DeclarePairedDelimiter{\braces}{\{}{\}}

\DeclarePairedDelimiter{\abs}{\lvert}{\rvert}
\DeclarePairedDelimiter{\norm}{\lVert}{\rVert}

\newcommand{\del}{\partial}
\newcommand{\loc}{\textnormal{loc}}
\newcommand{\id}{\textnormal{id}}

\DeclareMathOperator{\supp}{supp}
\let\div\undefined
\DeclareMathOperator{\div}{div}

\DeclareMathOperator{\sgn}{sgn}
\DeclareMathOperator{\Lip}{Lip}

\newtheorem{theorem}{Theorem}
\newtheorem{lemma}{Lemma}
\newtheorem{proposition}{Proposition}
\newtheorem{corollary}{Corollary}
\theoremstyle{definition}
\newtheorem{definition}{Definition}
\theoremstyle{remark}
\newtheorem{remark}{Remark}

\numberwithin{equation}{section}

\newcommand{\meas}{m}
\newcommand{\contractint}{\hspace{-0.5em}}
\newcommand{\contractintsmall}{\hspace{-0.3em}}
\newcommand{\iTR}{\int_0^T \contractint \int_{\R^d}}

\setlist[enumerate]{label=\arabic*)}

\titleformat{\paragraph}[runin]{\normalfont\itshape}{\theparagraph}{1em}{}

\titleformat{\subsubsection}[runin]
  {\normalfont\itshape}
  {\normalfont\thesubsubsection}
  {1em}
  {}
  [.]

\titlespacing*{\subsubsection}
  {0pt}
  {1ex plus .2ex}
  {1em}

\makeatletter
\def\cleardoublepage{
  \clearpage
  \if@twoside
    \ifodd\c@page
    \else
      \hbox{}
      \thispagestyle{empty}
      \newpage
    \fi
  \fi
}
\makeatother

\begin{document}
\maketitle

% \newgeometry{twoside, inner=3.1cm, outer=2.3cm, top=2.4cm, bottom=2.3cm}

\section*{Abstract}

The Cauchy-Lipschitz theorem guarantees that the ODE $\dot\gamma = b(t,\gamma)$ is well-posed if 
$b$ is Lipschitz in space, an assumption too restrictive for applications such as fluid mechanics. 
In 1989, DiPerna and Lions extended the theory to Sobolev vector fields with bounded divergence. 
Well-posedness then holds not for individual trajectories but for regular Lagrangian flows: 
flows that solve the ODE for almost every initial point and compress volume by at most a bounded 
factor. Their method rests on renormalized solutions of the associated transport equation.

This thesis is primarily expository: it presents the DiPerna-Lions theory with detailed proofs. 
We treat existence, uniqueness and stability of weak and renormalized solutions of the transport 
equation, deduce existence (for autonomous fields) and uniqueness of regular Lagrangian flows, 
and discuss applications to the two-dimensional Euler equations. Three counterexamples with 
divergence-free fields show that uniqueness can fail without a growth condition at infinity, 
if $b$ has only fractional Sobolev regularity (due to DiPerna and Lions), or for $L^1$ solutions 
when $\nabla b \in L^r$, $r<d$ (the simplest case of a recent construction by Bruè, Colombo and Kumar).

Contributions are (1) smooth approximations of $b$ with uniformly bounded divergence for the existence proof; (2) a direct proof of the known uniqueness of regular Lagrangian flows 
from uniqueness for the continuity equation; (3) a construction of the auxiliary cube-translating field in the example of Bruè, Colombo and Kumar that works for all $d \ge 2$, replacing an extension from two dimensions that is not a vector field on $\R^d$ for $d \ge 3$.

\clearpage

\section*{Zusammenfassung}

Ein klassisches Ergebnis zur Existenz und Eindeutigkeit von Lösungen gewöhnlicher
Differenzialgleichungen ist der Satz von Cauchy-Lipschitz. Wir betrachten
die Differenzialgleichung $\dot \gamma = b(t, \gamma(t))$, wobei $b : [0, T] \times \R^d \to \R^d$
ein Vektorfeld ist und eine Anfangsbedingung $\gamma(0) \in \R^d$ gegeben ist.
Für die Eindeutigkeit von Lösungen ist eine zentrale Voraussetzung des Satzes, dass $b$
Lipschitz-stetig in der räumlichen Variable ist.
In einigen Anwendungen ist diese Regularitätsannahme jedoch nicht erfüllt,
beispielsweise in der Strömungsmechanik.
DiPerna und Lions \cite{DL89} haben den Satz von Cauchy-Lipschitz auf weniger reguläre Vektorfelder
verallgemeinert. Der klassische punktweise Lösungsbegriff wird hierbei durch \emph{Regular Lagrangian flows}
ersetzt.
Ziel dieser Arbeit ist es, die Theorie von DiPerna und Lions mit detaillierten Beweisen vorzustellen,
sowie Anwendungen und Gegenbeispiele zu zeigen.

Die Methode von DiPerna und Lions basiert auf der Analyse einer zugehörigen Transportgleichung.
In Abschnitt §\ref{sec:transport} entwickeln wir die Theorie renormalisierter Lösungen für die
Transportgleichung. Daraus werden in Abschnitt §\ref{sec:rlf} Existenz- und Eindeutigkeitsergebnisse
für die gewöhnliche Differenzialgleichung $\dot \gamma = b(t, \gamma(t))$ hergeleitet.
Abschnitt §\ref{sec:euler} behandelt Anwendungen auf die zweidimensionale Eulergleichung.
Zuletzt diskutieren wir drei Gegenbeispiele in Abschnitt §\ref{sec:counterexamples}.

\clearpage

\tableofcontents

\clearpage

\section{Introduction}

Consider the ordinary differential equation
\begin{align}
	\begin{cases}
		\dot \gamma(t) = b(t, \gamma(t)) & \quad \text{for every $t \in (0, T)$} \\
		\gamma(0) = x^0                  & \quad \text{for some $x^0 \in \R^d$}
	\end{cases} \,.
	\label{eq:ode}
\end{align}
Here, $T > 0$ is a real number and $b : [0, T] \times \R^d\to \R^d$ is a (possibly)
time-dependent vector field. The classical existence and uniqueness result for (\ref{eq:ode})
is the Cauchy-Lipschitz theorem. We denote by $\Lip(\cdot)$ the Lipschitz constant
of a Lipschitz-continuous function. The following is a global version of the theorem:

\begin{theorem}[Cauchy-Lipschitz]
	Let $b : [0, T] \times \R^d \to \R^d$ be measurable. Assume that $b(t)$ is
	Lipschitz-continuous on $\R^d$ for every $t \in [0, T]$ with $\Lip (b(t)) \in L^1([0, T])$.
	Then, for every $x^0 \in \R^d$, there exists a unique absolutely continuous solution
	$\gamma : [0, T] \to \R^d$ to (\ref{eq:ode}).
	\label{theorem:cauchy-lipschitz}
\end{theorem}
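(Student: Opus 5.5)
We recast (\ref{eq:ode}) as the integral equation $\gamma(t) = x^0 + \int_0^t b(s,\gamma(s))\,ds$ on $[0,T]$ and solve it by a Picard iteration in $C([0,T];\R^d)$, using a weighted supremum norm to get a global contraction in one step. An absolutely continuous function solves (\ref{eq:ode}) (with the equation holding for a.e.\ $t$, as is natural under mere measurability in $t$) if and only if it is a continuous solution of the integral equation. So it suffices to find a unique continuous fixed point of
\[
	(\Phi\gamma)(t) \colonequals x^0 + \int_0^t b(s,\gamma(s))\,ds .
\]

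First we check that $\Phi$ is well defined on $C([0,T];\R^d)$. Put $L(t) \colonequals \Lip(b(t)) \in L^1([0,T])$. For continuous $\gamma$, the map $s\mapsto b(s,\gamma(s))$ is measurable: approximate $\gamma$ uniformly by step functions, for which measurability is clear, and pass to the limit using continuity of $b(s,\cdot)$. Integrability needs a bound on $s\mapsto |b(s,x_0)|$ for some fixed point, here $x^0$; we take $|b(\cdot,x^0)| \in L^1([0,T])$ as the implicit hypothesis, since without it the statement fails. Then
\[
	|b(s,\gamma(s))| \le |b(s,x^0)| + L(s)\,|\gamma(s)-x^0| \le |b(s,x^0)| + L(s)\norm{\gamma - x^0}_\infty ,
\]
which is integrable. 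Hence $\Phi\gamma$ is absolutely continuous, and in particular continuous.

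Next comes the contraction, the only real point. Let $\Lambda(t) \colonequals \int_0^t L(s)\,ds$ and use the Bielecki-type norm
\[
	\norm{\gamma}_* \colonequals \sup_{t} e^{-2\Lambda(t)}|\gamma(t)| ,
\]
which is equivalent to the sup norm because $\Lambda$ is bounded by $\norm{L}_{L^1}$. Then
\[
	|\Phi\gamma(t)-\Phi\eta(t)| \le \int_0^t L(s)\,e^{2\Lambda(s)}\,ds\,\norm{\gamma-\eta}_* = \tfrac12\bigl(e^{2\Lambda(t)}-1\bigr)\norm{\gamma-\eta}_* ,
\]
using $\frac{d}{ds}e^{2\Lambda} = 2Le^{2\Lambda}$ a.e.\ together with absolute continuity of $\Lambda$. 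Thus $\norm{\Phi\gamma-\Phi\eta}_* \le \frac12\norm{\gamma-\eta}_*$. Banach's fixed point theorem on the complete space $(C([0,T];\R^d),\norm{\cdot}_*)$ gives a unique fixed point, so we get existence and uniqueness at once, with no localization in time.

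The main obstacle is the measurability and integrability of $s\mapsto b(s,\gamma(s))$ together with the integrability of $b(\cdot,x^0)$, rather than the estimate itself. A routine alternative for uniqueness is Gronwall's inequality in integral form with an $L^1$ kernel applied to $|\gamma-\eta|$; the weighted norm makes this unnecessary.
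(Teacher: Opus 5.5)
Your proof is correct and complete. The paper gives no proof of this theorem; it quotes it as the classical global Cauchy--Lipschitz theorem, so there is no route to compare with. The closest thing in the paper is the Gr\"onwall computation in \S\ref{sec:lipschitz-flows} leading to (\ref{eq:one-sided-lipschitz}). With $x=y$ it is exactly the uniqueness argument you mention as an alternative. Your weight $e^{-2\Lambda(t)}$ gets existence and uniqueness from a single global contraction, with no subdivision of $[0,T]$. It works because $\Lambda$ is absolutely continuous.

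The two caveats you raise are genuine defects of the statement, not of your argument.
\begin{itemize}
\item The equation in (\ref{eq:ode}) can only be demanded for a.e.\ $t$. For $d=1$ and $b(t,x)=\chi_{(T/2,T]}(t)$, the unique absolutely continuous solution $x^0+(t-T/2)_+$ is not differentiable at $t=T/2$.
\item Some time-integrability of $b$ is necessary. Take $b(t,x)=1/t$ for $t>0$ and $b(0,x)=0$. Then $\Lip(b(t))=0$, but there is no absolutely continuous solution on $[0,T]$.
\end{itemize}
Your extra hypothesis does not actually depend on $x^0$. The bound $|b(t,y)|\le|b(t,x^0)|+\Lip(b(t))\,|y-x^0|$ transfers integrability from one point to all points. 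So it is a condition on $b$ alone, and it holds where the paper uses the theorem, namely Step~1 of Proposition~\ref{prop:existence}, where $b\in L^1(0,T;C^1_b)$.

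One small refinement of your measurability step. The step-function approximation needs $t\mapsto b(t,y)$ to be measurable for each fixed $y$. This is immediate if $b$ is Borel. If $b$ is only Lebesgue measurable on $[0,T]\times\R^d$, Fubini gives it for a.e.\ $y$. Continuity of $b(t,\cdot)$ then extends it to every $y$, by approximating through a countable dense set of such good points.
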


A natural question is whether the assumption of Lipschitz-continuity can be weakened.
In \cite{DL89}, DiPerna and Lions extended the above result to vector
fields that are not Lipschitz-continuous, but only belong to a Sobolev space.
However, their results are of a different kind than Theorem \ref{theorem:cauchy-lipschitz}:
While Theorem \ref{theorem:cauchy-lipschitz} provides uniqueness for every initial condition $x^0$
individually, in the more general setting we only get uniqueness by adding some
additional information over all initial conditions.

\subsection{Flows with controlled compressibility}

We will see that there exists a unique \emph{regular Lagrangian flow} $X : [0, T] \times \R^d \to \R^d$
under suitable assumptions on $b$. In the following, $\meas^d$ denotes the $d$-dimensional Lebesgue measure.
If $f : \R^d \to \R^d$ is measurable, we denote by $f_\# \meas^d$ the pushforward measure of $\meas^d$
along $f$, i.e. $f_\#\meas^d(A) = \meas^d(f^{-1}(A))$ for every $\meas^d$-measurable $A \subset \R^d$.

\begin{definition}[Regular Lagrangian Flow]
	Let $b : [0, T] \times \R^d \to \R^d$.
	A measurable function $X : [0, T] \times \R^d \to \R^d$ is
	a regular Lagrangian flow for $b$ if
	\begin{enumerate}[label={F\arabic*)}]
		\item for $\meas^{d}$-almost every $x \in \R^d$, the map $t \mapsto X(t, x)$ is
		      an absolutely continuous solution of (\ref{eq:ode}) with
		      $x^0 = x$,
		      \label{enum:F1}
		\item there exists a $K \ge 0$ such that
		      \begin{align*}
			      X(t, \cdot)_\#\meas^{d} \le K \meas^{d}
			      \hspace{3em} \text{for every $t \in [0, T]$.}
		      \end{align*}
		      The constant $K$ is called the \emph{compressibility constant} of $X$.
		      \label{enum:F2}
	\end{enumerate}
	\label{def:lrf}
\end{definition}

This is a common definition, see e.g. \cite{CL08}, p.~15 or \cite{AT17}, p.~731.
Property \ref{enum:F2} is the additional information over all initial conditions mentioned above.
The following example shows that uniqueness can fail pointwise, but still
there is a unique regular Lagrangian flow.

\begin{remark}[Uniqueness of regular Lagrangian flows]
	We say that regular Lagrangian flows for a vector field $b$ are unique,
	if any two regular Lagrangian flows $X$ and $Y$ satisfy
	$X(\cdot, x) = Y(\cdot, x)$ for almost every $x \in \R^d$.
	\label{remark:rlf-unique}
\end{remark}

\paragraph*{Example.}

Take $d = 1$, $b(t, x) = \sqrt{|x|}$, and $x^0 \le 0$. Then, for every
$a \in [0, \infty]$, the following is a solution to (\ref{eq:ode}).
\begin{align}
	\gamma_{a}(t) =
	\begin{cases}
		-(t/2 - \sqrt{-x^0})^2      & \quad \text{if }t \le 2\sqrt{-x^0}                  \\ 0
		                            & \quad \text{if }2\sqrt{-x^0}< t \le 2\sqrt{-x^0}+ a \\
		((t - a)/2 - \sqrt{-x^0})^2 & \quad \text{if }t > 2\sqrt{-x^0}+ a
	\end{cases}
	\label{eq:gamma-a}
\end{align}
Informally speaking, since $b$ is not Lipschitz-continuous, the solution is
able to reach the point 0 and may stay there indefinitely.
However, this is the only obstruction to uniqueness in this example.
If we require that a solution $\gamma$ satisfies
\begin{align*}
	\meas^1(\{t \in [0, T] : \gamma(t) = 0\}) = 0,
\end{align*}
then the only solution to (\ref{eq:ode}) is $\gamma_a$ with $a = 0$.

Now suppose that $X : [0, T] \times \R \to \R$ is a regular Lagrangian flow for $b$.
By \ref{enum:F2}, $\meas^1 \paren*{X(t, \cdot)^{-1}(0)} = 0$ for every $t \in [0, T]$.
Therefore
\begin{align*}
	\meas^2\big(\{(t, x) \in [0, T] \times \R : X(t, x) = 0\}\big) = 0 \,.
\end{align*}
By Fubini's theorem, this implies that $\meas^1(\{t \in [0, T] : X(t, x) = 0\}) = 0$
for almost every $x \in \R$. Hence, for almost every $x \in \R$, $X(\cdot, x)$
must be equal to $\gamma_a$ from (\ref{eq:gamma-a}) with $a = 0$ and $x^0 = x$.
This shows that $X$ is unique in the sense of Remark \ref{remark:rlf-unique}.

\subsection{Flows in the Lipschitz case}
\label{sec:lipschitz-flows}

For later reference and to motivate \ref{enum:F2}, we collect some properties of the flow map
in the classical setting. We assume that $b(t)$ is Lipschitz-continuous on $\R^d$
and $\Lip(b(t)) \in L^1([0, T])$. Then, by Theorem \ref{theorem:cauchy-lipschitz}, there exists
a map $X : [0, T] \times \R^d \to \R^d$ such that for every $x \in \R^d$, $X(\cdot, x)$ is
the unique absolutely continuous solution to (\ref{eq:ode}) with $x^0 = x$.

The first observation is that for all $x, y \in \R^d$
\begin{align}
	\frac{d}{dt} \abs*{X(t, x) - X(t, y)}
	\le |b(t, X(t, x)) - b(t, X(t, y))|
	\le \Lip(b(t)) |X(t, x) - X(t, y)| \,.
	\label{eq:X-lipschitz}
\end{align}
Hence, by Grönwall's inequality (see \cite[p.~708]{Eva10}),
\begin{align}
	|X(t, x) - X(t, y)| \le e^{\ell(t)} |x - y|, \hspace{2em} \text{where} \hspace{.5em}
	\ell(t) \colonequals \int_0^t \Lip b(s) \,ds \,.
	\label{eq:one-sided-lipschitz}
\end{align}
By reversing time, we can also show a similar reverse inequality. Let $t_0 \in [0, T]$ and
$Y^{t_0} : [0, t_0] \times \R^d \to \R^d$ be the flow map of $-b(t_0 - t, x)$. Since
\begin{align*}
	\frac d{dt} X(t_0 - t, x) = -b(t_0 - t, X(t_0 - t, x)) \quad \text{and} \quad
	X(t_0 - t, x)|_{t = 0} = X(t_0, x),
\end{align*}
both $X(t_0 - t, x)$ and $Y^{t_0}(t, X(t_0, x))$ are solutions to (\ref{eq:ode})
with the vector field $-b(t_0 - t, x)$ and initial condition $X(t_0, x)$. By uniqueness of
solutions to (\ref{eq:ode}) in the Lipschitz case, $Y^{t_0}(t_0, X(t_0, x)) = x$
for every $x \in \R^d$. Hence $X(t_0) : \R^d \to \R^d$ is invertible. Moreover, by
applying the computation in (\ref{eq:X-lipschitz}) to $Y^{t_0}$ for every $t_0 \in [0, T]$,
\begin{align}
	e^{-\ell(t)} |x - y| \le |X(t, x) - X(t, y)| \le e^{\ell(t)} |x - y|
	\hspace{3em} \text{for all $x, y \in \R^d$.}
	\label{eq:two-sided-lipschitz}
\end{align}
The lower bound in (\ref{eq:two-sided-lipschitz}) shows that \ref{enum:F2} holds with compressibility constant
\begin{align*}
	K = \exp\big(d\norm*{\Lip b(t)}_{L^1([0, T])}\big) \,.
\end{align*}

The compressibility constant of $X$ can be improved to depend only on the divergence of $b$.
For this, we assume in addition that $b \in L^1_\loc([0, T] \times \R^d)$.
We claim that the measure $\mu(t) \colonequals X(t)_\#\meas^d$ satisfies the following
equation in the distributional sense:
\begin{align}
	\frac{d\mu}{dt} + \div(b\mu) = 0 \qquad \text{on $(0, T) \times \R^d$,}
	\qquad \mu|_{t = 0} = \meas^d \qquad \text{on $\R^d$}
	\label{eq:mu-continuity}
\end{align}
To prove this, let $\phi \in C^\infty_c([0, T) \times \R^d)$. Then, by the chain rule
\begin{align*}
	\frac{d}{dt}\big(\phi(t, X(t, x))\big)
	= \del_t \phi(t, X(t, x)) +  b(t, X(t, x)) \cdot \nabla \phi(t, X(t, x))
	\qquad \text{for a.e. $t \in (0, T)$.}
\end{align*}
Integrating over $[0, T] \times \R^d$ yields
\begin{align*}
	-\int_{\R^d} \phi(0, x) \,dx
	= \iTR \del_t \phi(t, X(t, x)) \,dx \,dt
	+ \iTR b(t, X(t, x)) \cdot \nabla \phi(t, X(t, x)) \,dx \,dt
\end{align*}
By (\ref{eq:two-sided-lipschitz}), these integrals are well defined.
By definition of the distributions in (\ref{eq:mu-continuity}),
\begin{align*}
	\frac{d\mu}{dt}[\phi] = -\iTR \del_t \phi(t, X(t, x)) \,dx \,dt
\end{align*}
and
\begin{align*}
	\div\paren*{b\mu}[\phi] = -\mu[b \cdot \nabla \phi]
	= -\iTR b(t, X(t, x)) \cdot \nabla \phi(t, X(t, x)) \,dx \,dt \,.
\end{align*}
Therefore, (\ref{eq:mu-continuity}) holds.

Next, we look at the density of $\mu(t)$ with respect to $\meas^d$. By (\ref{eq:two-sided-lipschitz}),
$\mu(t)$ is absolutely continuous and $\mu(t) = h(t, \cdot) \, d\meas^d$ for some
$h\in L^\infty([0, T] \times \R^d)$. Then also
\begin{align}
	\frac{dh}{dt} + \div(bh) = 0 \quad \text{on $(0, T) \times \R^d$,}
	\qquad h|_{t = 0} = 1 \quad \text{on $\R^d$}
	\label{eq:h-continuity}
\end{align}
in the weak sense, since $h$ is the same distribution as $\mu$. Now, we use the fact
that weak solutions of the continuity equation (\ref{eq:h-continuity}) in $L^\infty([0, T] \times \R^d)$
are unique. This is not clear from the discussion so far, but it will be a consequence of the results
in section §\ref{sec:uniqueness}.
Therefore, $h$ satisfies the following ODE along the characteristics of $b$:
\begin{align*}
	\frac{d}{dt} \big(h(t, X(t, x))\big)
	= -h(t, X(t, x)) \div b(t, X(t, x))
\end{align*}
The assumption $\Lip (b(t)) \in L^1([0, T])$ implies $\div b \in L^1(0, T; L^\infty(\R^d))$.
Thus, by Grönwall's inequality,
\begin{align*}
	e^{-D(t)} \le h(t, x) \le e^{D(t)}, \hspace{2em} \text{where} \hspace{.5em}
	D(t) \colonequals \int_0^t \norm*{\div b(s)}_{L^\infty(\R^d)} \,ds \,.
\end{align*}
By definition of $h$, this implies
\begin{align}
	e^{-D(t)}m^d \le X(t)_\#\meas^d \le e^{D(t)}m^d
	\hspace{3em} \text{for all $t \in [0, T]$.}
	\label{eq:two-sided-compressibility}
\end{align}
Thus, the divergence of $b$ controls the rate of contraction and expansion of the flow $X$.

\begin{remark}[Local flows]
	Corresponding results hold if $b$ belongs only to $L^1(0, T; W^{1, \infty}_\loc(\R^d))$.
	In this case, by the local version of the
	Cauchy-Lipschitz theorem, for every compact set $K \subset \R^d$, there exists
	a $0 < \tilde T \le T$ and a map $X : [0, \tilde T] \times K \to \R^d$ such that
	for all $x \in K$, $X(\cdot, x)$ is the unique absolutely continuous solution to
	(\ref{eq:ode}) with $x^0 = x$.

	The analogue of (\ref{eq:two-sided-lipschitz}) is that for all $x, y \in K$ and $t \in [0, \tilde T]$
	\begin{align}
		e^{-\ell(t)} |x - y| \le |X(t, x) - X(t, y)| \le e^{\ell(t)} |x - y|,
		\hspace{3em} \ell(t) \colonequals \int_0^t \Lip \big(b(s)|_{K'}\big) \,ds \,.
		\label{eq:two-sided-lipschitz-local}
	\end{align}
	Here, $K' \subset \R^d$ is a compact set, for example $K' = X([0, \tilde T] \times K)$.
	\label{remark:local-flows}
\end{remark}

\section{The transport equation}
\label{sec:transport}

The ODE (\ref{eq:ode}) is closely related to the following linear transport equation.
\begin{align}
	\begin{cases}
		\del_tu - b \cdot \nabla u + cu = 0
		 & \qquad \text{in $(0,T) \times \R^d$} \\ u|_{t = 0}= u^0
		 & \qquad \text{in $\R^d$}
	\end{cases}
	\label{eq:transport}
\end{align}
Equations (\ref{eq:ode}) and (\ref{eq:transport}) are connected via the theory of characteristics,
which will be explained in the proof of Proposition \ref{prop:existence}.
We consider solutions $u \in L^\infty(0, T; L^p_\loc(\R^d))$ for some $1 \le p \le \infty$
and understand (\ref{eq:transport}) in the distributional sense, that is,
\begin{align}
	-\iTR u \, \del_t\phi \,dx \,dt
	- \int_{\R^d} u^0\phi(0, x) \,dx
	+ \iTR u (b \cdot \nabla \phi + (\div b + c)\phi) \, dx \,dt = 0
	\label{eq:transport-weak}
\end{align}
for every $\phi \in C^\infty_c([0, T) \times \R^d)$. For (\ref{eq:transport-weak})
to make sense, we assume $u^0 \in L^1_\loc(\R^d)$, $b \in L^1(0, T; L^q_\loc(\R^d; \R^d))$ and
$c, \, \div b \in L^1(0, T; L^q_\loc(\R^d))$, where $1/p + 1/q = 1$.

We will use the following notation frequently: We say that $u$ is a solution to (\ref{eq:transport})
with $(\tilde b, \tilde c, \tilde u^0)$, if $u$ satisfies (\ref{eq:transport-weak}), where
$b, c$ and $u^0$ are replaced by $\tilde b, \tilde c$ and $\tilde u^0$, respectively.

\subsection{Existence}
\label{sec:existence}

Before proving the existence of solutions to (\ref{eq:transport}),
let us introduce some notation.
We denote by $C^1_b(\R^d; \R^m)$ the space of $C^1$-functions
for which the norm
\begin{align*}
	\norm*{f}_{C^1_b(\R^d; \R^m)} \colonequals \norm*{f}_{L^\infty(\R^d; \R^m)}
	+ \norm*{Df}_{L^\infty(\R^d; \R^{m \times d})}
\end{align*}
is finite. The norm on $\R^{m \times d}$ is the operator norm of linear maps $\R^d \to \R^m$.
We write $C^1_c(\R^d; \R^m)$ for the subspace of compactly supported functions,
equipped with the same norm.

For $x \in \R^d$ and $R > 0$, we denote by $B_R(x) \colonequals \{y \in \R^d : |x - y| < R\}$
the ball of radius $R$ centered at $x$. We sometimes abbreviate $B_R(0)$ by $B_R$.
We also sometimes abbreviate $L^\gamma(0, T; L^p(\R^d))$ by $L^\gamma_t L^p_x$
and $C([0, T]; L^p(\R^d))$ by $C_t L^p_x$ $(1 \le \gamma,p \le \infty)$.

In the proof of existence of weak solutions to the transport equation (\ref{eq:transport}),
we will approximate $b$ by a sequence of vector fields $b_n \in L^1(0, T; C^1_b(\R^d; \R^d))$.
The next lemma shows that this is possible while controlling the divergence of $b_n$
uniformly.

\begin{lemma}
	Let $1 \le q < \infty$ and $b \in L^1(0,T; L^q_\loc(\R^d; \R^d))$. If $\div b \in L^1(0, T; L^\infty(\R^d))$,
	then there exists a sequence of vector fields $b_n \in L^1(0, T; C^1_b(\R^d; \R^d))$ such that
	\begin{align}
		b_n \to b, \ \, \div b_n \to \div b \qquad \text{in $L^1(0, T; L^q_\loc(\R^d))$ as $n \to \infty$,}
		\label{eq:bn-convergence}
	\end{align}
	and
	\begin{align}
		\norm*{\div b_n(t)}_{L^\infty_x(\R^d)} \le C \norm*{\div b(t)}_{L^\infty_x(\R^d)}
		\qquad \text{for a.e. $t \in [0, T]$,}
		\label{eq:div-bn-bounded}
	\end{align}
	for some constant $C$ independent of $b$.
	\label{lemma:bn-seq}
\end{lemma}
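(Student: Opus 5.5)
The plan is to combine spatial mollification with a cutoff, and to repair the divergence error created by the cutoff with a compactly supported correction obtained from the Bogovskii operator. Fix a standard mollifier $\rho_\eps$ in $x$ and set $B_\eps(t) \colonequals b(t) * \rho_\eps$. Then $B_\eps(t)$ is smooth and $\div B_\eps = (\div b) * \rho_\eps$, so $\norm{\div B_\eps(t)}_{L^\infty} \le \norm{\div b(t)}_{L^\infty}$. Moreover $B_\eps \to b$ and $\div B_\eps \to \div b$ in $L^1(0,T;L^q_\loc)$ as $\eps \to 0$, using $q<\infty$ and dominated convergence in $t$. The only defect is that $B_\eps(t)$ need not be bounded on $\R^d$, since $b$ is only locally integrable, so $B_\eps \notin C^1_b$ in general. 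A cutoff $\chi_R B_\eps$, with $\chi_R = 1$ on $B_R$ and $\supp \chi_R \subset B_{2R}$, fixes this. However, it produces the extra divergence $g \colonequals \nabla\chi_R \cdot B_\eps$, which is not controlled by $\div b$ at all.

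\textbf{Step 1: redistributing the mass of $g$.} The idea is to remove $g$ by adding a field $w$ with $\div w = -g + \lambda\psi$. Here $\psi \ge 0$ is a bump with $\int \psi = 1$ and $\psi \le 2/\meas^d(B_L\setminus B_{4R})$, supported in the far annulus $B_L \setminus B_{4R}$. The constant $\lambda$ is chosen so that the right-hand side has mean zero:
\begin{align*}
	\lambda = \int g \,dx = -\int \chi_R \div B_\eps \,dx,
	\qquad |\lambda| \le \meas^d(B_{2R}) \norm{\div b(t)}_{L^\infty}.
\end{align*}
Choosing $L$ so large that $2\meas^d(B_{2R}) \le \meas^d(B_L\setminus B_{4R})$ gives $|\lambda\psi| \le \norm{\div b(t)}_{L^\infty}$. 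Since $\chi_R\div B_\eps$ and $\lambda\psi$ have disjoint supports, the field $b_n \colonequals \chi_R B_\eps + w$ satisfies $\norm{\div b_n(t)}_{L^\infty} \le \norm{\div b(t)}_{L^\infty}$. Thus (\ref{eq:div-bn-bounded}) holds with $C=1$.

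\textbf{Step 2: constructing $w$ (the main step).} I would apply the Bogovskii operator on the annulus $\Omega = B_{L+1}\setminus \overline{B_{R/2}}$, for $d \ge 2$. This domain is a bounded Lipschitz domain, a finite union of star-shaped pieces, and $f \colonequals -g+\lambda\psi \in C^\infty_c(\Omega)$ has mean zero. Bogovskii's operator then yields $w \in C^\infty_c(\Omega;\R^d)$ with $\div w = f$. It is linear and bounded $W^{k,p}_0 \to W^{k+1,p}_0$ for all $k$. By Sobolev embedding, this gives $\norm{w(t)}_{C^1_b} \le C_{R,L,\eps}\norm{B_\eps(t)}_{W^{k,2}(B_{2R})} \le C'_{R,L,\eps}\norm{b(t)}_{L^1(B_{2R+1})}$. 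By linearity in $b(t)$, $w$ is measurable in $t$, and together with the analogous bound for $\chi_R B_\eps$ this gives $b_n \in L^1(0,T;C^1_b)$. In $d=1$ no Bogovskii operator is needed: set $w(x) \colonequals \int_{-\infty}^x f$. This is compactly supported because $\int f = 0$; it vanishes on $B_{R}$ after splitting $f$ into its parts on the positive and negative half-lines, each adjusted to mean zero with its own bump $\psi$. The delicate points are the mean-zero condition, the connectedness of $\Omega$, and the fact that $w$ vanishes on $B_{R/2}$.

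\textbf{Step 3: convergence.} On $B_{R/2}$ we have $b_n = B_\eps$. Taking $R = n$ and $\eps = 1/n$, for any fixed compact set $K$ we eventually have $b_n = B_{1/n}$ and $\div b_n = \div B_{1/n}$ on $K$. Hence (\ref{eq:bn-convergence}) follows from the mollifier convergence noted at the start. I expect the main obstacle to be Step 2, namely making the correction compactly supported, smooth, and measurable and integrable in time with quantitative $C^1$ bounds. The divergence bound itself is just the mass-spreading trick of Step 1.
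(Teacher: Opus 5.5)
\textbf{Dimensions $d \ge 2$.} Here your argument is essentially the paper's. You mollify and cut off, move the mean of the cutoff error $\nabla\chi_R\cdot(b*\rho_\eps)$ into a bump, and invert the divergence on an annulus with the Bogovskii operator. Two of your choices improve on the paper:
\begin{itemize}
\item You place the bump outside $\supp\chi_R$, which gives $C=1$ instead of the paper's $C=5$.
\item You use the higher-order Bogovskii bounds together with Sobolev embedding to check $b_n \in L^1(0,T;C^1_b)$. The paper only records boundedness into $W^{1,2}_0$, so your verification is more careful.
\end{itemize}

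\textbf{The gap is $d=1$, which the lemma includes.} Once you split $f$ by half-lines, the mass to be compensated on $(0,\infty)$ is
\[
\lambda^+ = \int_0^\infty \chi_R'\,(b*\rho_\eps)\,dx = -(b*\rho_\eps)(t,0) - \int_0^\infty \chi_R\,\del_x(b*\rho_\eps)\,dx .
\]
It contains the point value $(b*\rho_\eps)(t,0)$, which is not controlled by $\norm{\div b(t)}_{L^\infty}$. Take $b \equiv 1$. Then $\div b = 0$, but your construction gives $\div b_n = -\psi^+ + \psi^- \neq 0$, so (\ref{eq:div-bn-bounded}) fails for every $C$. Without the split, $\lambda$ is controlled, but $w$ no longer vanishes on $B_R$. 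For $b \equiv 1$ one gets $w = -\chi_R$ and hence $b_n \equiv 0$, so (\ref{eq:bn-convergence}) fails instead.

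\textbf{This is an obstruction, not bookkeeping.} In one dimension, if $\div b(t) = 0$, then (\ref{eq:div-bn-bounded}) forces $\del_x b_n(t) = 0$, so $b_n(t)$ is constant in $x$. Compact support then forces $b_n(t) = 0$, which contradicts $b_n \to b \ne 0$. So no compactly supported approximation can work in $d=1$. This is why the lemma only asks for $C^1_b$ (see the paper's footnote).

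\textbf{A repair.} Keep the spatially constant part and define
\[
b_n(t,x) \colonequals (b*\rho_\eps)(t,0) + \int_0^x \chi_R(y)\,\del_y(b*\rho_\eps)(t,y)\,dy .
\]
This field lies in $L^1(0,T;C^1_b)$ and coincides with $b*\rho_\eps$ on $B_R$. It satisfies $\del_x b_n = \chi_R\,\del_x(b*\rho_\eps)$, so $C=1$. The paper does the equivalent thing: it normalizes $b(t,0)=0$, using that $b(t,\cdot)$ is Lipschitz with constant $\norm{\div b(t)}_{L^\infty}$. It then bounds the cutoff term directly by $|x|\,\norm{\div b(t)}_{L^\infty}\,|\nabla\eta_n|$.
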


\begin{proof}
	Let $\rho_\eps$ be regularizing kernels defined by
	\begin{equation}
		\begin{split}
			\begin{gathered}
				\rho_\eps(x) \colonequals \eps^{-d} \rho(x/\eps) \quad
				\text{for $\eps > 0$}, \qquad \text{for some } \rho \in C^\infty_c(\R^d),
				\
				\rho \ge 0, \ \int_{\R^d} \rho \,dx = 1 \,.
			\end{gathered}
		\end{split}
		\label{eq:rho-eps}
	\end{equation}
	Also, let $\eta_R$ be smooth cutoff functions defined by
	\begin{equation}
		\begin{split}
			\begin{gathered}
				\eta_R(x) \colonequals \eta(|x|/R) \quad \text{for $R > 0$}, \\
				\addlinespace
				\text{for some } \eta \in C^\infty_c([0, \infty)), \
				\eta' \le 0,
				\ \eta|_{[0, 5/4]} = 1, \ \supp \eta \subset [0, 2) \,.
			\end{gathered}
		\end{split}
		\label{eq:eta-R}
	\end{equation}
	We set $\tilde b_n \colonequals (b * \rho_{1/n})\eta_n$.
	The symbol ``$*$'' means convolution only in the spatial variable.
	This will always be the convention.
	Since $q < \infty$, by standard properties of convolutions
	\begin{align}
		\tilde b_n \to b, \ \, \div \tilde b_n \to \div b \qquad
		\text{in $L^1(0, T; L^q_\loc(\R^d))$ as $n \to \infty$.}
		\label{eq:tilde-bn-convergence}
	\end{align}
	However,
	\begin{align}
		\div \tilde b_n = (\div b * \rho_{1/n})\eta_n + (b * \rho_{1/n}) \cdot \nabla \eta_n \,.
		\label{eq:div-bn}
	\end{align}
	Due to the second term, (\ref{eq:div-bn-bounded}) need not hold for $\tilde b_n$.
	In fact $\div \tilde b_n(t)$ need not even be bounded in $L^\infty(\R^d)$ for any $t$.
	Thus, we have to modify $\tilde b_n$.

	Let
	\begin{align*}
		I_n(t) \colonequals
		\int_{\R^d} (b * \rho_{1/n}) \cdot \nabla \eta_n \,dx \,.
	\end{align*}
	Also, let $\zeta_n \in C^\infty_c(B_{2n} \setminus \overline{B_n})$ such that $\int \zeta_n \,dx = 1$
	and $\norm*{\zeta_n}_{L^\infty} \le 2/\meas^d(B_{2n} \setminus B_n)$.
	We set
	\begin{align*}
		f_n(t, x) \colonequals (b * \rho_{1/n}) \cdot \nabla \eta_n - I_n(t)\zeta_n(x) \,.
	\end{align*}
	Then $f_n \in L^1(0, T; L^\infty(\R^d))$ and $f_n(t, \cdot) \in C^\infty_c(B_{2n} \setminus \overline{B_n})$
	for a.e. $t \in [0, T]$. Furthermore
	\begin{align*}
		\int_{B_{2n} \setminus B_n} f_n(t, x) \,dx = 0 \hspace{4em}
		\text{for a.e. $t \in [0, T]$.}
	\end{align*}

	Let $L^2_0(B_{2n} \setminus B_n)$ be the subspace of functions $f \in L^2(B_{2n} \setminus B_n)$
	such that $\int f \,dx = 0$. If $d \ge 2$, by Theorem III.3.1 of \cite{Gal11}, there exists a
	bounded linear operator
	\begin{align*}
		\mathcal B : L_0^2(B_{2n} \setminus B_n) \to W^{1, 2}_0(B_{2n} \setminus B_n; \R^d),
	\end{align*}
	such that $\div \mathcal B(f) = f$ for every $f \in L_0^2(B_{2n} \setminus B_n)$.
	Moreover, by Theorem III.3.3 of \cite{Gal11},
	$\mathcal B(f) \in C^\infty_c(B_{2n} \setminus \overline{B_n}; \R^d)$
	if $f \in C^\infty_c(B_{2n} \setminus \overline {B_n})$.

	We define $v_n \in L^1(0, T; W^{1, 2}_0(B_{2n} \setminus B_n; \R^d))$ by $v_n(t) = \mathcal B(f_n(t))$.
	Since $\mathcal B$ is bounded and linear, $v_n$ is measurable on $[0, T] \times \R^d$.
	Also
	\begin{align*}
		v_n(t, \cdot) \in C^\infty_c(B_{2n} \setminus \overline{B_n}; \R^d), \quad
		\div v_n(t) = f_n(t) \qquad \text{for a.e. $t \in [0, T]$.}
	\end{align*}
	Let $b_n \colonequals \tilde b_n - v_n$. Now
	\begin{align*}
		\div b_n = (\div b * \rho_{1/n}) \eta_n + I_n(t) \zeta_n(x) \,.
	\end{align*}
	By Hölder's inequality,
	\begin{align*}
		|I_n(t)| = \abs*{\int_{\R^d} (\div b(t) * \rho_{1/n}) \eta_n \,dx}
		\le \meas^d(B_{2n}) \norm*{\div b(t)}_{L^\infty(\R^d)} \,.
	\end{align*}
	Thus
	\begin{align*}
		\norm*{\div b_n(t)}_{L^\infty_x} \le \norm*{\div b}_{L^\infty_x}
		+ \frac {2\meas^d(B_{2n})}{\meas^d(B_{2n} \setminus B_n)} \norm*{\div b}_{L^\infty_x}
		\le 5 \norm*{\div b}_{L^\infty_x} \,,
	\end{align*}
	so (\ref{eq:div-bn-bounded}) holds. Since
	$\supp v_n \subset [0, T] \times B_{2n} \setminus \overline{B_n}$,
	(\ref{eq:bn-convergence}) follows from (\ref{eq:tilde-bn-convergence}).

	If $d = 1$, then $\del_x b = \div b \in L^\infty(\R)$ for almost every $t \in [0, T]$.
	Hence, $b(t, \cdot)$ is Lipschitz continuous (after modifying $b$ on a
	$\meas^2$-negligible set).
	Because $b \in L^1(0, T; L^1([-1, 1]))$, the Lipschitz continuity in space implies
	$b(t, 0) \in L^1((0, T))$. Thus, we may assume without loss of
	generality that $b(t, 0) = 0$ for a.e. $t \in [0, T]$.\footnote{
		This reduction is the only reason why the lemma states $b_n \in L^1(0, T; C^1_b)$
		instead of $b_n \in L^1(0, T; C^1_c)$.
	}
	In this case, $|b(t, x)| \le |x| \norm*{\div b(t)}_{L^\infty_x}$. Since
	$\supp \nabla \eta_n \subset B_{2n} \setminus B_n$ and
	$\norm*{\nabla \eta_n}_{L^\infty} \le \norm*{\nabla \eta}_{L^\infty}/n$,
	\begin{align*}
		|(b(t) * \rho_{1/n})(x) \cdot \nabla \eta_n(x)|
		 & \le |x| \norm*{\div b(t)}_{L^\infty_x(\R)} |\nabla \eta_n(x)|                   \\
		 & \le 2 \norm*{\nabla \eta}_{L^\infty(\R)} \norm*{\div b(t)}_{L^\infty_x(\R)} \,.
	\end{align*}
	Thus, by (\ref{eq:div-bn})
	\begin{align*}
		\big\|{\div \tilde b_n(t)}\big\|_{L^\infty_x}
		\le \paren*{1 + 2 \norm*{\nabla \eta}_{L^\infty(\R)}} \big\|{\div b(t)}\big\|_{L^\infty_x}
	\end{align*}
	for a.e. $t \in [0, T]$. We set $b_n = \tilde b_n$.
\end{proof}

\begin{proposition}
	Let $1 < p \le \infty, \ 1/p + 1/q = 1$. Let $u^0\in L^p(\R^d)$ and
	\begin{gather*}
		b \in L^1(0, T; L^q_\loc(\R^d; \R^d)), \qquad
		c, \, \div b \in L^1(0, T; L^\infty(\R^d)) \,.
	\end{gather*}
	Then there exists a weak solution
	$u \in L^\infty(0, T; L^p(\R^d))$ to (\ref{eq:transport})
	for the initial data $u^0$.
	\label{prop:existence}
\end{proposition}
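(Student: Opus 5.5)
We follow the classical scheme: approximate, solve the smooth problems by characteristics, derive uniform $L^p$ bounds, and pass to the limit by weak-$*$ compactness. Lemma \ref{lemma:bn-seq} gives vector fields $b_n \in L^1(0,T; C^1_b(\R^d;\R^d))$ with $b_n \to b$ and $\div b_n \to \div b$ in $L^1(0,T;L^q_\loc)$, and with $\norm{\div b_n(t)}_{L^\infty_x} \le C\norm{\div b(t)}_{L^\infty_x}$. (If $q = \infty$, i.e.\ $p=1$, this would fail, but $p>1$ gives $q<\infty$.) We also mollify the rest of the data: $c_n \colonequals c * \rho_{1/n}$ and $u^0_n \in C^\infty_c(\R^d)$ with $u^0_n \to u^0$ in $L^p$ (weak-$*$ if $p=\infty$, e.g.\ $u^0_n=(u^0*\rho_{1/n})\eta_n$). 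Then $\norm{c_n(t)}_{L^\infty}\le\norm{c(t)}_{L^\infty}$ and $c_n \to c$ in $L^1(0,T;L^1_\loc)$.

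For fixed $n$, Theorem \ref{theorem:cauchy-lipschitz} applies to $b_n$, since $\Lip b_n(t) \le \norm{b_n(t)}_{C^1_b} \in L^1$. To solve $\del_t u - b_n\cdot\nabla u + c_n u = 0$, use characteristics backward in time. For fixed $t$, let $Z^t(s,x)$ solve $\frac{d}{ds}Z^t = -b_n(s,Z^t)$ on $[0,t]$, with the data prescribed at time $t$, so that $Z^t(t,x)=x$. Set
\[
u_n(t,x) \colonequals u^0_n(Z^t(0,x))\exp\Big(-\int_0^t c_n(s,Z^t(s,x))\,ds\Big).
\]
By construction $u_n$ is constant-times-exponential along the curves $s\mapsto$ the flow of $-b_n$, and it is Lipschitz in $(t,x)$. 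This is the step where the flow properties of §\ref{sec:lipschitz-flows} enter: they give differentiability of $Z^t$ in $t$ and $x$ almost everywhere. Verifying that $u_n$ satisfies (\ref{eq:transport-weak}) with $(b_n,c_n,u^0_n)$ then reduces to the chain rule, i.e.\ the classical computation of characteristics. It can be done via the identity $\frac{d}{ds}u_n(s,X(s,y))=-c_n u_n$ along forward curves of $-b_n$, followed by the change of variables of §\ref{sec:lipschitz-flows}.

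For the uniform bound, take $1<p<\infty$. Formally multiply by $p|u_n|^{p-2}u_n$ and integrate; since $u_n$ is Lipschitz and compactly supported in $x$ this is justified. Integrating by parts, $-\int b_n\cdot\nabla|u_n|^p = \int \div b_n\,|u_n|^p$, hence
\[
\frac{d}{dt}\norm{u_n(t)}_{L^p}^p \le \big(C\norm{\div b(t)}_{L^\infty}+p\norm{c(t)}_{L^\infty}\big)\norm{u_n(t)}_{L^p}^p .
\]
Grönwall then gives $\sup_t\norm{u_n(t)}_{L^p}\le C_T\norm{u^0}_{L^p}$, with $C_T$ independent of $n$. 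For $p=\infty$ the bound $\norm{u_n}_{L^\infty}\le \norm{u^0}_{L^\infty}e^{\int\norm{c}_\infty}$ is immediate from the formula for $u_n$. Equivalently, for $p<\infty$ one can use the Jacobian of $Z^t$, which is controlled by $e^{\int\norm{\div b_n}_\infty}$ by (\ref{eq:two-sided-compressibility}) and (\ref{eq:div-bn-bounded}). This is exactly where the uniform divergence bound from Lemma \ref{lemma:bn-seq} is essential.

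Since $L^\infty(0,T;L^p) = (L^1(0,T;L^{q}))^*$ with $q<\infty$, there is a subsequence with $u_n \rightharpoonup u$ weakly-$*$, and $u$ inherits the bound. Passing to the limit in (\ref{eq:transport-weak}) is the main technical point. The terms $\int u_n\del_t\phi$ and $\int u^0_n\phi(0)$ converge by weak(-$*$) convergence. The product terms $\int u_n\, b_n\cdot\nabla\phi$ and $\int u_n(\div b_n + c_n)\phi$ are weak-times-strong products. Write $u_n b_n = u_n b + u_n(b_n-b)$. The first part converges because $b\cdot\nabla\phi \in L^1(0,T;L^q)$. The second is bounded by $\sup_t\norm{u_n(t)}_{L^p}\,\norm{b_n-b}_{L^1_tL^q(\supp\phi)} \to 0$, and the same argument handles $\div b_n$. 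For $c_n$, use $c_n\phi \to c\phi$ in $L^1_tL^q_x$; this holds because $c_n\to c$ a.e.\ along a subsequence with $\abs{c_n}\le\norm{c(t)}_\infty$, so dominated convergence applies. Hence $u$ is a weak solution in $L^\infty(0,T;L^p(\R^d))$.
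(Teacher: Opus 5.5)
Your proposal is correct and follows essentially the same route as the paper: solve the smoothed problem $(b_n,c_n,u^0_n)$ by characteristics (your backward curves $Z^t(0,x)$ are exactly the paper's $X(t,\cdot)^{-1}(x)$), obtain $n$-uniform $L^p$ bounds from Gr\"onwall and the uniform divergence bound (\ref{eq:div-bn-bounded}), extract a weak-$*$ limit, and pass to the limit using the strong convergence of $b_n$, $\div b_n$, $c_n$ in $L^1(0,T;L^q_\loc)$. One small correction: since $b_n$ and $c_n$ are only integrable in time, $u_n$ is Lipschitz in $x$ but only absolutely continuous (not Lipschitz) in $t$, and this is all the chain-rule computations need.
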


\begin{proof}
	This proof follows that of Proposition II.1 in \cite{DL89}.

	\paragraph{Step 1: Existence in the smooth case.}

	For now, assume in addition to the above assumptions that $b \in L^1(0, T; C^1_b(\R^d; \R^d))$,
	$c \in L^1(0, T; C^1_b(\R^d))$ and $u^0\in C^1_{c}(\R^d)$.
	By Theorem \ref{theorem:cauchy-lipschitz}, there exists a unique
	$X : [0, T] \times \R^d \to \R^d$ satisfying $\del_t X(t, x) = -b(t, X(t, x))$ and $X(0, \cdot) = \id_{\R^d}$.
	Moreover $X(t, \cdot) \in C^1(\R^d)$ for every $t \in [0, T]$ by Proposition \ref{prop:smooth-dependence}.
	We define
	\begin{align}
		v(t, x) \colonequals u^0(x) \exp\paren*{
			-\!\int_0^t c(s, X(s, x)) \,ds
		} \,.
		\label{eq:u-along-char-def}
	\end{align}
	For every $x \in \R^d$, $v(\cdot, x)$ is an absolutely continuous solution of
	\begin{align}
		\del_t v(t, x) = -c(t, X(t, x))\, v(t, x),
		\qquad v(0, x) = u^0(x) \,.
		\label{eq:characteristic-ode}
	\end{align}
	Because $c$ need not be continuous in $t$, in general $v(\cdot, x) \notin C^1([0, T])$.

	Next, we note that by Proposition \ref{prop:smooth-dependence},
	$DX(t, x)$ is absolutely continuous in $t$ and satisfies
	\begin{align}
		\del_t DX(t, x) = -Db(t, X(t, x)) DX(t, x)
		\qquad \text{for a.e. $t \in (0, T)$.}
		\label{eq:Db-evo}
	\end{align}
	Since $Db \in L^1(0, T; C_b(\R^d))$, (\ref{eq:u-along-char-def}) and (\ref{eq:Db-evo}) imply
	$v \in L^\infty(0, T; C^1_c(\R^d))$.
	Also, note that $\supp v(t, \cdot) \subset \supp u^0$ for every $t \in [0, T]$.

	By (\ref{eq:Db-evo}) and the formula for the derivative of the determinant,
	\begin{align*}
		\del_t \big({\det DX(t, x)}\big) = -\div b(t, X(t, x)) \det DX(t, x)
		\qquad \text{for a.e. $t \in (0, T)$.}
	\end{align*}
	Since $\div b \in L^1(0,T ; C_b(\R^d))$, it follows that
	$X(t, \cdot) : \R^d \to \R^d$ is a diffeomorphism for every $t \in [0, T]$.
	We define $u(t, x) \colonequals v(t, X(t, \cdot)^{-1}(x))$.
	Since $X(t, \cdot)$ and $X(t, \cdot)^{-1}$ are Lipschitz continuous uniformly in time,
	$u \in L^\infty(0, T; C^1_c(\R^d))$ and $\supp u$ is compact.
	Let $\phi \in C^\infty_c([0, T) \times \R^d)$ and $t \in [0, T]$. Then
	\begin{align}
		\int_{\R^d} u(t, x) \phi(t, x) \,dx = \int_{\R^d} v(t, x) \phi(t, X(t, x)) \det DX(t, x) \,dx
		\label{eq:uv-cov}
	\end{align}
	by changing variables $x = X(t, x')$.
	Since each of the three factors on the right-hand side in (\ref{eq:uv-cov}) is absolutely continuous
	with respect to $t$, for almost every $t \in [0, T]$
	\begin{align*}
		\frac d {dt}
		 & \int_{\R^d} v(t, x) \phi(t, X(t, x)) \det DX(t, x) \,dx                                   \\
		 & = \int_{\R^d} \Big({-}c(t, X(t, x))v(t, x) \phi(t, X(t, x)) \det DX(t, x)                 \\
		 & \hspace{3.4em} +v(t, x)\big(\del_t \phi(t, X(t, x))
		- \nabla \phi(t, X(t, x)) \cdot b(t, X(t, x))\big) \det DX(t, x)                             \\
		\addlinespace
		 & \hspace{3.4em} - v(t, x) \phi(t, X(t, x)) \div b(t, X(t, x)) \det DX(t, x) \Big) \,dx \,.
	\end{align*}
	By changing variables again, we get
	\begin{align*}
		 & \frac d {dt} \int_{\R^d} u(t, x) \phi(t, x) \,dx \\
		 & \quad = \int_{\R^d} - \phi(t, x)u(t, x)(c(t, x)
		+ \div b(t, x)) - u(t, x) b(t, x) \cdot \nabla \phi(t, x)
		+ u(t, x) \del_t \phi(t, x) \,dx \,.
	\end{align*}
	Thus, $u$ is a weak solution to the transport equation.

	We now derive $L^p$-estimates of $u$ in terms of $u^0$.
	In the case $p = \infty$, by (\ref{eq:u-along-char-def})
	\begin{align}
		\|u(t, \cdot)\|_{L^\infty(\R^d)}
		\le \exp\paren*{\|c\|_{L^1(0, T; L^\infty(\R^d))}}\|u^0\|_{L^\infty(\R^d)}\,.
		\label{eq:u-Linfty-estimate}
	\end{align}
	In the case $1 < p < \infty$, we note that $u$ is weakly differentiable in time,
	since $u$ satisfies the transport equation and $u \in L^\infty(0, T; C^1_c(\R^d))$. By the
	chain rule,
	\begin{align}
		\del_t |u|^p - b \cdot \nabla |u|^p+ pc|u|^p= 0 \,.
		\label{eq:ddt-norm}
	\end{align}
	Integrating this equation over $\R^d$ and using integration by parts yields
	\begin{align*}
		\del_t \int_{\R^d} |u|^p \,dx + \int_{\R^d}|u|^p \div b + pc|u|^p\,dx
		= 0 \qquad \text{for a.e. $t \in (0, T)$.}
	\end{align*}
	Moving the time derivative in front of the integral is possible by the dominated convergence
	theorem, since $-b\cdot \nabla |u|^p + pc|u|^p \in L^1_tL^\infty_x$ and compactly supported. Thus
	\begin{align*}
		\del_t \int_{\R^d}|u|^p\,dx
		\le \norm*{\div b + pc}_{L^\infty(\R^d)}\int_{\R^d}|u|^p\,dx
		\qquad \text{for a.e. $t \in (0, T)$.}
	\end{align*}
	By Grönwall's inequality, we conclude
	\begin{align}
		\|u(t, \cdot)\|_{L^p(\R^d)}
		\le \exp\paren*{\|(\div b)/p + c\|_{L^1(0, T; L^\infty(\R^d))}}
		\|u^0\|_{L^p(\R^d)}\,.
		\label{eq:u-Lp-estimate}
	\end{align}

	\paragraph*{Step 2: Existence in the general case.}

	Let $b_n \in L^1(0, T; C^1_b(\R^d; \R^d))$ be a sequence from Lemma \ref{lemma:bn-seq}.
	For every $n \in \N$, we set
	\begin{align*}
		c_n\colonequals (c * \rho_{1/n}) \eta_n,
		\quad u^0_n\colonequals (u^0* \rho_{1/n}) \eta_n\,.
	\end{align*}
	By Step 1, there exists a weak solution $u_n$ of
	\begin{align*}
		\del_t u_n - b_n\cdot \nabla u_n + c_nu_n= 0, \qquad u_n|_{t = 0}= u^0_n\,.
	\end{align*}
	Since $\norm*{\div b_n}_{L^1_t L^\infty_x} \le C \norm*{\div b}_{L^1_t L^\infty_x}$,
	by Lemma \ref{lemma:bn-seq}, (\ref{eq:u-Linfty-estimate}) and (\ref{eq:u-Lp-estimate})
	show that $u_n$ is bounded in $L^\infty(0, T; L^p(\R^d))$.

	By the Banach-Alaoglu theorem, a
	subsequence of $u_n$ converges weak-$*$ in $L^\infty(0, T; L^p(\R^d))$ to some $u$.
	We just consider this subsequence from now on and
	also denote it by $u_n$. Let $\phi \in C^\infty_c([0, T) \times \R^d)$.
	Then for every $n \in \N$
	\begin{align}
		\begin{split}
			- \iTR u_n
			 & \del_t\phi \,dx \,dt
			-\int_{\R^d}u_n^0\phi(0, x) \,dx      \\
			 & +\iTR u_n(b_n \! \cdot \nabla \phi
			+ (\div b_n \!+ c_n)\phi) \, dx \,dt = 0 \,.
		\end{split}
		\label{eq:u_n-transport-weak}
	\end{align}
	Since $q < \infty$, by standard properties of convolutions, $c_n\to c$
	in $L^1(0, T; L^{q}_\loc(\R^d))$. By Lemma \ref{lemma:bn-seq}, also $b_n \to b$ and $\div b_n \to \div b$
	in $L^1(0, T; L^{q}_\loc(\R^d))$. Thus, by
	letting $n \to \infty$ in (\ref{eq:u_n-transport-weak}), we conclude that
	$u$ satisfies (\ref{eq:transport-weak}).
\end{proof}

\begin{remark}
	The existence result also holds for the transport equation with a
	right-hand side $f \in L^1(0,T; L^p(\R^d))$:
	\begin{align*}
		\del_t u - b \cdot \nabla u + cu = f, \qquad u|_{t = 0} = u^0 \,.
	\end{align*}
	The proof is very similar. For Step 1, we assume $f \in L^1(0, T; C^1_b(\R^d))$.
	Instead of (\ref{eq:characteristic-ode}), one
	solves a linear, inhomogeneous ODE along the characteristics, and (\ref{eq:u-Linfty-estimate}),
	(\ref{eq:u-Lp-estimate}) are replaced by similar estimates with a contribution from $f$.
	In Step 2, we set $f_n = (f * \rho_{1/n})\eta_n$.
	\label{remark:existence-rhs}
\end{remark}

\subsection{Regularization}
\label{sec:regularization}

The following lemma will be an important tool in subsequent proofs, in particular for the
uniqueness results in §\ref{sec:uniqueness}. It shows that,
if $b$ belongs to a suitable Sobolev space, the regularization
$u * \rho_\eps$ of a solution $u$ is still an approximate solution. The lemma
is from \cite{DL89}, Theorem II.1.

\begin{lemma}
	Let $1 \le p, \alpha \le \infty$ and $1 \le \lambda < \infty$ such that
	$1/p + 1/\alpha \le 1/\lambda$. Assume that
	\begin{align*}
		b \in L^1(0, T; W^{1, \alpha}_\loc(\R^d)), \qquad c \in L^1(0, T; L^{\alpha}_\loc(\R^d))
	\end{align*}
	and that $u \in L^\infty(0, T; L^p_\loc(\R^d))$ solves (\ref{eq:transport})
	for some initial data $u^0 \in L^p(\R^d)$.
	Then $u_\eps \colonequals u * \rho_\eps$ satisfies
	\begin{align}
		\del_tu_\eps- b \cdot \nabla u_\eps+ cu_\eps= r_\eps,
		\qquad u_\eps|_{t = 0}= u^0 * \rho_\eps,
		\label{eq:regularized-transport}
	\end{align}
	where $r_\eps\to 0$ in $L^1(0, T; L^{\lambda}_\loc(\R^d))$.
	\label{lemma:regularization}
\end{lemma}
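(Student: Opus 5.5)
The plan follows the DiPerna--Lions commutator argument. First derive the equation for $u_\eps$ by testing the weak formulation (\ref{eq:transport-weak}) with $\phi(t,y)\rho_\eps(x-y)$. This identifies the remainder explicitly as
\begin{align*}
	r_\eps = (cu)*\rho_\eps - c\,u_\eps \;-\; \big[(b\cdot\nabla u)*\rho_\eps - b\cdot\nabla u_\eps\big],
\end{align*}
where $b\cdot\nabla u$ is understood as the distribution $\div(bu) - u\div b$. Since $u_\eps$ is smooth in $x$, the equation (\ref{eq:regularized-transport}) then holds in the distributional sense. The initial datum is $u^0*\rho_\eps$, because the test function at time $0$ contributes $\int u^0(y)\rho_\eps(x-y)\,dy$. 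The term $(cu)*\rho_\eps - cu_\eps$ is harmless: $cu\in L^1_tL^\lambda_\loc$ by Hölder, since $1/p+1/\alpha\le1/\lambda$. So $(cu)*\rho_\eps\to cu$ and $cu_\eps\to cu$ in $L^1_tL^\lambda_\loc$ by standard convolution properties, using $\lambda<\infty$ and, when $p=\infty$, a.e. convergence with dominated convergence.

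The main step is the commutator $C_\eps \colonequals (b\cdot\nabla u)*\rho_\eps - b\cdot\nabla u_\eps$. Writing out the distributional definition gives, for a.e.\ $t,x$,
\begin{align*}
	C_\eps(t,x) = \int u(t,y)\,\big(b(t,y)-b(t,x)\big)\cdot\nabla\rho_\eps(x-y)\,dy \;-\; \big(u\div b\big)*\rho_\eps(t,x).
\end{align*}
After substituting $y=x-\eps z$, the first term becomes
\begin{align*}
	\int u(t,x-\eps z)\,\frac{b(t,x-\eps z)-b(t,x)}{\eps}\cdot\nabla\rho(z)\,dz .
\end{align*}
I would prove convergence in two stages. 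In the first stage I would establish a uniform bound. The pointwise-in-$z$ difference quotient satisfies $\|(b(\cdot-\eps z)-b)/\eps\|_{L^\alpha(B_R)}\le |z|\,\|\nabla b\|_{L^\alpha(B_{R+\eps})}$ for $|z|\le1$ and $\supp\rho\subset B_1$. Combining this with Hölder in $x$ and Minkowski in $z$ gives
\begin{align*}
	\|C_\eps(t)\|_{L^\lambda(B_R)} \le C\,\|u(t)\|_{L^p(B_{R+1})}\,\|\nabla b(t)\|_{L^\alpha(B_{R+1})}.
\end{align*}
This bound is integrable in $t$ because $u\in L^\infty_tL^p_\loc$ and $b\in L^1_tW^{1,\alpha}_\loc$. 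Consequently $C_\eps$ is bounded in $L^1_tL^\lambda_\loc$ linearly in $(u,b)$ with respect to the $L^\infty_tL^p\times L^1_tW^{1,\alpha}$ norms. It therefore suffices to show $C_\eps\to0$ for a dense class of $(u,b)$, for instance $b$ smooth in $x$.

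In the second stage I would handle smooth $b$ and the density argument. For smooth $b$, the difference quotient converges to $-z\cdot\nabla b(t,x)$, and $u(x-\eps z)\to u(x)$ in $L^p_\loc$ when $p<\infty$. The limit of the first term is then
\begin{align*}
	-u\,\partial_j b_i\int z_j\,\partial_i\rho\,dz = u\div b,
\end{align*}
using $\int z_j\partial_i\rho\,dz = -\delta_{ij}$. This cancels the limit of $(u\div b)*\rho_\eps$, which is $u\div b$, so $C_\eps\to0$. The delicate point, which I expect to be the main obstacle, is the density argument when $p=\infty$ or $\alpha=\infty$, since smooth functions are not dense in those spaces. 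When $\alpha<\infty$ I would approximate $b$ by spatially smooth fields in $L^1_tW^{1,\alpha}_\loc$ and keep $u$ fixed. The convergence for fixed smooth $b$ then needs $u(\cdot-\eps z)\to u$, which can fail in $L^\infty$. In that case I would use $p=\infty\Rightarrow$ pass through a smaller local exponent: since $1/\alpha\le1/\lambda$ with $\lambda<\infty$, one can choose finite $\tilde p$ with $1/\tilde p+1/\alpha\le1/\lambda$ when $\alpha>\lambda$, or argue via a.e.\ convergence and domination. When $\alpha=\infty$, we have $p<\infty$ automatically if $\lambda<\infty$, unless $p=\infty$ too; here I would instead approximate $u$ in $L^p_\loc$ and use $L^\infty_tW^{1,\alpha}$ bounds on $b$ only through the uniform estimate.

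Combining the two stages gives $r_\eps\to0$ in $L^1(0,T;L^\lambda_\loc(\R^d))$.
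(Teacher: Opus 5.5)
Your proposal is essentially the paper's proof (Lemma~\ref{lemma:commutator}, Remark~\ref{remark:commutator-time} and the test-function computation): the same explicit commutator formula, and the same uniform bound (\ref{eq:A-estimate}) driving a density argument and dominated convergence in time, with your moment identity $\int z_j\,\partial_i\rho\,dz=-\delta_{ij}$ replacing the paper's integration by parts in the smooth case, which has the mild advantage that only $b$, not $u$, has to be regularized. Minor corrections: your $r_\eps$ has the wrong overall sign (it is $C_\eps-(cu)*\rho_\eps+cu_\eps$); the worry at $p=\infty$ is unfounded, since for smooth $b$ the difference quotient is locally bounded and you only need $u(\cdot-\eps z)\to u$ in $L^\lambda_\loc$, so approximating $b$ alone covers every $\alpha<\infty$; and for $\alpha=\infty$, where you approximate $u$, you still have to check that the commutator vanishes for smooth $u$ and $b\in W^{1,\infty}_\loc$, which your Stage 2 does not cover (the same difference-quotient computation works, since difference quotients of $b$ converge in $L^\lambda_\loc$), keeping in mind that $b$ is only $L^1$, not $L^\infty$, in time.
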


The proof of Lemma \ref{lemma:regularization} relies on estimating the
commutator between the directional derivative along $b$ and convolution with $\rho_\eps$.
Such an estimate is established in Lemma \ref{lemma:commutator}.
In the proof of the lemma and in later proofs, we use $C_{x, y, \dots}$ to denote a constant
depending only on $x, y, \dots$ (where $x, y$ can be numbers, functions etc.)

\begin{lemma}[Commutator estimate]
	Let $p, \alpha, \lambda$ be as in Lemma \ref{lemma:regularization}. If
	$b \in W^{1, \alpha}_\loc(\R^d; \R^d)$ and
	$w \in L^p_\loc(\R^d)$, then
	\begin{align*}
		(b \cdot \nabla w) * \rho_\eps- b \cdot \nabla(w * \rho_\eps) \to 0
		\qquad \text{in $L^{\lambda}_\loc(\R^d)$}
	\end{align*}
	as $\eps \to 0$. \label{lemma:commutator}
\end{lemma}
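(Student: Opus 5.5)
We follow the classical DiPerna--Lions strategy: write the commutator explicitly as an integral, prove a uniform bound for it, and combine this with a density argument. Here $(b\cdot\nabla w)$ is understood distributionally as $\div(bw) - w\div b$. Convolving gives, for $x\in\R^d$,
\begin{align*}
	C_\eps(b,w)(x) \colonequals (b\cdot\nabla w)*\rho_\eps(x) - b\cdot\nabla(w*\rho_\eps)(x)
	= \int_{\R^d} w(y)\,\big(b(y)-b(x)\big)\cdot\nabla\rho_\eps(x-y)\,dy - (w\,\div b)*\rho_\eps(x).
\end{align*}
To verify this formula, test the distribution $\div(bw)$ against $\rho_\eps(x-\cdot)$ and use $\nabla_x\rho_\eps(x-y) = -\nabla_y\rho_\eps(x-y)$. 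The second term tends to $w\,\div b$ in $L^\lambda_\loc$, because $w\,\div b\in L^\lambda_\loc$ by Hölder's inequality ($1/p+1/\alpha\le 1/\lambda$) and $\lambda<\infty$. The task is therefore to show that the first term converges to $-w\,\div b$.

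\textbf{Step 1: uniform bound.} Fix a ball $B_R$ and assume $\supp\rho\subset B_1$. Substituting $y = x-\eps z$, the first term equals
\begin{align*}
	\int_{B_1} w(x-\eps z)\,\frac{b(x-\eps z)-b(x)}{\eps}\cdot\nabla\rho(z)\,dz .
\end{align*}
By Minkowski's and Hölder's inequalities, its $L^\lambda(B_R)$ norm is bounded by $\|w\|_{L^p(B_{R+1})}$ times $\sup_{|z|\le 1}\|(b(\cdot-\eps z)-b)/\eps\|_{L^\alpha(B_R)}$ times $\|\nabla\rho\|_{L^1}$. The difference quotient satisfies $\|(b(\cdot-\eps z)-b)/\eps\|_{L^\alpha(B_R)}\le |z|\,\|Db\|_{L^\alpha(B_{R+1})}$, which follows by writing it as $\int_0^1 Db(\cdot-s\eps z)z\,ds$ for smooth $b$ and then using density. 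This holds even for $\alpha = \infty$. Together with the trivial bound for the second term, we obtain
\begin{align*}
	\|C_\eps(b,w)\|_{L^\lambda(B_R)}\le C_{\rho}\,\|w\|_{L^p(B_{R+1})}\|b\|_{W^{1,\alpha}(B_{R+1})},
\end{align*}
uniformly in $\eps\le 1$.

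\textbf{Step 2: convergence.} If $p<\infty$, approximate $w$ in $L^p(B_{R+1})$ by smooth $w_k$. Since $C_\eps$ is linear in $w$, the uniform bound reduces the claim to smooth $w$. For smooth $w$ the first term converges in $L^\lambda(B_R)$ to
\begin{align*}
	w(x)\int Db(x)z\cdot\nabla\rho(z)\,dz = -w(x)\,\div b(x),
\end{align*}
using $\int z_j\,\partial_i\rho\,dz = -\delta_{ij}$. This can be justified by the strong $L^\alpha$ convergence of difference quotients $(b(\cdot-\eps z)-b)/\eps\to -Db\,z$ when $\alpha<\infty$, or more simply by observing that $C_\eps(b,w)\to 0$ directly: for smooth $w$, $(b\cdot\nabla w)*\rho_\eps\to b\cdot\nabla w$ and $b\cdot\nabla(w*\rho_\eps)\to b\cdot\nabla w$ in $L^\lambda_\loc$. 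If $p=\infty$, then $\lambda<\infty$ forces $\alpha<\infty$, and we instead approximate $b$ in $W^{1,\alpha}(B_{R+1})$ by smooth $b_k$. Here $C_\eps$ is linear in $b$, and for smooth $b$ the convergence $C_\eps(b,w)\to 0$ is standard: apply the same formula, use the uniform convergence $(b(x-\eps z)-b(x))/\eps\to -Db(x)z$, and note that $w(\cdot-\eps z)\to w$ in $L^\lambda_\loc$ holds uniformly for $|z|\le 1$ when $w\in L^\infty_\loc\subset L^\lambda_\loc$, by continuity of translations in $L^\lambda$.

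\textbf{Main obstacle.} The main obstacle is Step 1 in the borderline cases. One has to handle the distributional meaning of $b\cdot\nabla w$ carefully, and one needs the difference quotient bound in $L^\alpha$ including $\alpha=\infty$. The density argument must also be split according to whether $p$ or $\alpha$ is finite. At least one of them is finite because $\lambda<\infty$.
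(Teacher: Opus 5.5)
Your argument is correct in substance and takes a different route from the paper's, but two slips need fixing.

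\textbf{The exponent range.} The claim that $p=\infty$ and $\lambda<\infty$ force $\alpha<\infty$ (``at least one of them is finite'') is false. The hypothesis $1/p+1/\alpha\le 1/\lambda$ is only an upper bound, so $p=\alpha=\infty$ is admissible for every $\lambda<\infty$, and your case split as written does not cover it. The repair is one line. If $p=\alpha=\infty$, then $w\in L^\infty_\loc\subset L^\lambda_\loc$, and the exponents $p'=\lambda$, $\alpha=\infty$ still satisfy $1/p'+1/\alpha\le 1/\lambda$. So your $p<\infty$ branch applies with $p$ replaced by $\lambda$; that branch works for every $\alpha\in[1,\infty]$ via your direct argument.

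\textbf{A sign error.} The second term is $-(w\,\div b)*\rho_\eps\to -w\,\div b$, so the first term must converge to $+w\,\div b$, not $-w\,\div b$. It does: with your own limit $-Db\,z$ of the difference quotient, one gets $-w\int Db\,z\cdot\nabla\rho\,dz=w\,\div b$. Your displayed identity drops this minus sign. Neither slip damages the rest. Your direct observation that $C_\eps(b,w)\to 0$ for smooth $w$ involves no sign, and your $p=\infty$ branch uses the correct $-Db\,z$.

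\textbf{Comparison with the paper.} Both proofs rest on the same uniform bound; your Step 1 is the paper's (\ref{eq:A-estimate}).
\begin{itemize}
\item The paper smooths $b$ and $w$ simultaneously and splits the first term into $D+E+F$. It handles $D$ through the doubly smooth case via an integration by parts. It then has to order the limits in $\delta,\gamma,\eps$ according to which exponent is infinite, and redefines $E$ and $F$ when $\alpha=\infty$.
\item You smooth only one factor, chosen according to which exponent is finite, and pick the dense class so that convergence is immediate. For smooth $w$, both $(b\cdot\nabla w)*\rho_\eps$ and $b\cdot\nabla(w*\rho_\eps)$ tend to $b\cdot\nabla w$ in $L^\lambda_\loc$, using only $b\in L^\alpha_\loc\subset L^\lambda_\loc$. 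For smooth $b$ and bounded $w$, a first-order Taylor expansion plus continuity of translations in $L^\lambda$ suffices.
\end{itemize}
This avoids the interleaving of three limits entirely and is the cleaner, more standard argument. The paper's version needs only one convergence input (the case where both factors are smooth), but pays for it with that bookkeeping.
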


\begin{proof}
	This proof is from \cite{DL89}, p.~516, Lemma II.1. We first write out the convolution explicitly.
	\begin{align}
		(b \cdot \nabla w) * \rho_\eps- b \cdot \nabla(w * \rho_\eps)
		 & = -\int_{\R^d}w(y) \div_{y}(b(y) \rho_\eps(x - y))
		+ b(x) \cdot \nabla \rho_\eps(x - y) w(y) \, dy \notag                \\
		 & = \int_{\R^d}w(y) (b(y) - b(x)) \cdot \nabla \rho_\eps(x - y) \,dy
		- (w \div b) * \rho_\eps
		\label{eq:convolution-explicitly}
	\end{align}
	Since $\lambda < \infty$, in view of the assumed integrability on $w$ and $\nabla b$,
	we have $(w \div b) * \rho_\eps \to w \div b$ in $L^{\lambda}_\loc(\R^d)$.
	Thus, our goal is to show that the first term also converges to $w \div b$ in
	$L^{\lambda}_\loc(\R^d)$.

	\paragraph{Step 1: The smooth case.}

	For now, assume that in addition $b \in C^1(\R^d; \R^d)$
	and $w \in C^1(\R^d)$. Then an integration by parts yields
	\begin{align*}
		\int_{\R^d}
		 & w(y) (b(y) - b(x)) \cdot \nabla \rho_\eps(x - y) \,dy                \\
		 & = \int_{\R^d}\nabla w(y) \cdot (b(y) - b(x)) \,
		\rho_\eps(x - y) \,dy + \int_{\R^d}w(y) \div b(y) \rho_\eps(x - y) \,dy \\
		 & = (\nabla w \cdot b) * \rho_\eps
		- (\nabla w * \rho_\eps) \cdot b + (w \div b) * \rho_\eps\,.
	\end{align*}
	The first and third term converge to $\nabla w \cdot b$ and $w \div b$
	in $L^{\lambda}_\loc(\R^d)$, respectively. To see that the
	second term converges to $\nabla w \cdot b$, note that $b$ and $w$ belong to $C^1$,
	hence $\nabla w \in L^{\lambda}_\loc(\R^d)$ and
	$b \in L^\infty_\loc(\R^d)$.

	\paragraph{Step 2: Estimating the first term in (\ref{eq:convolution-explicitly}).}

	To reduce to the smooth case, we will need an estimate of
	\begin{align*}
		A \colonequals \norm*{
			\int_{\R^d} w(y) (b(y) - b(x)) \cdot \nabla \rho_\eps(x - y) \,dy
		}_{L^\lambda_x(B_R)} \,.
	\end{align*}
	By changing variables $z = (y - x)/\eps$ in integral over $y$,
	\begin{align*}
		A = \eps^d \, \norm*{
			\int_{\R^d} w(x + \eps z) (b(x + \eps z) - b(x)) \cdot \nabla \rho_\eps(\eps z) \,dz
		}_{L^\lambda_x(B_R)} \,.
	\end{align*}
	By the definition of $\rho_\eps$ in (\ref{eq:rho-eps}), $\supp \nabla \rho_\eps\subset B_{\eps C_\rho}$
	for some $C_{\rho}> 0$. Thus the integrand is nonzero only for $z \in B_{C_\rho}$. Moreover
	$|\nabla \rho_\eps| \le \tilde C_\rho \eps^{-d - 1}$, hence
	\begin{align*}
		A \le \frac{\tilde C_\rho}{\eps} \, \norm*{
			\int_{B_{C_\rho}} |w(x + \eps z)| |b(x + \eps z) - b(x)| \,dz
		}_{L^\lambda_x(B_R)} \,.
	\end{align*}
	By Minkowski's inequality and Hölder's inequality, and since $1/\lambda \ge 1/\alpha + 1/p$,
	\begin{align*}
		A & \le \frac{\tilde C_\rho}{\eps} \int_{B_{C_\rho}}
		\big\|{
		|w(x + \eps z)| |b(x + \eps z) - b(x)|
		}\big\|_{L^\lambda_x(B_R)} \,dz                             \\
		  & \le \frac{C_{\rho,\lambda,R,d}}{\eps} \int_{B_{C_\rho}}
		\norm*{w(x + \eps z)}_{L^p_x(B_R)} \,
		\norm*{b(x + \eps z) - b(x)}_{L^\alpha_x(B_R)} \,dz
		\,.
	\end{align*}
	For small enough $\eps$, we have $\eps C_\rho < 1$, hence
	\begin{align}
		A \le \frac{C_{\rho,\lambda,R,d}}{\eps} \, \norm*{w}_{L^p(B_{R + 1})}
		\int_{B_{C_\rho}} \norm*{b(x + \eps z) - b(x)}_{L^\alpha_x(B_R)} \,dz \,.
		\label{eq:A-w-int-b-diff-estimate}
	\end{align}
	Next, we use the fact that for every $h \in \R^d$
	\begin{align}
		\norm{b(\cdot + h) - b}_{L^\alpha(B_R)}\le |h| \norm{\nabla b}_{L^\alpha(B_{R + 2|h|})}\,.
		\label{eq:finite-differences}
	\end{align}
	Combining (\ref{eq:A-w-int-b-diff-estimate}) with (\ref{eq:finite-differences}) yields
	\begin{align}
		A \le C \, \|w\|_{L^p(B_{R + 1})}\|\nabla b\|_{L^\alpha(B_{R + 1 + C_\rho})}\,.
		\label{eq:A-estimate}
	\end{align}
	for some $C$ not depending on $w$ and $b$.

	\paragraph*{Step 3: The general case.}

	Let $b_{\delta}\colonequals b * \rho_{\delta}, \, w_{\gamma}\colonequals w * \rho_{\gamma}$
	for some $\delta, \gamma > 0$ and write
	\begin{align}
		\int_{\R^d}w(y) (b(y) - b(x)) \cdot \nabla \rho_\eps
		  & (x - y) \,dy = D + E + F   ,\quad \text{where}                                      \\
		D & \colonequals \int_{\R^d}w_{\gamma}(y)
		(b_{\delta}(y) - b_{\delta}(x)) \cdot \nabla \rho_\eps(x - y) \,dy \notag               \\
		E & \colonequals \int_{\R^d}w(y)
		(b(y) - b_{\delta}(y) - b(x) + b_{\delta}(x)) \cdot \nabla \rho_\eps(x - y) \,dy \notag \\
		F & \colonequals \int_{\R^d}
		(w(y) - w_{\gamma}(y))(b_{\delta}(y) - b_{\delta}(x)) \cdot \nabla \rho_\eps(x - y)\,dy \,. \notag
	\end{align}
	Let $R > 0$. We first consider the case $p <\infty$ and $\alpha < \infty$. Then
	$E \to 0$ as $\delta \to 0$ and $F \to 0$ as $\gamma \to 0$ in
	$L^{\lambda}(B_R)$ by (\ref{eq:A-estimate}). Moreover, both converge
	uniformly in $\eps$ and $F$ converges uniformly in $\delta$.
	By writing
	\begin{align}
		 & \norm*{
			\int_{\R^d} w(y) (b(y) - b(x)) \cdot \nabla \rho_\eps (x - y) \,dy - w \div b\,
		}_{L^\lambda(B_R)}\label{eq:final-diff} \\
		 & \hspace{2.4em}\le \norm*{
			w_\gamma \div b_\delta - w \div b}_{L^\lambda(B_R)}+ \norm*{D - w_\gamma \div b_\delta
		}_{L^\lambda(B_R)}
		+ \|E\|_{L^\lambda(B_R)}+ \|F\|_{L^\lambda(B_R)} \,,
		\notag
	\end{align}
	we may finish the argument in the case $p < \infty, \, \alpha < \infty$
	as follows. Choose $\delta$ and $\gamma$ such that
	$\norm*{w_\gamma \div b_\delta - w \div b}_{L^\lambda(B_R)}$ is small
	and $\|E\|_{L^\lambda(B_R)}$, $\|F\|_{L^\lambda(B_R)}$ are small. By
	Step 1, we may then choose $\eps$ such that
	$\norm*{D - w_\gamma \div b_\delta}_{L^\lambda(B_R)}$ is small.

	Next, we consider the case $p = \infty$. By (\ref{eq:A-estimate}),
	applied with the $\lambda$-norm for $w - w_{\gamma}$ and the $\infty$-norm
	for $b_{\delta}$,
	\begin{align}
		\|F\|_{L^\lambda(B_R)}
		\le C\|w - w_{\gamma}\|_{L^\lambda(B_R)}\|\nabla b_{\delta}\|_{L^\infty(B_R)}\,.
		\label{eq:F-lambda-infty}
	\end{align}
	Thus still $F \to 0$ as $\gamma \to 0$ in $L^{\lambda}(B_R)$, but no
	longer uniformly in $\delta$. Hence, $\gamma$ now has to be chosen after
	$\delta$. This is not a problem for $E$, since $E$ is independent of
	$\gamma$. To see that it is also not a problem for the first term on the right
	hand side of (\ref{eq:final-diff}), note that
	\begin{align*}
		\|w_{\gamma}\div b_{\delta}- w \div b\|_{L^\lambda(B_R)}
		\le \|w_{\gamma}(\div b_{\delta}- \div b)\|_{L^\lambda(B_R)}
		+ \|(w_{\gamma}- w) \div b \, \|_{L^\lambda(B_R)}\,.
	\end{align*}
	Since $\div b_{\delta}\to \div b$ in $L^{\lambda}(B_R)$ and
	$w_{\gamma}$ is bounded in $L^\infty(B_R)$, the first term converges
	to 0 as $\delta \to 0$, uniformly in $\gamma$. The second term converges to
	0 since $w_{\gamma}$ is bounded in $L^\infty(B_R)$ and $w_{\gamma} \to w$
	almost everywhere.

	The last step of the proof in the case $\alpha = \infty$ works
	symmetrically to $p = \infty$. Now, $\delta$ has to be chosen after
	$\gamma$. Note that for (\ref{eq:F-lambda-infty}), the smoothness of
	$b_{\delta}$ was important. Thus the definitions of $E$ and $F$ must be adapted:
	$w$ is replaced by $w_{\gamma}$ in $E$, and $b_{\delta}$ is replaced
	by $b$ in $F$. Then, one obtains an estimate for $E$ similar to (\ref{eq:F-lambda-infty}).
\end{proof}

\begin{remark}
	Lemma \ref{lemma:commutator} yields the following analogous result for time-dependent
	vector fields. If
	$b \in L^1(0, T; W^{1,\alpha}_\loc(\R^d; \R^d))$
	and $w \in L^\infty(0, T; L^p_\loc(\R^d))$, then
	\begin{align}
		(b \cdot \nabla w) * \rho_\eps - b \cdot \nabla (w * \rho_\eps) \to 0 \qquad
		\text{in $L^1(0, T; L^{\lambda}_\loc(\R^d))$}
		\label{eq:commutator-time-dependent}
	\end{align}
	as $\eps \to 0$. As always, the convolution applies only to the spatial variables.

	To prove (\ref{eq:commutator-time-dependent}), let $R > 0$.
	By Lemma \ref{lemma:commutator},
	\begin{align*}
		(b(t) \cdot \nabla w(t)) * \rho_\eps- b(t) \cdot \nabla (w(t) * \rho_\eps) \to 0
		\qquad \text{in $L^{\lambda}(B_R)$}
	\end{align*}
	for almost every $t \in [0, T]$. Moreover, by (\ref{eq:convolution-explicitly})
	and (\ref{eq:A-estimate}),
	\begin{align*}
		\|(b(t) \cdot \nabla w(t))
		 & * \rho_\eps-b(t) \cdot \nabla (w(t) * \rho_\eps)\|_{L^\lambda(B_R)}          \\
		 & \le C\|w(t)\|_{L^p(B_{R + 1})}\|\nabla b(t)\|_{L^\alpha(B_{R + 1 + C_\rho})}
		+ \|w(t) \div b(t)\|_{L^\lambda(B_{R + 1})}
	\end{align*}
	for almost every $t \in [0, T]$. Thus we conclude (\ref{eq:commutator-time-dependent})
	by the dominated convergence theorem, since the right-hand side is independent of $\eps$ and
	in $L^1([0, T])$.
	\label{remark:commutator-time}
\end{remark}

\begin{proof}[Proof of Lemma \ref{lemma:regularization}]
	Let $\tilde \rho_\eps(x) = \rho_\eps(-x)$ and $\phi \in C^\infty_c([0, T) \times \R^d)$.
	Note that $\phi * \tilde \rho_\eps$ is also in $C^\infty_c([0, T) \times \R^d)$.
	Using that $u$ satisfies (\ref{eq:transport-weak}), we compute
	\begin{align}
		-\iTR u_\eps \del_t \phi \,dx \,dt
		 & = -\iTR u \, \del_t \phi * \tilde \rho_\eps \,dx \,dt
		\label{eq:u-eps-convolution}                                \\
		 & = \int_{\R^d} u^0 \phi(0, \cdot) * \tilde \rho_\eps \,dx
		-\!\iTR u \div (b (\phi * \tilde \rho_\eps)) + uc (\phi * \tilde \rho_\eps) \,dx \,dt \,. \notag
	\end{align}
	By definition, the integral of $-u \div (b(\phi * \tilde \rho_\eps))$ is
	the distribution $(b \cdot \nabla u) * \rho_\eps$ evaluated at $\phi$.
	Thus, by Lemma \ref{lemma:commutator} and Remark \ref{remark:commutator-time}
	\begin{align*}
		-\iTR u \div (b(\phi * \tilde \rho_\eps)) \,dx \,dt
		= \iTR (b \cdot \nabla u_\eps + s_\eps) \phi \,dx \,dt
	\end{align*}
	where $s_\eps \to 0$ in $L^1(0, T; L^\lambda_\loc(\R^d))$.
	Substituting this into (\ref{eq:u-eps-convolution}), we obtain
	\begin{align*}
		-\iTR u_\eps \del_t \phi & \,dx \,dt
		= \int_{\R^d} u^0 * \rho_\eps \, \phi(0, x) \,dx                                   \\
		                         & + \iTR (b \cdot \nabla u_\eps - cu_\eps) \phi \,dx \,dt
		+ \iTR (s_\eps + c u_\eps - (cu) * \rho_\eps) \phi \,dx \,dt \,.
	\end{align*}
	Because $(cu) * \rho_\eps- cu_\eps \to 0$ in $L^1(0, T; L^\lambda_\loc(\R^d))$,
	we may set $r_\eps \colonequals s_\eps + c u_\eps - (cu) * \rho_\eps$,
	which completes the proof.
\end{proof}

\begin{remark}
	A corresponding result also holds for the transport equation with
	a right-hand side.
	We keep the notation and assumptions of Lemma \ref{lemma:regularization}.
	If $f \in L^1(0, T; L^\lambda_\loc(\R^d))$ and $u$ satisfies (\ref{eq:transport})
	with $f$ on the right-hand side, then also
	\begin{align*}
		\del_t u_\eps - b \cdot \nabla u_\eps + c u_\eps = f + r_\eps,
		\qquad u_\eps|_{t = 0} = u^0 * \rho_\eps
	\end{align*}
	for some $r_\eps \to 0$ in $L^1(0, T; L^\lambda_\loc(\R^d))$.
	This is a direct consequence of the above proof, because $\lambda < \infty$, hence
	$f * \rho_\eps \to f$ in $L^1(0, T; L^\lambda_\loc(\R^d))$.
	\label{remark:regularization-rhs}
\end{remark}

The regularization result, Lemma \ref{lemma:regularization}, allows us
to show that the chain rule holds for solutions of the transport equation,
in the following sense.

\begin{lemma}
	Let $1 \le p \le \infty, \ 1/p + 1/q = 1$. Assume
	\begin{align}
		b \in L^1(0, T; W^{1, q}_\loc(\R^d; \R^d)),
		\qquad c \in L^1(0, T; L^q_\loc(\R^d))
		\label{eq:b-W1q-c-Lq}
	\end{align}
	and let $u \in L^\infty(0, T; L^p_\loc(\R^d))$ be a
	solution of (\ref{eq:transport}) for the initial data
	$u^0\in L^1_\loc(\R^d)$. If $\beta \in C^1(\R)$ and $\beta'$ is bounded, then $\beta(u)$ satisfies
	\begin{align}
		\del_t \beta(u) - b \cdot \nabla \beta(u) + cu\beta'(u) = 0,
		\qquad \beta(u)|_{t = 0}= \beta(u^0)
		\label{eq:chain-rule}
	\end{align}
	in the weak sense.
	\label{lemma:solutions-are-renormalized}
\end{lemma}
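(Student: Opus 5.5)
We apply Lemma \ref{lemma:regularization} with $\alpha = q$ and $\lambda = 1$. Then $1/p + 1/\alpha = 1 = 1/\lambda$, so the hypotheses of the lemma hold. (The lemma asks for $u^0 \in L^p$, but the initial datum enters its proof only through $u^0 * \rho_\eps$; for $u^0 \in L^1_\loc$ the same argument gives $u_\eps|_{t=0} = u^0 * \rho_\eps \to u^0$ in $L^1_\loc$.) Hence $u_\eps = u * \rho_\eps$ satisfies
\begin{align*}
	\del_t u_\eps - b \cdot \nabla u_\eps + c u_\eps = r_\eps, \qquad r_\eps \to 0 \text{ in } L^1(0,T; L^1_\loc(\R^d)).
\end{align*}
Since $u_\eps$ is smooth in $x$ and $\del_t u_\eps = b\cdot\nabla u_\eps - cu_\eps + r_\eps \in L^1_\loc$, $u_\eps(\cdot,x)$ is absolutely continuous in $t$ for a.e. $x$, with the initial value $u^0*\rho_\eps$ taken in the weak sense.

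We then apply the classical chain rule to $\beta(u_\eps)$, which is legitimate because $\beta \in C^1$ with $\beta'$ bounded. This gives
\begin{align*}
	\del_t \beta(u_\eps) - b\cdot \nabla \beta(u_\eps) + c u_\eps \beta'(u_\eps) = r_\eps \beta'(u_\eps), \qquad \beta(u_\eps)|_{t=0} = \beta(u^0*\rho_\eps).
\end{align*}
In weak form, for $\phi \in C^\infty_c([0,T)\times\R^d)$, we test against $\phi$ and integrate by parts in $t$ and $x$:
\begin{align*}
	-\iTR \beta(u_\eps)\del_t\phi \,dx\,dt - \int_{\R^d} \beta(u^0*\rho_\eps)\phi(0,\cdot)\,dx
	+ \iTR \beta(u_\eps)\big(b\cdot\nabla\phi + \div b\,\phi\big) + c u_\eps \beta'(u_\eps)\phi \,dx\,dt
	= \iTR r_\eps \beta'(u_\eps)\phi \,dx\,dt.
\end{align*}
To justify this rigorously, I would first verify the identity against test functions of the product form $\psi(t)\chi(x)$ using Fubini and absolute continuity in $t$, or equivalently mollify in time as well.

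Finally we pass to the limit $\eps\to 0$ term by term.
\begin{itemize}
	\item The right-hand side tends to $0$, because $|\beta'| \le \norm{\beta'}_\infty$ and $r_\eps\to0$ in $L^1_{t}L^1_\loc$.
	\item Along a subsequence, $u_\eps \to u$ a.e. and in $L^1(0,T;L^p_\loc)$ when $p<\infty$. For $p=\infty$ we still have convergence in $L^1(0,T;L^r_\loc)$ for every $r<\infty$, together with the uniform bound $\norm{u_\eps}_{L^\infty(B_R)} \le \norm{u}_{L^\infty(B_{R+1})}$.
	\item Since $|\beta(s)| \le |\beta(0)| + \norm{\beta'}_\infty|s|$, the function $\beta$ is Lipschitz. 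Hence $\beta(u_\eps) \to \beta(u)$ in the same sense, with $L^\infty_t L^p_\loc$ bounds.
	\item Combined with $b, \div b \in L^1_t L^q_\loc$, this handles the terms containing $b$ and $\div b$.
	\item For $p=\infty$ these terms are treated by dominated convergence, using a.e. convergence and the uniform bound.
	\item The initial term converges because $u^0*\rho_\eps\to u^0$ in $L^1_\loc$ and $\beta$ is Lipschitz.
\end{itemize}

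The main obstacle is the term $c\,u_\eps\beta'(u_\eps)\phi$. There $\beta'(u_\eps) \to \beta'(u)$ only a.e. along a subsequence, by continuity of $\beta'$, and not in any norm. I would handle it with a generalized dominated convergence (Vitali) argument. We have $u_\eps\to u$ in $L^1(0,T;L^p_\loc)$ (in the case $p=\infty$, bounded and a.e.) and $c\in L^1_tL^q_\loc$, so $c\,u_\eps \to c\,u$ in $L^1_\loc$; the products $c\,u_\eps$ are therefore dominated along a subsequence by an $L^1$ function up to small error. The factor $\beta'(u_\eps)$ is bounded and converges a.e. Writing
\begin{align*}
	c u_\eps\beta'(u_\eps) - cu\beta'(u) = c(u_\eps-u)\beta'(u_\eps) + cu\big(\beta'(u_\eps)-\beta'(u)\big),
\end{align*}
the first term tends to $0$ in $L^1_\loc$ by boundedness of $\beta'$. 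The second term tends to $0$ by dominated convergence with majorant $2\norm{\beta'}_\infty|cu|$. This yields \eqref{eq:chain-rule} in the weak sense.
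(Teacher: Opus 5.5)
Your proof is correct and follows essentially the same route as the paper: apply Lemma \ref{lemma:regularization} with $\alpha = q$, $\lambda = 1$, use the chain rule for $\beta(u_\eps)$, and pass to the limit, splitting the $c u_\eps \beta'(u_\eps)$ term exactly as the paper does. Your remarks on $u^0 \in L^1_\loc$ versus the $L^p$ hypothesis of the regularization lemma, and on recovering the initial trace through absolute continuity in $t$, are useful refinements that the paper leaves implicit; the Vitali aside is unnecessary, since the decomposition you then give already settles that term.
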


That (\ref{eq:chain-rule}) holds in the weak sense means
that for every $\phi \in C^\infty_c([0, T) \times \R^d)$
\begin{align}
	\begin{split}
		-\iTR & \beta(u) \del_t \phi \,dx \,dt
		-\int_{\R^d}\beta(u^0) \phi(0, x) \,dx                            \\
		      & \qquad + \iTR \beta(u)(\phi \div b + b \cdot \nabla \phi)
		+ c u \beta'(u) \phi \,dx \,dt = 0 \,.
	\end{split}
	\label{eq:beta-weak}
\end{align}

\begin{proof}
	By Lemma \ref{lemma:regularization}, $u_\eps= u * \rho_\eps$
	satisfies
	\begin{align*}
		\del_tu_\eps- b \cdot \nabla u_\eps+ cu_\eps= r_\eps, \qquad u_\eps|_{t = 0}= u_\eps^0\,,
	\end{align*}
	where $u^0_\eps= u^0* \rho_\eps$ and $r_\eps\to 0$ in
	$L^1_\loc([0, T] \times \R^d)$. Since $u_\eps\in W^{1, 1}_\loc((0, T) \times \R^d)$
	and $\beta'$ is bounded, by the chain rule for Sobolev functions
	\begin{align*}
		\del_t \beta(u_\eps) - b \cdot \nabla \beta(u_\eps) + cu_\eps\beta'(u_\eps)
		= r_\eps\beta'(u_\eps),
		\qquad \beta(u_\eps)|_{t = 0} = \beta(u_\eps^0) \,.
	\end{align*}
	Testing against $\phi \in C^\infty_c([0, T) \times \R^d)$ yields
	\begin{align}
		\begin{split}
			-\iTR & \beta(u_\eps) \del_t \phi \,dx \,dt
			-\int_{\R^d}\beta(u_\eps^0) \phi(0, x) \,dx                     \\
			      & + \iTR \beta(u_\eps)(\phi \div b + b \cdot \nabla \phi)
			+ c u_\eps\beta'(u_\eps) \phi \,dx \,dt = \iTR r_\eps\beta'(u_\eps)\phi \,dx \,dt \,.
		\end{split}
		\label{eq:renormalized-u-eps}
	\end{align}
	We let $\eps \to 0$ in this equation with the aim of obtaining (\ref{eq:beta-weak}). Since
	$\beta'$ is bounded and $r_\eps \to 0$ in $L^1_\loc([0,T] \times \R^d)$, the right-hand
	side converges to 0 as $\eps \to 0$. For the $cu_\eps \beta'(u_\eps)$ term, we want to show that
	\begin{align}
		\iTR cu_\eps \beta'(u_\eps) \phi \,dx \,dt \ \to \
		\iTR cu \beta'(u) \phi \,dx \,dt  \qquad \text{as $\eps \to 0$.}
		\label{eq:cu-desired-conv}
	\end{align}
	In the case $p < \infty$, we write
	\begin{align*}
		cu_\eps \beta'(u_\eps) - cu\beta'(u)
		= c(u_\eps - u)\beta'(u_\eps) + cu(\beta'(u_\eps) - \beta'(u)) \,.
	\end{align*}
	Since $u_\eps(t) \to u(t)$ in $L^p_\loc(\R^d)$ for a.e. $t \in (0, T)$ and $\beta'$ is
	bounded, $\int_0^T\int_{\R^d} c(u_\eps - u)\beta'(u_\eps) \,dx\,dt$ converges to 0 as $\eps \to 0$.
	Since $u_\eps \to u$ pointwise almost everywhere, also
	$\int_0^T\int_{\R^d} cu(\beta'(u_\eps) - \beta'(u)) \,dx \,dt$ converges to 0 as $\eps \to 0$.
	In the case $p = \infty$, we conclude (\ref{eq:cu-desired-conv}) directly from dominated convergence.

	For the remaining terms in (\ref{eq:renormalized-u-eps}), if $p < \infty$,
	we use that $\beta(u_\eps) \to \beta(u)$ in $L^p_\loc(\R^d)$ since $\beta'$ is bounded. If
	$p = \infty$, we apply the dominated convergence
	theorem, using that $u_\eps \to u$ almost everywhere on $[0, T] \times \R^d$.
\end{proof}

The same proof shows a similar result for the transport equation with
a right-hand side $f \in L^1(0, T; L^1_\loc(\R^d))$. In this case, there is a
term $f\beta'(u)$ on the right-hand side in (\ref{eq:chain-rule}).

\subsection{Renormalized solutions}

We introduce here a new notion of a solution for the transport equation
(\ref{eq:transport}), motivated by formula (\ref{eq:chain-rule}). First,
we set up some notation. If $A \subset \R^d$ is measurable, we denote by $L(A)$
the set of measurable functions $A \to \R$. Let $f_n \in L(A)$  for $n \in \N$ and $f \in L(A)$.
We say that $f_n \to f$ in $L(A)$
if $f_n|_K$ converges to $f|_K$ in measure for every compact $K \subset A$. Using this notion
of convergence, we also define the space $C([0, T]; L(A))$.

\begin{definition}[Admissible function]
	We call $\beta \in C^1(\R)$ an admissible function, if $\beta$ and $\beta' \cdot (1 + |x|)$ are
	bounded on $\R$.
\end{definition}

\begin{definition}[Renormalized solution]
	Let	$b \in L^1_\loc([0, T] \times \R^d; \R^d)$ and
	$c, \ \div b \in L^1_\loc([0, T] \times \R^d)$.
	Let $u^0\in L(\R^d)$ and $u \in L([0, T] \times \R^d)$.
	If, for every admissible function $\beta$,
	\begin{align}
		\del_t \beta(u) - b \cdot \nabla \beta(u) + cu\beta'(u) = 0, \qquad
		\beta(u)|_{t = 0}= \beta(u^0)
		\label{eq:def-renormalized}
	\end{align}
	holds in the weak sense, then we call $u$ a renormalized solution of (\ref{eq:transport})
	corresponding to the initial condition $u^0$.
	\label{def:renormalized}
\end{definition}

Definition \ref{def:renormalized} provides a notion of a solution for
very general initial data $u^0 \in L(\R^d)$, for which the weak definition (\ref{eq:transport-weak})
is not well defined. This will become important
in the application to ordinary differential equations.
Moreover, renormalized solutions have several
features of smooth solutions, such as the conservation
of $L^p$-norms in the case $\div b = 0$ and $c = 0$ (see Proposition \ref{prop:continuity}).

The following result establishes the relation between weak and renormalized solutions.
It corresponds to part 1) of Theorem II.3 in \cite{DL89}.

\begin{proposition}[Consistency]
	Let $1 \le p \le \infty, \ 1/p + 1/q = 1$. Let $u \in L^\infty(0, T; L^p_\loc(\R^d))$ and
	$u^0\in L^1_\loc(\R^d)$. Assume that $c, \, \div b \in L^1(0, T; L^q_\loc(\R^d))$.
	\begin{enumerate}
		\item If $b \in L^1(0, T; L^{q}_\loc(\R^d; \R^d))$
		      and $u$ is a renormalized solution of (\ref{eq:transport}),
		      then $u$ is a weak solution.
		\item If $b \in L^1(0, T; W^{1, q}_\loc(\R^d; \R^d))$ and $u$ is
		      a weak solution of (\ref{eq:transport}), then $u$ is a renormalized solution.
	\end{enumerate}
	\label{prop:consistency}
\end{proposition}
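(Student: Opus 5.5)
Part 2) is essentially Lemma \ref{lemma:solutions-are-renormalized}. Under the assumption $b \in L^1(0,T;W^{1,q}_\loc)$ and $c \in L^1(0,T;L^q_\loc)$ (note $\div b \in L^1(0,T;L^q_\loc)$ is then automatic), every weak solution $u \in L^\infty(0,T;L^p_\loc)$ satisfies (\ref{eq:chain-rule}) for every $\beta \in C^1(\R)$ with $\beta'$ bounded. Every admissible function has bounded derivative, since $|\beta'| \le |\beta'|(1+|x|)$ is bounded. Hence (\ref{eq:def-renormalized}) holds for every admissible $\beta$, and $u$ is renormalized. One small point to check: the renormalized definition requires $b, c, \div b \in L^1_\loc$. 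This follows from the $L^1_tL^q_{x,\loc}$ assumptions, since $q \ge 1$.

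For part 1) I will approximate the identity by admissible functions. Choose $\beta_M(s) \colonequals M\beta(s/M)$, where $\beta \in C^1(\R)$ is bounded, odd, satisfies $\beta(s) = s$ for $|s| \le 1$, and is constant for $|s| \ge 2$. Then $\beta_M$ is admissible, because $\beta_M$ is bounded and $\beta_M'$ has compact support. Moreover $|\beta_M(s)| \le C|s|$, $|\beta_M'| \le C$, $\beta_M(s) \to s$ and $\beta_M'(s) \to 1$ pointwise as $M \to \infty$. Writing (\ref{eq:beta-weak}) for $\beta_M$ and a fixed test function $\phi$ gives
\begin{align*}
	-\iTR \beta_M(u) \del_t\phi \,dx\,dt - \int_{\R^d}\beta_M(u^0)\phi(0,x)\,dx
	+ \iTR \beta_M(u)(\phi\div b + b\cdot\nabla\phi) + cu\beta_M'(u)\phi \,dx\,dt = 0 \,.
\end{align*}
I then pass to the limit $M \to \infty$ term by term with dominated convergence on $\supp\phi$. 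The dominating functions are $C|u||\del_t\phi|$, $C|u^0||\phi(0)|$, $C|u|(|\div b|+|c|)|\phi|$ and $C|u||b||\nabla\phi|$. Each is integrable by Hölder in $x$ ($u(t) \in L^p$, the coefficients in $L^q$ on a compact set), followed by $L^\infty$ times $L^1$ in time. The initial term is integrable because $u^0 \in L^1_\loc$. The limit is exactly (\ref{eq:transport-weak}).

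The main obstacle is only bookkeeping. The case $p = 1$, $q = \infty$ poses no extra issue. The renormalized formulation (\ref{eq:def-renormalized}) contains $\beta(u)\div b$ through the weak form (\ref{eq:beta-weak}), so the assumption on $\div b$ is what makes the dominating function integrable. No regularization argument is needed in part 1), since all the difficulty was already absorbed into Lemma \ref{lemma:solutions-are-renormalized} for part 2).
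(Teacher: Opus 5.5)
Your proposal is correct and follows the paper's proof. Part 2) comes from Lemma \ref{lemma:solutions-are-renormalized}, and your check that admissible functions have bounded derivative is what makes that lemma applicable. Part 1) approximates the identity by admissible functions and uses dominated convergence with the majorant $|u|(|\phi \div b| + |b||\nabla\phi| + |c\phi|)$. The only difference is cosmetic: the paper takes $\beta_n(\sigma) = n\tanh(\sigma/n)$, while you use the truncation $M\beta(s/M)$, which satisfies the same bounds up to a constant.
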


\begin{proof}
	Part 2) is a consequence of Lemma \ref{lemma:solutions-are-renormalized}.
	For part 1), we choose a sequence of admissible functions $\beta_n$
	such that $\beta_n\to \id_{\R}$, $\beta_n' \to 1$ pointwise, $|\beta_n(\sigma)| \le |\sigma|$ and
	$|\beta_n'(\sigma)| \le 1$ for every $\sigma \in \R$. For example, one may take
	$\beta_n(\sigma) = n \tanh(\sigma/n)$. By (\ref{eq:def-renormalized}),
	for every $\phi \in C^\infty_c([0, T) \times \R^d)$
	\begin{align*}
		-\iTR \beta_n(u)
		 & \del_t\phi \,dx \,dt - \int_{\R^d}\beta_n(u^0) \phi(0, x) \,dx \\
		 & \qquad + \iTR \beta_n(u) (\phi \div b + b \cdot \nabla \phi
		+ c u \beta_n'(u) \phi) \,dx \,dt = 0 \,.
	\end{align*}
	We let $n \to \infty$ and use the dominated convergence theorem to conclude (\ref{eq:transport-weak}).
	For the third integral, $|u|(|\phi \div	b| + |b| |\nabla \phi| + |\phi c|)$ is a majorant, since
	$|\beta_n(u)| \le |u|$ and $|\beta_n'(u)| \le 1$. The majorant is integrable,
	because	$b \in L^1(0, T; L^{q}_\loc(\R^d; \R^d))$ and $c, \, \div b \in L^1(0, T; L^q_\loc(\R^d))$.
\end{proof}

\begin{remark}
	Similarly, one can define renormalized solutions for the transport equation
	with a right-hand side $f \in L^1_\loc([0, T] \times \R^d)$. In this case,
	we require that
	\begin{align*}
		\del_t \beta(u) - b \cdot \nabla \beta(u) + cu\beta'(u) = f\beta'(u),
		\qquad \beta(u)|_{t = 0} = \beta(u^0)
	\end{align*}
	for every admissible function $\beta$. Proposition \ref{prop:consistency} still holds
	for $f \in L^1_\loc([0, T] \times \R^d)$ by the same proof.
	\label{remark:renormalized-rhs}
\end{remark}

\subsection{Uniqueness}
\label{sec:uniqueness}

In this section, we prove the uniqueness of renormalized and weak
solutions in $L^\infty(0, T; L^p(\R^d))$ for $1 \le p \le \infty$.
For all uniqueness results,
we will assume that $b$ satisfies the following growth condition:
\begin{align}
	\frac{|b|}{1 + |x|}\in L^1(0, T; L^1(\R^d)) + L^1(0, T; L^\infty(\R^d))
	\tag{$\star$}
	\label{eq:b-growth-condition}
\end{align}

\subsubsection{The case $p < \infty$}

The following Proposition collects several
properties of renormalized solutions. Uniqueness for $p < \infty$
will follow immediately from it.

\begin{proposition}
	Let	$b \in L^1_\loc([0, T] \times \R^d; \R^d)$ and $c, \ \div b \in L^1_\loc([0, T] \times \R^d)$.
	Let $1 \le p \le \infty$, let $f \in L^1(0, T; L^p(\R^d))$ and let $u \in L^\infty(0, T; L^p(\R^d))$
	be a renormalized solution of
	\begin{align*}
		\del_t u - b \cdot \nabla u + cu = f, \qquad u|_{t = 0} = u^0,
	\end{align*}
	where $u^0 \in L^p(\R^d)$.
	\begin{enumerate}
		\item If $p = \infty$, then $u \in C([0, T]; L^r_\loc(\R^d))$ for every $1 \le r < \infty$.
		\item Assume $p < \infty$, (\ref{eq:b-growth-condition}), and that
		      $c, \, \div b \in L^1(0, T; L^\infty(\R^d))$.
		      Then $u \in C([0, T]; L^p(\R^d))$ and for a.e. $t \in (0, T)$
		      \begin{align}
			      \frac{d}{dt} \int_{\R^d} |u|^p \,dx =
			      -\int_{\R^d} (\div b + pc)|u|^p \,dx
			      + p \int_{\R^d} f |u|^{p - 2}u \,dx \,.
			      \label{eq:ddt-Lp-norm}
		      \end{align}
		      Furthermore, for every admissible $\beta$ such that $0 \notin \supp \beta$,
		      \begin{align}
			      \frac{d}{dt} \int_{\R^d} \beta(u) \,dx =
			      -\int_{\R^d} (\beta(u)\div b + cu\beta'(u)) \,dx
			      + \int_{\R^d} f \beta'(u) \,dx
			      \label{eq:ddt-int-beta}
		      \end{align}
		      for a.e. $t \in (0, T)$.
	\end{enumerate}
	\label{prop:continuity}
\end{proposition}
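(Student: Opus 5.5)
The plan is to work throughout with the renormalized equation for suitable admissible $\beta$ and to extract the statements from its weak form. Test functions of the form $\phi(t,x)=\chi(t)\psi(x)$ with $\chi\in C^\infty_c([0,T))$ and $\psi\in C^\infty_c(\R^d)$ will be used.

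\emph{Part 1).} Here $u$ is bounded, so we fix an admissible $\beta$ with $\beta(\sigma)=\sigma$ on $[-\|u\|_\infty,\|u\|_\infty]$ (modified outside so that it is admissible). Then $\beta(u)=u$, and the renormalized equation shows that $\del_t u$ equals, in $\mathcal D'$, the expression $-\div(bu)+u\div b - cu + f$ (using $\beta'(u)=1$). Testing with $\psi\in C^\infty_c(\R^d)$, we obtain that $t\mapsto\int u(t)\psi\,dx$ agrees a.e. with an absolutely continuous function, because the right-hand side paired with $\psi$ lies in $L^1(0,T)$ by the local integrability assumptions. Hence $u$ has a representative in $C([0,T];L^\infty\text{-weak}^*)$, and in particular in $C([0,T];L^r_\loc)$ with the weak topology, and the initial trace is $u^0$.

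To upgrade to strong continuity, we apply the same argument to $\beta(u)=u^2$ (again admissible after truncation, since $u$ is bounded). This gives weak continuity of $t\mapsto u(t)^2$. Weak continuity of $u(t)$ and of $u(t)^2$ on compacts yields $\|u(t_n)\|_{L^2(B_R)}\to\|u(t)\|_{L^2(B_R)}$ after localizing with $\psi$, and therefore strong $L^2_\loc$ convergence. Interpolation with the $L^\infty$ bound then gives $L^r_\loc$ convergence for all $r<\infty$.

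One subtlety has to be handled: the weakly continuous representatives of $u$ and of $u^2$ must agree with the square relation at every $t$, not only for a.e.\ $t$. I would handle this by showing that the weakly continuous representative $\tilde u$ satisfies $\tilde u(t)^2\le \widetilde{u^2}(t)$ by weak lower semicontinuity. I would then run the argument in both time directions, using time reversal of the equation, to obtain equality. This is the step I expect to be the main obstacle.

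\emph{Part 2).} We use a cutoff $\psi=\eta_R$ from (\ref{eq:eta-R}) and admissible $\beta$ vanishing near $0$. For such $\beta$ we have $|\beta(u)|\le C|u|^p$, $|u\beta'(u)|\le C|u|^p$ and $|\beta'(u)|\le C|u|^{p-1}$, all integrable. Testing the renormalized equation with $\chi(t)\eta_R(x)$ gives
\begin{align*}
\frac{d}{dt}\int \beta(u)\eta_R\,dx = -\int \beta(u)\big(\eta_R\div b + b\cdot\nabla\eta_R\big) + cu\beta'(u)\eta_R\,dx + \int f\beta'(u)\eta_R\,dx .
\end{align*}
The term involving $b\cdot\nabla\eta_R$ is controlled by the growth condition (\ref{eq:b-growth-condition}). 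Since $|\nabla\eta_R|\le C/(1+|x|)$ on $\{R\le|x|\le2R\}$, splitting $b/(1+|x|)=b_1+b_2$ with $b_1\in L^1_tL^1_x$ and $b_2\in L^1_tL^\infty_x$ bounds this term by $\int_0^T\!\int_{|x|>R}|\beta(u)|(|b_1|+|b_2|)$. This tends to $0$ as $R\to\infty$ by dominated convergence, because $\beta$ is bounded, which handles $b_1$, and $\beta(u)\in L^\infty_tL^1_x$, which handles $b_2$. Letting $R\to\infty$ in the integrated-in-time identity gives (\ref{eq:ddt-int-beta}).

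Next, we choose $\beta_n$ vanishing near $0$ and increasing to $|\sigma|^p$ in a truncated sense, e.g.\ $\beta_n(\sigma)=\min(|\sigma|^p,n)$ smoothed and cut off near $0$ at level $1/n$. Passing to the limit by dominated and monotone convergence, using $c,\div b\in L^1_tL^\infty_x$ and $f\in L^1_tL^p_x$, yields (\ref{eq:ddt-Lp-norm}).

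For continuity in $C([0,T];L^p)$ we follow the pattern of part 1). The truncations $T_k(u)$ are bounded renormalized solutions (of an equation with modified right-hand side), so by part 1) they are continuous in $L^r_\loc$. The identity (\ref{eq:ddt-Lp-norm}) makes $t\mapsto\|u(t)\|_p$ continuous, and (\ref{eq:ddt-int-beta}) gives uniform-in-time tightness at infinity and small mass on $\{|u|>k\}$. Together these yield strong continuity in $L^p$.
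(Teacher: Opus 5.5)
Your route is the paper's: the $W^{1,1}$ bound on $t\mapsto\int\beta(u)\xi\,dx$ gives weak-$*$ continuity, applying it to $\beta^2$ gives continuity of local $L^2$ norms, and uniform convexity gives strong convergence. Your derivations of (\ref{eq:ddt-int-beta}) and (\ref{eq:ddt-Lp-norm}) (cutoff $\eta_R$, growth condition, $\beta_n\uparrow|\sigma|^p$) also match. You correctly isolate the one non-routine step, which the paper passes over when it writes $\beta(u(s))^2\rightharpoonup^*\beta(u(t))^2$: the weak-$*$ continuous representatives must satisfy $\widetilde{u^2}(t)=\tilde u(t)^2$ for \emph{every} $t$. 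Your fix does not work. Weak lower semicontinuity of $v\mapsto\int v^2\xi\,dx$ ($\xi\ge 0$) gives $\tilde u(t)^2\le\widetilde{u^2}(t)$. Approaching $t$ from the other side, or reversing time, yields the same inequality again, never the reverse one. The reverse inequality is genuinely new information. At $t=0$ the definition of a renormalized solution supplies it, since the datum of $\beta(u)$ is $\beta(u^0)$, but at interior times nothing in the hypotheses does.

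In fact, with $b$ merely in $L^1_\loc$, it can fail. Take a bounded divergence-free field of Depauw type that refines a $\pm1$ checkerboard at ever smaller scales, so that $u(t)\rightharpoonup^*0$ as $t\uparrow t_0$. Continue past $t_0$ with the time-reversed field and solution. On both sides $\beta(u)=\tfrac12(\beta(1)+\beta(-1))+\tfrac12(\beta(1)-\beta(-1))u$ solves the equation, and the one-sided weak-$*$ limits at $t_0$ coincide. So $u$ is a bounded renormalized solution with $c=f=0$, taking the initial time where the field is $BV$. Yet $|u|=1$ a.e.\ while $u(t)\rightharpoonup^*0$, so $u$ has no representative in $C([0,T];L^1_\loc(\R^d))$, and $\widetilde{u^2}(t_0)=1\ne0=\tilde u(t_0)^2$.

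The step does close under the extra hypothesis $b\in L^1(0,T;W^{1,1}_\loc(\R^d;\R^d))$. For $t_0<T$, $\tilde u$ restricted to $[t_0,T]$ is a bounded weak solution with datum $\tilde u(t_0)$. Lemma~\ref{lemma:solutions-are-renormalized} (in its version with right-hand side) then makes $\beta(\tilde u)$ a weak solution with datum $\beta(\tilde u(t_0))$, which forces $\widetilde{\beta(u)}(t_0)=\beta(\tilde u(t_0))$. Time reversal is needed only at $t_0=T$.

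The same gap affects your $C([0,T];L^p)$ claim in part 2, which rests on part 1 applied to truncations. Once one strictly increasing admissible $\beta(u)$ is locally strongly continuous, the paper's finish is also shorter than yours. It extracts a.e.\ convergent subsequences of $u(t_n)$ and combines them with continuity of $\|u(t)\|_{L^p}$ via Lemma~\ref{lemma:pointwise-ae-Lp}. This avoids uniform tightness at infinity, which (\ref{eq:ddt-int-beta}), an identity over all of $\R^d$, does not give directly.
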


The proof of Proposition \ref{prop:continuity} uses the following Lemma.

\begin{lemma}
	Let $(X, \mu)$ be a measure space and $1 \le p < \infty$. Let $f_n, f \in L^p(\mu)$ for $n \in \N$.
	If $f_n \to f$ $\mu$-a.e. and $\|f_n\|_{L^p(\mu)} \to \|f\|_{L^p(\mu)}$ as $n \to \infty$, then
	$f_n \to f$ in $L^p(\mu)$.
	\label{lemma:pointwise-ae-Lp}
\end{lemma}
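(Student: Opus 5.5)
The plan is to prove this Brezis--Lieb/Riesz-type statement through Fatou's lemma applied to a nonnegative auxiliary sequence. The pointwise inequality needed is convexity of $s \mapsto |s|^p$ for $1 \le p < \infty$. For all $a, b \in \R$,
\begin{align*}
	|a - b|^p \le \big(|a| + |b|\big)^p \le 2^{p-1}\big(|a|^p + |b|^p\big) \,.
\end{align*}
Accordingly, set
\begin{align*}
	g_n \colonequals 2^{p-1}\big(|f_n|^p + |f|^p\big) - |f_n - f|^p \ \ge\ 0 \,.
\end{align*}
Each $g_n$ is integrable, since $f_n, f \in L^p(\mu)$. Because $f_n \to f$ $\mu$-a.e., we have $g_n \to 2^p |f|^p$ $\mu$-a.e.

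Next apply Fatou's lemma to $g_n$. This gives
\begin{align*}
	\int 2^p |f|^p \,d\mu \le \liminf_{n \to \infty} \int g_n \,d\mu
	= 2^{p-1}\Big(\lim_{n\to\infty}\|f_n\|_{L^p}^p + \|f\|_{L^p}^p\Big) - \limsup_{n \to \infty} \int |f_n - f|^p \,d\mu \,.
\end{align*}
The equality in this display uses the hypothesis $\|f_n\|_{L^p(\mu)} \to \|f\|_{L^p(\mu)}$ to evaluate the limit of the first part. The right-hand side therefore equals $2^p\|f\|_{L^p}^p - \limsup_n \|f_n - f\|_{L^p}^p$.

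The quantity $2^p \|f\|_{L^p}^p$ is finite, so it may be subtracted from both sides. This yields $\limsup_n \|f_n - f\|_{L^p}^p \le 0$, which is exactly $f_n \to f$ in $L^p(\mu)$.

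There is no real obstacle in this argument. The only point needing care is the bookkeeping of $\liminf$ versus $\limsup$: the $\liminf$ of a difference, where the first part converges, equals that limit minus the $\limsup$ of the second part. One must also confirm that every quantity involved is finite before cancelling, and this holds because $f_n, f \in L^p(\mu)$. The case $p = 1$ is included, with constant $2^{p-1} = 1$.
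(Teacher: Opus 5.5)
Your proposal is correct and follows essentially the same route as the paper's proof: the pointwise bound $|f_n - f|^p \le 2^{p-1}(|f_n|^p + |f|^p)$ from convexity, Fatou's lemma applied to $2^{p-1}(|f_n|^p + |f|^p) - |f_n - f|^p$, and then splitting off the convergent part via $\liminf(a_n + b_n) = a + \liminf b_n$. Your explicit check that every quantity is finite before cancelling is a small extra care the paper leaves implicit.
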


\begin{proof}
	Since the function $x \mapsto |x|^p$ is convex on $\R$, the following pointwise inequality holds.
	\begin{align*}
		|f_n - f|^p \le 2^{p - 1}(|f_n|^p + |f|^p)
	\end{align*}
	Thus, by Fatou's Lemma and the almost everywhere convergence of $f_n$ to $f$,
	\begin{align*}
		\liminf_{n \to \infty} \int_X 2^{p - 1}(|f_n|^p + |f|^p) - |f_n - f|^p \,d\mu
		\ge 2^p \int_X |f|^p \,d\mu \,.
	\end{align*}
	We now use the following fact: If $a_n, b_n$ are sequences of real numbers, and $a_n \to a$ as $n \to \infty$,
	then $\liminf_{n \to \infty}(a_n + b_n) = a + \liminf_{n \to \infty} b_n$.
	By applying this to the sequence $\|f_n\|_{L^p(\mu)}^p$,
	\begin{align*}
		\liminf_{n \to \infty} \int_X 2^{p - 1}(|f_n|^p + |f|^p) - |f_n - f|^p \,d\mu
		= 2^p \int_X |f|^p \,d\mu - \limsup_{n \to \infty} \int_X |f_n - f|^p \,d\mu \,.
		 & \qedhere
	\end{align*}
\end{proof}

\begin{proof}[Proof of Proposition \ref{prop:continuity}]
	Let $\beta$ be an admissible function and $\xi \in C^\infty_c(\R^d), \ \psi \in C^\infty_c([0, T))$.
	Since $u$ is a renormalized solution,
	\begin{align}
		\begin{split}
			- & \iTR \beta(u) \xi \del_t\psi \,dx \,dt
			- \psi(0)\int_{\R^d}\beta(u^0) \xi \,dx                          \\
			  & \qquad + \iTR \psi \beta(u)(\xi \div b + b \cdot \nabla \xi)
			+ \psi \xi c u \beta'(u) \,dx \,dt
			= \iTR \psi \xi f \beta'(u) \,dx \,dt \,.
		\end{split}
		\label{eq:beta-xi-psi}
	\end{align}
	Since $b, c, f \in L^1(0, T; L^1_\loc)$ and $\xi \in C^\infty_c(\R^d)$,
	(\ref{eq:beta-xi-psi}) implies
	\begin{align}
		\int_{\R^d} \beta(u) \xi \,dx \in W^{1, 1}((0, T)) \hookrightarrow C([0, T]) \,.
		\label{eq:beta-xi-w11}
	\end{align}
	Since $\{\beta(u(t)) : t \in [0, T]\}$ is bounded in $L^\infty(\R^d)$, the Banach-Alaoglu theorem and
	(\ref{eq:beta-xi-w11}) imply $\beta(u(s)) \rightharpoonup^* \beta(u(t))$ in $L^\infty(\R^d)$ if $s \to t$ in $[0, T]$.
	Since $\beta^2$ is admissible, also $\beta(u(s))^2 \rightharpoonup^* \beta(u(t))^2$ in
	$L^\infty(\R^d)$ if $s \to t$ in $[0, T]$. Let $K \subset \R^d$ be compact.
	Since $\meas^d(K) < \infty$, the characteristic function $\chi_K$ lies in $L^1(\R^d)$,
	so $\|\beta(u(t))\|_{L^2(K)} \in C([0, T])$. Thus
	$\beta(u) \in C([0, T]; L^2(K))$, since $L^2(K)$ is uniformly convex.
	By iterating this argument for $L^{2^k}_\loc$, $k \in \N$, we conclude
	\begin{align}
		\hspace{8em} \beta(u) \in C([0, T]; L^r_\loc(\R^d))
		\qquad \text{for every $1 \le r < \infty$.}
		\label{eq:beta-in-Lr}
	\end{align}

	\paragraph{Part 1).}

	If $u \in L^\infty([0, T] \times \R^d)$, we can choose $\beta$ admissible
	such that $\beta(u) = u$ almost everywhere. So part 1) follows from (\ref{eq:beta-in-Lr}).

	\paragraph{Part 2).}

	We first show (\ref{eq:ddt-int-beta}), so let $\beta$ be admissible and assume
	that $0 \notin \supp \beta$. We choose $\xi = \eta_R$
	in (\ref{eq:beta-xi-psi}), where $\eta_R$ is defined in (\ref{eq:eta-R}).
	We then let $R \to \infty$ with the goal of obtaining
	\begin{align}
		\begin{split}
			-\iTR \beta(u) & \del_t\psi \,dx \,dt
			- \psi(0)\int_{\R^d}\beta(u^0) \,dx                                             \\
			               & \qquad + \iTR \psi (\beta(u) \div b + c u \beta'(u)) \,dx \,dt
			= \iTR \psi f\beta'(u)  \,.
		\end{split}
		\label{eq:ddt-int-beta-integral}
	\end{align}
	Since $p < \infty$ and $0 \notin \supp \beta$,
	\begin{align}
		\beta(u), \ u\beta'(u), \ \beta'(u)
		\in L^\infty(0, T; L^1(\R^d)) \cap L^\infty(0, T; L^\infty(\R^d)).
		\label{eq:L1-cap-Linfty}
	\end{align}
	Therefore, all terms in (\ref{eq:beta-xi-psi}) except the one with
	$\nabla \eta_R$ converge as desired by dominated convergence.
	For the $\nabla \eta_R$-term, we note that $|\nabla \eta_R| \le C_\eta/R$, that
	$\supp \eta_R \subset B_{2R} \setminus B_R$ and that $1/R \le 2/(1 + |x|)$ for
	$x \in B_{2R} \setminus B_R$. Hence
	\begin{align*}
		\abs*{\iTR \psi \beta(u) \, b \cdot \nabla \eta_R \,dx \,dt}
		\le 2C_\eta \int_0^T \contractint \int_{B_{2R} \setminus B_R}
		|\psi| |\beta(u)| \frac{|b|} {1 + |x|} \,dx \,dt \,.
	\end{align*}
	By the growth condition (\ref{eq:b-growth-condition}) on $b$ and by (\ref{eq:L1-cap-Linfty}), the
	integrand on the right-hand side is in $L^1([0, T] \times \R^d)$. Hence the integral
	converges to 0 as $R \to \infty$.
	Thus (\ref{eq:ddt-int-beta-integral}) holds, and since $\psi \in C^\infty_c([0, T))$ is arbitrary,
	also (\ref{eq:ddt-int-beta}).

	To show (\ref{eq:ddt-Lp-norm}), we choose
	a sequence of admissible functions $\beta_n$ such that $0 \notin \supp \beta_n$,
	$\beta_n(\sigma) \ge 0$, $\sigma \beta_n'(\sigma) \ge 0$ for all $\sigma \in \R$ and
	\begin{align*}
		\beta_n(\sigma), \, \sigma \beta_n'(\sigma) \le C|\sigma|^p,
		\qquad \beta_n(\sigma) \xrightarrow[n]{} |\sigma|^p,
		\quad \beta_n'(\sigma) \xrightarrow[n]{} p\sigma|\sigma|^{p - 2}
		\quad \text{for all $\sigma \in \R \setminus \{0\}$} \,.
	\end{align*}
	Here, $C$ must be independent of $n$ and $\sigma$. Note that because $0 \notin \supp \beta_n$, we
	automatically have $\beta_n(0) \to 0$ and $\beta_n'(0) \to 0$ as $n \to \infty$.
	An example for a sequence with these properties is
	\begin{align*}
		\beta_n(\sigma) = |n\tanh(\sigma/n)|^p (1 - \eta_{1/n}(\sigma)),
	\end{align*}
	where $\eta_{1/n}$ is a smooth cutoff function as in (\ref{eq:eta-R}) with $d = 1$.
	We now choose $\beta = \beta_n$ in (\ref{eq:ddt-int-beta-integral}) and let $n \to \infty$,
	aiming to show
	\begin{align}
		\begin{split}
			-\iTR & |u|^p \del_t\psi \,dx \,dt
			- \psi(0) \int_{\R^d} |u^0|^p \,dx               \\
			      & + \iTR \psi (\div b + pc)|u|^p \,dx \,dt
			= \iTR \psi f |u|^{p - 2}u  \,dx \,dt\,.
		\end{split}
		\label{eq:ddt-Lp-norm-int}
	\end{align}
	Since $u \in L^\infty_t L^p_x$ and we assume $c, \, \div b \in L^1_t L^\infty_x$,
	all integrals on the left-hand side converge by dominated convergence.
	For the right-hand side, notice that $\beta_n'(u) \le C|u|^{p - 1}$.
	Since $|u|^{p - 1} \in L^\infty_t L^{p/(p - 1)}_x$,
	by Hölder's inequality $f|u|^{p - 1} \in L^1_t L^1_x$.
	Thus dominated convergence also applies.
	This concludes the proof of
	(\ref{eq:ddt-Lp-norm-int}) and hence of (\ref{eq:ddt-Lp-norm}).

	To show $u \in C([0, T]; L^p(\R^d))$, let $t_n \to t$ be a convergent sequence in $[0, T]$.
	By (\ref{eq:ddt-Lp-norm}), $\|u(t)\|_{L^p(\R^d)} \in C([0, T])$.
	So by Lemma \ref{lemma:pointwise-ae-Lp},
	we only need to show that $u(t_n)$ converges to $u(t)$ almost everywhere on $\R^d$.
	Furthermore, it suffices to show this
	for a subsequence, since $t_n \to t$ is an arbitrary convergent sequence.
	By a diagonal argument, it suffices to show that for every $R > 0$,
	there exists a subsequence $t_{k_n}$ of $t_n$ such that
	\begin{align}
		u(t_{k_n}) \to u(t) \qquad \text{a.e. on $B_R$.}
		\label{eq:pointwise-local-trunc}
	\end{align}
	Let $\beta$ be an admissible function such that $\beta'(\sigma) > 0$ for all $\sigma \in \R$.
	By (\ref{eq:beta-in-Lr}),
	\begin{align*}
		\beta(u(t_n)) \to \beta(u(t)) \qquad \text{in $L^r(B_R)$ as $n \to \infty$.}
	\end{align*}
	Thus, $\beta(u(t_{k_n})) \to \beta(u(t))$ a.e. on $B_R$ for some subsequence $t_{k_n}$ of $t_n$.
	Since $\beta' > 0$, this implies (\ref{eq:pointwise-local-trunc}).
\end{proof}

\begin{corollary}[Uniqueness for $p < \infty$]
	Let $1 \le p < \infty, \ 1/p + 1/q = 1$. Assume (\ref{eq:b-growth-condition}) and
	\begin{align*}
		b \in L^1(0, T; W^{1, q}_\loc(\R^d; \R^d)),
		\qquad c, \, \div b \in L^1(0, T; L^\infty(\R^d)) \,.
	\end{align*}
	If $u \in L^\infty(0, T; L^p(\R^d))$ is a solution of (\ref{eq:transport}) for
	$u^0 = 0$, then $u = 0$.
	\label{cor:uniqueness-p}
\end{corollary}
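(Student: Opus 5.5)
The plan is to combine the consistency result with the energy identity of Proposition \ref{prop:continuity}. By Proposition \ref{prop:consistency}, part 2), applied with $b \in L^1(0, T; W^{1,q}_\loc(\R^d;\R^d))$ and $c, \div b \in L^1(0,T;L^\infty(\R^d)) \subset L^1(0,T;L^q_\loc(\R^d))$, the weak solution $u \in L^\infty(0,T;L^p(\R^d))$ is also a renormalized solution of (\ref{eq:transport}) with $u^0 = 0$. The consistency statement needs $u \in L^\infty(0,T;L^p_\loc)$, which is implied by global $L^p$.

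Next apply Proposition \ref{prop:continuity}, part 2), with $f = 0$. Its hypotheses are exactly the ones we have: $p < \infty$, the growth condition (\ref{eq:b-growth-condition}), and $c, \div b \in L^1(0,T;L^\infty(\R^d))$. The local integrability $b \in L^1_\loc$ and $c, \div b \in L^1_\loc$ required there also follows from our assumptions. We obtain $u \in C([0,T];L^p(\R^d))$ and, for a.e. $t$,
\[
\frac{d}{dt}\int_{\R^d}|u|^p\,dx = -\int_{\R^d}(\div b + pc)|u|^p\,dx \le \norm*{\div b(t) + pc(t)}_{L^\infty(\R^d)}\int_{\R^d}|u|^p\,dx .
\]
Set $y(t) = \norm{u(t)}_{L^p}^p$. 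The proof of Proposition \ref{prop:continuity}, via (\ref{eq:ddt-Lp-norm-int}), shows that $y$ is absolutely continuous, since its distributional derivative lies in $L^1(0,T)$. It also gives the initial value $y(0) = \int |u^0|^p\,dx = 0$.

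Grönwall's inequality then yields
\[
y(t) \le y(0)\exp\bigl(\norm{\div b + pc}_{L^1(0,T;L^\infty)}\bigr) = 0,
\]
so $u(t) = 0$ in $L^p$ for every $t$, and hence $u = 0$ almost everywhere.

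The only delicate point is making sure the initial trace is correctly identified, i.e. that $y$ really is absolutely continuous with $y(0)=0$ rather than merely having an $L^1$ derivative on $(0,T)$. This is handled by the integrated identity (\ref{eq:ddt-Lp-norm-int}) with test functions $\psi\in C^\infty_c([0,T))$, which encodes $y(0)=\norm{u^0}_{L^p}^p$, so no further work should be needed.
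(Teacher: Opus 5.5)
Your proposal is correct and follows the paper's proof: Proposition \ref{prop:consistency}, part 2), makes $u$ renormalized, and then (\ref{eq:ddt-Lp-norm}) from Proposition \ref{prop:continuity} together with Grönwall's inequality gives $u = 0$. Your additional point, that the integrated identity (\ref{eq:ddt-Lp-norm-int}) with $\psi \in C^\infty_c([0,T))$ fixes the initial value $\|u(0)\|_{L^p}^p = \|u^0\|_{L^p}^p = 0$, is correct; the paper leaves this detail implicit.
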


\begin{proof}
	By Proposition \ref{prop:consistency}, $u$ is a renormalized solution.
	Thus (\ref{eq:ddt-Lp-norm}) and Grönwall's inequality imply $u = 0$.
\end{proof}

\begin{remark}
	Proposition \ref{prop:continuity} also shows the following property of renormalized solutions,
	which is familiar from smooth solutions: If $c = 0$ and $\div b = 0$, the measures of
	level sets of $u$ are constant. To show this, let $\beta$ be admissible such that $0 \notin \supp \beta$.
	By (\ref{eq:ddt-int-beta}), for almost every $t \in (0, T)$
	\begin{align*}
		\int_{\R^d} \beta(u(t)) \,dx = \int_{\R^d} \beta(u^0) \,dx \,.
	\end{align*}
	Let $0 < a < b$. Then, letting $\beta \uparrow \chi_{(a, b)}$ in the above equation and the monotone
	convergence theorem imply that $\meas^d(\{a < u(t) < b\}) = \meas^d(\{a < u^0 < b\})$
	for almost every $t \in (0, T)$.
\end{remark}

\subsubsection{The case $p = \infty$}

The uniqueness of weak solutions for $p = \infty$ will be a consequence of the following theorem.
It is the same as Theorem II.6 of \cite{DL89}, except that we allow a right-hand side
for both renormalized solutions.

\begin{theorem}[Integration by parts along the flow]
	Assume (\ref{eq:b-growth-condition}) and
	\begin{align*}
		 & b \in L^1(0,T; W^{1, 1}_\loc(\R^d; \R^d)),
		 & c, \ \div b \in L^1(0, T; L^\infty(\R^d)) \,.
	\end{align*}
	Let $1 \le p \le \infty, \, 1/p + 1/q = 1$ and let
	$f \in L^1(0, T; L^p(\R^d))$, $g \in L^1(0, T; L^q(\R^d))$.
	If $u \in L^\infty(0, T; L^p(\R^d))$ and
	$v \in L^\infty(0, T; L^q(\R^d))$
	are renormalized solutions of
	\begin{align*}
		\del_t u - b \cdot \nabla u + cu = f,    \qquad
		\del_t v - b \cdot \nabla v - (\div b + c)v = g,
	\end{align*}
	then
	\begin{align}
		\iTR u g \,dx \,dt = -\iTR fv \,dx \,dt
		+ \int_{\R^d} u(T)v(T) \,dx - \int_{\R^d} u(0)v(0) \,dx \,.
		\label{eq:ibp}
	\end{align}
	\label{theorem:ibp}
\end{theorem}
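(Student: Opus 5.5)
The plan is to reduce to bounded solutions by renormalization, regularize in space, use the product rule on the regularized functions, and then remove the cutoffs one at a time. Every nonlinear operation will be applied only to bounded functions. This matters because $b$ is only $W^{1,1}_\loc$: Lemma \ref{lemma:commutator} is then available only with $\alpha = 1$, $p = \infty$, $\lambda = 1$.

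\textbf{Step 1: renormalization and regularization.} Fix admissible functions $\beta, \gamma$ with $\beta(0) = \gamma(0) = 0$; later we take $\beta = \gamma = \beta_n$ with $\beta_n(\sigma) = n\tanh(\sigma/n)$. Set $w \colonequals \beta(u)$ and $z \colonequals \gamma(v)$. By Definition \ref{def:renormalized} and Remark \ref{remark:renormalized-rhs}, these satisfy in the weak sense
\begin{align*}
	\del_t w - b\cdot\nabla w = F \colonequals \beta'(u)(f - cu), \qquad
	\del_t z - b\cdot\nabla z = G \colonequals \gamma'(v)\big(g + (\div b + c)v\big).
\end{align*}
These are transport equations with $c = 0$. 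Since $\beta'(\sigma)(1+|\sigma|)$ is bounded, we have $F, G \in L^1(0,T; L^1_\loc(\R^d))$ and $w, z \in L^\infty$. Lemma \ref{lemma:regularization} together with Remark \ref{remark:regularization-rhs}, applied with $p = \infty$, $\alpha = 1$, $\lambda = 1$, gives
\begin{align*}
	\del_t w_\eps - b\cdot\nabla w_\eps = F + r_\eps, \qquad \del_t z_\eps - b\cdot\nabla z_\eps = G + s_\eps,
\end{align*}
with $r_\eps, s_\eps \to 0$ in $L^1(0,T;L^1_\loc(\R^d))$. The functions $w_\eps, z_\eps$ are bounded uniformly in $\eps$ and lie in $W^{1,1}_\loc$. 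The product rule therefore gives
\begin{align*}
	\del_t(w_\eps z_\eps) - b\cdot\nabla(w_\eps z_\eps) = (F + r_\eps)z_\eps + (G + s_\eps)w_\eps.
\end{align*}
We test this against $\psi(t)\eta_R(x)$, integrating the $b\cdot\nabla$ term by parts so that $\div b$ appears. Then we let $\eps \to 0$, using the uniform $L^\infty$ bounds, the a.e. convergence $w_\eps \to w$, $z_\eps \to z$, and dominated convergence. This yields the weak form of $\del_t(wz) - b\cdot\nabla(wz) = Fz + Gw$, tested with $\psi\eta_R$.

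\textbf{Step 2: removing the spatial cutoff.} Next we let $R \to \infty$, as in the proof of Proposition \ref{prop:continuity}. Since $|\beta(\sigma)| \le C|\sigma|$, we have $|wz| \le C|u||v|$. By Hölder, $wz \in L^\infty(0,T;L^1(\R^d)) \cap L^\infty$. The term containing $b\cdot\nabla\eta_R$ is bounded by $\int\!\!\int_{B_{2R}\setminus B_R} |wz|\,|b|/(1+|x|)$. We split $|b|/(1+|x|)$ according to (\ref{eq:b-growth-condition}). The $L^1_tL^1_x$ part is paired with $wz \in L^\infty$, and the $L^1_tL^\infty_x$ part is paired with $wz \in L^\infty_tL^1_x$. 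In both cases the term tends to $0$. The remaining terms are $wz\,\div b$, $Fz$ and $Gw$. Each is dominated by a multiple of $|u||v|(\|\div b\|_\infty + \|c\|_\infty) + |f||v| + |u||g|$, which is integrable, so they converge by dominated convergence.

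\textbf{Step 3: removing the truncation.} We now take $\beta = \gamma = \beta_n$ and let $n \to \infty$. The integrands satisfy the bounds $|\beta_n(u)\beta_n(v)| \le |u||v|$ and $|\beta_n'| \le 1$, with a.e. convergence. In the limit,
\begin{align*}
	Fz + Gw \to (f - cu)v + u\big(g + (\div b + c)v\big) = fv + ug + uv\,\div b.
\end{align*}
The term $uv\,\div b$ cancels the $\div b$ term coming from the integration by parts of $b\cdot\nabla$. We obtain, for every $\psi \in C^\infty_c([0,T))$,
\begin{align*}
	-\iTR uv\,\psi'\,dx\,dt - \psi(0)\int u(0)v(0)\,dx = \iTR \psi\,(fv + ug)\,dx\,dt.
\end{align*}
This means that $t \mapsto \int uv\,dx$ lies in $W^{1,1}(0,T)$ with derivative $\int (fv + ug)\,dx$. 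Integrating from $0$ to $T$ gives (\ref{eq:ibp}).

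\textbf{Main obstacle: the time traces.} The step needing most care is giving meaning to $\int u(T)v(T)\,dx$ and $\int u(0)v(0)\,dx$, and justifying the passage to the endpoint values rather than merely a.e. $t$. For the exponent in $[1,\infty)$, Proposition \ref{prop:continuity} part 2) gives continuity in $L^p$ (respectively $L^q$). For the exponent $\infty$, part 1) gives continuity only in $L^r_\loc$. We will therefore use the fact that the truncated and localized quantity $\int \beta(u)\gamma(v)\eta_R\,dx$ is itself continuous in $t$, because it lies in $W^{1,1}(0,T)$ by the tested equation. We then pass to $\psi \to \chi_{[0,T]}$ at this level, and take the limits in $R$ and $n$ pointwise in $t$ using the same dominating functions as in Steps 2 and 3.
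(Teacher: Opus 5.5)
Your proof is correct, and its skeleton is the paper's. You truncate with admissible functions vanishing at $0$, mollify, apply the product rule to the regularized functions, and then let $\eps\to0$, $R\to\infty$ and $\beta_n\to\id_{\R}$ with essentially the same dominating functions. Where you genuinely differ is in the time boundary terms.

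The paper multiplies the equation for $\tilde u_\eps$ by $\eta_R\tilde v_\eps$ and integrates by parts over all of $[0,T]$ already at the regularized level. This forces it to justify (\ref{eq:trace-u-v}) through the trace theorem on $Z=(0,T)\times B_{2R}$ and the Whitney-extension approximation (\ref{eq:Fn-Z-seq}). It then passes to the limit $\eps\to0$ at the two fixed times $0$ and $T$. You keep the time derivative in weak form and read off the endpoint values only after $\eps\to0$, which avoids that machinery. The only input you need for the endpoints is the time continuity from Proposition \ref{prop:continuity}. The paper uses this too, implicitly, when it asserts $\tilde u_\eps(t)\to\tilde u(t)$ for every $t$.

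There are two points to tighten.

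\begin{enumerate}
\item Membership of $t\mapsto\int\beta(u)\gamma(v)\eta_R\,dx$ in $W^{1,1}(0,T)$ only yields a continuous representative. What identifies its value at $t=T$ with $\int\beta(u(T))\gamma(v(T))\eta_R\,dx$ is the continuity of $\beta(u)$ and $\gamma(v)$ in $L^2_\loc$, i.e.\ (\ref{eq:beta-in-Lr}).
\item Your Step 1 uses test functions with $\psi(0)\neq0$, so it silently needs the initial trace of $w_\eps z_\eps$. This is exactly the issue the paper settles with the trace theorem. It disappears if you test only with $\psi\in C^\infty_c((0,T))$, which needs just the interior product rule, and take both endpoint values from (\ref{eq:beta-in-Lr}).
\end{enumerate}

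Finally, the obstacle is milder than you suggest. If, say, $p=\infty$, then $u$ is bounded uniformly in $t$ and continuous into $L^1_\loc$, while $v\in C([0,T];L^1(\R^d))$. Splitting $\R^d$ into $B_R$ and its complement then shows directly that $t\mapsto\int u(t)v(t)\,dx$ is continuous on $[0,T]$. So the detour through the truncated, localized quantity is optional.
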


The boundary terms on right-hand side of (\ref{eq:ibp}) make sense due to the time-continuity
of $u$ and $v$, which was shown in Proposition \ref{prop:continuity}.

\begin{proof}
	This proof is from \cite{DL89}, Theorem II.6.
	Let $\beta$ be an admissible function and set $\tilde u \colonequals \beta(u)$,
	$\tilde v \colonequals \beta(v)$.
	By definition of a renormalized solution, $\tilde u$ and $\tilde v$ satisfy
	\begin{align*}
		\del_t \tilde u - b \cdot \nabla \tilde u + cu\beta'(u)            & = f\beta'(u)     \\
		\del_t \tilde v - b \cdot \nabla \tilde v - (c + \div b)v\beta'(v) & = g\beta'(v) \,.
	\end{align*}
	Let $\tilde u_\eps \colonequals \tilde u * \rho_\eps$ and
	$\tilde v_\eps \colonequals \tilde v * \rho_\eps$. By Lemma
	\ref{lemma:regularization} and Remark \ref{remark:regularization-rhs}
	\begin{align}
		\begin{split}
			\del_t \tilde u_\eps - b \cdot \nabla \tilde u_\eps + c u \beta'(u)
			 & = f\beta'(u) +  r_\eps, \\
			\del_t \tilde v_\eps- b \cdot \nabla \tilde v_\eps- (c + \div b) v \beta'(v)
			 & = g\beta'(v) + s_\eps,
		\end{split}
		\label{eq:tilde-u-v}
	\end{align}
	where $r_\eps, s_\eps\to 0$ in
	$L^1(0, T; L^1_\loc(\R^d))$ as $\eps \to 0$. For $R > 0$, let $\eta_R$
	be a smooth cutoff function defined in (\ref{eq:eta-R}). Due to the regularization,
	the equations in (\ref{eq:tilde-u-v})
	are no longer just equations of distributions, but also of
	$L^1_\loc([0, T] \times \R^d)$-functions.
	Hence, we can multiply the equation for $\tilde u_\eps$ by
	$\tilde v_\eps \eta_R$ and integrate over $[0, T] \times \R^d$:
	\begin{align}
		\begin{split}
			\iTR \eta_R \tilde v_\eps\del_t  \tilde u_\eps \,dx \,dt
			 & + \iTR \eta_R \tilde v_\eps(-b \cdot \nabla \tilde u_\eps + cu\beta'(u)) \,dx \,dt \\
			 & \qquad = \iTR \eta_R \tilde v_\eps (f\beta'(u) + r_\eps) \,dx\,dt
		\end{split}
		\label{eq:int-v-dt-u}
	\end{align}
	If $\eta_R \tilde v_\eps$ would belong to $C^1([0, T] \times \R^d)$, we could conclude from
	the trace theorem that
	\begin{align}
		\begin{split}
			\iTR \eta_R \tilde v_\eps \del_t \tilde u_\eps \,dx\,dt
			= & -\iTR \eta_R \tilde u_\eps \del_t \tilde v_\eps \,dx \,dt   \\
			\addlinespace
			  & + \int_{\R^d} \eta_R \tilde v_\eps(T) \tilde u_\eps(T) \,dx
			- \int_{\R^d} \eta_R \tilde v_\eps(0) \tilde u_\eps(0) \,dx \,.
		\end{split}
		\label{eq:trace-u-v}
	\end{align}
	However, because $\eta_R \tilde v_\eps \notin C^1([0, T] \times \R^d)$, some
	additional arguments are needed. Let $Z = (0, T) \times B_{2R}$ and
	$F \in C^\infty_c([0, T] \times B_{2R})$. Because $\tilde u_\eps \in W^{1, 1}(Z)$,
	by the trace theorem
	\begin{align}
		\int_Z F \del_t \tilde u_\eps \,dx \,dt
		= -\int_Z \tilde u_\eps \del_t F \,dx \,dt
		+ \int_{B_{2R}} F(T, x) \tilde u_\eps(T, x) \,dx
		- \int_{B_{2R}} F(0, x) \tilde u_\eps(0, x) \,dx \,.
		\label{eq:trace-F}
	\end{align}

	Next, we choose a sequence
	$F_n \in C^\infty_c(\R \times B_{2R})$ such that
	\begin{align}
		F_n \to \eta_R \tilde v_\eps \quad \text{in $W^{1, 1}(Z)$}, \qquad
		F_n \rightharpoonup^* \eta_R \tilde v_\eps \quad \text{in $L^\infty(Z)$} \,.
		\label{eq:Fn-Z-seq}
	\end{align}
	This works as follows. Because $Z$ has Lipschitz boundary, by the
	Whitney extension theorem, there exists a $h \in W^{1, 1}(\R^{d + 1})$
	such that $h|_Z = \eta_R \tilde v_\eps$. Since $\supp \eta_R \subset B_{2R}$,
	we may assume $\supp h \subset \R \times B_{2R}$ after multiplying $h$
	by a smooth cutoff function. Since $\eta_R \tilde v_\eps \in L^\infty(Z)$, we may
	also assume $h \in L^\infty(\R^{d + 1})$ by composing with a function in $C^1_b(\R)$ which
	is the identity on a suitably large interval. Then, there exists a sequence
	$F_n \in C^\infty_c(\R \times B_{2R})$, bounded in $L^\infty$,
	such that $F_n \to h$ in $W^{1, 1}(\R^{d + 1})$ as $n \to \infty$.
	After extraction of a subsequence, also $F_n \rightharpoonup^* h$ in $L^\infty(\R^{d + 1})$.

	We now set $F = F_n$ in (\ref{eq:trace-F}) and let $n \to \infty$.
	The left-hand side of (\ref{eq:trace-F}) converges to the left-hand side of (\ref{eq:trace-u-v})
	by the weak-$*$ convergence of $F_n$ and since
	$\del_t \tilde u_\eps \in L^1_\loc([0, T] \times \R^d)$. For the
	$\eta_R \tilde u_\eps \del_t \tilde v_\eps$-term, note that $\tilde u_\eps \in L^\infty(Z)$.
	The boundary terms converge as desired by the continuity of the trace operator
	$W^{1,1}(Z) \to L^1(\del Z)$ and since $\tilde u_\eps(0)$ and $\tilde u_\eps(T)$ are smooth
	and therefore in $L^\infty(B_{2R})$.

	Substituting (\ref{eq:trace-u-v}) into (\ref{eq:int-v-dt-u}) and
	integrating by parts in $x$ yields
	\begin{align*}
		-\iTR \eta_R
		 & \tilde u_\eps \del_t \tilde v_\eps \,dx \,dt
		+ \int_{\R^d} \eta_R \tilde v_\eps(T) \tilde u_\eps(T) \,dx
		- \int_{\R^d} \eta_R \tilde v_\eps(0) \tilde u_\eps(0) \,dx        \\
		 & + \iTR \tilde u_\eps(\eta_R \tilde v_\eps \div b
		+ \tilde v_\eps b \cdot \nabla \eta_R + \eta_R b \cdot \nabla \tilde v_\eps) +
		\eta_R \tilde v_\eps c u \beta'(u) \,dx \,dt                       \\
		 & = \iTR \eta_R \tilde v_\eps (f \beta'(u) + r_\eps) \,dx\,dt \,.
	\end{align*}
	By using equation (\ref{eq:tilde-u-v}) for $\tilde v_\eps$,
	\begin{align*}
		-\iTR \eta_R
		 & \tilde u_\eps(b \cdot \nabla \tilde v_\eps +
		(c + \div b)v\beta'(v) + g\beta'(v) + s_\eps) \,dx \,dt             \\
		 & + \int_{\R^d} \eta_R \tilde v_\eps(T) \tilde u_\eps(T) \,dx
		- \int_{\R^d} \eta_R \tilde v_\eps(0) \tilde u_\eps(0) \,dx         \\
		 & + \iTR \tilde u_\eps(\eta_R \tilde v_\eps \div b
		+ \tilde v_\eps b \cdot \nabla \eta_R + \eta_R b \cdot \nabla \tilde v_\eps) +
		\eta_R \tilde v_\eps c u \beta'(u) \,dx \,dt                        \\
		 & = \iTR \eta_R \tilde v_\eps (f \beta'(u) + r_\eps) \,dx \,dt \,.
	\end{align*}
	The terms $\eta_R \tilde u_\eps b \cdot \nabla \tilde v_\eps$ in the first line and third line
	cancel out.
	Letting $\eps \to 0$, we obtain
	\begin{align}
		- & \iTR \eta_R \tilde u((c + \div b)v\beta'(v) + g\beta'(v)) \,dx \,dt
		+ \int_{\R^d} \eta_R \tilde v(T) \tilde u(T) \,dx
		- \int_{\R^d} \eta_R \tilde v(0) \tilde u(0) \,dx      \notag           \\
		  & + \iTR \tilde u(\eta_R \tilde v \div b
		+ \tilde v b \cdot \nabla \eta_R) + \eta_R \tilde v c u \beta'(u) \,dx \,dt
		= \iTR \eta_R \tilde v f \beta'(u)  \,dx \,dt \,.
		\label{eq:post-eps-0}
	\end{align}
	To justify this, note that $\tilde u_\eps(t) \rightharpoonup^* \tilde u(t)$ and
	$\tilde v_\eps(t) \rightharpoonup^* \tilde v(t)$ in $L^\infty(\R^d)$ for every $t \in [0, T]$.
	Since $\tilde u_\eps(t) \to \tilde u(t)$ and $\tilde v_\eps(t) \to \tilde v(t)$
	in $L^r_\loc(\R^d)$ for all $r < \infty$, we also have
	$\tilde u_\eps(t) \tilde v_\eps(t) \rightharpoonup^* \tilde u(t) \tilde v(t)$ in $L^\infty(\R^d)$.
	Thus, for almost every $t \in [0, T]$, the integrals over
	$x$ converge as desired. To conclude the convergence of the space-time integrals, we apply
	dominated convergence to the time integral.

	From now on, assume that $\beta(0) = 0$. By letting $R \to \infty$
	in (\ref{eq:post-eps-0}), we obtain
	\begin{align}
		\begin{split}
			-\iTR
			 & \tilde u((c + \div b)v\beta'(v) + g\beta'(v)) \,dx \,dt
			+ \int_{\R^d} \tilde v(T) \tilde u(T) \,dx
			- \int_{\R^d} \tilde v(0) \tilde u(0) \,dx                           \\
			 & + \iTR \tilde u\tilde v \div b + \tilde v c u \beta'(u) \,dx \,dt
			= \iTR \tilde v f \beta'(u)  \,dx \,dt \,.
		\end{split}
		\label{eq:post-R-infty}
	\end{align}
	For all terms except the one with $\nabla \eta_R$, this is justified by dominated convergence,
	since
	\begin{align*}
		\tilde u, \in L^\infty_t (L^\infty_x \cap L^p_x),
		\qquad \tilde v \in L^\infty_t (L^\infty_x \cap L^q_x),
		\qquad \tilde u(0), \tilde u(T) \in L^p_x,
		\qquad \tilde v(0), \tilde v(T) \in L^q_x \,.
	\end{align*}
	Since $|\nabla \eta_R| \le \|\nabla \eta\|_{L^\infty(\R^d)}/R$ and
	$\supp \nabla \eta_R \subset B_{2R} \setminus B_R$, the $\nabla \eta_R$-term can be estimated by
	\begin{align*}
		\abs*{ \iTR \tilde u \tilde v b \cdot \nabla \eta_R \,dx \,dt}
		\le C \int_0^T\contractint \int_{B_{2R} \setminus B_R}
		|\tilde u \tilde v| \frac{|b|}{1 + |x|}\,dx \,dt \,.
	\end{align*}
	Since $u$ and $v$ have dual Hölder exponents and $\beta(0) = 0$,
	$\tilde u \tilde v \in L^\infty_t L^1_x \cap L^\infty_t L^\infty_x$.
	Thus, by the growth condition (\ref{eq:b-growth-condition}), the right-hand side
	converges to 0 as $R \to \infty$.

	Finally, we choose a sequence of admissible functions $\beta_n$ such that
	$\beta_n\to \id_{\R}$, $\beta_n' \to 1$ pointwise, $|\beta_n(\sigma)| \le |\sigma|$ and
	$|\beta_n'(\sigma)| \le 1$ for every $\sigma \in \R$.
	Taking $\beta = \beta_n$ in (\ref{eq:post-R-infty}) and
	using dominated convergence as $n \to \infty$, we get
	\begin{align*}
		\begin{split}
			-\iTR
			 & u((c + \div b)v + g) \,dx \,dt
			+ \int_{\R^d}  v(T)  u(T) \,dx
			- \int_{\R^d}  v(0)  u(0) \,dx           \\
			 & + \iTR  u v \div b +  v c u \,dx \,dt
			= \iTR v f  \,dx \,dt \,.
		\end{split}
	\end{align*}
	The $\div b$ and $c$ terms cancel out, so (\ref{eq:ibp}) follows.
\end{proof}

\begin{corollary}[Uniqueness for $p = \infty$]
	Assume (\ref{eq:b-growth-condition}) and
	\begin{align}
		b \in L^1(0, T; W^{1, 1}_\loc(\R^d; \R^d)),
		\qquad c, \, \div b \in L^1(0, T; L^\infty(\R^d))
		\tag{{$\dagger$}}
		\label{eq:bc-hypotheses}
	\end{align}
	If $u \in L^\infty([0, T] \times \R^d)$ is a solution of (\ref{eq:transport}) for $u^0 = 0$,
	then $u = 0$.
	\label{cor:uniqueness-infty}
\end{corollary}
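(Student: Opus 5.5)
The plan is a duality argument built on Theorem \ref{theorem:ibp}, applied with $p = \infty$ and $q = 1$. Here $u \in L^\infty([0,T]\times\R^d)$ and $b \in L^1(0,T;W^{1,1}_\loc)$, so Proposition \ref{prop:consistency} part 2) with $p=\infty$, $q=1$ shows that $u$ is a renormalized solution with $f = 0$ and $u^0 = 0$. By Proposition \ref{prop:continuity} part 1), $u \in C([0,T];L^r_\loc)$, so the boundary values $u(0)=0$ and $u(T)$ are meaningful.

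Next I construct a test solution $v$. Fix an arbitrary $g \in C^\infty_c((0,T)\times\R^d)$. I want $v \in L^\infty(0,T;L^1(\R^d))$ solving the adjoint equation $\del_t v - b\cdot\nabla v - (\div b + c)v = g$ with $v(T) = 0$. Reversing time, $w(t) = v(T-t)$ solves $\del_t w + b(T-t)\cdot\nabla w + (\div b + c)(T-t)\, w = -g(T-t)$ with $w(0)=0$. This is a transport equation with field $\tilde b = -b(T-\cdot)$ and zeroth-order coefficient $\tilde c = (\div b + c)(T-\cdot) \in L^1_tL^\infty_x$. Note that $\div \tilde b = -\div b(T-\cdot) \in L^1_tL^\infty_x$ and $\tilde b \in L^1_tL^\infty_\loc$.

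Existence of $w$ comes from Proposition \ref{prop:existence} together with Remark \ref{remark:existence-rhs}. That proposition requires $p>1$, whereas I need $w \in L^\infty_tL^1_x$. So I take $w \in L^\infty_t L^{p_0}_x$ for some finite $p_0>1$. The smooth approximants have support contained in the support of $g$ transported by a smooth field, but this gives no uniform compact support, so I cannot simply conclude $L^1$ that way. Instead, I would redo the Step 1 estimate (\ref{eq:u-Lp-estimate}) with $p=1$: the $L^1$ bound $\|w_n(t)\|_{L^1} \le e^{C}\|g\|_{L^1_tL^1_x}$ holds uniformly in $n$, by the same Grönwall computation with the source included. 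The $L^{p_0}$ bound, also uniform, gives the weak-$*$ limit. Fatou/lower semicontinuity of the $L^1$ norm under weak convergence in $L^{p_0}_\loc$ then shows the limit lies in $L^\infty_tL^1_x \cap L^\infty_tL^{p_0}_x$. Since $b\in W^{1,1}_\loc$ only, consistency with $q=1$ requires the weak solution to lie in $L^\infty_x$. I would therefore also keep the $L^\infty$ bound: $g$ is bounded, so $w_n$ is uniformly bounded in $L^\infty$, and hence $w \in L^\infty_t(L^1\cap L^\infty)_x$. Then Proposition \ref{prop:consistency} 2) (with $p=\infty$, using $\tilde b \in L^1_tW^{1,1}_\loc$) shows that $w$, and hence $v$, is a renormalized solution. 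I expect this step, producing a renormalized $v$ in $L^\infty_tL^1_x$ given that existence was stated only for $p>1$, to be the main obstacle.

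Finally I apply Theorem \ref{theorem:ibp} with $(u,f=0)$ in $L^\infty_tL^\infty_x$ and $(v,g)$ in $L^\infty_tL^1_x$. Its hypotheses (\ref{eq:b-growth-condition}) and (\ref{eq:bc-hypotheses}) are exactly those assumed here. The identity (\ref{eq:ibp}) gives
\begin{align*}
\iTR u g \,dx\,dt = \int_{\R^d} u(T)v(T)\,dx - \int_{\R^d} u(0)v(0)\,dx = 0,
\end{align*}
since $v(T)=0$ and $u(0)=0$. Because $g \in C^\infty_c((0,T)\times\R^d)$ is arbitrary, it follows that $u=0$ almost everywhere.
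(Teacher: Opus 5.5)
Your proposal follows the paper's proof: you build the solution $v$ of the adjoint equation with source $g$ and $v(T)=0$ by time reversal and Proposition~\ref{prop:existence}, and you get $v\in L^\infty(0,T;L^1(\R^d))$ from Gr\"onwall bounds on the approximants that are uniform in $n$. As in the paper, this only needs each approximant to be compactly supported, not uniformly, and you then conclude with Theorem~\ref{theorem:ibp}. One slip: for $d\ge 2$, $b\in L^1(0,T;W^{1,1}_\loc)$ does not give $\tilde b\in L^1_tL^\infty_\loc$, but your detour through $L^{p_0}$ is unnecessary anyway, since the paper applies Proposition~\ref{prop:existence} directly with $p=\infty$, $q=1$, which needs only $b\in L^1_tL^1_\loc$ and yields the bounded solution required for consistency.
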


\begin{proof}
	It suffices to show that for every $\phi \in C^\infty_c((0, T) \times \R^d)$
	\begin{align}
		\iTR u \phi \,dx \,dt = 0 \,.
		\label{eq:int-u-phi-0}
	\end{align}
	To do this, we consider a solution $v$ of
	\begin{align}
		\del_tv - b \cdot \nabla v - (c + \div b)v = \phi, \qquad v|_{t = T}= 0 \,.
		\label{eq:v-equation}
	\end{align}
	By Proposition \ref{prop:existence} and Remark \ref{remark:existence-rhs}, there exists a solution
	to this equation in $L^\infty([0, T] \times \R^d)$. An initial condition at time $T$ can be imposed
	by reversing time.

	To show that also $v \in L^\infty(0, T; L^1(\R^d))$, we look again at the proof of
	Proposition \ref{prop:existence}. We constructed a sequence of solutions
	$v_n \in L^\infty(0,T; C^1_b(\R^d))$ to
	\begin{align*}
		\del_t v_n - b_n \cdot \nabla v_n - (c_n + \div b_n)v_n = \phi_n \,,
		\quad v_n|_{t = 0} = v_n^0
	\end{align*}
	where
	\begin{align*}
		b_n \to b, \ \ c_n \to c, \ \ \div b_n \to \div b, \ \ \phi_n \to \phi
		\quad \text{in $L^1_\loc([0, T] \times \R^d)$}
	\end{align*}
	and $c_n, \, \div b_n$ are bounded in $L^1(0, T; L^\infty(\R^d))$. As indicated in Remark \ref{remark:existence-rhs},
	the proof of Proposition \ref{prop:existence} can be modified to incorporate a right-hand side by
	setting $\phi_n \colonequals (\phi * \rho_{1/n})\eta_n$, where $\rho_{1/n}$ and $\eta_n$ are defined in
	(\ref{eq:rho-eps}) and (\ref{eq:eta-R}).
	Then, we used the estimate (\ref{eq:u-Linfty-estimate}) to show
	that $v_n$ is bounded in $L^\infty([0, T] \times \R^d)$ and obtained
	$v$ as the weak-$*$ limit of a subsequence of $v_n$ in $L^\infty([0, T] \times \R^d)$.
	By Step 1 of Proposition \ref{prop:existence}, $v_n$ is also compactly supported. Thus,
	by (\ref{eq:ddt-Lp-norm}) and Grönwall's inequality
	\begin{align*}
		\norm*{v_n(t)}_{L^1(\R^d)}
		\le \exp\paren*{\norm*{c_n}_{L^1(0, T; L^\infty(\R^d))}}
		\norm*{\phi_n}_{L^1(0, T; L^1(\R^d))} \,.
	\end{align*}
	This shows that $v_n$ is bounded in $L^\infty(0, T; L^1(\R^d))$.

	Let $\psi \in L^1([0, T])$, $R > 0$ and $\zeta(t, x) \colonequals \psi(t) \chi_{B_R}(x) \sgn(v)$.
	Since $v_n \rightharpoonup^* v$ in $L^\infty([0, T] \times \R^d)$,
	\begin{align}
		\iTR v_n \zeta \,dx \,dt \ \xrightarrow[n \to \infty]{} \
		\iTR v \zeta \,dx \,dt = \int_0^T \!\!\psi(t) \int_{B_R} |v| \,dx \,dt
		\label{eq:vn-weak-star}
	\end{align}
	By Hölder's inequality,
	\begin{align*}
		\iTR v_n \zeta \,dx \,dt\le \norm*{v_n}_{L^\infty(0, T; L^1(\R^d))} \norm*{\psi}_{L^1([0, T])} \,.
	\end{align*}
	Thus, by letting $R \to \infty$ in (\ref{eq:vn-weak-star}) and using that $\psi \in L^1([0, T])$ is
	arbitrary, we conclude $v \in L^\infty(0, T; L^1(\R^d))$.

	By Proposition \ref{prop:consistency} (and Remark \ref{remark:renormalized-rhs}),
	both $u$ and $v$ are renormalized solutions.
	Thus (\ref{eq:int-u-phi-0}) follows from Theorem \ref{theorem:ibp}.
\end{proof}

\subsection{Applications to the 2d Euler equations}
\label{sec:euler}

We pause the general discussion and briefly discuss applications of the theory
developed so far to the two-dimensional Euler equations.
In vorticity formulation, the Euler equations read
\begin{align}
	\begin{split}
		\del_t \omega + b \cdot \nabla \omega & = 0          \hspace{4em} \text{on $(0, T) \times \R^2$} \\
		b                                     & = K * \omega
	\end{split}
	\label{eq:euler}
\end{align}
The convolution kernel $K : \R^2 \setminus \{0\} \to \R^2$ (also called the Biot-Savart kernel) is defined by
\begin{align*}
	K(x) \colonequals \frac 1 {2\pi |x|^2}
	\begin{pmatrix}
		-x_2 \\
		x_1
	\end{pmatrix} \,.
\end{align*}
We assume that $\omega \in L^\infty(0, T; L^p(\R^2))$ for some $1 \le p \le \infty$
and $b \in L^1(0, T; L^q_\loc(\R^2))$, where $1/p + 1/q = 1$.
Then, the transport equation in (\ref{eq:euler}) can be understood in the weak sense.\footnote{
	If $4/3 \le p < 2$, then $\omega \in L^\infty(0, T; L^p(\R^2))$ already
	implies $b \in L^1(0, T; L^q_\loc(\R^2))$.
	Since $K$ is homogeneous of degree $-1$, by the Hardy-Littlewood-Sobolev theorem
	$b \in L^\infty(0, T; L^r(\R^2))$, where $1/r = 1/p - 1/2$ (\cite{Ste70}, p.~119, Theorem 1).
	However, for $p > 2$, such a priori estimates are not sufficient to conclude even
	$K * \omega \in L^1_\loc([0, T] \times \R^2)$. Consider for example
	$\omega(t, x) = \chi_{x_1 > 0}/(1 + |x|)$.
}

Since $\nabla K$ is homogeneous of degree $-2$, smooth on $\R^2 \setminus \{0\}$, and
$\int_{S^1} \nabla K \, d\mathcal H^1 = 0$,
by the Calderón-Zygmund theory for singular integrals,
the map $f \mapsto \nabla K * f$ is a bounded linear operator
$L^p(\R^2) \to L^p(\R^2; \R^{2 \times 2})$ if $1 < p < \infty$ (see \cite{Ste70}, p.~39, Theorem 3).
Thus $\nabla b \in L^\infty(0, T; L^p(\R^2))$ for such $p$.

If $p \ge 2$, this implies $b \in L^1(0, T; W^{1, q}_\loc)$. Combining this fact with
part 2) of Proposition \ref{prop:consistency} yields the following theorem.

\begin{theorem}
	Let $2 \le p < \infty$, and let $\omega \in L^\infty(0, T; L^p(\R^2))$ and $b \in L^1(0, T; L^q_\loc(\R^2))$
	satisfy the Euler equations (\ref{eq:euler}) in the weak sense. Then $\omega$ is a renormalized
	solution.
	\label{theorem:euler}
\end{theorem}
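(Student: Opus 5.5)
The plan is to reduce the statement to part 2) of Proposition \ref{prop:consistency}, applied with $c = 0$ and $-b$ in place of $b$. The Euler transport equation $\del_t \omega + b \cdot \nabla \omega = 0$ is (\ref{eq:transport}) with vector field $\tilde b \colonequals -b$, coefficient $\tilde c = 0$ and initial datum $\omega(0)$. The only hypotheses to verify are therefore $\tilde b \in L^1(0, T; W^{1, q}_\loc(\R^2; \R^2))$ and $\div \tilde b = 0 \in L^1(0, T; L^q_\loc(\R^2))$. Note that the weak formulation (\ref{eq:transport-weak}) involves $\div b$, and that the equation is meant in this sense.

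\textbf{Step 1: divergence.} I will show $\div b = 0$ in the distributional sense. Since $K = \nabla^\perp G$ with $G(x) = \frac{1}{2\pi}\log|x|$, we formally have $\div (K * \omega) = \div \nabla^\perp (G * \omega) = 0$. To make this rigorous without assuming convergence of $G * \omega$, I will test against $\phi \in C^\infty_c(\R^2)$ and use Fubini:
\begin{align*}
\int b \cdot \nabla \phi \,dx = \int \omega(y) \, (\tilde K * \nabla \phi)(y) \,dy, \qquad \tilde K(x) = K(-x).
\end{align*}
Then I will check that $\tilde K * \nabla\phi = -\nabla^\perp G * \nabla \phi$ is a divergence of a curl, hence vanishes pointwise. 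Fubini is justified for a.e. $t$ if $|K| * |\nabla \phi| \in L^q$, where $q \ge 1$ is the dual exponent of $p$. This holds since $|K| * |\nabla\phi|$ is bounded and decays like $|x|^{-2}$ by the cancellation $\int \nabla\phi = 0$, so it lies in $L^q$ for every $q \ge 1$.

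\textbf{Step 2: Sobolev regularity.} By the Calder\'on--Zygmund bound quoted before the theorem, $\nabla b(t) = \nabla K * \omega(t)$ satisfies $\|\nabla b(t)\|_{L^p} \le C_p \|\omega(t)\|_{L^p}$, so $\nabla b \in L^\infty(0, T; L^p(\R^2))$. Since $p \ge 2 \ge q$, Hölder on balls gives $\nabla b \in L^\infty(0, T; L^q_\loc)$. Together with the assumption $b \in L^1(0, T; L^q_\loc)$, this yields $b \in L^1(0, T; W^{1, q}_\loc)$.

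\textbf{Main obstacle.} The delicate point is identifying the distributional gradient of $b$, which is a priori only known to be locally $L^q$, with the singular integral $\nabla K * \omega$, which includes the principal value plus a delta term that is trace-free. I would first prove this for $\omega(t) \in C^\infty_c$, where it is classical. Then I would pass to the limit for general $\omega(t) \in L^p$ by approximating in $L^p$ and using the boundedness of the operator on $L^p$. This limit also needs $K * \omega_n \to K * \omega$ in distributions. For $p \ge 2$ this convergence is not automatic from HLS, which is the subtle issue noted in the footnote, so I would handle it by the same Fubini duality as in Step 1. That is, I would show $\tilde K * \psi \in L^q$ for test functions $\psi$ that are derivatives of $C^\infty_c$ functions; this suffices because only $\nabla b$ is paired. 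Once both hypotheses are in place, Proposition \ref{prop:consistency}, part 2), applies and $\omega$ is a renormalized solution.
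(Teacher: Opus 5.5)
Your skeleton is the paper's. The Calder\'on--Zygmund bound gives $\nabla b\in L^\infty(0,T;L^p(\R^2))$, and Hölder on balls (since $q\le 2\le p$) yields $b\in L^1(0,T;W^{1,q}_\loc)$. With $\div b=0$, part 2) of Proposition~\ref{prop:consistency} for the field $-b$ and $c=0$ concludes. The paper simply asserts $\div b=0$ and $\nabla b=\nabla K*\omega$. The justification you add for these two facts is where the proposal breaks.

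In Step 1 you claim that $|K|*|\nabla\phi|$ decays like $|x|^{-2}$ because $\int\nabla\phi=0$. Once absolute values are taken there is no cancellation: $(|K|*|\nabla\phi|)(x)=\frac{1}{2\pi}\int\frac{|\nabla\phi(y)|}{|x-y|}\,dy\sim\frac{\|\nabla\phi\|_{L^1}}{2\pi|x|}$ as $|x|\to\infty$. Since $|x|^{-1}\notin L^q(\R^2\setminus B_1)$ for $q\le 2$, the claim fails exactly in the range $p\ge 2$ of the theorem. So $\omega\in L^p$ alone does not make $\iint|K(x-y)||\omega(y)||\nabla\phi(x)|\,dx\,dy$ finite; it is infinite for the footnote's $\omega=\chi_{\{x_1>0\}}/(1+|x|)$ when $p>2$. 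The interchange of integrals is therefore unjustified.

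The $|x|^{-2}$ decay does hold for $\tilde K*\partial_j\phi$ \emph{without} absolute values. It correctly shows that $\omega\mapsto\int\omega\,(\tilde K_i*\partial_j\phi)\,dy$ is continuous on $L^p$, which lets you pass to the limit $\omega_n\to\omega$ in your last paragraph. But it does not give $\int b_i\,\partial_j\phi\,dx=\int\omega\,(\tilde K_i*\partial_j\phi)\,dy$ for the given $b$. That identity is exactly what Step 1 uses, and it is the only link between your limit and the distributional gradient of $b$. So the claim that it ``suffices'' is also wrong.

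The missing input is the hypothesis that $b(t)=K*\omega(t)$ exists as a Lebesgue integral, which is genuinely more than $\omega(t)\in L^p$. Suppose $\int|K(x_0-y)||\omega(t,y)|\,dy<\infty$ at a single point $x_0$. Then $|K(x_0-y)|\ge(4\pi|y|)^{-1}$ for $|y|\ge 2|x_0|+1$, so $\int_{|y|>1}|\omega(t,y)|\,|y|^{-1}\,dy<\infty$; the bounded region $1<|y|<2|x_0|+1$ is harmless.

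Now fix $R\ge 1$ and $|x|<R$, and split the integral. On $|y|<2R$, use Young's inequality with $|K|\chi_{B_{3R}}\in L^1$. On $|y|\ge 2R$, use $|K(x-y)|\le(\pi|y|)^{-1}$. This gives $|K|*|\omega(t)|\in L^1_\loc(\R^2)$, hence $\iint|K(x-y)||\omega(t,y)||\psi(x)|\,dy\,dx<\infty$ for every bounded, compactly supported $\psi$, which is what Fubini needs. With this in place of your decay claim, Step 1 goes through. So does your approximation argument identifying $\nabla b(t)$ with the Calder\'on--Zygmund operator applied to $\omega(t)$.
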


The result in Theorem \ref{theorem:euler} has been observed previously as a consequence
of Theorem II.3 of \cite{DL89}, see Proposition 1 of \cite{FML05}.

For $1 < p < 2$, it was shown in \cite{CS15} that solutions of (\ref{eq:euler})
obtained as a limit in the vanishing viscosity approximation are renormalized.
The argument in \cite{CS15} uses the integration by parts formula (\ref{eq:ibp}).
They show that every such solution satisfies, for all $\chi \in C^\infty_c((0, T) \times \R^2)$,
\begin{align}
	\int_0^T \contractint \int_{\R^2} \chi \omega \,dx \,dt
	= \int_{\R^2} \phi(0, x) \omega(0, x) \,dx \,,
	\label{eq:chi-phi}
\end{align}
where $\phi \in L^\infty(0, T; L^1 \cap L^\infty)$ is the renormalized solution of
\begin{align*}
	\del_t \phi + b \cdot \nabla \phi = -\chi, \qquad \phi|_{t = T} = 0 \,.
\end{align*}
Equation (\ref{eq:chi-phi}) may be used to deduce that $\omega$ is renormalized.
Note that $\div b = 0$, since $\div K = 0$.
Thus by Theorem \ref{theorem:ibp}, (\ref{eq:chi-phi}) also holds for the renormalized
solution of (\ref{eq:euler}). Since $\chi \in C^\infty_c((0, T) \times \R^2)$ is arbitrary,
$\omega$ equals the renormalized solution.

\subsection{Stability}
\label{sec:stability}

\subsubsection{The case $c = 0$}

In this section, we prove stability results for renormalized solutions
of the transport equation. We consider the case $c = 0$ here
and more general $c$ in §\ref{sec:stab-general-c}.
A consequence of the stability results will be the existence of renormalized
solutions for $u^0 \in L(\R^d)$, see Corollary \ref{cor:existence}.

\begin{theorem}[Stability]
	Assume $b$ satisfies (\ref{eq:bc-hypotheses}) and (\ref{eq:b-growth-condition}) with $c = 0$.
	For $n \in \N$, let $b_n \in L^1_\loc([0, T] \times \R^d; \R^d)$ and $c_n \in L^1_\loc([0, T] \times \R^d)$
	such that
	\begin{align}
		b_n\to b \quad \text{in $L^1_\loc([0, T] \times \R^d; \R^d)$},
		\qquad \div b_n\to \div b, \ c_n\to 0 \quad \text{in $L^1_\loc([0, T] \times \R^d)$}
		\label{eq:stability-conv-assumptions}
	\end{align}
	as $n \to \infty$.
	Let $u_n^0, u^0 \in L(\R^d)$ and let $u_n \in L([0, T] \times \R^d)$ be renormalized
	solutions of (\ref{eq:transport}) with $(b_n, c_n, u_n^0)$.
	\begin{enumerate}
		\item If $u_n^0 \to u^0$ in $L(\R^d)$, then there exists a renormalized solution
		      $u \in C([0, T]; L(\R^d))$ for $(b, 0, u^0)$, and $u_n \to u$ in
		      $C([0, T]; L(\R^d))$.
		\item Let $1 \le p < \infty$ and $u_n \in L^\infty(0, T; L^p(\R^d))$. Assume that
		      $c_n, \, \div b_n \in L^1(0, T; L^\infty(\R^d))$ and that (\ref{eq:b-growth-condition}) holds for all $b_n$.
		      If $u_n^0 \to u^0$ in $L^p(\R^d)$ and there exists a function $g \in L^1((0, T))$
		      such that
		      \begin{align}
			      \hspace{4em}
			      \sup_{n \in \N} \, \norm*{\div b_n(t) + p c_n(t)}_{L^\infty(\R^d)} \le g(t)
			      \qquad \text{for a.e. $t \in [0, T]$,}
			      \label{eq:majorant}
		      \end{align}
		      then $u_n \to u$ in $C([0, T]; L^p(\R^d))$.
	\end{enumerate}
	\label{theorem:stability}
\end{theorem}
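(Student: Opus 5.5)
We first reduce part 1) to bounded solutions by renormalizing. Fix a strictly increasing admissible function, for instance $\beta_0(\sigma) = \arctan \sigma$. Then $w_n \colonequals \beta_0(u_n)$ is bounded by $\pi/2$. Since $u_n$ is a renormalized solution and compositions of admissible functions with $\beta_0$ are again admissible, $w_n$ is itself a renormalized solution. Its equation is
\begin{align*}
	\del_t w_n - b_n \cdot \nabla w_n + c_n u_n\beta_0'(u_n) = 0, \qquad w_n|_{t=0} = \beta_0(u^0_n).
\end{align*}
The zeroth-order term $c_n u_n \beta_0'(u_n)$ is bounded by $|c_n|$ and hence tends to $0$ in $L^1_\loc$. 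Convergence $u_n \to u$ in $L(\cdot)$ is equivalent to convergence of $\beta_0(u_n)$ to $\beta_0(u)$ in $L^1_\loc$, because $\beta_0$ is a homeomorphism onto $(-\pi/2,\pi/2)$. It therefore suffices to prove the following: uniformly bounded renormalized solutions $w_n$ of transport equations with right-hand sides $f_n \to 0$ in $L^1_\loc$ and initial data $w_n^0 \to w^0 \colonequals \beta_0(u^0)$ in $L^1_\loc$ converge in $C([0,T]; L^1_\loc)$ to the unique bounded solution $w$ for $(b,0,w^0)$. Then $u \colonequals \tan w$ is a renormalized solution for $(b,0,u^0)$. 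For this last assertion we must check that $|w| < \pi/2$ a.e. We will get it from convergence of $\beta(w_n)$ for suitable $\beta$, which passes level-set information, or from a separate renormalized argument for $\chi_{|w| = \pi/2}$.

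\textbf{Step 1: weak compactness and identification.} Extract a subsequence with $w_n \rightharpoonup^* w$ in $L^\infty$. Passing to the limit in the weak formulation, which is linear in $w_n$ with $b_n, \div b_n$ converging strongly in $L^1_\loc$, shows that $w$ is a weak solution for $(b,0,w^0)$. Since $b \in L^1_t W^{1,1}_\loc$ and $w \in L^\infty$, part 2) of Proposition \ref{prop:consistency} (with $p=\infty$, $q=1$) shows that $w$ is renormalized. By Corollary \ref{cor:uniqueness-infty}, which uses (\ref{eq:b-growth-condition}), $w$ is unique, so the whole sequence converges weakly-$*$.

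\textbf{Step 2: upgrade to strong convergence.} Apply the same argument to $w_n^2$. It is a bounded renormalized solution with right-hand side $2 w_n f_n \to 0$ and initial data $(w_n^0)^2 \to (w^0)^2$. Its weak-$*$ limit $\overline{w^2}$ therefore solves the equation for $(b,0,(w^0)^2)$. The function $w^2$ also solves this equation, because $w$ is renormalized. Uniqueness gives $\overline{w^2} = w^2$, and weak convergence together with convergence of the squares yields $w_n \to w$ in $L^2_\loc([0,T]\times\R^d)$.

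\textbf{Step 3: convergence in $C([0,T];L^1_\loc)$.} This is the main obstacle. Repeat Steps 1 and 2 with a test function $\xi \in C_c^\infty(\R^d)$ against $\beta(w_n)$. Equation (\ref{eq:beta-xi-psi}) gives a bound on $\frac{d}{dt}\int w_n\xi$ in $L^1(0,T)$ that is uniform in $n$ and controlled by $\|b_n\|_{L^1_\loc}$ and $\|\div b_n\|_{L^1_\loc}$. These quantities are uniformly integrable in $t$ because the sequences converge in $L^1$. By Arzel\`a--Ascoli, $\int w_n(t)\xi$ and $\int w_n(t)^2\xi$ converge uniformly in $t$ to the corresponding quantities for $w$. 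A standard argument by contradiction with $t_n \to t$ then gives the result: combine weak convergence of $w_n(t_n)$, which follows from the equicontinuity plus separability, with convergence of the local $L^2$ norms and the continuity of $w$ from Proposition \ref{prop:continuity}. This yields $w_n(t_n) \to w(t)$ in $L^2_\loc$, hence part 1).

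\textbf{Part 2).} From (\ref{eq:ddt-Lp-norm}) with the majorant (\ref{eq:majorant}), Gr\"onwall gives uniform bounds
\begin{align*}
	\|u_n(t)\|_{L^p} \le e^{\int_0^T g}\,\|u_n^0\|_{L^p}.
\end{align*}
Moreover, $\|u_n(t)\|_{L^p}^p$ is equicontinuous in $t$ and converges to $\|u(t)\|^p_{L^p}$ uniformly in $t$. To see this, pass to the limit in the integrated form of (\ref{eq:ddt-Lp-norm}), where the terms $(\div b_n + pc_n)|u_n|^p$ are dominated by $g|u_n|^p$. Local convergence in measure is already known from part 1). Combined with this, Lemma \ref{lemma:pointwise-ae-Lp} gives $u_n(t_n)\to u(t)$ in $L^p$ along subsequences that converge a.e., hence $u_n \to u$ in $C([0,T];L^p)$.

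The delicate point is identifying the limit of $\int (\div b_n + pc_n)|u_n|^p$. We handle it with a tightness argument: uniform smallness of $\int_{|x|>R}|u_n|^p$, obtained from the cutoff estimate in the proof of Proposition \ref{prop:continuity} using (\ref{eq:b-growth-condition}) for $b_n$, together with local strong convergence. We expect this tightness to need (\ref{eq:b-growth-condition}) uniformly in $n$. If it is not uniform, we instead argue with $\liminf$ and $\limsup$: Fatou gives $\|u(t)\|_{L^p} \le \liminf_n \|u_n(t)\|_{L^p}$, and we aim for the reverse inequality via the explicit formula for $\|u(t)\|_{L^p}^p$.
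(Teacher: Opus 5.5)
Your Steps 1--3 for part 1) essentially reproduce the paper's Steps 1--3. Your equicontinuity plus Arzel\`a--Ascoli argument replaces the paper's convergence of $\int v_n\xi\,dx$ in $W^{1,1}((0,T))$, and both work. The return from $w$ to $u=\tan w$, however, is not carried out. To define $u$, and to turn $w_n\to w$ in $C([0,T];L^2_\loc)$ into $u_n\to u$ in $C([0,T];L(\R^d))$, you need
\[
\sup_{t\in[0,T]}\meas^d(\{x\in K:|w(t,x)|\ge\pi/2-\delta\})\to0 \quad \text{as } \delta\to0 .
\]
Your first suggestion does not give this by itself. Convergence of $\alpha(w_n)$, for an admissible $\alpha$ concentrated near $\pm\pi/2$, only shows that the limit is $\alpha(w)$. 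You still need $\alpha_k(w)\to0$ as $\alpha_k\to\chi_{\{|\sigma|=\pi/2\}}$, and in part 1) there is no uniform-in-$n$ control of $\{|u_n|\ge M\}$ to supply it. The paper gets this by applying Steps 1--3 a second time, with $b$ fixed, to the sequence $\alpha_k(w)$ of renormalized solutions with data $\alpha_k(w^0)\to0$ in $L(\R^d)$. This yields $\alpha_k(w)\to0$ in $C([0,T];L^2_\loc)$, hence the bound uniformly in $t$. Your second suggestion can be completed, but it needs an argument valid for every $t$ and a compactness argument in $t$, neither of which you give. Also, renormalization of $u$ does not follow by composing $w$ with $\beta\circ\tan$, which need not be $C^1$ up to $\pm\pi/2$. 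Instead, apply your bounded-solution claim to $\beta(u_n)$ for each admissible $\beta$ and identify the limit with $\beta(u)$.

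The genuine gap is in part 2), at the point you flag. The tightness route needs (\ref{eq:b-growth-condition}) uniformly in $n$, which is not assumed. Even on bounded sets you have no equi-integrability of $|u_n|^p$: local convergence in measure plus an $L^\infty_tL^p_x$ bound do not give $\int_0^{t}\int(\div b_n+pc_n)|u_n|^p\to\int_0^t\int\div b\,|u|^p$. The fallback does not close either. Fatou gives only the $\liminf$ bound, and the formula for $\|u(t)\|^p_{L^p}$ contains the sign-indefinite term $\int_0^t\int\div b\,|u|^p$, to which Fatou cannot be applied. The missing idea is an integrating factor that makes the integrand nonnegative. With $a(t)\colonequals\int_0^tg$, (\ref{eq:ddt-Lp-norm}) gives
\[
e^{-a(t_n)}\|u_n(t_n)\|_{L^p}^p=\|u_n^0\|_{L^p}^p-\int_0^{t_n}\!\!\int_{\R^d}(\div b_n+pc_n+g)\,e^{-a}|u_n|^p\,dx\,ds,
\]
and the integrand is $\ge0$ by (\ref{eq:majorant}). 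Pass to a subsequence on which $\div b_n$, $c_n$ and $u_n$ converge a.e. Then Fatou gives
\[
\limsup_n e^{-a(t_n)}\|u_n(t_n)\|^p_{L^p}\le\|u^0\|^p_{L^p}-\int_0^t\!\!\int_{\R^d}(\div b+g)e^{-a}|u|^p\,dx\,ds=e^{-a(t)}\|u(t)\|^p_{L^p}.
\]
This is the reverse inequality you need. Together with your $\liminf$ bound and Lemma \ref{lemma:pointwise-ae-Lp}, it finishes part 2) without any tightness or uniform growth assumption.
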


\begin{proof}
	This proof corresponds to the proofs of Theorems II.3 and II.4 in \cite{DL89}.
	For the first four steps of the proof, we assume the hypotheses of part 1)
	of Theorem \ref{theorem:stability}.

	\paragraph{Step 1: Weak-$*$ convergence of $\beta(u_n)$.}

	Let $\beta$ be an admissible function and set
	\begin{align*}
		v_n= \beta(u_n), \quad v_n^0= \beta(u_n^0), \quad v^0= \beta(u^0) \,.
	\end{align*}
	We first show that $v_n$ converges weak-$*$ in
	$L^\infty([0, T] \times \R^d)$ to the unique solution of (\ref{eq:transport})
	with $c = 0$ and initial data $v^0$. Since $v_n$ is bounded in
	$L^\infty([0, T] \times \R^d)$, there exists a weak-$*$ convergent
	subsequence $v_{k_n}$, whose limit we denote by $v$.

	Since $u_{k_n}$ is a renormalized solution, $v_{k_n}$ satisfies
	\begin{align*}
		\del_tv_{k_n} - b_{k_n} \! \cdot \nabla v_{k_n}+ c_{k_n}u_{k_n}\beta'(u_{k_n}) = 0
		\quad \text{in $(0, T) \times \R^d$},
		\qquad v_{k_n}|_{t = 0}= v_{k_n}^0 \quad \text{in $\R^d$}\,.
	\end{align*}
	This means that for every $\phi \in C^\infty_c([0, T) \times \R^d)$
	\begin{align*}
		-\iTR & v_{k_n} \del_t\phi \,dx \,dt -\int_{\R^d}v^0_{k_n}\phi(0, x) \,dx    \\
		      & + \iTR v_{k_n} (\phi \div b_{k_n} \! + b_{k_n} \! \cdot \nabla \phi)
		+ c_{k_n}u_{k_n} \beta'(u_{k_n}) \, \phi \, dx \,dt = 0 \,.
	\end{align*}
	By letting $n \to \infty$ in this equation, we now show that $v$
	satisfies (\ref{eq:transport-weak}) with $c = 0$ and initial data
	$v^0$. By (\ref{eq:stability-conv-assumptions}) and the boundedness of
	$u_{k_n}\beta'(u_{k_n})$ in $L^\infty([0, T] \times \R^d)$,
	the first and third integral converge as desired. For the second integral,
	note that $v_{k_n}^0, v^0 \le \|\beta\|_{L^\infty(\R)}$.
	Therefore, for every $\eps \ge 0$
	\begin{align*}
		\int_{\R^d} |v_{k_n}^0 - v^0| |\phi(0, x)| \,dx
		\le \|\phi(0, \cdot)\|_{L^1(\R^d)}
		\paren*{
			\eps + 2\|\beta\|_{L^\infty(\R)} \,
			\meas^d(\{|v_{k_n}^0 - v^0| \ge \eps\} \cap \supp \phi)
		}
	\end{align*}
	Since $v_{k_n}^0\to v^0$ in $L(\R^d)$ by the
	Lipschitz-continuity of $\beta$, the right-hand side can be made arbitrarily small
	by choosing $\eps$ appropriately. Hence $v$ is a solution of the transport
	equation with $(b, 0, v^0)$.

	So far, we have only shown the weak-$*$ convergence of the subsequence
	$v_{k_n}$ to $v$. However, by uniqueness (Corollary \ref{cor:uniqueness-infty}) and the above
	argument, every subsequence of $v_n$ has a subsequence
	converging to $v$ weak-$*$ in
	$L^\infty([0, T] \times \R^d)$, which implies the convergence
	of the whole sequence.

	\paragraph*{Step 2: $L^2_\loc([0, T] \times \R^d)$ convergence of $\beta(u_n)$.}

	We now show that $v_n\to v$ in $L^2_\loc([0, T] \times \R^d)$. Let
	$K \subset [0, T] \times \R^d$ be compact. By Step 1,
	$v_n\rightharpoonup v$ in $L^2(K)$, so it suffices to show
	$\|v_n\|_{L^2(K)}\to \|v\|_{L^2(K)}$, because $L^2(K)$ is uniformly convex.

	Let $w$ be the unique solution of (\ref{eq:transport}) with $(b, 0, \beta(u^0)^2)$.
	By Step 1 with $\beta$ replaced by $\beta^2$, which is also admissible,
	\begin{align*}
		v_n^2\rightharpoonup^{*}w \quad \text{in $L^\infty([0, T] \times \R^d)$}\,.
	\end{align*}
	By Proposition \ref{prop:consistency}, $v$ is renormalized, hence $v^2$
	is also a solution of (\ref{eq:transport}) with $c = 0$ and initial data $\beta(u^0)^2$.
	Note that $v$ is bounded, so we can choose a suitably cut-off
	version of $x^2$ as an admissible function. Thus $v^2 = w$ by Corollary
	\ref{cor:uniqueness-infty}. We conclude $\|v_n\|_{L^2(K)}\to \|v\|_{L^2(K)}$,
	because $\meas^{d + 1}(K) < \infty$.

	\paragraph{Step 3: Uniformity in time.}

	The next goal is to show $v_n\to v$ in
	$C([0, T]; L^2_\loc(\R^d))$. We first show that for every $\xi
		\in C^\infty_c(\R^d)$
	\begin{align}
		\int_{\R^d}v_n(t, x) \xi(x) \,dx \to \int_{\R^d} v(t, x)\xi(x) \,dx
		\qquad \text{in $W^{1, 1}((0, T))$.}
		\label{eq:weak-conv-w11}
	\end{align}
	Since $u_n$ is a renormalized solution, for every $\psi \in C^\infty_c((0, T))$
	\begin{align*}
		\iTR v_n\xi \del_t\psi(t) \,dx
		= \iTR (v_n(\xi \div b_n+ b_n \cdot \nabla \xi)
		+ c_nu_n\beta'(u_n)\xi) \psi(t) \,dx \,dt \,.
	\end{align*}
	Because $\psi$ is arbitrary, we deduce that
	\begin{align}
		\frac{d}{dt} \int_{\R^d}v_n\xi \,dx
		= -\int_{\R^d}v_n(\xi \div b_n+ b_n\cdot \nabla \xi)
		+ c_nu_n\beta'(u_n)\xi \,dx
		\label{eq:dt-int-vn}
	\end{align}
	in the sense of distributions on $(0, T)$. However, the right-hand side
	is in $L^1((0, T))$, so (\ref{eq:dt-int-vn}) shows
	$\int_{\R^d}v_n\xi \,dx \in W^{1, 1}((0, T))$. Similarly,
	\begin{align}
		\frac{d}{dt} \int_{\R^d}v \xi \,dx
		= -\int_{\R^d}v(\xi \div b
		+ b \cdot \nabla \xi) \, dx \,.
		\label{eq:dt-int-v}
	\end{align}
	Thus, to prove (\ref{eq:weak-conv-w11}), by (\ref{eq:dt-int-vn}) and (\ref{eq:dt-int-v})
	it suffices to show that
	\begin{align*}
		\iTR \xi (v_n\div b_n - v \div b) + (v_nb_n - vb) \cdot \nabla \xi
		+ c_nu_n\beta'(u_n) \xi \,dx \,dt \, \to \, 0 \,.
	\end{align*}
	The integral of $c_nu_n\beta'(u_n) \xi$ converges to zero since $c_n\to 0$ in
	$L^1_\loc([0, T] \times \R^d)$ and $u_n\beta'(u_n)$ is bounded in
	$L^\infty([0, T] \times \R^d)$. The first two terms converge to 0 since
	$b_n \to b$ and $\div b_n \to \div b$ in $L^1_\loc([0, T] \times \R^d)$ and
	$v_n \rightharpoonup^* v$ in $L^\infty([0, T] \times \R^d)$.

	By Proposition \ref{prop:continuity}, $v \in C([0, T]; L^2_\loc(\R^d))$.
	Hence, to conclude $v_n\to v$ in $C([0, T]; L^2_\loc(\R^d))$, it suffices
	to show that for every convergent sequence $t_n\to t$ in $[0, T]$
	\begin{align}
		v_n(t_n, \cdot) \to v(t, \cdot) \qquad \text{in $L^2_\loc(\R^d)$.}
		\label{eq:vn-tn-to-v-t}
	\end{align}
	Fix such a sequence $t_n$. Because $W^{1, 1}((0, T))$ embeds
	continuously into $C([0, T])$, (\ref{eq:weak-conv-w11}) implies for every
	$\xi \in C^\infty_c(\R^d)$
	\begin{align}
		\int_{\R^d}v_n(t_n, x) \xi(x) \,dx \to
		\int_{\R^d}v(t, x)\xi(x) \,dx
		\qquad \text{as $n \to \infty$.}
		\label{eq:vn-tn}
	\end{align}
	Let $K \subset \R^d$ be compact.
	Since $v_n(t_n)$ is bounded in $L^2(K)$, (\ref{eq:vn-tn}) implies
	$v_n(t_n) \rightharpoonup v(t)$ in $L^2(K)$. Since $\beta^2$ is also
	an admissible function, we also have $v_n(t_n)^2 \rightharpoonup v(t)^2$ in $L^2(K)$.
	Since $\chi_{K} \in L^2(K)$, we conclude
	$\|v_n(t_n, \cdot)\|_{L^2(K)}\to \|v(t, \cdot)\|_{L^2(K)}$, and hence
	$v_n(t_n, \cdot) \to v(t, \cdot)$ in $L^2(K)$.

	\paragraph{Step 4: Part 1).}

	Choose $\beta = \tanh$ and let $v$ be the limit of $\beta(u_n)$ in
	$C([0, T]; L^2_\loc(\R^d))$. Define $u = \beta^{-1}(v)$. To make
	sense of this definition, we now show that
	\begin{align}
		m^{d}(\{x \in \R^d: |v(t, x)| = 1\}) = 0 \quad \text{for every $t \in [0, T]$}\,.
		\label{eq:v-less-1}
	\end{align}
	For every $t \in [0, T]$, we already know that $|v(t, x)| \le 1$
	for a.e. $x \in \R^d$, since $v$ is the weak-$*$ limit of
	$v_n$ in $L^\infty([0, T] \times \R^d)$
	and $v \in C([0, T]; L^2_\loc(\R^d))$.
	To prove (\ref{eq:v-less-1}), for $k \in \N$, let $\theta_k \in C^\infty_c(\R)$ be such that
	\begin{align*}
		0 \le \theta_k \le 1, \quad \theta_k|_{[1 - 1/k, 1]} = 1,
		\quad \supp \theta_k \subset (1 - 2/k, \infty) \,.
	\end{align*}
	Define $\alpha_k(\sigma) = \theta_k(|\sigma|)$ for $\sigma \in \R$ and
	note that $\alpha_k$ is admissible.
	Since $|v^0(x)| < 1$ for all $x \in \R^d$, $\alpha_k(v^0) \to 0$ in
	$L(\R^d)$ as $k \to \infty$. Furthermore, since $v$ is renormalized,
	$\alpha_k(v)$ is the unique renormalized solution of (\ref{eq:transport}) with
	$c = 0$ and initial data $\alpha_k(v^0)$. By applying Steps 1-3 to the sequence
	$\alpha_k(v)$, $b_k = b$ and $c_k = 0$ for $k \in \N$,
	we deduce that for every admissible function $\gamma$,
	\begin{align*}
		\gamma(\alpha_k(v)) \to \gamma(0)
		\quad \text{in $C([0, T]; L^2_\loc(\R^d))$ as $k \to \infty$.}
	\end{align*}
	But $\alpha_k$ only takes values in $[0, 1]$, hence we may choose
	$\gamma$ such that $\gamma|_{[0, 1]} = \id$. Thus
	\begin{align}
		\alpha_k(v) \to 0
		\quad \text{in $C([0, T]; L^2_\loc(\R^d))$ as $k \to \infty$.}
		\label{eq:alpha-to-0}
	\end{align}
	By the definition of $\alpha_k$, this implies (\ref{eq:v-less-1}).

	We now show $u_n\to u$ in $C([0, T]; L(\R^d))$. Let
	$K \subset \R^d$ be compact and $\delta > 0$. By (\ref{eq:alpha-to-0}), there
	exists an $M \in \N$ such that
	\begin{align*}
		\sup_{t \in [0, T]}\meas^{d}(\{x \in K : |v(t, x)| \ge 1 - 1/M\}) < \delta \,.
	\end{align*}
	Let $\eps > 0$. Then
	\begin{align}
		\sup_{t \in [0, T]}\meas^{d}
		 & (\{x \in K : |u_n(t, x) - u(t, x)| \ge \eps\})
		\label{eq:uniform-meas}                           \\
		 & \le \sup_{t \in [0, T]}
		\meas^{d}\big(\{ x \in K : |u_n(t, x) - u(t, x)| \ge \eps \ \text{and}
		\ |v(t, x)| < 1 - 1/M\}\big) + \delta \,. \notag
	\end{align}
	The supremum on the right-hand side converges to 0 as
	$n \to \infty$, since $\beta^{-1}$ is Lipschitz-continuous on any compact
	interval $I \subset (-1, 1)$. Explicitly, let
	$\lambda = \beta(\beta^{-1}(1 - 1/M) + \eps)$ and $L$ be the Lipschitz constant of
	$\beta^{-1}|_{[-\lambda, \lambda]}$. If $x \in K$ and $|v(t, x)| < 1 - 1/M$, then
	\begin{align*}
		\{|u_n(t, x) - u(t, x)| \ge \eps\} \quad \text{implies}\quad \{|v_n(t, x) - v(t, x)| \ge \eps/L\} \,.
	\end{align*}
	Since $v_n\to v$ in $C([0, T]; L(\R^d))$ and $\delta > 0$ is arbitrary, we conclude
	$u_n\to u$ in $C([0, T]; L(\R^d))$.

	To prove that $u$ is a renormalized solution of (\ref{eq:transport})
	with $c = 0$ and initial data $u^0$, let $\beta$ be an admissible
	function. Because $\beta$ is Lipschitz-continuous, $\beta(u_n)$ converges
	to $\beta(u)$ in $C([0, T]; L(\R^d))$. By Step 3,
	$\beta(u_n)$ also converges in $C([0, T]; L^2_\loc(\R^d))$ to the unique renormalized
	solution of (\ref{eq:transport}) with $c = 0$ and initial data $\beta(u^0)$.

	\paragraph{Step 5: Part 2).}

	By (\ref{eq:majorant}) and (\ref{eq:ddt-Lp-norm}), $u_n$ is bounded in $L^\infty(0, T; L^p(\R^d))$.
	By Step 4, $u$ is the limit of $u_n$ in $C([0,T]; L(\R^d))$, hence $u$ belongs to
	$L^\infty(0, T; L^p(\R^d))$. Thus $b$ and $u$ satisfy the hypothesis of part 2) of
	Proposition \ref{prop:continuity}, which implies that $u \in C([0, T]; L^p(\R^d))$.
	So, we only need to show that
	$u_n(t_n) \to u(t)$ in $L^p(\R^d)$ for every convergent sequence $t_n \to t$ in $[0, T]$.
	Since $u_n \to u$ in $C([0, T]; L(\R^d))$, we may assume that $u_n(t_n)$
	converges to $u(t)$ almost everywhere on $\R^d$.
	Therefore, by Lemma \ref{lemma:pointwise-ae-Lp}, it suffices to show that
	$\|u_n(t_n)\|_{L^p(\R^d)} \to \|u(t)\|_{L^p(\R^d)}$ as $n \to \infty$.

	By Fatou's Lemma,
	\begin{align}
		\liminf_{n \to \infty} \int_{\R^d} |u_n(t_n)|^p \,dx \ge \int_{\R^d} |u(t)|^p \,dx \,.
		\label{eq:un-tn-fatou}
	\end{align}
	For the other inequality, let $a \in W^{1, 1}((0, T))$.
	By (\ref{eq:ddt-Lp-norm}), for a.e. $t \in (0, T)$
	\begin{align}
		\frac{d}{dt} \paren*{e^{-a(t)} \int_{\R^d} |u(t)|^p \,dx }
		= -e^{-a(t)} \int_{\R^d} (\div b(t) + a'(t))|u(t)|^p \,dx \,.
		\label{eq:ddt-e-at}
	\end{align}
	Integrating (\ref{eq:ddt-e-at}) from 0 to $t$ yields
	\begin{align}
		e^{-a(t)} \int_{\R^d} |u(t)|^p \, dx
		= \int_{\R^d} |u^0|^p \,dx
		- \int_0^t \contractintsmall \int_{\R^d} (\div b + a') e^{-a(s)} |u|^p \,dx \,ds \,.
		\label{eq:e-a-u-t}
	\end{align}
	Since the assumptions of Proposition \ref{prop:continuity} are also satisfied for $u_n$,
	(\ref{eq:ddt-e-at}) also holds with $u_n, b_n, c_n$ instead of $u, b, 0$.
	We integrate from 0 to $t_n$:
	\begin{align}
		e^{-a(t_n)} \int_{\R^d} |u_n(t_n)|^p \, dx
		= \int_{\R^d} |u_n^0|^p \,dx
		- \int_0^{t_n} \contractintsmall \int_{\R^d} (\div b_n + pc_n + a') e^{-a(s)} |u_n|^p \,dx \,ds
		\label{eq:e-a-un-tn}
	\end{align}
	Since $b_n \to b$ and $c_n \to 0$ in $L^1_\loc([0, T] \times \R^d)$, after extracting
	a subsequence, we may assume they converge pointwise almost everywhere on $[0, T] \times \R^d$.
	We now choose $a$ such that $a$ is continuous, $a(0) = 0$ and $a' = g$, where $g \in L^1((0, T))$
	is from the assumptions of Theorem \ref{theorem:stability}, part 2). Due to our choice of $a'$,
	the integrand on the right-hand side in (\ref{eq:e-a-un-tn}) is nonnegative. Thus, by
	the assumption that $u^0_n \to u^0$ in $L^p(\R^d)$ and another application of Fatou's Lemma,
	\begin{align}
		\begin{split}
			\limsup_{n \to \infty}
			 & \paren*{ e^{-a(t_n)} \int_{\R^d} |u_n(t_n)|^p \,dx}   \\
			 & \ = \int_{\R^d} |u^0|^p \,dx - \liminf_{n \to \infty}
			\int_0^{t_n} \contractintsmall \int_{\R^d}
			(\div b_n + pc_n + a') e^{-a(s)} |u_n|^p \,dx \,ds       \\
			 & \ \le  e^{-a(t)} \int_{\R^d} |u(t)|^p \,dx \,.
		\end{split}
		\label{eq:un-tn-limsup}
	\end{align}
	Since $a$ is continuous, the left-hand side is equal to
	$e^{-a(t)}\limsup_{n \to \infty} \int_{\R^d} |u_n(t_n)|^p \,dx$.
	Thus we conclude $\|u_n(t_n)\|_{L^p(\R^d)} \to \|u(t)\|_{L^p(\R^d)}$.
\end{proof}

\subsubsection{More general $c$}
\label{sec:stab-general-c}

We now prove stability results for
$c \in L^1(0, T; L^\infty(\R^d))$ by reducing to the case $c = 0$.
Let $a \in L^1(0, T; L^\infty(\R^d))$ and consider the equation
\begin{align*}
	\del_t\Phi - b \cdot \nabla \Phi = a, \qquad\qquad \Phi|_{t = 0}= 0 \,.
\end{align*}
By Proposition \ref{prop:existence} and Remark \ref{remark:existence-rhs},
if $b \in L^1_\loc([0, T] \times \R^d; \R^d)$ and
$\div b \in L^1(0, T; L^\infty(\R^d))$, there exists a solution $\Phi \in L^\infty([0, T] \times \R^d)$
to this equation. If $b \in L^1(0, T; W^{1, 1}_\loc(\R^d))$,
then $\Phi$ is renormalized by Proposition \ref{prop:consistency}.

\begin{lemma}
	Assume $b \in L^1(0, T; W^{1, 1}_\loc(\R^d))$ and $c \in L^1(0, T; L^1_\loc(\R^d))$.
	Let $u^0\in L(\R^d)$. Then, $u \in L([0, T] \times \R^d)$ is a renormalized solution
	of (\ref{eq:transport}) with $(b, c, u^0)$ if and only if $e^{-\Phi}u$ is a
	renormalized solution of (\ref{eq:transport}) with $(b, c + a, u^0)$.
	\label{lemma:c+a}
\end{lemma}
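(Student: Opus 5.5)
The idea is to verify the formal identity: if $w = e^{-\Phi}u$, then
\begin{align*}
	\del_t w - b\cdot\nabla w = e^{-\Phi}(\del_t u - b\cdot \nabla u) - w(\del_t\Phi - b\cdot\nabla\Phi) = -(c+a)w .
\end{align*}
To make this rigorous at the level of renormalized solutions, I would prove a two-variable chain rule for bounded renormalized solutions transported by the same field $b$.

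\emph{Two-variable chain rule.} Let $U \colonequals \gamma(u)$ with $\gamma$ admissible, and let $\Phi$ be as above. Both are in $L^\infty([0,T]\times\R^d)$ and satisfy
\begin{align*}
	\del_t U - b\cdot\nabla U = -cu\gamma'(u), \qquad \del_t\Phi - b\cdot\nabla\Phi = a .
\end{align*}
Apply Lemma \ref{lemma:regularization} with Remark \ref{remark:regularization-rhs}, taking $p=\infty$, $\alpha=1$, $\lambda=1$; this is allowed since $b\in L^1(0,T;W^{1,1}_\loc)$. It gives $U_\eps, \Phi_\eps \in W^{1,1}_\loc$ satisfying these equations pointwise a.e., with remainders $r_\eps, s_\eps \to 0$ in $L^1_\loc$. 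Now let $H\in C^1(\R^2)$ have bounded derivatives on $[-\|U\|_\infty,\|U\|_\infty]\times[-\|\Phi\|_\infty,\|\Phi\|_\infty]$. The Sobolev chain rule gives
\begin{align*}
	\del_t H(U_\eps,\Phi_\eps) - b\cdot\nabla H(U_\eps,\Phi_\eps) = \del_1H\,(\text{rhs}_U + r_\eps) + \del_2 H\,(a+s_\eps).
\end{align*}
Test this against $\phi$ and let $\eps\to0$, exactly as in the proof of Lemma \ref{lemma:solutions-are-renormalized}: we have a.e. convergence and uniform bounds, the coefficients $\div b, c, a$ are in $L^1_\loc$, and we use dominated convergence. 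The result is that $H(U,\Phi)$ solves
\begin{align*}
	\del_t H(U,\Phi) - b\cdot\nabla H(U,\Phi) = -\del_1H(U,\Phi)\,cu\gamma'(u) + \del_2H(U,\Phi)\,a
\end{align*}
weakly, with initial datum $H(\gamma(u^0),0)$, because $\Phi|_{t=0}=0$.

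\emph{Choice of $H$.} Fix an admissible $\beta$. The main obstacle is that $u$ is unbounded, so $\beta(e^{-\Phi}u)$ cannot be written directly as a $C^1$ function with bounded derivatives of $(\gamma(u),\Phi)$. For instance, with $\gamma=\tanh$ the derivative $\del_s$ blows up as $|s|\to1$, since $\beta'$ only decays like $1/|z|$. I would avoid this by truncation. Choose admissible $\beta_k$ such that:
\begin{itemize}
	\item $\beta_k$ is constant outside $[-k,k]$;
	\item $\beta_k\to\beta$ and $\beta_k'\to\beta'$ pointwise;
	\item $|\beta_k|$ and $|z\beta_k'(z)|$ are bounded uniformly in $k$.
\end{itemize}
Set $M=\|\Phi\|_\infty$ and $\gamma=\tanh$, and define $H_k(s,\sigma) \colonequals \beta_k(e^{-\sigma}\operatorname{artanh} s)$. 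This is extended as a constant near $|s|=1$, which is legitimate because $\beta_k(e^{-\sigma}y)$ is already constant for $|y|\ge e^{M}k$. Hence $H_k$ is $C^1$ with bounded derivatives on the relevant rectangle, and $H_k(U,\Phi)=\beta_k(e^{-\Phi}u)$. Computing the derivatives,
\begin{align*}
	\del_1H_k(U,\Phi)\, u\gamma'(u) = e^{-\Phi}u\,\beta_k'(e^{-\Phi}u), \qquad \del_2 H_k(U,\Phi) = -e^{-\Phi}u\,\beta_k'(e^{-\Phi}u).
\end{align*}
Therefore, with $w=e^{-\Phi}u$,
\begin{align*}
	\del_t\beta_k(w) - b\cdot\nabla\beta_k(w) + (c+a)\,w\beta_k'(w) = 0, \qquad \beta_k(w)|_{t=0}=\beta_k(u^0).
\end{align*}

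\emph{Passing to the limit and the converse.} Let $k\to\infty$ in the weak formulation. The terms $\beta_k(w)$ and $w\beta_k'(w)$ are uniformly bounded and converge pointwise, and $b$, $\div b$, $c$, $a$ are in $L^1_\loc$, so dominated convergence yields (\ref{eq:def-renormalized}) for $w$ with $(b, c+a, u^0)$. Thus $e^{-\Phi}u$ is renormalized for $(b,c+a,u^0)$.

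For the converse, note that $-\Phi$ is the bounded renormalized solution of the same equation with $a$ replaced by $-a$. Applying the direct implication to $w$, with coefficient $c+a$ and $-a$ in place of $a$, shows that $e^{\Phi}w=u$ is renormalized for $(b,c,u^0)$.
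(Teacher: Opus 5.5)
Your proposal is correct and follows essentially the same route as the paper: you regularize a bounded renormalization of $u$ together with $\Phi$ via Lemma \ref{lemma:regularization}, apply the Sobolev chain rule to a $C^1$ function of the two regularized quantities, let $\eps \to 0$, remove a truncation, and obtain the converse by symmetry. The only difference is where the truncation sits: the paper works with $\gamma(e^{-\Phi}\beta_n(u))$ and lets the inner $\beta_n \to \id$, using $|\sigma\beta_n'(\sigma)| \le |\beta_n(\sigma)|$ to dominate the $c$-term, whereas you fix the invertible inner renormalization $\tanh$ and truncate the outer function, so that $H_k(\tanh u, \Phi) = \beta_k(e^{-\Phi}u)$ holds exactly and the final limit involves only uniformly bounded integrands.
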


\begin{proof}
	This proof is from \cite{DL89}, p.~526, Lemma II.2.

	It suffices to show just one direction due to the symmetry of the statement.
	Assume that $u$ is a renormalized solution of the transport equation for
	the initial condition $u^0$.

	Formally, the lemma would follow from the chain rule. To make this
	argument rigorous, we invoke the regularization result, Lemma \ref{lemma:regularization}.
	Let $\beta$ be an admissible function and set $v = \beta(u)$. Then
	by Lemma \ref{lemma:regularization}
	\begin{align}
		\begin{split}
			\begin{alignedat}{2}
				\del_tv_\eps- b \cdot \nabla v_\eps+ cu \beta'(u)
				 & = r_\eps     & \qquad v_\eps|_{t = 0}    & = v_\eps^0 \\
				\del_t\Phi_\eps- b \cdot \nabla \Phi_\eps
				 & = a + s_\eps & \qquad \Phi_\eps|_{t = 0} & = 0 \,
			\end{alignedat}
		\end{split}
		\label{eq:v-phi-regularized}
	\end{align}
	Here, $v_\eps= v * \rho_\eps, \, v_\eps^0= \beta(u^0) * \rho_\eps, \, \Phi_\eps = \Phi * \rho_\eps$
	and $r_\eps, s_\eps\to 0$ in $L^1_\loc([0, T] \times \R^d)$. Let $\gamma$ be an admissible function
	and define $\omega^\eps \colonequals \gamma\big(e^{-\Phi_\eps} v_\eps\big)$.
	By the product and chain rules for Sobolev functions,
	\begin{align*}
		\del_t \omega^\eps
		 & = \gamma'{\big(e^{-\Phi_\eps} v_\eps\big)}
		e^{-\Phi_\eps} \big({-}\del_t \Phi_\eps v_\eps+ \del_t v_\eps\big) \\
		 & = \gamma'{\big(e^{-\Phi_\eps} v_\eps\big)}
		e^{-\Phi_\eps} \big({-}(b \cdot \nabla \Phi_\eps + a + s_\eps)v_\eps
		+ b \cdot \nabla v_\eps- c u \beta'(u) + r_\eps\big) \,.
	\end{align*}
	Even though the derivative of the exponential function is not bounded on $\R$, the chain rule
	for Sobolev functions applies here since $\Phi_\eps \in L^\infty([0, T] \times \R^d)$.
	Since
	\begin{align*}
		\nabla \omega^\eps = \gamma'{\big(e^{-\Phi_\eps} v_\eps\big)}
		e^{-\Phi_\eps} \big({-}\nabla \Phi_\eps + \nabla v_\eps \big) \,,
	\end{align*}
	we have
	\begin{align}
		\del_t \omega^\eps - b \cdot \nabla \omega^\eps
		= \gamma'{\big(e^{-\Phi_\eps} v_\eps\big)} e^{-\Phi_\eps}
		\big({-}(a + s_\eps)v_\eps - cu\beta'(u) + r_\eps\big) \,.
		\label{eq:phi-eps-v-eps}
	\end{align}
	Let $\phi \in C^\infty_c([0, T) \times \R^d)$. By integrating (\ref{eq:phi-eps-v-eps})
	over $[0, T] \times \R^d$ against $\phi$ and using integration by parts, we get
	\begin{align}
		\begin{split}
			-  \iTR \gamma{\big(e^{-\Phi_\eps}v_\eps\big)} & \del_t \phi \,dx \,dt
			- \int_{\R^d}\gamma{\big(v_\eps^0\big)} \phi(0, x) \,dx
			+ \iTR \gamma{\big(e^{-\Phi_\eps}v_\eps\big)} \div(b\phi)   \,dx \,dt  \\
			                                               & = \iTR
			\gamma'{\big(e^{-\Phi_\eps} v_\eps\big)} e^{-\Phi_\eps}
			\big({-}(a + s_\eps) v_\eps - cu\beta'(u) + r_\eps \big) \phi \,dx \,dt \,.
		\end{split}
		\label{eq:v-phi-after-ibp}
	\end{align}
	We let $\eps \to 0$ in (\ref{eq:v-phi-after-ibp}). Since $r_\eps, s_\eps \to 0$ in
	$L^1_\loc([0, T] \times \R^d)$ and $e^{-\Phi_\eps}, v_\eps$ are bounded uniformly in $\eps$,
	\begin{align*}
		\iTR \gamma'{\big(e^{-\Phi_\eps} v_\eps\big)} e^{-\Phi_\eps} s_\eps v_\eps \phi \,dx \,dt \to 0,
		\qquad
		\iTR \gamma'{\big(e^{-\Phi_\eps} v_\eps\big)} e^{-\Phi_\eps} r_\eps \phi \,dx \,dt \to 0
	\end{align*}
	as $\eps \to 0$.
	For all other terms, we notice that $v_\eps \to v$ and $\Phi_\eps \to \Phi$ almost everywhere
	on $[0, T] \times \R^d$. Thus, by the dominated convergence theorem
	\begin{align}
		\begin{split}
			- \iTR \gamma{\big(e^{-\Phi}v\big)}
			 & \del_t\phi \,dx \,dt
			- \int_{\R^d} \gamma{\big(v^0\big)} \phi(0, x) \,dx
			+ \iTR \gamma{\big(e^{-\Phi} v\big)} \div(b\phi) \,dx \,dt                                   \\
			 & + \iTR \gamma'{\big(e^{-\Phi} v\big)} e^{-\Phi} (av + cu\beta'(u)) \phi \,dx \,dt = 0 \,.
		\end{split}
		\label{eq:beta-v}
	\end{align}

	Next, we choose a sequence of admissible functions $\beta_n$
	such that $\beta_n \to \id_\R$, $\beta_n' \to 1$ pointwise and
	\begin{align*}
		|\beta_n(\sigma)| \le |\sigma|, \quad |\sigma \beta_n'(\sigma)| \le |\beta_n(\sigma)|
		\qquad \text{for every $\sigma \in \R$.}
	\end{align*}
	For instance, $\beta_n(\sigma) = n\tanh(\sigma/n)$ has these properties. Since $v = \beta(u)$,
	by choosing $\beta = \beta_n$ in (\ref{eq:beta-v}) and letting $n \to \infty$, we get
	\begin{align}
		\begin{split}
			- \iTR \gamma \paren*{e^{-\Phi} u}
			 & \del_t \phi \,dx \,dt
			- \int_{\R^d}\gamma(u^0) \phi(0, x) \,dx
			+ \iTR \gamma(e^{-\Phi}u) \div (b\phi)        \,dx \,dt                         \\
			 & + \iTR \gamma'\paren*{e^{-\Phi}u} e^{-\Phi} u (a + c) \phi \,dx \,dt = 0 \,.
		\end{split}
		\label{eq:beta-u-c+a}
	\end{align}
	This is justified by the dominated convergence theorem, using the fact that
	$\gamma(\sigma)$ and $\gamma'(\sigma) (1 + |\sigma|)$ are bounded on $\R$. To get an integrable majorant
	for the term with $cu\beta'(u)$, we use that $|\sigma\beta_n'(\sigma)| \le |\beta(\sigma)|$:
	\begin{align*}
		\abs*{\gamma'\paren*{e^{-\Phi} \beta_n(u)} e^{-\Phi} cu\beta_n'(u)}
		\le \abs*{\gamma'\paren*{e^{-\Phi} \beta_n(u)} e^{-\Phi} \beta_n(u) \, c}
		\le C_\gamma |c| \,.
	\end{align*}
	Since $\gamma$ is an arbitrary admissible function, (\ref{eq:beta-u-c+a}) implies that
	$e^{-\Phi} u$ is a renormalized solution of the transport equation with $(b, c + a, u^0)$.
\end{proof}

The following is a stability result for $c \in L^1(0, T; L^\infty(\R^d))$. The hypotheses
are the same as in Theorem \ref{theorem:stability}, except that we
require $b_n \in L^1(0, T; W^{1, 1}_\loc(\R^d))$ in order to use Lemma \ref{lemma:c+a}.

\begin{corollary}
	Assume $b$ and $c$ satisfy (\ref{eq:bc-hypotheses}) and (\ref{eq:b-growth-condition}). We keep
	the notations and assumptions of Theorem \ref{theorem:stability}, except that we replace
	$c_n \to 0$ by $c_n \to c$ in $L^1_\loc([0, T] \times \R^d)$, and assume in addition
	that $b_n \in L^1(0, T; W^{1, 1}_\loc(\R^d))$ for every $n \in \N$.
	\begin{enumerate}
		\item If $u_n^0 \to u^0$ in $L(\R^d)$, then there exists a renormalized solution
		      $u \in C([0, T]; L(\R^d))$ for $(b, c, u^0)$, and $u_n \to u$ in
		      $C([0, T]; L(\R^d))$.
		\item Part 2) of Theorem \ref{theorem:stability} remains true under its additional
		      assumptions.
	\end{enumerate}
	\label{cor:stability-c-ne-0}
\end{corollary}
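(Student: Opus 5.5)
The plan is to reduce to Theorem \ref{theorem:stability} via Lemma \ref{lemma:c+a}, removing the zeroth-order terms by an exponential gauge. Apply the construction preceding Lemma \ref{lemma:c+a} twice.

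Let $\Phi \in L^\infty([0,T]\times\R^d)$ be the renormalized solution of $\del_t\Phi - b\cdot\nabla\Phi = -c$, $\Phi|_{t=0}=0$. It exists by Proposition \ref{prop:existence} and Remark \ref{remark:existence-rhs} with $a=-c \in L^1_tL^\infty_x$, and it is renormalized by Proposition \ref{prop:consistency}. Similarly, for each $n$ let $\Phi_n$ solve $\del_t\Phi_n - b_n\cdot\nabla\Phi_n = -c_n$, $\Phi_n|_{t=0}=0$; here the extra hypothesis $b_n \in L^1(0,T;W^{1,1}_\loc)$ is used.

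By Lemma \ref{lemma:c+a}, $w_n \colonequals e^{-\Phi_n}u_n$ is a renormalized solution with $(b_n,0,u_n^0)$. Conversely, any renormalized solution $w$ with $(b,0,u^0)$ gives the renormalized solution $u \colonequals e^{\Phi}w$ with $(b,c,u^0)$, using the lemma in the other direction.

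\textbf{Part 1).} Theorem \ref{theorem:stability}, part 1), applied to $w_n$ with $c_n$ replaced by $0$, gives a renormalized $w \in C([0,T];L(\R^d))$ for $(b,0,u^0)$ with $w_n \to w$ in $C([0,T];L(\R^d))$. It then remains to show that $\Phi_n \to \Phi$ in $C([0,T];L(\R^d))$. Given this, $u_n = e^{\Phi_n}w_n \to e^{\Phi}w = u$ in $C([0,T];L(\R^d))$, because products and compositions with continuous functions preserve convergence in measure on compacts, uniformly in $t$, provided the sequences are bounded in measure; here $\Phi_n,\Phi$ are bounded in $L^\infty$.

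\textbf{Main obstacle: convergence of $\Phi_n$.} The sequence $\Phi_n$ need not be uniformly bounded, since $c_n$ is only assumed to converge in $L^1_\loc$. My plan is to use that $\Phi_n$ has zero initial data and is governed by the inhomogeneous stability theory. I would view $(\Phi_n, 1)$ as a solution of a homogeneous system, or more simply write $\Phi_n = \Psi_n - \tilde\Psi_n$ with $\tilde\Psi_n$ solving the homogeneous equation, but the cleanest route is a Step 1--4 argument of Theorem \ref{theorem:stability} carried out with a right-hand side.

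For an admissible $\beta$, the function $\beta(\Phi_n)$ solves $\del_t\beta(\Phi_n)-b_n\cdot\nabla\beta(\Phi_n) = -c_n\beta'(\Phi_n)$. Since $\beta'$ is bounded and $c_n\to c$ in $L^1_\loc$, the weak-$*$ limit points of $\beta(\Phi_n)$ solve the transport equation with right-hand side $-c\beta'(\Phi)$. This is a compactness issue, so I would argue as follows. The weak-$*$ limits $v$ of $\beta(\Phi_n)$ and $v_2$ of $\beta^2(\Phi_n)$ must be identified, which requires knowing the limit of $\beta'(\Phi_n)$. I would therefore first prove strong convergence for the linear quantity $\Phi_n$ truncated at its natural bound. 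If one additionally knows $\|c_n\|_{L^1_tL^\infty_x}$ is bounded, which holds if the $c_n$ are constructed by mollification as in the applications, then $\Phi_n$ is bounded in $L^\infty$ and the uniqueness-plus-norm argument of Steps 1--3 (Corollary \ref{cor:uniqueness-infty} with a right-hand side, via Theorem \ref{theorem:ibp}) gives $\Phi_n \to \Phi$ in $C([0,T];L^2_\loc)$. Without that bound, I would need a truncation argument, and I expect this step to be the delicate point.

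\textbf{Part 2).} Here I would repeat Step 5 of Theorem \ref{theorem:stability} directly for $u_n$. Proposition \ref{prop:continuity} applies to $u_n$ and to $u$, giving the $L^p$ identity (\ref{eq:ddt-Lp-norm}) with the term $\div b_n + pc_n$. The majorant (\ref{eq:majorant}) with $a'=g$ makes the integrand nonnegative, so Fatou's lemma gives the $\limsup$ bound exactly as before. Together with the convergence in measure from part 1) and Lemma \ref{lemma:pointwise-ae-Lp}, this yields $u_n \to u$ in $C([0,T];L^p(\R^d))$.
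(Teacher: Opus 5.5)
The gap is exactly the step you flag. Once Theorem~\ref{theorem:stability} has been applied to $e^{-\Phi_n}u_n$, the convergence $\Phi_n \to \Phi$ in $C([0,T];L(\R^d))$ is the whole remaining content of part 1), and you do not prove it. With the source $-c_n$ in the gauge equation the step does not even work as set up. Under the hypotheses of part 1) the $c_n$ are only in $L^1_\loc$, so Proposition~\ref{prop:existence} does not produce $\Phi_n$, and nothing makes the $\Phi_n$ bounded. The proof of Lemma~\ref{lemma:c+a}, however, uses boundedness of the gauge: it needs the chain rule for the exponential and uniform bounds on $e^{-\Phi_\eps}$. So your argument covers only the special case where $c_n$ is bounded in $L^1(0,T;L^\infty(\R^d))$, and the truncation argument for the general case is not supplied.

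The paper avoids any convergence of gauges. It uses one $n$-independent $\Phi$, transported by the limit field $b$, and asserts via Lemma~\ref{lemma:c+a} that $e^{-\Phi}u_n$ is renormalized for $(b_n, c_n - c, u_n^0)$. Your instinct that the gauge must depend on $n$ is nevertheless right, because Lemma~\ref{lemma:c+a} needs the gauge to be transported by the same field as the solution. Formally, $\tilde u_n = e^{-\Phi}u_n$ satisfies
\[
\del_t \tilde u_n - b_n \cdot \nabla \tilde u_n + (c_n - c)\,\tilde u_n = \big((b_n - b)\cdot \nabla \Phi\big)\,\tilde u_n ,
\]
and the right-hand side need not vanish. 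For example, take $b = 0$, $b_n \equiv (1/n, 0, \dots, 0)$, $u_n^0 = 1$, and $c_n = c$ smooth with $\del_1 c \neq 0$; then $\Phi = -tc$.

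The missing idea is to put the \emph{limit} coefficient into the gauge for $b_n$: solve $\del_t \Phi_n - b_n \cdot \nabla \Phi_n = -c$ with $\Phi_n|_{t=0} = 0$.
\begin{enumerate}
\item The gauges are uniformly bounded: $\norm{\Phi_n}_{L^\infty} \le \norm{c}_{L^1(0,T;L^\infty(\R^d))}$ for all $n$. For bounded solutions the construction of Proposition~\ref{prop:existence} needs no bound on $\div b_n$, since (\ref{eq:u-Linfty-estimate}) does not involve it, and one can approximate by $(b_n * \rho_{1/k})\eta_k$.
\item Lemma~\ref{lemma:c+a}, applied with the field $b_n$, shows that $e^{-\Phi_n}u_n$ is renormalized for $(b_n, c_n - c, u_n^0)$. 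This is where $b_n \in L^1(0,T;W^{1,1}_\loc)$ enters.
\item Since $c_n - c \to 0$, part 1) of Theorem~\ref{theorem:stability} applies to $e^{-\Phi_n}u_n$.
\item Convergence of the gauges is now routine. The functions $\Phi_n$ and $\Phi_n^2$ solve transport equations with sources $-c$ and $-2c\Phi_n$, which converge to $-c$ and $-2c\Phi$. Steps 1--3 of Theorem~\ref{theorem:stability}, with Corollary~\ref{cor:uniqueness-infty} identifying the weak-$*$ limits as $\Phi$ and $\Phi^2$, give $\Phi_n \to \Phi$ in $C([0,T];L^2_\loc(\R^d))$.
\end{enumerate}
Your bounded-product argument then yields $u_n = e^{\Phi_n}(e^{-\Phi_n}u_n) \to u$. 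Your direct Step 5 argument for part 2) goes through once part 1) is in place.
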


\begin{proof}
	Let $\Phi \in L^\infty([0, T] \times \R^d)$ be a solution to
	\begin{align*}
		\del_t\Phi - b \cdot \nabla \Phi = -c, \qquad \Phi|_{t = 0}= 0 \,.
	\end{align*}
	By Proposition \ref{prop:existence}, such a $\Phi$ exists.
	By Lemma \ref{lemma:c+a}, $\tilde u_n\colonequals e^{-\Phi}u_n$ is
	a renormalized solution of
	\begin{align*}
		\del_t\tilde u_n - b_n \cdot \nabla \tilde u_n
		+ (c_n- c) \tilde u_n = 0, \qquad \tilde u_n|_{t = 0}= u_n^0\,.
	\end{align*}
	By part 1) of Theorem \ref{theorem:stability}, $\tilde u_n$ converges in
	$C([0, T]; L(\R^d))$ to a renormalized solution $\tilde u$ of
	(\ref{eq:transport}) with $c = 0$ and initial data $u^0$. Using again Lemma
	\ref{lemma:c+a}, we deduce that $u \colonequals e^{\Phi}\tilde u$ is a renormalized solution of
	\begin{align*}
		\del_t u - b \cdot \nabla u + c u = 0, \qquad u|_{t = 0}= u^0\,.
	\end{align*}
	To conclude the proof of part 1) of Corollary \ref{cor:stability-c-ne-0},
	we need to show that $e^{\Phi}\tilde u_n\to e^{\Phi}\tilde	u$ in $C([0, T]; L(\R^d))$.
	Since $\Phi \in L^\infty([0, T] \times \R^d)$, we also have $e^{\Phi}\in L^\infty([0, T] \times \R^d)$.
	Thus for every compact $K \subset \R^d$ and $\eps > 0$
	\begin{align*}
		\sup_{t \in [0, T]}\meas^{d}
		 & \paren*{
			\braces*{
				x \in K : |e^{\Phi(t, x)} \tilde u_n(t, x) - e^{\Phi(t, x)} \tilde u(t, x)| \ge \eps
			}
		}                                            \\
		 & \le \sup_{t \in [0, T]} \meas^{d}\paren*{
			\braces*{
				x \in K : |\tilde u_n(t, x) - \tilde u(t, x)| \ge \eps /
				\big\| e^{\Phi} \big\|_{L^\infty([0, T] \times \R^d)}
			}
		}\,.
	\end{align*}
	Since $\tilde u_n\to \tilde u$ in $C([0, T]; L(\R^d))$, the right-hand
	side converges to 0 as $n \to \infty$.

	For part 2), by assumption, we know that (\ref{eq:majorant}) holds for some $g \in L^1((0, T))$.
	Then, since $c(t) \in L^\infty(\R^d)$ for almost every $t \in [0, T]$,
	\begin{align*}
		\hspace{2em}
		\sup_{n \in \N} \, \norm*{\div b_n(t) + p (c_n - c)(t)}_{L^\infty(\R^d)}
		\le g(t) + p\|c(t)\|_{L^\infty(\R^d)}
		\qquad \text{for a.e. $t \in [0, T]$.}
	\end{align*}
	Therefore, the sequence $\tilde u_n$ satisfies
	the assumptions of Theorem \ref{theorem:stability} part 2), so
	$\tilde u_n \to \tilde u$ in $C([0, T]; L^p(\R^d))$. Since $e^{\Phi} \in L^\infty([0, T] \times \R^d)$,
	multiplication by $e^{\Phi}$ preserves convergence in $C([0, T]; L^p(\R^d))$, hence also
	$u_n \to u$ in $C([0, T]; L^p(\R^d))$.
\end{proof}

\begin{corollary}[Existence \& Uniqueness]
	Assume (\ref{eq:bc-hypotheses}) and (\ref{eq:b-growth-condition}).
	If $u^0\in L(\R^d)$, then there exists a unique renormalized
	solution $u$ of (\ref{eq:transport}). In addition,
	\begin{enumerate}
		\item $u \in C([0, T]; L(\R^d))$.
		\item If $u^0 \in L^\infty(\R^d)$, then $u \in L^\infty([0, T] \times \R^d)$
		      and $u \in C([0, T]; L^r_\loc(\R^d))$ for every $r < \infty$.
		\item If $u^0 \in L^p(\R^d)$ for some $1 \le p < \infty$, then
		      $u \in C([0, T]; L^p(\R^d))$.
	\end{enumerate}
	\label{cor:existence}
\end{corollary}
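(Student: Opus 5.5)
Existence comes from the stability result, Corollary \ref{cor:stability-c-ne-0}, applied with the constant sequences $b_n = b$, $c_n = c$ and approximating initial data. Let $u_n^0 \colonequals \beta_n(u^0)$ with $\beta_n(\sigma) = n\tanh(\sigma/n)$. Then $u_n^0 \in L^\infty(\R^d)$, and $u_n^0 \to u^0$ pointwise, hence in $L(\R^d)$. For each $n$, Proposition \ref{prop:existence} with $p = \infty$ gives a weak solution $u_n \in L^\infty([0,T]\times\R^d)$; the hypotheses hold because $b \in L^1_t W^{1,1}_{\loc} \subset L^1_t L^1_{\loc}$ and $c, \div b \in L^1_t L^\infty_x$. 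Since $q = 1$ and $b \in L^1(0,T;W^{1,1}_\loc)$, part 2) of Proposition \ref{prop:consistency} shows that $u_n$ is renormalized. The constant sequence trivially satisfies (\ref{eq:stability-conv-assumptions}) with $c_n \to c$ and $b_n \in L^1(0,T;W^{1,1}_\loc)$. Part 1) of Corollary \ref{cor:stability-c-ne-0} then yields a renormalized solution $u \in C([0,T];L(\R^d))$ for $(b,c,u^0)$, which gives existence and item 1).

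For uniqueness, let $u, w$ be two renormalized solutions with the same $u^0$. Apply the same stability corollary to the sequence $u_{2k} = u$, $u_{2k+1} = w$ with constant data $u_n^0 = u^0$ and $b_n = b$, $c_n = c$. It produces a renormalized limit $\tilde u$ with $u_n \to \tilde u$ in $C([0,T];L(\R^d))$. Both the even and odd subsequences converge to $\tilde u$, so $u = \tilde u = w$ in $L([0,T]\times\R^d)$. This trick avoids comparing two renormalized solutions directly, which is the delicate point since they need not be integrable.

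If one prefers a direct argument, use Lemma \ref{lemma:c+a} to reduce to $c = 0$. For admissible $\beta$, both $\beta(u)$ and $\beta(w)$ are bounded weak solutions with data $\beta(u^0)$, by part 1) of Proposition \ref{prop:consistency}. Hence $\beta(u) = \beta(w)$ by Corollary \ref{cor:uniqueness-infty}, and taking $\beta = \tanh$ gives $u = w$. The $c \ne 0$ case is where the reduction is needed: a renormalized $u$ with $c \neq 0$ makes $\beta(u)$ solve an equation with source $cu\beta'(u)$, not a linear transport equation. I would rely on the stability route above to sidestep this.

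Item 2): if $u^0 \in L^\infty$, Proposition \ref{prop:existence} gives a weak solution in $L^\infty$, renormalized by Proposition \ref{prop:consistency}, which by uniqueness equals $u$. Continuity in $L^r_\loc$ then follows from part 1) of Proposition \ref{prop:continuity}.

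Item 3): for $u^0 \in L^p$, $1 \le p < \infty$, I would use part 2) of Corollary \ref{cor:stability-c-ne-0}. Take $u_n^0 \colonequals \beta_n(u^0)\chi_{B_n}$, which lies in $L^1 \cap L^\infty$ and converges to $u^0$ in $L^p$ by dominated convergence. For these data, the solutions $u_n$ from Proposition \ref{prop:existence} with exponent $p$ exist when $p > 1$ and lie in $L^\infty_t L^p_x$. Where $p = 1$ is not covered by Proposition \ref{prop:existence}, take the bounded solution from item 2) instead. It is in $L^\infty_t L^1_x$ by the Grönwall bound from (\ref{eq:ddt-Lp-norm}), applied to the approximating smooth solutions exactly as in the proof of Corollary \ref{cor:uniqueness-infty}. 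The majorant (\ref{eq:majorant}) holds with $g = \|\div b\|_{L^\infty} + p\|c\|_{L^\infty}$, and (\ref{eq:b-growth-condition}) holds for $b_n = b$. So $u_n \to u$ in $C([0,T];L^p)$, the limit being $u$ by uniqueness, and $u \in C([0,T];L^p(\R^d))$. The main obstacle is confirming that the $u_n$ lie in $L^\infty_t L^p_x$, especially for $p = 1$, which needs the weak-$*$ lower semicontinuity argument from Corollary \ref{cor:uniqueness-infty}.
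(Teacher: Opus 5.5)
The gap is in item 3) for $1<p<\infty$. You take $u_n$ from Proposition~\ref{prop:existence} with exponent $p$. That proposition requires $b\in L^1(0,T;L^{q}_\loc(\R^d))$ with $q=p/(p-1)$, which (\ref{eq:bc-hypotheses}) does not provide when $d\ge 2$ and $p<d$, since $W^{1,1}_\loc$ only embeds into $L^{d/(d-1)}_\loc$. More importantly, part 2) of Corollary~\ref{cor:stability-c-ne-0} needs the $u_n$ to be renormalized. Proposition~\ref{prop:consistency} gives this for a weak solution in $L^\infty_tL^p_x$, $p<\infty$, only if $b\in L^1(0,T;W^{1,q}_\loc)$ or if the solution is also bounded. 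Proposition~\ref{prop:existence} with exponent $p$ gives you neither. For the same reason, Corollary~\ref{cor:uniqueness-p} cannot identify these $u_n$ with the bounded renormalized solutions. So, as you set it up, the stability corollary cannot be invoked for $p>1$.

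The repair is the argument you already give for $p=1$, used for every $1\le p<\infty$. Since $u_n^0\in L^1\cap L^\infty$, take the bounded solution, which is renormalized because $q=1$. Its smooth approximants from Step 2 of Proposition~\ref{prop:existence} are bounded in $L^\infty_{t,x}$. By (\ref{eq:u-Lp-estimate}) (or (\ref{eq:ddt-Lp-norm}) and Grönwall if $p=1$) together with (\ref{eq:div-bn-bounded}), they are also uniformly bounded in $L^\infty_tL^p_x$. Weak-$*$ lower semicontinuity then puts the limit in $L^\infty_tL^p_x$.

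The paper avoids this step by choosing $b_n$ from Lemma~\ref{lemma:bn-seq} and mollified $c_n,u_n^0$. The approximants are then the smooth, compactly supported solutions of Step 1, trivially in $L^\infty_tL^p_x$ and renormalized, and (\ref{eq:majorant}) comes from (\ref{eq:div-bn-bounded}). Your choice $b_n=b$, $c_n=c$ costs the semicontinuity step but has one merit. In the proof of Corollary~\ref{cor:stability-c-ne-0}, the reduction $\tilde u_n=e^{-\Phi}u_n$ with $\Phi$ built from $b$ is covered by Lemma~\ref{lemma:c+a} only when $u_n$ is transported by $b$ itself.

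The rest of your proposal is fine. Existence, item 1) and item 2) are the paper's argument, and your ``direct'' uniqueness argument is exactly the paper's proof. The alternating-sequence trick is also valid, but it sidesteps nothing. Corollary~\ref{cor:stability-c-ne-0} itself reduces to $c=0$ via Lemma~\ref{lemma:c+a} and identifies limits via Corollary~\ref{cor:uniqueness-infty}.
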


\begin{proof}
	For the uniqueness, by Lemma \ref{lemma:c+a}, we may assume that $c = 0$.
	Then, uniqueness follows from Corollary \ref{cor:uniqueness-infty} by
	choosing an injective admissible function.

	For existence, let $\beta_n(\sigma) = n \tanh(\sigma/n)$ and $u^0_n = \beta_n(u^0)$. By
	Proposition \ref{prop:existence}, there exists a weak solution $u_n \in L^\infty([0, T] \times \R^d)$
	of the transport equation with $(b, c, u_n^0)$ for every $n \in \N$. Since
	$b \in L^1(0, T; W^{1, 1}_\loc(\R^d))$, $u_n$ is renormalized by
	Proposition \ref{prop:consistency}. Furthermore, $u_n^0 \to u^0$ in $L(\R^d)$,
	so the existence of $u$ and part 1) follow from Corollary \ref{cor:stability-c-ne-0}.

	Part 2) follows from Propositions \ref{prop:existence} and \ref{prop:continuity}.

	For part 3), we choose a sequence $b_n \in L^1(0, T; C^1_b(\R^d))$ as in Lemma \ref{lemma:bn-seq} and set
	$c_n \colonequals (c *\rho_{1/n})\eta_n$, $u^0_n \colonequals (u^0 * \rho_{1/n})\eta_n$
	for $n \in \N$.
	Then, by Step 1 of Proposition \ref{prop:existence}, there exists a sequence
	$u_n \in L^\infty(0,T;C^1_c(\R^d))$ satisfying
	\begin{align*}
		\del_t u_n - b_n \cdot \nabla u_n + c_n u_n = 0, \qquad u_n|_{t = 0} = u_n^0 \,.
	\end{align*}
	By Proposition \ref{prop:consistency}, the $u_n$ are renormalized solutions. Since $u_n^0 \to u^0$ in $L^p(\R^d)$
	and (\ref{eq:majorant}) is satisfied by (\ref{eq:div-bn-bounded}),
	we conclude by part 2) of Corollary \ref{cor:stability-c-ne-0}.
\end{proof}

\subsection{More properties of renormalized solutions}

\begin{lemma}
	Let $b \in L^1(0, T; W^{1,1}_\loc(\R^d))$, $c = 0$
	and $u_1, \dots, u_m \in L([0, T] \times \R^d)$ be renormalized solutions of (\ref{eq:transport})
	with initial conditions $u_1^0, \dots, u_m^0 \in L(\R^d)$. If $F : \R^m \to \R$
	is continuous, then $F(u_1, \dots, u_m)$ is a renormalized solution of (\ref{eq:transport})
	with initial condition $F(u_1^0, \dots, u_m^0)$.
	\label{lemma:composition}
\end{lemma}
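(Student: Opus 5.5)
First I would reduce to bounded data. Let $\beta$ be admissible. I want to show that $\beta(F(u_1,\dots,u_m))$ solves the transport equation with $c=0$ and initial datum $\beta(F(u^0_1,\dots,u^0_m))$. Pick an admissible, strictly increasing $\theta$ with bounded range, e.g.\ $\theta=\tanh$, and set $w_i\colonequals\theta(u_i)$. By definition of a renormalized solution (with $c=0$), each $w_i\in L^\infty([0,T]\times\R^d)$ is a weak solution with datum $\theta(u_i^0)$. Then $\beta\circ F(u_1,\dots,u_m)=G(w_1,\dots,w_m)$ with $G\colonequals\beta\circ F\circ(\theta^{-1},\dots,\theta^{-1})$. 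The function $G$ is continuous and bounded on the open cube $(-1,1)^m$, which contains the range of $(w_1,\dots,w_m)$.

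So it suffices to prove the following: if $w_1,\dots,w_m$ are bounded weak solutions with values in a compact set $Q$, and $G\in C(Q)$, then $G(w)$ is a weak solution with datum $G(w^0)$. The problem is that $G$ is only continuous on the open cube and not on a compact set. To handle this, I truncate: take $\theta_k$ admissible with range in $[-1+1/k,1-1/k]$ agreeing with $\theta$ where $|\theta|\le1-2/k$, and later pass $k\to\infty$ using dominated convergence. This works because $G(\theta_k(u))\to G(\theta(u))$ pointwise and everything is bounded by $\|\beta\|_\infty$. The initial data converge pointwise as well.

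Next, the smooth case. If $G\in C^1(\R^m)$, the plan is to mimic Lemma \ref{lemma:solutions-are-renormalized}. By Lemma \ref{lemma:regularization} (with $p=\infty$, $\alpha=1$, $\lambda=1$), $w_{i,\eps}=w_i*\rho_\eps$ satisfies $\del_t w_{i,\eps}-b\cdot\nabla w_{i,\eps}=r_{i,\eps}$ with $r_{i,\eps}\to0$ in $L^1_\loc$. The multivariable chain rule for the $W^{1,1}_\loc$ functions $w_{i,\eps}$ then gives
\begin{align*}
\del_t G(w_\eps)-b\cdot\nabla G(w_\eps)=\sum_i \del_iG(w_\eps)\,r_{i,\eps}.
\end{align*}
Here $\nabla G$ is bounded on the bounded range of $w_\eps$. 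Testing against $\phi$ and letting $\eps\to0$ is justified as follows: $w_\eps\to w$ a.e., so dominated convergence handles the terms $G(w_\eps)(\del_t\phi+\div(b\phi))$, and the right-hand side tends to $0$. The same argument applies at $t=0$. This shows that $G(w)$ is a weak solution.

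Finally, continuous $G$. By Weierstrass, choose polynomials (or mollified extensions) $G_j\to G$ uniformly on $Q$. Then $G_j(w)\to G(w)$ uniformly, and $G_j(w^0)\to G(w^0)$ uniformly, so the weak formulation passes to the limit directly, since $b,\div b\in L^1_\loc$. This gives the claim for $\beta\circ F$ for every admissible $\beta$, which is exactly the renormalization property for $F(u_1,\dots,u_m)$.

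I expect the main obstacle to be the bookkeeping in the truncation: $G$ need not extend continuously to the closed cube, since $F$ may blow up at infinity. The point is that the compositions with $\beta$ and $\theta_k$ are arranged so that only continuity on compact subcubes is used, and dominated convergence with the bound $\|\beta\|_\infty$ closes the argument.
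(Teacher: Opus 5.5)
Your argument is correct. Its core step is the same as in the paper: regularize bounded renormalizations with Lemma \ref{lemma:regularization}, apply the multivariable chain rule, and let $\eps\to0$ using $r_{i,\eps}\to0$ in $L^1_\loc$. What differs is the order of the approximations.
\begin{itemize}
\item The paper first proves the claim for $F\in C^1_b(\R^m)$. It then removes the bounding renormalization by letting $\beta\to\id$ pointwise, with dominated convergence under the bound $\|\gamma\|_{L^\infty}$. Only at the end does it reach continuous $F$, through functions $F_n\in C^1_b(\R^m)$ converging pointwise on all of $\R^m$, again by dominated convergence.
\item You truncate first, so that all variables take values in a compact cube. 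You treat an arbitrary continuous $G$ there by Weierstrass approximation; uniform convergence makes that limit trivial, and the smooth step is only needed for polynomials. You remove the truncation last.
\end{itemize}
Your ordering spares you constructing global $C^1_b$ approximants that converge everywhere, which the paper asserts without construction. The paper's ordering is shorter.

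The $\tanh$ conjugation is a detour: it creates the open-cube difficulty that your truncation then undoes. The composition $\theta^{-1}\circ\theta_k$ is itself a bounded admissible function equal to the identity on $[-\theta^{-1}(1-2/k),\,\theta^{-1}(1-2/k)]$. So you can take $w_i=\beta_k(u_i)$ directly, with $\beta_k$ admissible, equal to the identity on $[-k,k]$, and taking values in a compact interval $I_k$. Then $\beta\circ F$ is already continuous on $I_k^m$, and this is exactly the paper's $\beta_n\to\id$ step.
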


\begin{proof}
	Let $\beta$ be an admissible function and $v_i \colonequals \beta(u_i)$, $v_i^0 \colonequals \beta(u_i^0)$.
	Then, by Lemma \ref{lemma:regularization}, $v_{i,\eps} \colonequals v_i * \rho_\eps$ satisfies
	\begin{align*}
		\del_t v_{i, \eps}- b \cdot \nabla v_{i, \eps}= r_{i, \eps}\,,
		\qquad v_{i,\eps}|_{t = 0} = v_{i,\eps}^0
	\end{align*}
	where $r_{i,\eps} \to 0$ in $L^1(0, T; L^1_\loc(\R^d))$. In the following, we
	write $u = (u_i)_{i = 1}^m$, $v_\eps = (v_{i,\eps})_{i = 1}^m$ etc. for the
	vector-valued functions.

	For now, assume that $F \in C^1_b(\R^m)$ and let $\gamma$ be admissible. Then, by
	the chain rule for Sobolev functions
	\begin{align*}
		\del_t \big(\gamma(F(v_\eps))\big)
		 & =  \gamma'(F(v_\eps)) \, \nabla F(v_\eps) \cdot (b \cdot \nabla v_\eps + r_\eps)                        \\
		 & =  \nabla\big(\gamma(F(v_\eps))\big)  \cdot b + \gamma'(F(v_\eps)) \, \nabla F(v_\eps) \cdot r_\eps \,.
	\end{align*}
	We integrate against a test function $\phi \in C^\infty_c([0, T) \times \R^d)$:
	\begin{align*}
		- \iTR \gamma
		 & (F(v_\eps)) \, \del_t \phi \,dx \,dt
		- \int_{\R^d} \gamma(F(v_\eps^0)) \, \phi(0, x) \,dx            \\
		 & + \iTR \gamma(F(v_\eps)) (\phi \div b + b \cdot \nabla \phi)
		= \iTR \gamma'(F(v_\eps)) \, \nabla F(v_\eps) \cdot r_\eps \, \phi\,dx \,dt
	\end{align*}
	Next, we let $\eps \to 0$ to obtain
	\begin{align*}
		- \iTR \gamma
		 & (F(v)) \, \del_t \phi \,dx \,dt
		- \int_{\R^d} \gamma(F(v^0)) \, \phi(0, x) \,dx
		+ \iTR \gamma(F(v)) (\phi \div b + b \cdot \nabla \phi) = 0 \,.
	\end{align*}
	On the left-hand side of the equation, this is justified by dominated convergence, since
	$v_\eps \to v$ almost everywhere. The right-hand side converges to 0, because
	$r_\eps \to 0$ in $L^1_\loc([0, T] \times \R^d; \R^d)$, and $\gamma'$
	and $\nabla F$ are bounded.

	By letting $\beta$ converge pointwise to
	the identity on $\R$, using dominated convergence and the boundedness of $\gamma$,
	we get
	\begin{align*}
		- \iTR \gamma
		 & (F(u)) \, \del_t\phi \,dx \,dt
		- \int_{\R^d} \gamma(F(u^0)) \, \phi(0, x) \,dx
		+ \iTR \gamma(F(u)) (\phi \div b + b \cdot \nabla \phi) = 0 \,.
	\end{align*}
	This concludes the proof for $F \in C^1_b(\R^m)$. For $F \in C(\R^m)$, we choose a
	sequence $F_n \in C^1_b(\R^m)$ converging pointwise to $F$ everywhere on $\R^m$.
	Then we set $F = F_n$ in the last equation, let $n \to \infty$, and use dominated convergence.
\end{proof}

Lemma \ref{lemma:composition} implies that renormalized solutions of the transport equation are
``additive'': If
\begin{align*}
	b \in L^1(0, T; W^{1, 1}_\loc(\R^d)), \qquad c \in L^1(0, T; L^\infty(\R^d)),
\end{align*}
and $u, v$ are renormalized solutions of (\ref{eq:transport}) for initial conditions $u^0, v^0$,
then $u + v$ is a renormalized solution of (\ref{eq:transport})
for the initial condition $u^0 + v^0$. By Lemma \ref{lemma:composition}, this holds
for $c = 0$, and via Lemma \ref{lemma:c+a}, also for $c \in L^1(0, T; L^\infty(\R^d))$.

\section{Regular Lagrangian flows}
\label{sec:rlf}

\subsection{Existence}

We now prove the existence of a regular Lagrangian flow for a vector field
$b$ satisfying
\begin{align}
	b \in W^{1, 1}_\loc(\R^d; \R^d), \qquad \div b \in L^\infty(\R^d),
	\qquad
	\frac{|b|}{1 + |x|} \in L^1(\R^d) + L^\infty(\R^d) \,.
	\label{eq:b-hypotheses}
\end{align}
For simplicity, we restrict ourselves to the autonomous case. This restriction is not necessary, but it
allows us to work with a flow depending on a single time variable instead of two time variables,
which simplifies the proof.

\begin{theorem}
	Assume (\ref{eq:b-hypotheses}). Then there exists a map $X \in C(\R; L(\R^d; \R^d))$ with the
	following properties.
	\begin{enumerate}
		\item For almost all $x \in \R^d$, $X(\cdot, x) \in W^{1, 1}_\loc(\R; \R^d)$ and
		      \begin{align}
			      \del_t X(t, x) = b(X(t, x)) \quad \text{on $\R$}, \qquad X(t, x)|_{t = 0} = x \,.
		      \end{align}
		\item For all $t \in \R$
		      \begin{align}
			      e^{-|t|D} \meas^d \le X(t, \cdot)_\# \meas^d \le e^{|t|D} \meas^d \,,
			      \qquad \text{where} \ D \colonequals \norm*{\div b}_{L^\infty(\R^d)} \,.
			      \label{eq:near-incompressibility}
		      \end{align}
		\item If $u^0\in L(\R^d)$, then $u^0(X(t, x))$
		      is the unique renormalized solution of (\ref{eq:transport}) with
		      $c = 0$ for the initial data $u^0$.
		\item The following group property holds: For all $s, t \in \R$
		      \begin{align}
			      \hspace{2em}
			      X(s + t, x) = X(s, X(t, x))
			      \qquad \text{a.e. on $\R^d$.}
			      \label{eq:group}
		      \end{align}
	\end{enumerate}
	\label{theorem:ode-existence}
\end{theorem}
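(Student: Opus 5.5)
The plan is to follow the DiPerna--Lions strategy: build the flow component by component from renormalized solutions of the transport equation, then pass to smooth approximations to identify it. First I treat $t \ge 0$. For $t\le 0$ the same construction applied to $-b$ works, since $-b$ satisfies (\ref{eq:b-hypotheses}) as well.

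\textbf{Construction.} For $i = 1,\dots,d$ let $X_i$ be the unique renormalized solution given by Corollary \ref{cor:existence}, with $c = 0$ and initial data $u^0(x) = x_i$. This is a time-independent $b$, so (\ref{eq:bc-hypotheses}) and (\ref{eq:b-growth-condition}) hold on every $[0,T]$. By part 1) of that corollary, $X = (X_1,\dots,X_d) \in C([0,T]; L(\R^d;\R^d))$. To identify $X$, take $b_n$ from Lemma \ref{lemma:bn-seq} and let $X_n$ be the classical flow of $\del_t X_n = b_n(X_n)$, $X_n(0) = \id$; since $b_n$ is autonomous, $X_n(t)$ is its flow at time $t$. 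By the characteristics argument of Step 1 in Proposition \ref{prop:existence}, $u^0\circ X_n(t)$ solves $\del_t u - b_n\cdot\nabla u = 0$ for every $u^0\in C^1$. Corollary \ref{cor:stability-c-ne-0} then gives $X_n \to X$ in $C([0,T];L)$.

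\textbf{Compressibility and the ODE.} For the compressibility, (\ref{eq:two-sided-compressibility}) together with (\ref{eq:div-bn-bounded}) yields
\begin{align*}
e^{-tCD}\meas^d \le X_n(t)_\#\meas^d \le e^{tCD}\meas^d .
\end{align*}
The constant $C$ here is not sharp. To get exactly $D$, I instead compute $\frac{d}{dt}\int \beta(u) = -\int \beta(u)\div b$ via (\ref{eq:ddt-int-beta}), applied to $u = \phi\circ X$. This gives $e^{-tD}\int\phi \le \int \phi\circ X(t) \le e^{tD}\int \phi$ for $\phi \ge 0$ continuous and compactly supported, and passing to the limit by convergence in measure yields (\ref{eq:near-incompressibility}).

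For part 3), the difficulty is showing that $u^0\circ X(t)$ is the renormalized solution, and not just that the $X_i$ are. If $u^0$ is continuous, then $u^0\circ X$ is a renormalized solution by Lemma \ref{lemma:composition}. For general $u^0\in L(\R^d)$, approximate $u^0$ by continuous $u^0_k$ in $L$. The compressibility bound makes $u^0_k\circ X(t)\to u^0\circ X(t)$ in measure uniformly in $t$, because preimages of small sets are small. Corollary \ref{cor:stability-c-ne-0} with $b_n = b$ then identifies the limit.

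For part 1), pass to the limit in $X_n(t,x) = x + \int_0^t b_n(X_n(s,x))\,ds$ locally in $x$. Using the compressibility of $X_n$ uniformly in $n$, I estimate
\begin{align*}
\int_{B_R}\int_0^T |b_n(X_n)-b(X)|\,ds\,dx \le \int\!\!\int |b_n - b|(X_n) + |b(X_n)-b(X)| .
\end{align*}
The first term is at most $e^{TCD}\|b_n-b\|_{L^1}$ on the relevant ball. For the second, approximate $b$ by a continuous $\tilde b$: the difference $|b-\tilde b|$ composed with $X_n$ or $X$ is controlled by compressibility, and $\tilde b(X_n)\to\tilde b(X)$ in measure. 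Tightness, i.e.\ that $X_n(t)(B_R)$ stays in bounded sets up to small measure, follows from the growth condition through the $L$-convergence. This step is the main obstacle: balls are not preserved, and the $L^1+L^\infty$ growth decomposition must be used to control mass escaping to infinity. The limit is $X(t,x) = x+\int_0^t b(X(s,x))\,ds$ for a.e.\ $x$ and all $t$, which is part 1).

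\textbf{Group property.} Fix $t$ and let $Y(s) = X(s+t)$. In the smooth case $u^0\circ X_n(s+t) = (u^0\circ X_n(s))\circ X_n(t)$. By part 3), $u^0\circ X(s)$ is the renormalized solution at time $s$ with data $u^0$, and both $s\mapsto u^0\circ X(s+t)$ and $s\mapsto u^0\circ X(s)\circ X(t)$ are renormalized solutions with data $u^0\circ X(t)$. For the first this is time translation of an autonomous equation; for the second it follows from part 3) applied to $v^0 = u^0\circ X(s)$, evaluated at time $t$. Uniqueness from Corollary \ref{cor:existence} gives $u^0\circ X(s+t) = u^0\circ X(s)\circ X(t)$. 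Taking $u^0 = x_i$ proves (\ref{eq:group}), in the order stated. Negative times are then handled by showing that the backward flow inverts $X(t)$, using the same uniqueness argument.
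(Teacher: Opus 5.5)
Your parts 2)--4) follow the paper's route, but part 1) has a genuine gap. It sits exactly at the step you call the main obstacle. Under (\ref{eq:b-hypotheses}) only $|b|/(1+|x|)$ lies in $L^1+L^\infty$, so three things go wrong:
\begin{enumerate}
\item $|b|$ may grow linearly, and $b_n\to b$ only in $L^1_\loc$.
\item The sets $X_n(s)(B_R)$ lie in no ball independent of $n$. So the bound ``$e^{TCD}\|b_n-b\|_{L^1}$ on the relevant ball'' has no fixed ball to refer to, and the same problem affects your terms with $|b-\tilde b|$.
\item Along trajectories the hypotheses only control $\log(1+|X_n|)$, not $\int_{B_R}\int_0^T |b(X_n)|\,ds\,dx$.
\end{enumerate}
The tightness you extract from $X_n\to X$ in $C([0,T];L)$ is tightness at each fixed time. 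It makes the set of $(s,x)$ with $|X_n(s,x)|>\lambda$ small. But a short excursion of a trajectory to large $|X_n|$, where $|b_n(X_n)|$ may be of size $|X_n|$, still contributes an uncontrolled amount to $\int_0^t b_n(X_n(s,x))\,ds$. So an $L^1$ limit in the integral equation is not available this way, and even a limit in measure needs more than fixed-time tightness.

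\textbf{The missing ingredient} is confinement of whole trajectories, uniformly in $n$: $\meas^d(B_R\setminus G^{X_n}_\lambda)\to 0$ as $\lambda\to\infty$, uniformly in $n$. This is Lemma~\ref{lemma:superlevels}. It comes from integrating $\frac{d}{dt}\log(1+|X_n|)\le |b_n(X_n)|/(1+|X_n|)$ over $B_R$, using compressibility and a uniform $L^1+L^\infty$ bound on $|b_n|/(1+|x|)$. For $b*\rho_{1/n}$ that bound is (\ref{eq:bn-growth}); you have not addressed it for the fields of Lemma~\ref{lemma:bn-seq}. With this estimate you can run your argument in measure on $G^{X_n}_\lambda$, where $b$ may be replaced by $b\chi_{B_\lambda}\in L^1$. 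You also need a corresponding confinement for the limit $X$ before $\int_0^t b(X(s,x))\,ds$ is even defined for a.e.\ $x$.

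The paper avoids integrating $b$ itself. It passes to the limit only in $\del_t\beta(X)=b(X)\cdot\nabla\beta(X)$ for $\beta$ bounded with $|\nabla\beta|(1+|x|)$ bounded. Then $b\cdot\nabla\beta\in L^1+L^\infty$, and compressibility plus Lemma~\ref{lemma:superlevels} handle the tails. It then takes $\beta=\tanh(x_i)$ to get continuous trajectories, and $\beta=x_i$ on the resulting compact set of positions to get the ODE.

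Two smaller slips:
\begin{enumerate}
\item In part 4), part 3) applied to $v^0=u^0\circ X(s)$ gives a solution in the inner time variable, not in $s$. Take $v^0=u^0\circ X(t)$ instead (or fix $s$ and vary $t$, as the paper does) and rename.
\item In part 3), ``preimages of small sets are small'' only controls globally small sets, while $u^0_k\to u^0$ only locally in measure, so you would also need tightness of $X(t)$ on compacts. It is simpler to take $u^0_k\to u^0$ a.e.\ and use that $X(t)^{-1}$ of a null set is null.
\end{enumerate}
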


In particular, Theorem \ref{theorem:ode-existence} implies that for every $T > 0$, the
restriction $X|_{[0, T] \times \R^d}$ is a regular Lagrangian flow for $b$.

In the proof of Theorem \ref{theorem:ode-existence}, we will use the
following estimate of the ``superlevels'' of the flow, due to Crippa and DeLellis \cite{CL08}.
If $X$ is a regular Lagrangian flow, we define
\begin{align}
	G^X_\lambda \colonequals \{x \in \R^d : |X(t, x)| \le \lambda \ \ \text{for all $t \in [0, T]$}\}.
	\label{eq:superlevels}
\end{align}
In Lemma \ref{lemma:superlevels}, we assume that the growth condition (\ref{eq:b-growth-condition})
holds. We write $|b|/(1 + |x|) = b^1 + b^2$, where $b^1 \in L^1(0, T; L^1(\R^d))$ and
$b^2 \in L^1(0, T; L^\infty(\R^d))$.

\begin{lemma}[\cite{CL08}, Proposition 3.2]
	Let $b \in L^1_\loc([0, T] \times \R^d)$, assume (\ref{eq:b-growth-condition}),
	and let $X$ be a regular Lagrangian flow for $b$. Then for every $R > 0$
	\begin{align*}
		\meas^d\paren*{B_R(0) \setminus G_\lambda^X} \le g(R, \lambda) \,.
	\end{align*}
	The function $g$ only depends on $\|b^1\|_{L^1(0, T; L^1(\R^d))}$,
	$\|b^2\|_{L^1(0, T; L^\infty(\R^d))}$ and the compressibility constant of $X$.
	For fixed $R$, $\lim_{\lambda \to \infty} g(R, \lambda) = 0$.
	\label{lemma:superlevels}
\end{lemma}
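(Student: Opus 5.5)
The plan is to control the set $B_R \setminus G^X_\lambda$ through the integral of a logarithm-type quantity along trajectories. For $x \in B_R$ I consider $\sup_{t\in[0,T]} \log(1+|X(t,x)|)$. Since $X(\cdot,x)$ is absolutely continuous and solves (\ref{eq:ode}) for a.e. $x$,
\begin{align*}
	\frac{d}{dt}\log\big(1+|X(t,x)|\big) \le \frac{|\dot X(t,x)|}{1+|X(t,x)|}
	= \frac{|b(t,X(t,x))|}{1+|X(t,x)|}
	= b^1(t,X(t,x)) + b^2(t,X(t,x))
\end{align*}
for a.e. $t$. Here $b^1, b^2$ can be taken nonnegative: replace them by their positive parts, and after splitting on $\{|b|/(1+|x|) \ge b^2\}$ assume $0\le b^2\le |b|/(1+|x|)$. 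Integrating in time and taking the supremum gives, for a.e. $x \in B_R$,
\begin{align*}
	\sup_{t\in[0,T]}\log\big(1+|X(t,x)|\big) \le \log(1+R) + \int_0^T b^1(t,X(t,x))\,dt + \int_0^T \norm*{b^2(t)}_{L^\infty(\R^d)}\,dt .
\end{align*}

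Next I integrate this over $B_R$ and use the compressibility bound \ref{enum:F2} together with Fubini. One point needs care. The composition $b^1(t,X(t,x))$ is well defined for a.e. $x$ and a.e. $t$ independently of the representative of $b^1$, because $X(t)_\#\meas^d \le K\meas^d$. This gives
\begin{align*}
	\int_{B_R}\int_0^T b^1(t,X(t,x))\,dt\,dx \le K \norm*{b^1}_{L^1(0,T;L^1(\R^d))}.
\end{align*}
It follows that
\begin{align*}
	\int_{B_R} \sup_{t}\log\big(1+|X(t,x)|\big)\,dx \le \meas^d(B_R)\Big(\log(1+R) + \norm*{b^2}_{L^1_tL^\infty_x}\Big) + K\norm*{b^1}_{L^1_tL^1_x}.
\end{align*}

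Finally, if $x \in B_R\setminus G^X_\lambda$, then $\sup_t |X(t,x)| > \lambda$ by continuity of $t \mapsto X(t,x)$, so the integrand exceeds $\log(1+\lambda)$. Chebyshev's inequality therefore gives
\begin{align*}
	\meas^d\big(B_R\setminus G^X_\lambda\big) \le g(R,\lambda) \colonequals \frac{\meas^d(B_R)\big(\log(1+R) + \norm*{b^2}_{L^1_tL^\infty_x}\big) + K\norm*{b^1}_{L^1_tL^1_x}}{\log(1+\lambda)} .
\end{align*}
This bound depends only on the allowed quantities and tends to $0$ as $\lambda\to\infty$ for fixed $R$.

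The main obstacle is measure-theoretic rather than analytic. First, $\sup_t$ of the measurable map $X$ must be measurable; this holds since the supremum can be taken over rational $t$ for a.e. $x$, by continuity. Second, the chain rule for $t\mapsto\log(1+|X(t,x)|)$ must be justified, which works because it is a Lipschitz function composed with an absolutely continuous one. Third, the equality $\dot X = b(t,X)$ a.e. in $t$ must be compatible with composing the chosen representatives $b^1,b^2$. Compressibility handles this last point, since changing $b$ on a null set of $[0,T]\times\R^d$ changes $b(t,X(t,x))$ only on a $dt\,dx$-null set.
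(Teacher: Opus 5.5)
Your proof is correct and follows the same route as the paper. Both bound $\sup_t$ of a logarithm of $1+|X(t,x)|$ by $\int_0^T |b(t,X)|/(1+|X|)\,dt$, split this using $b^1+b^2$, control the $b^1$ part with property~\ref{enum:F2}, and finish with Chebyshev; the only difference is cosmetic: you use $\log(1+|X|)$ instead of the paper's $\log\frac{1+|X|}{1+R}$, which keeps the integrand nonnegative for Chebyshev and gives a bound for every $\lambda>0$, at the cost of an extra $\meas^d(B_R)\log(1+R)$ term (your reduction to nonnegative $b^1,b^2$ is unnecessary, since $|b|/(1+|x|)\le |b^1|+|b^2|$ suffices).
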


\begin{proof}
	By property \ref{enum:F1} of a regular Lagrangian flow,
	\begin{align*}
		\int_{B_R} \sup_{t \in [0, T]} \log \paren*{
			\frac{1 + |X(t, x)|} {1 + R}
		} \,dx
		 & \le \int_{B_R} \int_0^T \frac{|\del_t X(t, x)|}{1 + |X(t, x)|} \,dt \,dx  \\
		 & = \int_{B_R} \int_0^T \frac{|b(t, X(t, x))|}{1 + |X(t, x)|} \,dt \,dx \,.
	\end{align*}
	Using the decomposition $|b|/(1 + |x|) = b^1 + b^2$, we obtain
	\begin{align*}
		\int_{B_R} \sup_{t \in [0, T]} \log \paren*{
			\frac{1 + |X(t, x)|} {1 + R}
		} \,dx
		 & \le \int_0^T \contractint \int_{B_R} |b^1(t, X(t, x))| + |b^2(t, X(t, x))| \,dt \,dx \\
		 & \le K \norm*{b^1}_{L^1_tL^1_x} + \meas^d(B_R) \norm*{b^2}_{L^1_tL^\infty_x} \,.
	\end{align*}
	Here $K$ is the compressibility constant of $X$ from Definition \ref{def:lrf}.
	By Chebyshev's inequality,
	\begin{align*}
		\meas^d\paren*{B_R \setminus G_\lambda^X}
		\le \paren*{\log\paren*{\frac {1 + \lambda}{1 + R}}}^{\!-1}
		\paren*{
			K \norm*{b^1}_{L^1_tL^1_x}
			+ \meas^d(B_R) \norm*{b^2}_{L^1_tL^\infty_x}
		} \,.
	\end{align*}
	Now, the function on the right-hand side has the required properties of $g$.
\end{proof}

\begin{proof}[Proof of Theorem \ref{theorem:ode-existence}]
	By Corollary \ref{cor:existence}, there exists a $X \in C(\R; L(\R^d; \R^d))$ such that
	for each $1 \le j \le d$, $X_j$ is the unique renormalized solution of
	\begin{align}
		\del_t X_j - b \cdot \nabla X_j = 0 \quad \text{on $\R \times \R^d$},
		\qquad X_j|_{t = 0} = x_j \,.
		\label{eq:Xj-transport}
	\end{align}
	Even though Corollary \ref{cor:existence} is formulated for a finite time interval $[0, T]$,
	we get a solution $X \in C(\R; L(\R^d; \R^d))$ as follows: For every $T > 0$, there exists
	a unique solution $X_j^T \in C([0, T]; L(\R^d; \R^d))$ of (\ref{eq:Xj-transport})
	on $[0, T] \times \R^d$. Therefore, if $S < T$, then $X_j^T|_{[0, S] \times \R^d} = X_j^S$ almost
	everywhere. Hence, by inductively choosing $X_j^n$ for $n \in \N$ such that
	$X_j^n|_{[0, m] \times \R^d} = X_j^m$ for $m < n$ everywhere on $[0, m] \times \R^d$, we can define
	$X_j(t, x) \colonequals X_j^n(t, x)$ for some $n > t$. The construction for negative
	times is the same.

	\paragraph*{Part 2).}

	By Lemma \ref{lemma:composition}, for every $u^0 \in C(\R^d)$, $u^0(X)$ is the unique
	renormalized solution of (\ref{eq:transport}) with $c = 0$ for the initial data $u^0$.
	Combining this with (\ref{eq:ddt-Lp-norm}), we see that
	for every $u^0 \in C(\R^d) \cap L^1(\R^d)$
	\begin{align}
		\frac d {dt} \int_{\R^d} \abs*{u^0(X(t, x))} \,dx
		= -\int_{\R^d} (\div b) \abs*{u^0(X(t, x))} \,dx
		\qquad \text{for a.e. $t \in \R$.}
		\label{eq:L1-evo}
	\end{align}
	Let $D \colonequals \norm*{\div b}_{L^\infty}$. By Grönwall's inequality and (\ref{eq:L1-evo}),
	for every $t \in \R$
	\begin{align}
		e^{-|t|D} \int_{\R^d} \abs*{u^0} \,dx
		\, \le \, \int_{\R^d} \abs*{u^0(X(t, x))} \,dx
		\, \le \, e^{|t|D} \int_{\R^d} \abs*{u^0} \,dx \,.
		\label{eq:u0-near-incompressibility}
	\end{align}
	Let $A \subset \R^d$ be a measurable set. Letting $u^0 \uparrow \chi_A$ in (\ref{eq:u0-near-incompressibility})
	and the monotone convergence theorem imply (\ref{eq:near-incompressibility}).

	\paragraph{Part 3).}

	We now show that also for $u^0 \in L(\R^d)$, $u^0(X)$ is the unique renormalized solution of
	(\ref{eq:transport}) with $c = 0$ and initial data $u^0$.
	By Lusin's theorem, $C(\R^d) \subset L(\R^d)$ is dense with respect to
	local convergence in measure.
	Let $u^0_n\in C(\R^d)$ be a sequence converging to $u^0$ in $L(\R^d)$. By passing to
	a subsequence, we may in addition assume that $u_n^0 \to u^0$ almost everywhere on $\R^d$.
	By part 1) of Theorem \ref{theorem:stability}, $u^0_n(X) \to \tilde u$ in $L(\R \times \R^d)$,
	where $\tilde u$ is the unique renormalized solution of (\ref{eq:transport})
	with $(b, 0, u^0)$. To show that $\tilde u = u^0(X)$, it suffices to show that
	$u_n^0(X) \to u^0(X)$ a.e. on $\R^d$ for a.e. every $t \in \R$. This is the case because for a.e.
	$t \in \R$
	\begin{align*}
		\meas^{d}\paren*{ X(t, \cdot)^{-1} \paren*{\braces*{x \in \R^d : u_n^0(x) \nrightarrow u^0(x)}} }= 0
	\end{align*}
	by (\ref{eq:near-incompressibility}).

	\paragraph{Part 4).}

	Let $s \in \R$. By the previous paragraph,
	$X(s, X(t, x))$ is the unique renormalized solution of
	\begin{align*}
		\del_t u - b \cdot \nabla u = 0, \qquad u|_{t = 0} = X(s, \cdot) \,.
	\end{align*}
	However, $X(s + t, x)$ is also a renormalized solution of this equation. So by uniqueness
	(Corollary \ref{cor:existence}), $X(s + t, x) = X(s, X(t, x))$ for almost
	every $x \in \R^d$.

	\paragraph{Part 1).}

	Let $b_n \colonequals b * \rho_{1/n}$, where $\rho_{1/n}$ is a regularizing kernel
	defined in (\ref{eq:rho-eps}). By the growth condition in (\ref{eq:b-hypotheses}), we can
	write $|b| = (1 + |x|)(b^1 + b^2)$, where $b^1 \in L^1(\R^d)$, $b^2 \in L^\infty(\R^d)$ and
	$b^1, b^2 \ge 0$. Using that $\rho_{1/n}$ is supported in $B_{C_\rho/n}(0)$ for some $C_{\rho} > 0$, we compute
	\begin{align}
		\begin{split}
			\frac {|b_n|}{1 + |x|}
			 & = \frac 1 {1 + |x|} \abs*{\int_{\R^d} b(y)\rho_{1/n}(y - x) \,dy}                  \\
			 & \le \frac 1 {1 + |x|} \int_{\R^d} (b^1(y) + b^2(y))(1 + |y|)\rho_{1/n}(y - x) \,dy \\
			 & \le \tilde C_\rho \int_{\R^d} (b^1(y) + b^2(y))\rho_{1/n}(y - x) \,dy              \\
			 & = \tilde C_\rho \, (b^1 * \rho_{1/n} + b^2 * \rho_{1/n}) \,.
		\end{split}
		\label{eq:bn-growth}
	\end{align}
	Here $\tilde C_\rho$ is some constant only depending on $\rho$.

	We claim that there exists a flow $X_n \in C^1(\R \times \R^d; \R^d)$ for $b_n$.
	By the local version of the Cauchy-Lipschitz theorem, for every $x^0 \in \R^d$, there exists
	a $\tilde T > 0$ and a solution $\gamma \in C^1([0, \tilde T]; \R^d)$ to the ODE (\ref{eq:ode})
	with $b$ replaced by $b_n$.
	By (\ref{eq:bn-growth}), $b_n/(1 + |x|) \in L^\infty(\R^d)$, since
	$\norm{b^1 * \rho_{1/n}}_{L^\infty} \le \norm{b^1}_{L^1} \norm{\rho_{1/n}}_{L^\infty}$.
	Therefore the solution $\gamma$
	can be extended to a maximal solution on $[0, \infty)$. The same argument applies for
	negative times. Thus, there exists a global flow map $X_n : \R \times \R^d \to \R^d$.
	Since $b_n \in C^1(\R^d)$, by Proposition \ref{prop:smooth-dependence},
	$X_n(t, \cdot) \in C^1(\R^d; \R^d)$ for every $t \in \R$. Since $b$ is autonomous, we also
	have $X(\cdot, x) \in C^1(\R; \R^d)$ for every $x \in \R^d$,
	and hence $X \in C^1(\R \times \R^d; \R^d)$.

	Since $b_n$ is locally Lipschitz continuous, $X_n$ has the group property (\ref{eq:group}).
	We compute
	\begin{align*}
		0 = \frac{d}{dt} \big(X_n(t, X_n(-t, x))\big)
		= \del_t X_n(t, X_n(-t, x)) - DX_n(t, X_n(-t, x)) \, \del_t X_n(-t, x) \,.
	\end{align*}
	Since $\del_t X_n(-t, x) = b_n(X_n(-t, x))$, we see that $X_n$
	satisfies the transport equation with $c = 0$ in the classical sense:
	\begin{align*}
		\del_t X_n - b_n \!\cdot \nabla X_n = 0, \qquad X_n|_{t = 0} = \id_{\R^d}
	\end{align*}
	By the chain rule, for every $\beta \in C^1(\R^d)$,
	\begin{align}
		\del_t \beta(X_n) - b_n \!\cdot \nabla \big(\beta(X_n)\big) = 0,
		\qquad \beta(X_n)|_{t = 0} = \beta \,.
		\label{eq:Xn-transport}
	\end{align}
	Thus, for every $1 \le j \le d$, the $j$-th component $X_{n, j}$ is a renormalized solution
	of (\ref{eq:transport}) with $(b_n, 0, x_j)$. By part 1) of Theorem \ref{theorem:stability},
	for every $T > 0$, $X_n \to X$ in $C([-T, T]; L(\R^d; \R^d))$. In particular, $X_n \to X$
	in $L(\R \times \R^d; \R^d)$.

	We now show that for every $\beta \in C^1(\R^d)$ such that $\beta$ and $|\nabla \beta|(1 + |x|)$ are
	bounded, the following equation holds in the weak sense:
	\begin{align}
		\del_t \beta(X) = b(X) \cdot \nabla \beta(X) \quad \text{on $\R \times \R^d$},
		\qquad \beta(X)|_{t = 0} = \beta \quad \text{on $\R^d$}
		\label{eq:ode-renormalized}
	\end{align}
	For simplicity, we only prove this on $\lbrack 0, \infty \rparen \times \R^d$.
	The argument for negative times is the same.
	Since $\del_t X_n = b_n(X_n)$ and $X_n \in C^1(\R \times \R^d; \R^d)$, (\ref{eq:ode-renormalized})
	holds with $X$ and $b$ replaced by $X_n$ and $b_n$. Thus, for every
	$\phi \in C^\infty_c([0, \infty) \times \R^d)$
	\begin{align*}
		-\int_0^\infty \contractint \int_{\R^d} \beta(X_n(t, x)) \, \del_t \phi \,dx \,dt
		-\int_{\R^d} \phi(0) \beta \,dx
		= \int_0^\infty \contractint \int_{\R^d}
		b_n(X_n(t, x)) \cdot \nabla \beta(X_n(t, x)) \, \phi \,dx \,dt \,.
	\end{align*}
	Our goal is to prove
	\begin{align}
		-\int_0^\infty \contractint \int_{\R^d} \beta(X(t, x)) \, \del_t \phi \,dx \,dt
		-\int_{\R^d} \phi(0) \beta \,dx
		= \int_0^\infty \contractint \int_{\R^d}
		b(X(t, x)) \cdot \nabla \beta(X(t, x)) \, \phi \,dx \,dt \,.
		\label{eq:ode-renormalized-weak}
	\end{align}
	Since $X_n \to X$ in $L(\R \times \R^d)$ and $\beta$ is bounded and Lipschitz continuous,
	\begin{align*}
		\int_0^\infty \contractint \int_{\R^d} \beta(X_n(t, x)) \, \del_t \phi \,dx \,dt
		\ \xrightarrow[n \to \infty]{} \
		\int_0^\infty \contractint \int_{\R^d} \beta(X(t, x)) \, \del_t \phi \,dx \,dt \,.
	\end{align*}
	Thus, it remains to show that
	\begin{align}
		\int_0^\infty \contractint \int_{\R^d} b_n(X_n(t, x)) \cdot \nabla \beta(X_n(t, x)) \, \phi \,dx \,dt
		\ \xrightarrow[n \to \infty]{} \
		\int_0^\infty \contractint \int_{\R^d} b(X(t, x)) \cdot \nabla \beta(X(t, x)) \, \phi \,dx \,dt \,.
		\label{eq:bn-Xn-convergence}
	\end{align}

	By (\ref{eq:Xn-transport}), $X_n$ is a renormalized solution of the transport
	equation with $(b_n, 0, \id_{\R^d})$. Since $\div b_n \in L^\infty(\R^d)$, by the
	computation in the proof of part 2), the measure $X_n(t, \cdot)_\# \meas^d$
	is absolutely continuous with respect to $\meas^d$ and
	\begin{align}
		e^{-|t|D_n} \meas^d \le X_n(t, \cdot)_\# \meas^d \le e^{|t|D_n} \meas^d \,,
		\qquad \text{where} \ D_n \colonequals \norm*{\div b_n}_{L^\infty(\R^d)} \,.
		\label{eq:Xn-near-incompressibility}
	\end{align}
	Let $h_n(t)$ be the density of  $X_n(t, \cdot)_\# \meas^d$, and let
	$T > 0$ such that $\supp \phi \subset [0, T) \times \R^d$. Since
	\begin{align*}
		\norm*{\div b_n}_{L^\infty}
		= \norm*{(\div b) * \rho_{1/n}}_{L^\infty} \le \norm*{\div b}_{L^\infty} \,,
	\end{align*}
	$\{h_n\}_{n \in \N}$ is bounded in $L^\infty([0, T] \times \R^d)$ by (\ref{eq:Xn-near-incompressibility}).
	By (\ref{eq:near-incompressibility}), also $X(t, \cdot)_\#$ is absolutely continuous with respect
	to $\meas^d$, and we denote the density of $X(t, \cdot)_\#$ by $h(t, \cdot)$.

	By changing variables and using the group property, (\ref{eq:bn-Xn-convergence}) is equivalent to
	\begin{align}
		\int_0^\infty \contractint \int_{\R^d}
		b_n \!\cdot \nabla \beta \, \phi(t, X_n(-t, x)) h_n \,dx \,dt
		\ \xrightarrow[n \to \infty]{} \
		\int_0^\infty \contractint \int_{\R^d}
		b \cdot \nabla \beta \, \phi(t, X(-t, x)) h \,dx \,dt \,.
		\label{eq:bn-Xn-convergence-cov}
	\end{align}
	Since $\phi$ is bounded and $h_n$ is bounded in $L^\infty([0, T] \times \R^d)$, by the Banach-Alaoglu
	theorem, there exists a subsequence $\phi(t, X_{k_n}(-t, x)) h_{k_n}$ of $\phi(t, X_n(-t, x)) h_n$
	converging weak-$*$ in $L^\infty([0, T] \times \R^d)$.
	To identify the weak-$*$ limit, we test against $\psi \in C^\infty_c([0, T] \times \R^d)$.
	\begin{align*}
		\iTR & \psi \phi(t, X_{k_n}(-t, x)) h_{k_n} \,dx \,dt
		= \iTR \psi(t, X_{k_n}(t, x)) \phi \,dx \,dt          \\
		     & \xrightarrow{n \to \infty} \
		\iTR \psi(t, X(t, x)) \phi \,dx \,dt
		= \iTR \psi \phi(t, X(-t, x)) h \,dx \,dt \,.
	\end{align*}
	The convergence follows from $X_{k_n} \to X$ in $L(\R \times \R^d; \R^d)$, the compact support of $\phi$,
	and because $\psi$ is bounded and Lipschitz. Since this argument also applies to any subsequence
	of $\phi(t, X_n(-t, x)) h_n$, we conclude
	\begin{align}
		\phi(t, X_n(-t, x)) h_n \rightharpoonup^* \phi(t, X(-t, x)) h
		\qquad \text{in $L^\infty([0, T] \times \R^d)$.}
		\label{eq:phi-h-weak-star}
	\end{align}
	Let $\lambda > 0$. We write
	\begin{align}
		 & \abs*{
			\int_0^\infty \contractint \int_{\R^d}
			b_n \!\cdot \nabla \beta \, \phi(t, X_n(-t, x)) h_n \,dx \,dt
			- \int_0^\infty \contractint \int_{\R^d}
			b \cdot \nabla \beta \, \phi(t, X(-t, x)) h \,dx \,dt
		}
		\label{eq:local-split}
		\\
		 & \qquad \le \abs*{
			\int_0^\infty \contractint \int_{B_\lambda}
			b_n \!\cdot \nabla \beta \, \phi(t, X_n(-t, x)) h_n
			- b \cdot \nabla \beta \, \phi(t, X(-t, x)) h \,dx \,dt
		}
		+ r_n(\lambda) + r(\lambda) \,, \notag
	\end{align}
	where
	\begin{align*}
		r_n(\lambda) & \colonequals \abs*{
			\int_0^\infty \contractint \int_{\R^d \setminus B_\lambda}
			b_n \!\cdot \nabla \beta \, \phi(t, X_n(-t, x)) h_n \,dx \,dt
		} \qquad \text{for $n \in \N$,}    \\
		r(\lambda)   & \colonequals \abs*{
			\int_0^\infty \contractint \int_{\R^d \setminus B_\lambda}
			b \cdot \nabla \beta \, \phi(t, X(-t, x)) h \,dx \,dt
		} \,.
	\end{align*}
	To conclude (\ref{eq:bn-Xn-convergence-cov}), we show that the right-hand side of
	(\ref{eq:local-split}) can be made arbitrarily small by choosing $\lambda$ appropriately
	and $n$ large enough.
	By (\ref{eq:phi-h-weak-star}) and since $b_n \to b$ in $L^1_\loc(\R^d)$,
	for every fixed $\lambda > 0$, the first term in the second line of (\ref{eq:local-split})
	converges to 0 as $n \to \infty$.

	Since $|\nabla \beta| (1 + |x|)$ is bounded and (\ref{eq:b-growth-condition}) holds,
	$b \cdot \nabla \beta \in L^1(\R^d) + L^\infty(\R^d)$. By (\ref{eq:near-incompressibility})
	and since $\phi$ and $h$ are bounded on $[0, T] \times \R^d$,
	\begin{align*}
		\phi(t, X(-t, x)) h \in L^1(0, T; L^1) \cap L^1(0, T; L^\infty) \,.
	\end{align*}
	Thus $r(\lambda) \to 0$ as $\lambda \to \infty$. We finish the proof of (\ref{eq:bn-Xn-convergence-cov})
	by showing that $r_n(\lambda) \to 0$ uniformly in $n$ as $\lambda \to \infty$. By (\ref{eq:bn-growth}),
	\begin{align*}
		r_n(\lambda)
		 & \le C_{\rho,\beta} \int_0^\infty \contractint \int_{\R^d \setminus B_\lambda}
		(b^1 * \rho_{1/n} + b^2 * \rho_{1/n}) |\phi(t, X_n(-t, x))| h_n \,dx \,dt                         \\
		 & \le C_{\rho,\beta} \norm*{\phi}_{L^\infty_{t, x}} \norm*{h_n}_{L^\infty([0, T] \times \R^d)} T
		\int_{\R^d \setminus B_{\lambda}} b^1 * \rho_{1/n} \,dx                                           \\
		 & \quad + \ C_{\rho,\beta} \norm*{b^2}_{L^\infty(\R^d)}
		\int_0^T \contractint \int_{X_n(-t, \R^d \setminus B_\lambda)} |\phi| \,dx \,dt \,.
	\end{align*}
	In the last line, we changed variables $x' = X(-t, x)$.
	Since $\supp \rho_{1/n} \subset B_{C_\rho/n}$ for some $C_\rho > 0$, and since
	$h_n$ is bounded in $L^\infty([0, T] \times \R^d)$, the first term on the right-hand
	side converges to 0 as $\lambda \to \infty$, uniformly in $n$.
	Let $R > 0$ such that $\supp \phi \subset [0, T] \times B_R(0)$. By the group
	property, $X_n(-t, x) = X_n(t, \cdot)^{-1}(x)$. Therefore
	\begin{align*}
		\int_0^T \contractint \int_{X_n(-t, \R^d \setminus B_\lambda)} |\phi| \,dx \,dt
		 & \le \norm*{\phi}_{L^\infty_{t, x}}
		\int_0^T \meas^d {\big(\{x \in B_R(0) : |X_n(t, x)| \ge \lambda\}\big)} \,dt                      \\
		 & \le  \norm*{\phi}_{L^\infty_{t, x}} T \, \meas^d\big(B_R(0) \setminus G^{X_n}_\lambda\big) \,.
	\end{align*}
	Here $G_\lambda^{X_n}$ is the set defined in (\ref{eq:superlevels}).
	We now want to apply Lemma \ref{lemma:superlevels}. By (\ref{eq:bn-growth}),
	\begin{align*}
		\frac{|b_n|}{1 + |x|} \le \tilde C_\rho (b^1 * \rho_{1/n} + b^2 * \rho_{1/n}) \,,
	\end{align*}
	and $b^1 * \rho_{1/n} \in L^1(\R^d)$, $b^2 * \rho_{1/n} \in L^\infty(\R^d)$.
	Thus, by Lemma \ref{lemma:superlevels},
	\begin{align*}
		\meas^d\big(B_R(0) \setminus G^{X_n}_\lambda\big) \le g_n(R, \lambda)
	\end{align*}
	where the function $g_n$ depends only on the compressibility constant $K_n$ of $X_n$ as well as the norms
	$\norm{b^1 * \rho_{1/n}}_{L^1(0, T; L^1)}$ and $\norm{b^2 * \rho_{1/n}}_{L^1(0, T; L^\infty)}$.
	Since $\{K_n\}_{n \in \N}$ is bounded by (\ref{eq:Xn-near-incompressibility}) and
	\begin{align*}
		\norm*{b^1 * \rho_{1/n}}_{L^1(0, T; L^1)}
		\le \norm*{b^1}_{L^1(0, T; L^1)}, \qquad
		\norm*{b^2 * \rho_{1/n}}_{L^1(0, T; L^\infty)}
		\le \norm*{b^2}_{L^1(0, T; L^\infty)} \,,
	\end{align*}
	we conclude that $g_n(R, \lambda) \to 0$ as $\lambda \to \infty$, uniformly in $n$.
	Therefore $r_n(\lambda) \to 0$ as $\lambda \to \infty$, uniformly in $n$.

	We now show that for almost every $x \in \R^{d}$, $X(\cdot, x) \in W^{1, 1}_\loc(\R)$
	and $\del_t X(t, x) = b(X(t,x))$. Since $b \cdot \nabla \beta \in L^1(\R^d) + L^\infty(\R^d)$,
	by (\ref{eq:near-incompressibility})
	\begin{align*}
		b(X) \cdot \nabla \beta(X) \in L^1(-T, T; L^1(\R^d)) + L^1(-T, T; L^\infty(\R^d))
	\end{align*}
	for every $T > 0$. In particular, $b(X) \cdot \nabla \beta(X) \in L^1_\loc(\R \times \R^d)$.

	Thus
	\begin{align}
		b(X(t, x)) \cdot \nabla \beta(X(t, x)) \in L^1_\loc(\R)
		\qquad \text{for a.e. $x \in \R^d$.}
		\label{eq:bX-L1loc-ae}
	\end{align}
	This is true for the ``$\loc$''-spaces, because both $\R$ and $\R^d$ can be
	covered by a countable number of compact sets.

	Let $\psi \in C^\infty_c([0, \infty))$ and $\xi \in C^\infty_c(\R^d)$. We choose
	$\phi(t, x) = \xi(x) \psi(t)$ in (\ref{eq:ode-renormalized-weak}).
	\begin{align*}
		-\int_0^\infty \contractint \int_{\R^d} \beta(X) \, \xi \, \del_t\psi \,dx \,dt
		- \psi(0) \int_{\R^d}\beta \xi \,dx
		= \int_0^\infty \contractint \int_{\R^d} b(X) \cdot \nabla \beta(X) \, \xi \psi \,dx \,dt \,.
	\end{align*}
	Since $\xi$ is arbitrary, for a.e. $x \in \R^d$,
	\begin{align}
		-\int_0^\infty \beta(X(t, x)) \del_t\psi(t) \,dt
		- \psi(0) \beta(x)
		= \int_0^\infty b(X(t, x)) \cdot \nabla \beta(X(t, x)) \, \psi(t) \,dt \,.
		\label{eq:ode-renormalized-weak-pointwise}
	\end{align}
	Fix $x \in \R^d$ such that both (\ref{eq:bX-L1loc-ae}) and (\ref{eq:ode-renormalized-weak-pointwise})
	hold. Then $\beta(X(t, x)) \in W^{1, 1}_\loc(\R)$. We choose $\beta = \tanh(x_i)$, where $x_i$
	is the $i$-th coordinate function. Thus, $\tanh(X_i(t, x)) \in C(\R)$ after modifying
	$X_i(\cdot, x)$ on a $\meas^1$-negligible set. It is possible to achieve this via a modification of $X_i(\cdot, x)$,
	since $\tanh' > 0$ on $\R$. Since $\tanh$ is strictly increasing, this implies $X_i(t, x) \in C(\R)$.

	Let $T > 0$ and $1 \le i \le d$. By the time-continuity of $X$ just shown,
	$X([-T, T], x) \subset \R^d$ is compact,
	so we may choose $\beta$ such that $\beta(X(t, x)) = X_i(t, x)$ for all $t \in [-T, T]$.
	Then, (\ref{eq:ode-renormalized-weak-pointwise}) shows that
	$X(\cdot, x) \in W^{1, 1}_\loc(\R; \R^d)$ and $\del_t X(t, x) = b(X(t, x))$.
\end{proof}

\subsection{Uniqueness}

Instead of working with regular Lagrangian flows, DiPerna and Lions \cite{DL89}
use a different characterization of the flow map. For time-dependent vector fields,
they introduce a map $X : [0, T]^2 \times \R^d \to \R^d$, where (informally speaking)
$X(s, t, x)$ is the position at time $s$ of a particle that was at $x$ at time $t$.
This flow belongs to $C([0,T]^2; L(\R^d; \R^d))$ and is uniquely determined by
the following three properties:

\begin{enumerate}
	\item Let $\beta \in C^1(\R^d)$ such that $\beta$ and $|\nabla \beta|(1 + |x|)$ are bounded. Then
	      for every $t \in [0, T]$, the following equation holds in the weak sense:
	      \begin{align*}
		      \del_s \beta(X) = b(s, X) \cdot \nabla \beta(X) \quad \text{on $(0, T) \times \R^d$},
		      \qquad \beta(X)|_{s = t} = \beta \quad \text{on $\R^d$}
	      \end{align*}
	\item There exists a $K > 0$ such that
	      $K^{-1} \meas^d \le X(t, \cdot)_\# \meas^d \le K\meas^d$ for all $t \in [0, T]$.
	\item For all $s, t, r \in [0, T]$,
	      \begin{align}
		      X(s, t, x) = X(s, r, X(r, t, x)) \quad \text{for a.e. $x \in \R^d$.}
		      \label{eq:group-time-dependent}
	      \end{align}
\end{enumerate}

The third property is the analogue of the group property (\ref{eq:group}) if $b$ is time-dependent.
However, subsequently it has been shown that (\ref{eq:group-time-dependent}) and
the lower bound on the pushforward measure $X(t, \cdot)_\# \meas^d$ are not required for uniqueness of the flow.
This is a consequence of more general principles connecting uniqueness for the continuity equation
with uniqueness of regular Lagrangian flows.

The continuity equation for $b$ is the following PDE. It is a special case of the transport equation
(\ref{eq:transport}) with $c = \div b$ and $b$ replaced by $-b$.
\begin{align}
	\del_t u + \div(bu) = 0 \quad \text{on $(0, T) \times \R^d$,}
	\qquad
	u|_{t = 0} = u^0 \quad \text{on $\R^d$}
	\label{eq:continuity}
\end{align}
These principles were introduced by Ambrosio in \cite{Amb04}.
A comprehensive treatment of them can be found in \cite{AC08}, p.~11--19.
Below, we use the connection with the continuity equation to give a direct
proof of the uniqueness of regular Lagrangian flows.

\begin{lemma}
	Assume that (\ref{eq:b-growth-condition}) holds
	and let $X$ be a regular Lagrangian flow for $b$.
	Let $u^0 \in C^\infty_c(\R^d)$ and $g(t, \cdot)$ be the density of
	$X(t, \cdot)_\# (u^0 \,d\meas^d)$ with respect to $\meas^d$. Then $g$ is a weak solution
	of the continuity equation (\ref{eq:continuity}) with initial data $u^0$.
	\label{lemma:continuity-eq}
\end{lemma}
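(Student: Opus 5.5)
The plan is to repeat the computation of §\ref{sec:lipschitz-flows}, where we showed that $X(t)_\#\meas^d$ solves (\ref{eq:mu-continuity}), now with the weight $u^0$ and using only \ref{enum:F1} and \ref{enum:F2}. First, by \ref{enum:F2} we have $|X(t,\cdot)_\#(u^0\,d\meas^d)| \le X(t,\cdot)_\#(|u^0|\,d\meas^d) \le \|u^0\|_{L^\infty} K\meas^d$. Hence the density $g(t,\cdot)$ exists and satisfies $|g| \le K\|u^0\|_{L^\infty}$. Also $\|g(t)\|_{L^1} \le \|u^0\|_{L^1}$, so $g \in L^\infty(0,T; L^1\cap L^\infty(\R^d))$. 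We also need measurability of $g$ in $(t,x)$; it follows by testing against a countable dense family of test functions, since $t \mapsto \int \psi(X(t,x))u^0(x)\,dx$ is measurable by Fubini. In particular $g \in L^\infty(0,T;L^p)$ for every $p$, so the weak formulation makes sense for $b \in L^1_\loc$: the term $\int\!\!\int g\, b\cdot\nabla\phi$ is finite because $g$ is bounded.

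The main step fixes $\phi \in C^\infty_c([0,T)\times\R^d)$. For a.e.\ $x$, the map $t\mapsto X(t,x)$ is absolutely continuous with $\del_t X = b(t,X)$ by \ref{enum:F1}. Since $\phi$ is smooth, $t\mapsto\phi(t,X(t,x))$ is absolutely continuous with derivative $\del_t\phi(t,X) + b(t,X)\cdot\nabla\phi(t,X)$ for a.e.\ $t$. Integrating from $0$ to $T$ gives $-\phi(0,x) = \int_0^T (\del_t\phi + b\cdot\nabla\phi)(t,X(t,x))\,dt$. We then multiply by $u^0(x)$ and integrate over $\R^d$. To exchange the order of integration (Fubini), we need $\int_{\supp u^0}\int_0^T |b(t,X(t,x))|\,|\nabla\phi(t,X(t,x))|\,dt\,dx < \infty$. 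By \ref{enum:F2} this is at most $K\int_0^T\int_{\R^d}|b||\nabla\phi|\,dx\,dt$, which is finite because $b \in L^1_\loc$ and $\phi$ has compact support. The same holds for the $\del_t\phi$ term. After the exchange, the definition of the pushforward turns $\int u^0(x)\,F(t,X(t,x))\,dx$ into $\int F(t,y)\,g(t,y)\,dy$. This yields
\begin{align*}
-\int_{\R^d} u^0\phi(0,x)\,dx = \iTR g\,(\del_t\phi + b\cdot\nabla\phi)\,dx\,dt,
\end{align*}
which is exactly the weak formulation of (\ref{eq:continuity}).

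The main obstacle is the rigorous justification of the pushforward identity $\int u^0\,F(t,X(t,\cdot)) = \int F g$ for the merely integrable function $F = b\cdot\nabla\phi(t,\cdot)$. The identity holds by definition for bounded Borel $F$. For Lebesgue-measurable $F$, one first notes that $F\circ X(t,\cdot)$ is well defined a.e., since preimages of null sets are null by \ref{enum:F2}. One then extends to $F\in L^1$ by monotone convergence applied to truncations, after splitting $u^0$ into its positive and negative parts. A second point to check is the absolute continuity of $t\mapsto\phi(t,X(t,x))$. It follows from the chain rule for a $C^1$ function composed with an absolutely continuous curve, so I expect no difficulty there. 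The growth condition (\ref{eq:b-growth-condition}) seems unnecessary for this argument, since $\phi$ has compact support. It is presumably stated for later use when this lemma is combined with Corollary \ref{cor:uniqueness-p} or \ref{cor:uniqueness-infty}.
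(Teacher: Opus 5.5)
Your proposal is correct and follows the same route as the paper's proof: the chain rule along a.e.\ trajectory, integration in $t$, multiplication by $u^0$ with Fubini justified by \ref{enum:F2}, and then the definition of the pushforward density; you additionally supply details the paper leaves implicit, namely the bounds and measurability of $g$ and the pushforward identity for integrable integrands. Your remark on (\ref{eq:b-growth-condition}) is accurate in that the paper only uses it to place $b\cdot\nabla\phi$ in $L^1(0,T;L^1+L^\infty)$, which the compact support of $\phi$ already gives, but in the lemma as stated (\ref{eq:b-growth-condition}) is exactly what supplies the local integrability $b\in L^1_\loc$ that your Fubini step relies on.
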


By the second property of a regular Lagrangian flow, the measure $X(t, \cdot)_\# (u^0 \,d\meas^d)$
is absolutely continuous with respect to $\meas^d$ and its density belongs to
$L^\infty([0, T] \times \R^d)$. Hence the weak formulation of the continuity equation
is well defined with $u = g$.

\begin{proof}
	Let $\phi \in C^\infty_c([0, T) \times \R^d)$. Since $X(\cdot, x) \in W^{1,1}(0, T; \R^d)$ for almost
	every $x \in \R^d$, by the chain rule for Sobolev functions
	\begin{align*}
		\frac{d}{dt}\big(\phi(t, X(t, x))\big)
		= \del_t \phi(t, X(t, x)) +  b(t, X(t, x)) \cdot \nabla \phi(t, X(t, x))
		\qquad \text{for a.e. $x \in \R^d$.}
	\end{align*}
	Integrating over $[0, T]$ yields
	\begin{align}
		-\phi(0, x)
		= \int_0^T \del_t \phi(t, X(t, x)) \,dt
		+ \int_0^T b(t, X(t, x)) \cdot \nabla \phi(t, X(t, x)) \,dt
		\qquad \text{for a.e. $x \in \R^d$.}
		\label{eq:continuity-pointwise}
	\end{align}
	Since $b \cdot \nabla \phi \in L^1(0, T; L^1 + L^\infty)$, by property \ref{enum:F2} of a
	regular Lagrangian flow, also
	\begin{align*}
		b(t, X(t, x)) \cdot \nabla \phi(t, X(t, x)) \in L^1(0, T; L^1 + L^\infty) \,.
	\end{align*}
	Thus, we may integrate (\ref{eq:continuity-pointwise}) over $\R^d$ after multiplying with $u^0$,
	\begin{align*}
		-\int_{\R^d}  u^0 \phi(0, x) \,dx
		 & = \iTR u^0 \del_t \phi(t, X(t, x)) \,dx \,dt                                    \\
		 & \quad\qquad + \iTR u^0 \, b(t, X(t, x)) \cdot \nabla \phi(t, X(t, x)) \,dx \,dt
	\end{align*}
	By definition of $g$, this equation is the weak form of (\ref{eq:continuity}) with $u = g$.
\end{proof}

\begin{theorem}
	If $b \in L^1(0, T; W^{1, 1}_\loc(\R^d; \R^d))$, $\div b \in L^1(0, T; L^\infty(\R^d))$ and
	(\ref{eq:b-growth-condition}) holds, then the regular Lagrangian flow for $b$ is unique.
	\label{theorem:ode-uniqueness}
\end{theorem}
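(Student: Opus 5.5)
The plan is to compare two regular Lagrangian flows $X$ and $Y$ for $b$ through the pushforwards they induce, using uniqueness for the continuity equation. Then I test those pushforwards with initial data built from $X$ itself.

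First I show that the pushforwards agree. Let $u^0 \in C^\infty_c(\R^d)$, and let $g_X$, $g_Y$ be the densities of $X(t,\cdot)_\#(u^0\,d\meas^d)$ and $Y(t,\cdot)_\#(u^0\,d\meas^d)$. By Lemma \ref{lemma:continuity-eq}, both are weak solutions in $L^\infty([0,T]\times\R^d)$ of the continuity equation (\ref{eq:continuity}) with the same initial data. This equation is the transport equation (\ref{eq:transport}) with $b$ replaced by $-b$ and $c = \div b$. The hypotheses (\ref{eq:bc-hypotheses}) and (\ref{eq:b-growth-condition}) are invariant under $b \mapsto -b$, and $c = \div b \in L^1(0,T;L^\infty)$. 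Hence Corollary \ref{cor:uniqueness-infty}, applied to $g_X - g_Y$, gives $g_X = g_Y$ a.e. in $(t,x)$. Consequently, for a.e. $t \in (0,T)$,
\begin{align*}
	\int_{\R^d} u^0(x)\,\psi(X(t,x))\,dx = \int_{\R^d} u^0(x)\,\psi(Y(t,x))\,dx
\end{align*}
for all $\psi$ in a countable dense subset of $C_c(\R^d)$ and all $u^0$ in a countable subset of $C^\infty_c$. By choosing these countable sets I get a single full-measure set of times $t$ that works for all of them.

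Next I extend this identity to $u^0 \in L^1(\R^d)\cap L^\infty(\R^d)$. This uses property \ref{enum:F2}: the maps $u^0 \mapsto X(t,\cdot)_\#(u^0\meas^d)$ and $u^0 \mapsto Y(t,\cdot)_\#(u^0\meas^d)$ are bounded $L^1 \to$ (finite measures), with norm controlled by the compressibility constants. The point of this extension is that $u^0$ may now be chosen depending on $X$. Fix such a $t$, a Borel set $B$ and $R>0$, and take
\begin{align*}
	u^0 = \chi_{B_R \cap X(t,\cdot)^{-1}(B)} .
\end{align*}
The $X$-pushforward of $u^0\meas^d$ gives zero mass to $B^c$, so the $Y$-pushforward does too. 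Hence $Y(t,x) \in B$ for a.e. $x \in B_R$ with $X(t,x)\in B$. Apply this with $B$ running over the countably many dyadic cubes of side $2^{-k}$, $k \in \N$. Then for a.e. $x$, the points $X(t,x)$ and $Y(t,x)$ lie in the same dyadic cube at every scale, so $X(t,x) = Y(t,x)$.

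Finally I pass from a.e. $t$ to the trajectories. Take a countable dense set of good times $t$. For a.e. $x$, equality holds at all of these times simultaneously, and both $X(\cdot,x)$ and $Y(\cdot,x)$ are absolutely continuous by \ref{enum:F1}. So $X(\cdot,x)=Y(\cdot,x)$ on $[0,T]$, which is uniqueness in the sense of Remark \ref{remark:rlf-unique}.

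The main obstacle is the first step. I have to check carefully that Corollary \ref{cor:uniqueness-infty} applies to the continuity equation. I also have to check that the identity obtained only for a.e. $t$ from the space-time statement can be organised over countably many test functions, so that the density extension to $X$-dependent $u^0$ is legitimate at a fixed good time.
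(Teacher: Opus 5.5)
Your proof is correct, and its core is the paper's. Lemma \ref{lemma:continuity-eq} turns both flows into $L^\infty$ solutions of the continuity equation. Corollary \ref{cor:uniqueness-infty}, applied with $-b$ and $c=\div b$ as you check, then forces the pushforward densities to agree.

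The finishing steps differ in two ways.

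First, the time issue. The paper upgrades $g_X=g_Y$ from a.e.\ $(t,x)$ to every $t$ via Proposition \ref{prop:continuity}. This needs the densities to be renormalized, through Proposition \ref{prop:consistency}. It also tacitly needs the pushforward at each fixed $t$ to coincide with the continuous representative, which itself comes from continuity of trajectories. You stay at a.e.\ $t$ and invoke trajectory continuity only once, on a countable dense set of good times. This is slightly more economical and sidesteps that tacit identification.

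Second, the identification step. To get $X(t,\cdot)=Y(t,\cdot)$ a.e., the paper notes that for fixed $\xi$ the identity $\int u^0\,\xi(X(t,x))\,dx=\int u^0\,\xi(Y(t,x))\,dx$ for all $u^0\in C^\infty_c(\R^d)$ already gives $\xi\circ X(t)=\xi\circ Y(t)$ a.e. It then lets $\xi$ run through a countable separating family. Your $X$-dependent choice $u^0=\chi_{B_R\cap X(t,\cdot)^{-1}(B)}$ with dyadic cubes is valid. However, it is just the a.e.\ identity $\chi_{B^c}\circ X(t)=\chi_{B^c}\circ Y(t)$ tested against one particular function. So choosing $u^0$ depending on $X$ is not really the point.

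One small correction: the $L^1$-extension does not use \ref{enum:F2}. For any measurable $X$ and bounded $\psi$ we have $|\int u^0\,\psi(X(t,x))\,dx|\le\|\psi\|_{L^\infty}\|u^0\|_{L^1}$. Compressibility is needed only to put the densities in $L^\infty$ and to make Lemma \ref{lemma:continuity-eq} meaningful.
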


\begin{proof}
	Let $X, Y : [0, T] \times \R^d \to \R^d$ be two regular Lagrangian flows for $b$.
	Let $u^0 \in C^\infty_c(\R^d)$
	and $g_X, g_Y \in L^\infty([0, T] \times \R^d)$ be the densities of $X(t, \cdot)_\# (u^0\,d\meas^d)$
	and $Y(t, \cdot)_\# (u^0 \,d\meas^d)$.
	By Lemma \ref{lemma:continuity-eq}, both $g_X$ and $g_Y$ are weak solutions
	of the continuity equation (\ref{eq:continuity}) for the initial data $u^0$. Since
	weak solutions of (\ref{eq:continuity}) are unique by Corollary \ref{cor:uniqueness-infty},
	we have $g_X = g_Y$ a.e. on $[0, T] \times \R^d$.

	By Proposition \ref{prop:continuity}, $g_X, g_Y \in C([0, T]; L^r_\loc(\R^d))$ for every $r < \infty$.
	Thus $g_X(t, \cdot) = g_Y(t, \cdot)$ a.e. on $\R^d$ for every $t \in [0, T]$.
	Hence, for every $t \in [0, T]$
	\begin{align*}
		X(t, \cdot)_\# (u^0 \,d\meas^d) = Y(t, \cdot)_\# (u^0 \,d\meas^d)
	\end{align*}
	as measures on $\R^d$.
	Testing against $\xi \in C^\infty_c(\R^d)$ yields
	\begin{align*}
		\int_{\R^d} u^0 \xi(X(t, x)) \,dx = \int_{\R^d} u^0 \xi(Y(t, x)) \,dx \,.
	\end{align*}
	Since $u^0$ is arbitrary, $\xi(X(t, x)) = \xi(Y(t, x))$ a.e. on $\R^d$.
	Since $\xi$ is arbitrary, we conclude $X(t, x) = Y(t, x)$ a.e. on $\R^d$ for every $t \in [0, T]$.
	Because $X(\cdot, x)$ and $Y(\cdot, x)$ are continuous on $[0, T]$ for a.e. $x \in \R^d$, this
	implies $X(\cdot, x) = Y(\cdot, x)$ on $[0, T]$ for a.e. $x \in \R^d$.
\end{proof}

\section{Counterexamples}
\label{sec:counterexamples}

\subsection{The growth condition}

To illustrate the necessity of the growth condition (\ref{eq:b-growth-condition})
for uniqueness of solutions to the transport equation,
consider the following example in two dimensions. Let
\begin{align*}
	b(x, y) =
	\begin{cases}
		\begin{pmatrix}
			x^2 \\
			-2xy
		\end{pmatrix} & \quad x > 0     \\
		0               & \quad x \le 0
	\end{cases}
\end{align*}
\begin{wrapfigure}[9]{r}{0.3\textwidth}
	\captionsetup{margin=0.5em}
	\vspace{-1.5em}
	\begin{center}
		\begin{tikzpicture}
			\begin{axis}[
					width=0.3\textwidth,
					height=0.3\textwidth,
					axis lines=middle,
					xmin=0,
					xmax=5,
					ymin=0,
					ymax=5,
					samples=200,
					xlabel={$t$},
					ylabel={$x$},
					xticklabels=\empty,
					yticklabels=\empty,
					xlabel style={ at={(current axis.right of origin)}, right },
					ylabel style={ at={(current axis.above origin)}, left },
				]
				\addplot[black, domain=4:10]{1/(x - 4)}; \addplot[black,
					domain=3.5:10]{1/(x - 3.5)}; \addplot[black, domain=3:10]{1/(x - 3)};
				\addplot[black, domain=2.5:10]{1/(x - 2.5)}; \addplot[black,
					domain=2:10]{1/(x - 2)}; \addplot[black, domain=1.5:10]{1/(x - 1.5)};
				\addplot[black, domain=1:10]{1/(x - 1)}; \addplot[black,
					domain=0.5:10]{1/(x - 0.5)}; \addplot[black]{1/x}; \addplot[black]{1/(x + 0.5)};
				\addplot[black]{1/(x + 1)}; \addplot[black]{1/(x + 1.5)}; \addplot[black]{1/(x + 2)};
				\addplot[black]{1/(x + 2.5)};
			\end{axis}
		\end{tikzpicture}
	\end{center}
	\vspace{-1em}
	\caption{The characteristic curves of $-b$, projected to the $x$-$t$-plane.}
\end{wrapfigure}

Since $b$ is smooth, $b \in W^{1,1}_\loc(\R^2)$. Furthermore, $\div b = 0$.
Note that $u \in C^1([0, \infty) \times \R^2)$ is a solution of the transport equation
(\ref{eq:transport}) with $c = 0$
if and only if it is constant along all characteristics of $-b$.

The flow map of $-b$ is
\begin{align*}
	X(t, x, y) = \begin{cases}
		             \begin{pmatrix}
			1/(x^{-1}+ t) \\
			y(1 + tx)^2
		\end{pmatrix}
		                             & \quad x > 0 \\
		             \addlinespace
		             \begin{pmatrix}
			x \\
			y
		\end{pmatrix} & \quad x \le 0
	             \end{cases}
\end{align*}
Every characteristic curve intersecting the plane $t = 0$ stays in the region
$\{(t, x, y) \in [0, \infty) \times \R^2 : x < 1/t\}$ for all $t \ge 0$. Hence, the
initial condition at $t = 0$ does not affect the region
$\{x \ge 1/t\}$, so there are infinitely many solutions to the transport equation.
For instance, let $h \in C^1(\R)$ such that $\supp h \subset (0, \infty)$. Then
\begin{align*}
	u(t, x, y) =
	\begin{cases}
		h(t - x^{-1}) & \quad x \ge 1/t \\
		0             & \quad x < 1/t
	\end{cases}
\end{align*}
is a solution in $C^1([0, \infty) \times \R^2)$.

\subsection{Integrability of the derivatives of the vector field}

We will construct an autonomous vector field $b$ on $\R^2$ such that
\begin{gather}
	\begin{split}
		\begin{gathered}
			b \in W^{s, 1}_\loc(\R^2) \quad \text{for every $s \in (0, 1)$,} \\
			b \in L^p(\R^2) + L^\infty(\R^2) \quad \text{for every $1 \le p < 2$,}
			\qquad \text{$\div b = 0$ on $\R^2$ as a distribution,}
		\end{gathered}
	\end{split}
	\label{eq:ex-dint-b-properties}
\end{gather}
and there are two distinct regular Lagrangian flows for $b$. Here, $W^{s, 1}$ is a
fractional Sobolev space which will be explained later.
This example is from section IV.2 in \cite{DL89}, except that DiPerna and Lions
define $b$ slightly differently for $|x| \ge |y|$.

We define $b$ as follows:
\begin{align}
	b(x, y) \colonequals
	\begin{pmatrix}
		-\frac {x} {y^2} \sgn(y) \\
		-\frac 1 {|y|}
	\end{pmatrix}
	\quad \text{if $|x| < |y|$,} \qquad\quad
	b(x, y) = 0 \quad \text{if $|x| \ge |y|$.}
\end{align}
Note that $b$ comes from the Hamiltonian
\begin{align*}
	H(x, y) =
	\begin{cases}
		-\frac{x}{|y|} & \quad |x| < |y|  \\
		-1             & \quad x \ge |y|  \\
		1              & \quad x \le -|y|
	\end{cases}
	\qquad \text{i.e.} \qquad
	b_x = -\frac{\del H}{\del y}, \quad b_y = \frac{\del H}{\del x} \,.
\end{align*}
Therefore, $\div b = 0$ as a distribution on $\R^2$.

Before verifying the other properties listed in (\ref{eq:ex-dint-b-properties}),
we construct the regular Lagrangian flows.
For $(x, y) \in \R \times [0, \infty)$, define
\begin{align}
	 & Z(t, x, y) \colonequals
	\begin{pmatrix}
		\frac x y \sqrt{y^2 - 2t} \\
		\addlinespace
		\sqrt{y^2 - 2t}
	\end{pmatrix}
	\qquad \text{if $|x| < y$ and $t \le y^2/2$.}
	\label{eq:ex-X-def}
\end{align}
If $|x| \ge y$, we define $Z(t, x, y) = (x, y)$ for all $t \in \R$. We do not define $Z$ for
$|x| < y$ and $t > y^2/2$. Note that
\begin{align*}
	\del_t Z(t, x, y) = b(Z(t, x, y)) \qquad \text{if $t < y^2/2$.}
\end{align*}
Hence, for each $(x, y) \in \R \times [0, \infty)$, the curve $t \mapsto Z(t, x, y)$
is the trajectory along $b$ until the origin is reached.
Moreover,
\begin{align}
	\det DZ(t, x, y) = 1 \qquad \text{if $y > 0, \ |x| < y, \ t < y^2/2$.}
	\label{eq:Z-measure-preserving}
\end{align}

Beyond the origin, we extend the curves in the following two ways:
We define the $x$-components of two flows $\tilde X, \tilde Y$
for $(x, y) \in \R \times [0, \infty)$ by
\begin{align*}
	\tilde X_x(t, x, y) & =
	\begin{cases}
		Z_x(t, x, y)        & \quad t \le y^2/2 \\
		-Z_x(y^2 - t, x, y) & \quad t > y^2 / 2
	\end{cases} \\
	\addlinespace
	\tilde Y_x(t, x, y) & =
	\begin{cases}
		Z_x(t, x, y)                  & \quad t \le y^2/2 \\
		Z_x(y^2 - t, x, y)\phantom{-} & \quad t > y^2 / 2
	\end{cases}
\end{align*}
The $y$-components of $\tilde X$ and $\tilde Y$ are defined identically:
\begin{align*}
	\tilde X_y(t, x, y) = \tilde Y_y(t, x, y) =
	\begin{cases}
		Z_y(t, x, y)        & \quad t \le y^2/2 \\
		-Z_y(y^2 - t, x, y) & \quad t > y^2 / 2
	\end{cases}
\end{align*}

\begin{figure}
	\begin{center}
		\begin{tikzpicture}
			\begin{axis}[
					width=0.35\textwidth,
					height=0.35\textwidth,
					axis lines=middle,
					xmin=-1.2,
					xmax=1.2,
					ymin=-1.2,
					ymax=1.2,
					samples=200,
					xlabel={$x$},
					ylabel={$y$},
					xticklabels=\empty,
					yticklabels=\empty,
					xlabel style={ at={(current axis.right of origin)}, right },
					ylabel style={ at={(current axis.above origin)}, left },
				]
				\draw[->, thick] (0.2, 1) -- (0.1, 0.5);
				\draw[-, thick] (0.2, 1) -- (0, 0);
				\draw[->, thick] (0, 0) -- (-0.1, -0.5);
				\draw[-, thick] (0, 0) -- (-0.2, -1);

				\draw[densely dotted, ->, thick] (0.8, 1) -- (0.4, 0.5);
				\draw[densely dotted,-, thick] (0.8, 1) -- (0, 0);
				\draw[densely dotted,->, thick] (0, 0) -- (-0.4, -0.5);
				\draw[densely dotted,-, thick] (0, 0) -- (-0.8, -1);

				\draw[densely dashed, ->, thick] (-0.4, 1) -- (-0.2, 0.5);
				\draw[densely dashed,-, thick] (-0.4, 1) -- (0, 0);
				\draw[densely dashed,->, thick] (0, 0) -- (0.2, -0.5);
				\draw[densely dashed,-, thick] (0, 0) -- (0.4, -1);
			\end{axis}
			\node at (0.5, 3) {$X$};
		\end{tikzpicture}
		\hspace{4em}
		\begin{tikzpicture}
			\begin{axis}[
					width=0.35\textwidth,
					height=0.35\textwidth,
					axis lines=middle,
					xmin=-1.2,
					xmax=1.2,
					ymin=-1.2,
					ymax=1.2,
					samples=200,
					xlabel={$x$},
					ylabel={$y$},
					xticklabels=\empty,
					yticklabels=\empty,
					xlabel style={ at={(current axis.right of origin)}, right },
					ylabel style={ at={(current axis.above origin)}, left },
				]
				\draw[->, thick] (0.2, 1) -- (0.1, 0.5);
				\draw[-, thick] (0.2, 1) -- (0, 0);
				\draw[->, thick] (0, 0) -- (0.1, -0.5);
				\draw[-, thick] (0, 0) -- (0.2, -1);

				\draw[densely dotted, ->, thick] (0.8, 1) -- (0.4, 0.5);
				\draw[densely dotted,-, thick] (0.8, 1) -- (0, 0);
				\draw[densely dotted,->, thick] (0, 0) -- (0.4, -0.5);
				\draw[densely dotted,-, thick] (0, 0) -- (0.8, -1);

				\draw[densely dashed, ->, thick] (-0.4, 1) -- (-0.2, 0.5);
				\draw[densely dashed,-, thick] (-0.4, 1) -- (0, 0);
				\draw[densely dashed,->, thick] (0, 0) -- (-0.2, -0.5);
				\draw[densely dashed,-, thick] (0, 0) -- (-0.4, -1);
			\end{axis}
			\node at (0.5, 3) {$Y$};
		\end{tikzpicture}
	\end{center}
	\vspace{-1em}
	\caption{Flow lines of $X$ and $Y$.}
\end{figure}

Next, we extend $\tilde X$ and $\tilde Y$ to $y < 0$ using the symmetry of $b$.
Because $b(x, y) = b(-x, -y)$, we set
\begin{align*}
	X(t, x, y) \colonequals
	\begin{cases}
		\tilde X(t, x, y)      & \quad y \ge 0 \\
		- \tilde X(-t, -x, -y) & \quad y < 0
	\end{cases} \,.
\end{align*}
We also extend $\tilde Y$ to $y < 0$ by the same formula and call the resulting flow $Y$.

Let $(x, y) \in \R^2$ such that $|x| < |y|$.
Then, for all but one $t \in \R$, $X(\cdot, x, y)$ and $Y(\cdot, x, y)$
are differentiable and solve the ODE (\ref{eq:ode}) with initial data $(x, y)$.
Due to (\ref{eq:Z-measure-preserving}), $X$ and $Y$ are measure-preserving.

We now check the properties in (\ref{eq:ex-dint-b-properties}),
beginning with $b \in L^p(\R^2) + L^\infty(\R^2)$ for every $1 \le p < 2$.
Since $b$ is bounded on $\R^2 \setminus B_1$, it suffices
to show that $b \in L^p(B_1)$ for every $1 \le p < 2$.
First, consider $b_x = -x\sgn(y)/|y|^2$.
Since $|b_x|$ is invariant under negation of $y$,
it suffices to show that $b_x \in L^p(B_1 \cap \{y > 0\})$.
Because $b = 0$ if $|x| > |y|$,
\begin{align*}
	\|b_x\|_{L^p(B_1 \cap \{y > 0\})}^p
	 & \le \int_0^1 \contractint \int_{-y}^y |b_x|^p \,dx \,dy
	= \int_0^1 \contractint \int_{-y}^y \frac{|x|^p}{y^{2p}} \,dx \,dy \\
	 & \le \int_0^1 \contractint \int_{-y}^y \frac 1 {y^p} \,dx \,dy
	= 2 \int_0^1 \frac 1 {y^{p - 1}} \,dy < \infty \,.
\end{align*}
The computation for $b_y = -1/|y|$ is similar.

Next, we show that $b \in W^{s, 1}_\loc(\R^2)$ for every $0 < s < 1$.
For an open set $\Omega \subset \R^d$, the fractional Sobolev space
$W^{s, 1}(\Omega)$ is defined as (see \cite{DPV12}, p.~524)
\begin{align*}
	W^{s, 1}(\Omega) = \braces*{
		u \in L^1(\Omega) \ : \ \frac{|u(z)- u(w)|}{|z - w|^{d + s}} \in L^1(\Omega \times \Omega)
	} \,.
\end{align*}
We also write
\begin{align*}
	[u]_{W^{s, 1}(\Omega)} \colonequals \norm*{\frac{|u(z)- u(w)|}{|z - w|^{d + s}}}_{L^1(\Omega \times \Omega)}
	\quad \text{and} \quad
	\|u\|_{W^{s, 1}(\Omega)} \colonequals \|u\|_{L^1(\Omega)} + [u]_{W^{s, 1}(\Omega)} \,.
\end{align*}
The notation $b \in W^{s, 1}_\loc(\R^2)$ means that $b \in W^{s, 1}(\Omega)$ for every
bounded, open set $\Omega \subset \R^2$.
Because $b \in L^1(\R^2) + L^\infty(\R^2)$, we have $b \in L^1_\loc(\R^2)$, so we
only need to show that $[u]_{W^{s, 1}(B_R)} < \infty$ for every $R > 0$.

First, we show that $b$ is of bounded variation on $B_4 \setminus B_1$.
For simplicity, we will only consider $(B_4 \setminus B_1) \cap \{y > 0\}$.
In fact, since $b$ is symmetric around $y = 0$ up to signs,
\begin{align*}
	\|b\|_{BV(B_4 \setminus B_1)} = 2\|b\|_{BV((B_4 \setminus B_1) \cap \{y > 0\})} \,.
\end{align*}
The total variation of $b$ splits into the following two parts:
\begin{align}
	\|b\|_{BV((B_4 \setminus B_1) \cap \{y > 0\})}
	= \|b\|_{BV((B_4 \setminus B_1) \cap \{|x| < |y|, y > 0\})}
	+ 2 \sqrt 2 \int_{1/\sqrt 2}^{4 / \sqrt 2} |b(x, x)| \,dx
	\label{eq:b-BV-split}
\end{align}
The second term comes from the discontinuities at the lines $x = y$ and $x = -y$.
Since
\begin{align*}
	b \in C^1_b\big((B_4 \setminus B_1) \cap \{|x| < |y|, \ y > 0\}; \, \R^2 \big) \,,
\end{align*}
the first term in (\ref{eq:b-BV-split}) is finite. The integral in (\ref{eq:b-BV-split}) is
\begin{align*}
	\int_{1/\sqrt 2}^{4/\sqrt 2} |b(x, x)| \,dx
	= \sqrt 2 \int_{1/\sqrt 2}^{4 / \sqrt 2} \frac 1 x \,dx < \infty \,.
\end{align*}
Therefore $b \in BV(B_4 \setminus B_1)$.

For every bounded open set $\Omega \subset \R^d$ with Lipschitz
boundary, there is a continuous linear embedding $BV(\Omega) \hookrightarrow W^{s, 1}(\Omega)$
for all $0 < s < 1$, see Proposition \ref{prop:BV-Ws1}. Therefore also $b \in W^{s, 1}(B_4 \setminus B_1)$.

For $k \in \Z$, let $A_k \colonequals \{z \in \R^2 : 2^k < |z| < 2^{k + 1}\}$
and define for $k, \ell \in \Z$
\begin{align}
	I_{k, \ell} \colonequals \int_{A_k} \int_{A_\ell} \frac{|b(z) - b(w)|}{|z - w|^{2 + s}} \,dw \,dz \,.
	\label{eq:Ikl-def}
\end{align}
Then, for all $m \in \Z$
\begin{align}
	[b]_{W^{s, 1}(B_{2^m})} = \sum_{k < m} \sum_{\ell < m} I_{k, \ell} \,.
	\label{eq:b-Ws1-Ak}
\end{align}
We will now estimate $I_{k, \ell}$. If $k = \ell$, we have
\begin{align*}
	I_{k, k} =	[b]_{W^{s, 1}(A_k)} = \int_{A_k} \int_{A_k} \frac{|b(z) - b(w)|}{|z - w|^{2 + s}} \,dw \,dz \,.
\end{align*}
Using that $b$ is homogeneous of degree $-1$, we can express this in terms of $[b]_{W^{s, 1}(A_0)}$.
By changing variables $\zeta = 2^{-k}z$ and $\eta = 2^{-k} w$,
\begin{align*}
	I_{k, k} = \int_{A_0} \int_{A_0}
	\frac{2^{-k}|b(\zeta) - b(\eta)|}{2^{k(2 + s)}|\zeta - \eta|^{2 + s}}
	\, 2^{4k} \,d \zeta  \, d \eta
	= 2^{k(1 - s)} [b]_{W^{s, 1}(A_0)} \,.
\end{align*}
By the above discussion, we know that $b \in W^{s, 1}(A_0)$.
Moreover, the sum $\sum_{k < m} 2^{k(1 - s)}$ is finite since $s < 1$. Therefore, the contribution
of the diagonal terms $I_{k, k}$ to (\ref{eq:b-Ws1-Ak}) is finite.

Next, we look at the case $\ell = k + 1$. We note that
$I_{k, k + 1} \le [b]_{W^{s,1}(A_k \cup A_{k + 1})}$. Again by homogeneity of $b$,
\begin{align*}
	[b]_{W^{s,1}(A_k \cup A_{k + 1})} = 2^{k(1 - s)} [b]_{W^{s, 1}(A_0 \cup A_1)} \,.
\end{align*}
Because $A_0 \cup A_1 = B_4 \setminus B_1$ and $b \in W^{s, 1}(B_4 \setminus B_1)$,
the contribution of the terms $I_{k, k + 1}$ to (\ref{eq:b-Ws1-Ak})
is finite. By symmetry, also the contribution of the terms $I_{k, k - 1}$ is finite.

The remaining case is when $|k - \ell| \ge 2$.
Assume that $k < \ell$. If $z \in A_k$ and $w \in A_\ell$, we have
\begin{align*}
	|z - w| \ge 2^{\ell - 1}, \quad |b(z)| \le 2^{-k + 1},
	\quad |b(w)| \le 2^{-\ell + 1} \le 2^{-k+1} \,.
\end{align*}
Therefore
\begin{align*}
	I_{k, \ell}
	 & = \int_{A_k} \int_{A_\ell} \frac{|b(z) - b(w)|}{|z - w|^{2 + s}} \,dw \,dz
	\le \int_{A_k} \int_{A_\ell} \frac {2^{-k+2}} {2^{(\ell - 1)(2 + s)}} \,dw \,dz \\
	\addlinespace
	 & \le C_s \, 2^{-\ell(2 + s) - k} \cdot 2^{2k} \cdot 2^{2\ell}
	= C_s \, 2^{-\ell s + k}\,.
\end{align*}
Since $s < 1$,
\begin{align*}
	\sum_{\ell < m} \sum_{k \le \ell - 2} 2^{-\ell s + k}
	= \sum_{\ell < m} 2^{-\ell s + \ell - 1} < \infty \,,
\end{align*}
so the contribution of the terms $I_{k,\ell} \ (k \le \ell - 2)$ to (\ref{eq:b-Ws1-Ak})
is finite.
By symmetry, also the contribution from $k \ge \ell + 2$ is finite.
This concludes the proof that $b \in W^{s, 1}_\loc(\R^2)$.

\subsection{Integrability of the solution}

\colorlet{color1}{DarkOrchid2}

\colorlet{ct0}{color1!60}
\colorlet{ct1}{color1!150}
\colorlet{ctinfty}{color1!12}

\newcommand{\bb}{{\hat a}}
\newcommand{\bbd}{{\hat u}}
\newcommand{\ind}{{\mathcal K}}

\newcommand{\cubelen}{s}
\newcommand{\boxlen}{\ell}

Consider the transport equation with $c = 0$.
\begin{align}
	\del_t u - b \cdot \nabla u = 0, \qquad u|_{t = 0} = u^0
	\label{eq:transport-c-0}
\end{align}
For simplicity, we set $T = 1$ in this section. Let $1 \le p, q, r \le \infty$
such that $1/p + 1/q = 1$ and
\begin{align*}
	b \in L^1(0, 1; W^{1, r}_\loc(\R^d; \R^d) \cap L^q_\loc), \quad
	\div b \in L^1(0, 1; L^\infty(\R^d))
\end{align*}
such that (\ref{eq:b-growth-condition}) holds.
Then, by Corollaries \ref{cor:uniqueness-p} and \ref{cor:uniqueness-infty}, weak solutions
of (\ref{eq:transport-c-0}) in $L^\infty(0, 1; L^p(\R^d))$ are unique provided that
\begin{align}
	\frac 1 p + \frac 1 r \le 1 \,.
	\label{eq:dl-range}
\end{align}
Outside the range (\ref{eq:dl-range}), uniqueness of (\ref{eq:transport-c-0}) may fail.
In this section, we demonstrate this by a counterexample of Bruè, Colombo and Kumar \cite{BCK26}.
They showed that if $d \ge 2$ and
\begin{align}
	\frac 1 p + \frac {d - 1} {dr} > 1 \,,
	\label{eq:nonuniqueness-range}
\end{align}
then there exists a compactly supported vector field
\begin{align*}
	b \in C([0, 1]; W^{1, r}(\R^d; \R^d)) \cap C([0, 1]; L^q(\R^d; \R^d)) \,,
\end{align*}
for which there exist two nonnegative solutions of (\ref{eq:transport-c-0}) in $C([0, 1]; L^p(\R^d))$.

The full construction for the range (\ref{eq:nonuniqueness-range}) is not presented here,
but only the simpler version from section 2 of \cite{BCK26}. It covers the case $p = 1$ and $r < d$.

\begin{theorem}[\cite{BCK26}, Theorem 2.1]
	Let $d \ge 2$ and $r < d$. Then there exists $0 < \alpha < 1$ and a compactly supported
	vector field
	\begin{align*}
		b \in C([0, 1]; W^{1, r}(\R^d; \R^d)) \cap C^\alpha([0, 1] \times \R^d; \R^d),
		\qquad \div b = 0 \,,
	\end{align*}
	for which there are two nonnegative solutions of (\ref{eq:transport-c-0})
	in $L^\infty(0, 1; L^1(\R^d))$ with the same initial condition.
	\label{theorem:nonuniqueness}
\end{theorem}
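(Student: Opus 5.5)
The plan is to build $b$ and a nonnegative solution $u\in L^\infty(0,1;L^1(\R^d))$ by a self-similar cascade across infinitely many length scales completed in finite time. Then I would compare $u$ with a second nonnegative solution that has the same initial datum. The guiding observation is that Corollaries \ref{cor:uniqueness-p} and \ref{cor:uniqueness-infty} only exclude this for $1/p+1/r\le 1$. For $p=1$ and $r<d$ we are outside that range, so the solution may be non-renormalized and non-Lagrangian, and its density may become unbounded. The intended mechanism is concentration of a fixed amount of mass onto cubes of side $\cubelen_n\to 0$ with values $\bb_n\sim \cubelen_n^{-d}$. The cascade should drive the mass into a single point at some time $t_*<1$ while staying in $L^1$. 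Running this backwards in time for $t>t_*$, or gluing a forward and a reversed copy, yields a solution that leaves the point. Comparing it with the solution that stays zero, or with the one that stays concentrated, on the relevant region gives the two distinct nonnegative solutions with equal initial data.

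\emph{Step 1: the building block.} For a cube $Q$ of side $\cubelen$ inside a box of side $\boxlen\gg \cubelen$, I would construct a divergence-free field $\bbd\in C^\infty_c$ of the box. It should equal a constant vector $v$ on a neighbourhood of the moving cube $Q+tv$, so that $\bbd$ translates $Q$ rigidly across the box. It should also satisfy $\|\nabla \bbd\|_{L^r}\lesssim |v|\,\cubelen^{-1}\cubelen^{d/r}$. The naive choice $v\chi_Q$ is not divergence-free. I would therefore correct it in the shell around the cube by a Bogovskii-type potential, as in Lemma \ref{lemma:bn-seq}, or equivalently take $v$ times a cutoff of the cube and subtract a field whose divergence cancels the flux. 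The point is to obtain a genuine vector field on $\R^d$ for every $d\ge 2$, not a two-dimensional field extended trivially.

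\emph{Step 2: the cascade.} At stage $n$ the solution consists of $\ind_n$ cubes of side $\cubelen_n$ and value $\bb_n$, with $\ind_n\bb_n\cubelen_n^d$ equal to the total mass. During a time interval of length $\tau_n$, building blocks translate groups of these cubes into configurations that form the stage-$(n+1)$ profile. Here each group is $N^d$ cubes of stage $n$ arranged as a single cube of side $\cubelen_{n+1}$, and I will check that the transport equation holds in the weak sense, so that the superposed values combine into $\bb_{n+1}$. With $\cubelen_n=N^{-n}$ and velocities $|v_n|\sim \boxlen_n/\tau_n$, I would choose $\tau_n$ geometric so that $\sum\tau_n<\infty$.

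\emph{Step 3: estimates.} The $W^{1,r}$ norm at stage $n$ is $\lesssim \tau_n^{-1}\cubelen_n^{d/r-1}\boxlen_n$. Since $r<d$, the exponent $d/r-1$ is positive, so this can be made summable and tending to $0$. The same bounds give continuity of $t\mapsto b(t)$ into $W^{1,r}$ up to the concentration time. Similarly $\|b_n\|_{C^\alpha}\lesssim |v_n|\cubelen_n^{-\alpha}$ is summable for small $\alpha$, giving $b\in C^\alpha$. Mass is conserved at each stage, so $u$ is uniformly bounded in $L^1$. Passing to the limit in the weak formulation is justified because the fields are uniformly bounded and the mass stays in a compact set.

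I expect the main obstacle to be Step 1 together with the bookkeeping of Step 2. One difficulty is making the divergence-free correction supported in the shell with the right scaling in all dimensions. The other is verifying that the weak formulation really holds through the gluing times between stages, where the combination of cubes is not Lagrangian. I would first try to do everything with piecewise-in-time constant configurations and smooth time cutoffs.
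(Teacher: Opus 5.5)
\textbf{The gap is in Step 2: smooth translating blocks cannot raise the density of your cubes from one stage to the next.}

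\emph{Why the cascade fails.} On each stage interval your field is a finite sum of smooth, compactly supported, divergence-free fields. There the weak solution in $L^\infty_tL^1_x$ is unique (Corollary \ref{cor:uniqueness-p} with $p=1$), so it is $u(t_n)$ composed with a measure-preserving diffeomorphism. Nothing can happen at a gluing time either, because for bounded $b$ the weak formulation makes $t\mapsto\int u\phi\,dx$ Lipschitz. Hence on $[0,t_*)$ every $u(t)$ is equimeasurable with $u(0)$. Three things follow:
\begin{itemize}
\item disjoint cubes cannot be superposed; already $N^d$ cubes of side $s_n$ have more volume than one cube of side $s_{n+1}$;
\item the values cannot grow like $s_n^{-d}$;
\item rearrangements of one $L^1$ function are uniformly integrable, so no limit at $t_*$ can be a point mass.
\end{itemize}
This is not a technicality. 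With a bounded datum such as $\chi_{Q_1(0)}$, Corollary \ref{cor:uniqueness-infty} forbids two bounded solutions, so the density must genuinely increase, and no Lagrangian step can do that.

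\emph{The missing idea is self-similarity.} Turning one uniform cube into $2^d$ denser subcubes is the whole problem, rescaled and run backwards in time. So on $[t_i,t_i^m)$ the field is defined in (\ref{eq:F-def-recursive}) as $2^{di}$ rescaled, time-reversed copies of itself. The translating field $\bb$ only realigns the cubes on $[t_i^m,t_{i+1})$; your Step 1, using Bogovskii instead of the antisymmetric tensor $\omega^\xi$, is fine for that purpose. Because of this recursion, $a$ and $u$ cannot be built stage by stage. They are fixed points of the contractions $F$ and $G$ (Propositions \ref{prop:F-contraction} and \ref{prop:G-contraction}). The conditions (\ref{eq:bb-contraction-assumptions}) are what make the rescaled copies small in the norm (\ref{eq:A-norm}). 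The equation is verified not at gluing times but through the self-similar bound $\meas^1(M)\ge\frac12+\frac12\meas^1(M)$.

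\textbf{Your choice of the second solution fails as well.}
\begin{itemize}
\item A solution that vanishes before $t_*$ and carries mass afterwards violates mass conservation. Testing with a cutoff equal to $1$ near $\supp b$ shows that $\int u(t)\,dx$ is independent of $t$.
\item A solution that stays concentrated at a point is a Dirac mass, not an element of $L^1$.
\end{itemize}
In the paper the comparison solution is simply the constant $\chi_{Q_1(0)}$. It solves the equation because $b$ is divergence-free and supported in $[0,1]\times Q_1(0)$. The nontrivial solution starts from the same uniform state. At $b$-time $1-t_i\downarrow 0$ it is spread over $2^{di}$ cubes of side $2^{-(\nu+1)i}$ with value $2^{\nu di}$, and it converges to $\chi_{Q_1(0)}$ only weakly. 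So the top-level cascade accumulates at the initial time and un-mixes the uniform density into an evenly spread, ever finer pattern. The mass never collapses to a point at an interior time.
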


Here, $C^\alpha$ is the space of
$\alpha$-Hölder continuous functions with the seminorm
\begin{align*}
	\norm*{f}_{C^\alpha} \colonequals
	\sup_{x \ne y} \frac{|f(x) - f(y)|}{|x - y|^\alpha} \,.
\end{align*}

\subsubsection{The self-similarity of the vector field}

First, let us introduce some notation. For $\ell > 0$ and $x \in \R^d$, we denote by $Q_\ell(x)$ the
closed cube of side length $\ell$ centered at $x$, that is
\begin{align}
	Q_\ell(x) \colonequals \braces*{
		y \in \R^d : \max{\braces*{|x_j - y_j| : 1 \le j \le d}} \le \ell/2
	} \,.
\end{align}
Let $g \in \N_{\ge 1}$ and $k \in \{-1, 1\}^g$. We define the centers of dyadic cubes of the $g$-th
generation in one dimension by
\begin{align*}
	c^1_g(k) \colonequals \frac 1 2 \sum_{j = 1}^g k_j 2^{-j} \,.
\end{align*}
The numbers $c^1_g(k)$ are evenly spaced in $[-1/2, 1/2]$ with distance $2^{-g}$. For example,
if $g = 2$ they are $-\frac 3 8, -\frac 1 8, \frac 1 8, \frac 3 8$.
For $k \in (\{-1, 1\}^g)^d$, we define the centers of dyadic cubes in $d$ dimensions by
\begin{align}
	c_g(k) \colonequals \paren*{c^1_g(k_j)}_{j = 1}^d \,.
\end{align}

For notational convenience, instead of directly constructing $b$, we will construct a vector field
$a(t, \cdot) = -b(1-t, \cdot)$ and two solutions to
\begin{align}
	\del_t u - a \cdot \nabla u = 0, \qquad u|_{t = 1} = u^1 \,.
	\label{eq:a-transport}
\end{align}
Their common value at $t = 1$ is $u^1 \colonequals \chi_{Q_1(0)}$. Since $a$ will be supported in $Q_1(0)$,
one solution to (\ref{eq:a-transport}) is the constant solution $\chi_{Q_1(0)}(x)$.
The second solution is denoted by $u$ and will satisfy
\begin{align}
	u|_{t = 0} = u^0 \colonequals 2^{\nu d} \chi_{P(2^{-(\nu + 1)}, 1)} \,,
	\qquad \text{where} \ \,
	P(\cubelen,\boxlen) \colonequals \bigcup_{k \in \{-1, 1\}^d} Q_s(\boxlen c_1(k)) \,.
	\label{eq:u0-def}
\end{align}
for some $\nu > 0$. Note that $\norm*{u^0}_{L^1} = \norm*{u^1}_{L^1} = 1$.
Figure \ref{fig:evo} shows the initial and final state of $u$ for $d = 2$.

\begin{figure}[ht!]
	\centering
	\begin{tikzpicture}[>=latex]
		\node at (2,4.4) {$t=0$};
		\node at (9,4.4) {$t=1$};

		\draw (0,0) rectangle (4,4);

		\draw[dashed] (2,0) -- (2,4);
		\draw[dashed] (0,2) -- (4,2);

		\fill[ct0] (0.4,2.4) rectangle (1.6,3.6);
		\fill[ct0] (2.4,2.4) rectangle (3.6,3.6);
		\fill[ct0] (0.4,0.4) rectangle (1.6,1.6);
		\fill[ct0] (2.4,0.4) rectangle (3.6,1.6);

		\node at (2,-0.5) {$u^0$};

		\draw[->,line width=0.4pt] (4.5,2.4) -- (6.5,2.4);
		\node at (5.5,2.75) {$a$};

		\draw[<-,line width=0.4pt] (4.5,1.6) -- (6.5,1.6);
		\node at (5.5,1.25) {$b$};

		\fill[ctinfty] (7,0) rectangle (11,4);
		\draw (7,0) rectangle (11,4);

		\node at (9,-0.5) {$u^1$};

		\node at (-0.3,4) {$\frac 1 2$};
		\node at (-0.3,2) {$0$};
		\node at (-0.45,0) {$-\frac 1 2$};

		\node at (11.5,4) {$\frac 1 2$};
		\node at (11.5,2) {$0$};
		\node at (11.5,0) {$-\frac 1 2$};
	\end{tikzpicture}
	\caption{
		The solution $u$ at $t = 0$ and $t = 1$. The intensity of
		the color indicates the magnitude of $u$. (From \cite{BCK26}, Figure 2)
	}
	\label{fig:evo}
\end{figure}

The vector field $a$ ``diffuses'' the $2^d$ cubes of $u^0$ by repeatedly breaking them into smaller
cubes while preserving the $L^1$-norm.
To formulate this precisely, we define
\begin{align*}
	t_i \colonequals 1 - 2^{-(i - 1)\beta},
	\qquad t_i^m \colonequals \frac{t_i + t_{i + 1}} 2
	\qquad\qquad \text{for $i \in \N_{\ge 1}$.}
\end{align*}
At time $t_i$, the solution $u$ is concentrated on $2^{di}$ cubes of side length $2^{-(\nu + 1)i}$,
centered at $c_i(k)$ for $k \in (\{-1, 1\}^i)^d$.
The value of $u$ on these cubes is $2^{\nu di}$ and 0 everywhere else, so that $\norm*{u(t_i)}_{L^1} = 1$.
From time $t_i$ to $t_i^m$, each cube is broken into $2^d$ smaller cubes of
side length $2^{-(\nu + 1)(i + 1)}$, and the magnitude of $u$ increases so that the $L^1$-norm is preserved.
At time $t_i^m$, the centers of the $2^{d(i + 1)}$ cubes do not align with the centers
of the dyadic cubes of generation $i + 1$. This is fixed by moving the cubes to
the centers from time $t_i^m$ to $t_{i + 1}$. The process is shown in Figure \ref{fig:break-and-move}.

Translating cubes which are separated by a positive distance is relatively straightforward.
In §\ref{sec:translating-cubes}, we explicitly construct a vector field $\bb$ for this task, and
we define $a$ in terms of $\bb$ on $[t_i^m, t_{i + 1})$.

The crucial part is subdividing each cube into smaller cubes.
The key observation of \cite{BCK26} is that breaking a cube into $2^d$ smaller cubes is
exactly the original problem, reversed in time. Thus, $a$ should be defined on $[t_i, t_i^m)$ as a collection of
$2^{di}$ time-reversed and scaled copies of itself.

\begin{figure}[ht!]
	\centering
	\begin{tikzpicture}[>=latex,scale=0.85]
		\begin{scope}
			\node at (2.1,4.5) {$t = t_1$};

			\draw (0,0) rectangle (4,4);
			\draw[dashed] (2,0)--(2,4);
			\draw[dashed] (0,2)--(4,2);

			\foreach \x/\y in {0.4/2.4,2.4/2.4,0.4/0.4,2.4/0.4}
				{
					\fill[ct0] (\x,\y) rectangle ++(1.2,1.2);
				}
		\end{scope}

		\draw[->] (4.4,2) -- (5.6,2);

		\pgfmathsetmacro{\sidelen}{0.3}

		\begin{scope}[shift={(6,0)}]
			\node at (2.17,4.5) {$t=t_1^m$};

			\draw (0,0) rectangle (4,4);

			\foreach \x in {1,2,3} \draw[dashed] (\x,0)--(\x,4);
			\foreach \y in {1,2,3} \draw[dashed] (0,\y)--(4,\y);

			\foreach \x in {1, 3}
			\foreach \y in {1, 3}
			\foreach \dx in {-0.3, 0.3}
			\foreach \dy in {-0.3, 0.3}
				{
					\fill[ct1] (\x+\dx - \sidelen/2,\y+\dy - \sidelen/2)
					rectangle ++(\sidelen,\sidelen);
				}
		\end{scope}

		\draw[->] (10.4,2) -- (11.6,2);

		\begin{scope}[shift={(12,0)}]
			\node at (2.1,4.5) {$t=t_2$};

			\draw (0,0) rectangle (4,4);

			\foreach \x in {1,2,3} \draw[dashed] (\x,0)--(\x,4);
			\foreach \y in {1,2,3} \draw[dashed] (0,\y)--(4,\y);

			\foreach \x in {0.5,1.5,2.5,3.5}
			\foreach \y in {0.5,1.5,2.5,3.5}
				{
					\fill[ct1] (\x-\sidelen/2,\y-\sidelen/2)
					rectangle ++(\sidelen,\sidelen);
				}

			\draw[dashed] (2,0)--(2,4);
			\draw[dashed] (0,2)--(4,2);
		\end{scope}
	\end{tikzpicture}
	\caption{
		The evolution from $t = t_1$ to $t = t_1^m$ consists of four scaled and time-reversed copies
		of the original problem.
		From time $t_1^m$ to $t_2$, the vector field $\bb$ is used to translate the cubes, so their
		centers align with $c_2(k)$ for $k \in (\{-1, 1\}^2)^2$.
		(From \cite{BCK26}, Figure 5)
	}
	\label{fig:break-and-move}
\end{figure}

\subsubsection{The contraction map}

The self-similar nature of $a$ suggests constructing $a$ as the fixed point of a
contraction on a suitable complete metric space.
Let $A$ be the space of all
\begin{align}
	a \in C([0, 1]; W^{1, r}(\R^d; \R^d)) \cap C^\alpha([0, 1] \times \R^d; \R^d)
\end{align}
such that
\begin{align}
	a|_{t = 0} = a|_{t = 1} = 0, \qquad \div a = 0, \qquad \supp a \subset [0, 1] \times Q_1(0) \,.
	\label{eq:A-properties}
\end{align}
We equip $A$ with the norm
\begin{align}
	\norm*{a}_A \colonequals \norm*{\nabla a}_{C([0, 1]; L^r(\R^d))}
	+ \norm*{a}_{C^\alpha([0, 1] \times \R^d)} \,.
	\label{eq:A-norm}
\end{align}

\begin{lemma}
	If $r < d$ and $0 < \alpha < 1$, then $A$ is a Banach space.
\end{lemma}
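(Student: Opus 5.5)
We have to show two things: that $\norm{\cdot}_A$ is a genuine norm on $A$, and that $A$ is complete. The compact support condition in (\ref{eq:A-properties}) is what turns the two seminorms into a norm. The Hölder seminorm kills constants, and the gradient seminorm does as well. For $a \in A$ with $\norm{a}_A = 0$, the Hölder seminorm vanishes, so $a$ is constant on $[0,1]\times\R^d$. Since $a$ vanishes outside $[0,1]\times Q_1(0)$, we get $a = 0$. Linearity of $A$ is clear, since every condition in (\ref{eq:A-properties}) is preserved under linear combinations.

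\textbf{Completeness.} Let $(a_n)$ be Cauchy in $\norm{\cdot}_A$.

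First, $(a_n)$ is Cauchy in the sup norm. Each $a_n - a_m$ vanishes at points $(t,x)$ with $x \notin Q_1(0)$. For any $(t,x)$, pick a point $(t,y)$ with $y \notin Q_1(0)$ and $|x-y| \le C_d$. Then
\begin{align*}
	|(a_n - a_m)(t,x)| \le C_d^\alpha \norm{a_n - a_m}_{C^\alpha} .
\end{align*}
Hence $a_n \to a$ uniformly for some continuous $a$.

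Second, the limit lies in the right Hölder class with the right support. Passing to the limit in the pointwise Hölder quotients gives $a \in C^\alpha$ and $\norm{a_n - a}_{C^\alpha} \to 0$, by the standard $\liminf$ argument applied to $a_n - a_m$ as $m \to \infty$. The support condition and $a|_{t=0} = a|_{t=1} = 0$ pass to the limit because the convergence is uniform.

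Third, we identify the gradient limit. The sequence $\nabla a_n$ is Cauchy in the Banach space $C([0,1]; L^r(\R^d))$, so $\nabla a_n \to G$ there. For each $t$ and each test function $\phi \in C^\infty_c(\R^d)$, we have $\int a_n(t)\, \del_j \phi = -\int \del_j a_n(t)\, \phi$. The left side converges by uniform convergence and compact support, and the right side converges by $L^r$ convergence. Hence $\nabla a(t) = G(t)$ weakly. Moreover $a(t) \in L^r$ because it is bounded with compact support, so $a(t) \in W^{1,r}$.

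Fourth, we check the continuity requirement $a \in C([0,1]; W^{1,r})$. Continuity of $t \mapsto a(t)$ in $L^r$ follows from uniform continuity on a compact support. Continuity of the gradient part is inherited from $G$. Finally, $\div a = 0$ follows from the identity $\div a(t) = \operatorname{tr} G(t) = \lim_n \operatorname{tr} \nabla a_n(t) = 0$. Thus $a \in A$ and $\norm{a_n - a}_A \to 0$.

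\textbf{Role of the hypotheses.} The hypothesis $r < d$ is not needed for the argument above; the compact support alone controls the sup norm. If anything, $r<d$ would be used through Sobolev embedding to bound $\norm{a}_{L^{r^*}}$ by $\norm{\nabla a}_{L^r}$, but the Hölder route is simpler. The only step needing care is the first one: the seminorm $\norm{a}_{C^\alpha}$ controls the sup norm only thanks to the support condition.
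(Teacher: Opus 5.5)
Your proof is correct, but it gets the key a priori bound from a different source than the paper.

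The paper's route is as follows.
\begin{enumerate}
\item It uses the Sobolev inequality $\|a\|_{L^r}\le C_{d,r}\|\nabla a\|_{L^r}$ for functions supported in $Q_1(0)$. This is the only place where $r<d$ enters. The inequality serves twice: to show that $\|\cdot\|_A$ is definite, and to obtain a limit $a_W$ of the Cauchy sequence directly in $C([0,1];W^{1,r})$.
\item Separately, it controls the sup norm by the Hölder seminorm through the \emph{temporal} condition $a|_{t=0}=0$, via $|a(t,x)|\le |a(t,x)-a(0,x)|/t^\alpha$. This gives a second limit $a_\alpha$ in $C^{0,\alpha}$.
\item Finally it identifies $a_W=a_\alpha$ almost everywhere.
\end{enumerate}

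You instead control the sup norm through the \emph{spatial} support condition, and you extract only the uniform limit $a$. You then identify the $C([0,1];L^r)$-limit $G$ of $\nabla a_n$ with the distributional gradient of $a(t)$ slice by slice. This costs you a small extra verification, namely that $a\in C([0,1];L^r)$, which you carry out.

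The paper's route is shorter given standard Sobolev theory. Yours is more elementary and shows that the lemma holds regardless of how $r$ compares to $d$. So the hypothesis $r<d$ is really only needed later, in the parameter choice of Proposition~\ref{prop:F-contraction}. Even within the paper's approach, $r<d$ could be dropped by replacing the Sobolev inequality with the Poincaré inequality for functions supported in $Q_1(0)$.
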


\begin{proof}
	Since $r < d$, by Sobolev inequalities and $\supp a \subset [0, 1] \times Q_1(0)$,
	\begin{align}
		\norm*{a}_{L^r(\R^d)} \le C_{d, r} \norm*{\nabla a}_{L^r(\R^d)} \,.
		\label{eq:sobolev-inequalities}
	\end{align}
	Thus, if $\norm*{a}_A = 0$, then $a = 0$. Since the triangle inequality and homogeneity of $\norm*{\cdot}_A$
	are clear from the definition, $\norm*{\cdot}_A$ indeed defines a norm.

	To show that $A$ is complete, let $a_n \in A$ be a Cauchy sequence with respect to (\ref{eq:A-norm}).
	By (\ref{eq:sobolev-inequalities}) and the completeness of $C([0, 1]; W^{1,r}(\R^d))$, $a_n$ converges
	in $C([0, 1]; W^{1,r}(\R^d))$ to some $a_W$.

	Next, we observe that
	\begin{align}
		A \hookrightarrow C^{0, \alpha}([0, 1] \times Q_1(0); \R^d) \,,
		\label{eq:A-embedding}
	\end{align}
	where $C^{0, \alpha}$ is the space of functions $f$
	for which the norm
	\begin{align*}
		\norm*{f}_{C^{0, \alpha}} \colonequals
		\norm*{f}_{C_b} + \norm*{f}_{C^\alpha}
	\end{align*}
	is finite. The embedding (\ref{eq:A-embedding}) is continuous, since for every $a \in A$
	and $(t, x) \in (0, 1] \times Q_1(0)$
	\begin{align*}
		|a(t, x)| \le \frac{|a(t, x) - a(0, x)|}{t^\alpha} \le \norm*{a}_{C^\alpha([0, 1] \times \R^d)} \,.
	\end{align*}
	The first inequality holds because $a|_{t = 0} = 0$ and $0 < t \le 1$.
	Since $[0, 1] \times Q_1(0)$ is closed and bounded, $C^{0, \alpha}([0, 1] \times Q_1(0); \R^d)$ is a Banach
	space (see \cite{Eva10}, p.~255). Hence, $a_n$ also converges to some $a_\alpha$ in
	$C^{0, \alpha}([0, 1] \times Q_1(0); \R^d)$.

	The two limits $a_W$ and $a_\alpha$ must agree almost everywhere, since $a_n$ converges
	to $a_{\alpha}$ uniformly on $[0, 1] \times \R^d$, hence also in $L^1_\loc([0, 1] \times \R^d; \R^d)$.

	Finally, $a_W = a_\alpha$ also satisfies (\ref{eq:A-properties}) because the maps
	\begin{alignat*}{2}
		C^{0,\alpha}([0, 1] \times Q_1(0); \R^d) & \to C^{0, \alpha}(Q_1(0); \R^d)
		                                         & \hspace{4em}
		                                         & a \mapsto a|_{t = 0} \text{ or } a \mapsto a|_{t = 1} \,, \\
		W^{1, r}(\R^d; \R^d)                     & \to L^r(\R^d)
		                                         &
		                                         & a              \mapsto \div a
	\end{alignat*}
	are continuous.
\end{proof}

We now define a contraction $F$ on $A$. To this end, we introduce the following notations.
\begin{align}
	\Delta_i \colonequals t_i^m - t_i = 2^{-i\beta} \paren*{\frac{2^\beta - 1} 2}, \qquad
	\ind_i^d \colonequals (\{-1, 1\}^i)^d
	\label{eq:Delta-index-def}
\end{align}
If $t \in [t_i, t_i^m)$, we define
\begin{align}
	F(a)(t, x) \colonequals -\frac 1 {2^{(\nu + 1)i} \Delta_i} \,
	\sum_{k \in \ind_i^d}
	a{\paren*{
				\frac{t_i^m - t}{\Delta_i}, \,
				2^{(\nu + 1)i}(x - c_i(k))
			}
		} 
	\label{eq:F-def-recursive}
\end{align}
The scaling factor in space is $2^{-(\nu + 1)i}$, since each block at time $t_i$ has
side length $2^{-(\nu + 1)i}$.
If $t \in [t_i^m, t_{i + 1})$, we define
\begin{align}
	F(a)(t, x) \colonequals \frac 1 {2^i \Delta_i} \,
	\sum_{k \in \ind_i^d}
	\bb{\paren*{
			\frac {t - t_i^m}{\Delta_i}, \,
			2^i\big(x - c_i(k)\big); \,
			2^{-\nu(i + 1) - 1}, \,
			2^{-\nu i}
		}
	}
	\label{eq:F-def-bb}
\end{align}
Here $\bb(t, x; \cubelen, \boxlen)$ is the vector field from Proposition \ref{prop:bb-ex} below,
which translates $2^d$ cubes of side length $\cubelen$, evenly arranged in $Q_\boxlen(0)$,
to the centers of the first-generation dyadic cubes, namely $c_1(k)$ for $k \in \ind_1^d$.
The cubes should be moved to the centers of the $(i + 1)$-th generation dyadic cubes,
hence the space variable is scaled by $2^i$. Since our cubes have side length $2^{-(\nu + 1)(i + 1)}$,
we set the parameter $s$ of $\bb$ to $2^{-(\nu + 1)(i + 1)} / 2^{-i} = 2^{-\nu(i + 1) - 1}$.
Finally, we define $F(a)(1, x) = 0$.

To prove that $F$ maps $A$ to $A$ and is a contraction, we need estimates for $\bb$.
These are collected in Proposition \ref{prop:bb-ex}, together with properties of the corresponding
solution $\bbd$. The proof is deferred to §\ref{sec:translating-cubes}. As described above,
the values of $\bbd$ at $t = 0$ and $t = 1$ are
\begin{align}
	\bbd^0 = \chi_{P(\cubelen,\boxlen)} \,, \quad \bbd^1 = \chi_{P(\cubelen, 1)} \,,
	\qquad \text{where} \ \,
	P(\cubelen,\boxlen) \colonequals \bigcup_{k \in \{-1, 1\}^d} Q_s(\boxlen c_1(k)) \,.
\end{align}

\begin{proposition}
	Let $0 < 4\cubelen < \boxlen < 1$. Then there exists
	\begin{align*}
		\bb(t, x; \cubelen, \boxlen) \in C^\infty_c([0, 1] \times \R^d; \R^d), \quad \text{such that} \ \
		\div \bb = 0, \ \supp \bb \subset [1/3, 2/3] \times Q_1(0)
	\end{align*}
	and $\bbd(t, x; \cubelen, \boxlen) \in L^\infty([0, 1] \times \R^d)$, $\bbd \ge 0$ such that
	\begin{align}
		\del_t \bbd - \bb \cdot \nabla \bbd = 0 \quad \text{on $[0, 1] \times \R^d$}, \qquad
		\bbd|_{t \in [0, 1/3]} = \bbd^0, \quad \bbd|_{t \in [2/3, 1]} = \bbd^1 \,.
		\label{eq:bb-transport}
	\end{align}
	Furthermore
	\begin{equation}
		\begin{split}
			\begin{gathered}
				\meas^d \paren*{\supp \bb(t, \cdot)} \le 4^d \cubelen^d \quad
				\text{for all $t \in [0, 1]$,} \\
				\addlinespace
				\norm*{\bb}_{L^\infty_{t, x}} \le C_d, \quad
				\norm*{\nabla_{(t, x)} \bb}_{L^\infty_{t, x}} \le C_d / \cubelen \,.
			\end{gathered}
			\label{eq:building-block-meas}
		\end{split}
	\end{equation}
	\label{prop:bb-ex}
\end{proposition}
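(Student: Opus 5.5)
The plan is to build $\bb$ as a composition of finitely many elementary \emph{coordinate-wise} translation steps, each generated by a divergence-free field that rigidly translates the $2^d$ cubes inside pairwise disjoint tubes. The key reduction is that moving the cubes from the centers $\boxlen c_1(k)$ to $c_1(k)$ can be done one coordinate at a time. During step $j$ (with $j=1,\dots,d$), the cube with index $k$ moves only along $e_j$, from $j$-th coordinate $\boxlen k_j/4$ to $k_j/4$. Since the $j$-th displacement has sign $k_j$, the two halves $\{k_j=1\}$ and $\{k_j=-1\}$ move in opposite directions and never approach each other. Throughout, the cubes stay at mutual distance at least $(\boxlen-2\cubelen)/2>\cubelen$, which holds because $4\cubelen<\boxlen$, and they stay inside $Q_1(0)$.

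For a single cube of side $\cubelen$ translated by a constant velocity $v e_j$, I will use a stream-function construction that works in every dimension $d\ge 2$. This is the point of the contribution mentioned in the abstract: there is no extension from two dimensions. Fix a coordinate $i\ne j$ and choose a smooth cutoff $\theta(x)$ equal to $1$ on a $\cubelen/4$-neighbourhood of the moving cube's trajectory in the directions transverse to $e_j$. Then set
\begin{align*}
	w = \del_i\big(\psi\big) e_j - \del_j\big(\psi\big) e_i, \qquad \psi = v\,x_i\,\theta(x) \,.
\end{align*}
This field is automatically divergence-free, smooth, supported in a $\cubelen/2$-tube around the cube, and equal to $v e_j$ near the cube. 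To confine the support in time and space simultaneously, I will instead make $\theta$ depend on $t$ and localize it around the cube's current position $x(t)$ in all directions. Concretely, $\theta(t,x)=\Theta((x-x(t))/\cubelen)$, where $\Theta\in C^\infty_c(Q_{3/2}(0))$ equals $1$ on $Q_{5/4}(0)$ and $x_i$ is replaced by $x_i-x_i(t)$. Then $w(t,\cdot)$ equals $\dot x(t)$ on $Q_\cubelen(x(t))$, so the cube is carried rigidly. The support of $w(t,\cdot)$ has measure at most $(2\cubelen)^d$, which gives $2^d(2\cubelen)^d=4^d\cubelen^d$ in total. The choice $\psi=(x_i-x_i(t))\dot x_j(t)\Theta$ gives $|w|\lesssim|\dot x|$ and $|\nabla_{(t,x)}w|\lesssim(|\dot x|+|\ddot x|)/\cubelen+|\dot x|^2/\cubelen$.

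Next I choose the time parametrisation. Split $[1/3,2/3]$ into $d$ subintervals of length $1/(3d)$. On the $j$-th subinterval, move all cubes along $e_j$ with $x_j(t)=\boxlen k_j/4+\tfrac{(1-\boxlen)k_j}{4}\sigma(3d(t-t_j))$, where $\sigma$ is a fixed smooth monotone profile with $\sigma=0$ near $0$ and $\sigma=1$ near $1$. Then $|\dot x|,|\ddot x|\le C_d$ because the displacement is at most $1/4$, and hence $\|\bb\|_\infty\le C_d$ and $\|\nabla_{(t,x)}\bb\|_\infty\le C_d/\cubelen$. I define $\bb$ as the sum of the $2^d$ single-cube fields. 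The separation property ensures their supports (the $3/2$-enlarged cubes) are disjoint, since $3\cubelen/2+\cubelen<(\boxlen-\cubelen)/1$ roughly. The support of $\bb$ lies in $[1/3,2/3]\times Q_1(0)$ because the enlarged cubes stay within $Q_{\boxlen/2+2\cubelen}\cup Q_{1/2+2\cubelen}\subset Q_1(0)$.

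Finally, $\bbd$ is obtained by transport along the smooth flow. Let $\bbd(t,x)=\bbd^0(Y(t,x))$, where $Y$ is the classical backward characteristic map of $-\bb$ as in Step 1 of Proposition~\ref{prop:existence}. Because the cubes are carried rigidly and $\bb$ vanishes outside $[1/3,2/3]$, this gives $\bbd=\bbd^0$ on $[0,1/3]$ and $\bbd=\chi_{P(\cubelen,1)}$ on $[2/3,1]$. The solution is also nonnegative and bounded, and it satisfies (\ref{eq:bb-transport}) weakly by the computation in that proof.

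The main obstacle I expect is the geometric bookkeeping. The enlarged moving cubes must remain disjoint during every step and must stay in $Q_1(0)$, with margins derived only from $4\cubelen<\boxlen<1$. I also need to check the sign convention, namely that $\bbd$ transported by $-b\cdot\nabla$ moves in the direction matching the translation. If the margins are too tight, I would shrink $\Theta$'s enlargement factor to $1+\delta$, at the cost of a $d$-dependent constant in the gradient bound.
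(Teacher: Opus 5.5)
Your construction is correct, but it is organized differently from the paper's.

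\textbf{The paper's route.} The paper moves all $2^d$ cubes simultaneously along straight lines, with centers $(\ell(1-\zeta(t))+\zeta(t))\,c_1(k)$. This is a homothety of the whole configuration. For it the paper needs a field $\Gamma^\xi$ that equals an arbitrary unit vector $\xi$ on $Q_1(0)$. It gets this from the antisymmetric tensor $(\xi_i x_j-\xi_j x_i)\Psi(x)$, which involves all planes containing $\xi$ (hence the factor $1/(d-1)$). Disjointness of the supports $Q_{2s}(z(t))$ is then immediate: all center distances are multiplied by $\lambda\in[\ell,1]$, and $\ell>4s$.

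\textbf{Your route.} You split $[1/3,2/3]$ into $d$ steps and move along one axis at a time. So you only need a stream function in a single coordinate plane, $\psi=(x_i-x_i(t))\,\dot x_j(t)\,\Theta((x-x(t))/s)$. This is the same antisymmetric-potential idea in its simplest form. Your write-up also checks several things the paper leaves implicit:
\begin{itemize}
\item disjointness of the enlarged supports during each step;
\item smooth gluing of the steps, since $\sigma'$ vanishes near $0$ and $1$;
\item the identification of $\hat u$ via the rigid motion of the cubes under the flow.
\end{itemize}
The price is $d$ time subintervals and constants growing with $d$ (e.g.\ $|\dot x|\lesssim d$, $|\ddot x|\lesssim d^2$). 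This is harmless, since only a constant $C_d$ is claimed.

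The coordinate splitting is not actually needed. For any unit $\eta\perp\xi$, take $\psi=\zeta'(t)|z^1-z^0|\,\big(\eta\cdot(x-z(t))\big)\,\Theta((x-z(t))/s)$ and $w=\xi\,(\eta\cdot\nabla\psi)-\eta\,(\xi\cdot\nabla\psi)$. This $w$ is divergence-free and equals $z'(t)$ on $Q_s(z(t))$. So your single-plane device also performs the paper's diagonal motion in one step.

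\textbf{Two small corrections.}
\begin{itemize}
\item As you suspected, the sign must be reversed. The equation $\partial_t\hat u-\hat a\cdot\nabla\hat u=0$ transports $\hat u$ with velocity $-\hat a$, so you need $\hat a=-\sum_k w^k$. This is the minus sign in the paper's definition of $\tau$.
\item The disjointness estimate should read as follows. Two distinct centers differ by at least $\ell/2$ in some coordinate, while each support $Q_{3s/2}(x^k(t))$ has half-width $3s/4$. So disjointness requires $\ell/2>3s/2$, which follows from $4s<\ell$. The bound $1/4+3s/4<1/2$ likewise keeps everything inside $Q_1(0)$.
\end{itemize}
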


\begin{proposition}
	Let $d \ge 2$ and $0 < \alpha, \beta < 1$. If
	\begin{align}
		\nu > 1 - \log_2(2^\beta - 1)\,, \qquad
		r < \frac{\nu d}{\nu + \beta}\,,\qquad
		\alpha < \frac{1 - \beta}{1 + \nu}\,,
		\label{eq:bb-contraction-assumptions}
	\end{align}
	then $F$ maps $A$ to $A$ and is a contraction.
	\label{prop:F-contraction}
\end{proposition}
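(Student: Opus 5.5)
The plan is to verify two things separately. First, for $a \in A$, $F(a)$ satisfies the structural properties (\ref{eq:A-properties}) and has finite $A$-norm. Second, $F$ is affine in $a$: the $\bb$-part (\ref{eq:F-def-bb}) does not depend on $a$. Hence $F(a)-F(a')$ is given on $[t_i,t_i^m)$ by (\ref{eq:F-def-recursive}) applied to $a-a'$ and vanishes on $[t_i^m,t_{i+1})$. So it suffices to show $\norm{F(a)-F(a')}_A\le \theta\norm{a-a'}_A$ for some $\theta<1$ coming from the linear recursive part alone. We also need $\norm{F(0)}_A<\infty$, which is a bound on the $\bb$-part.

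\textbf{Structural properties.} On $[t_i,t_i^m)$ the summands are rescaled copies of $a$ supported in the disjoint cubes $Q_{2^{-(\nu+1)i}}(c_i(k))\subset Q_1(0)$. On $[t_i^m,t_{i+1})$ the summands are rescaled copies of $\bb$ supported in $Q_{2^{-i}}(c_i(k))$. Both families are divergence free, since divergence is invariant under translation and dilation. Because $a$ vanishes at times $0$ and $1$ and $\supp\bb\subset[1/3,2/3]\times Q_1(0)$, $F(a)$ vanishes at every $t_i$, $t_i^m$. It is therefore continuous across the interval endpoints, and $F(a)(t)\to0$ as $t\to1$ will follow from the decay estimates below. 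Then $F(a)(1)=0$ and $F(a)(0)=F(a)(t_1)=0$ hold since $t_1=0$.

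\textbf{Scaling estimates.} For the recursive block write $\lambda_i=2^{-(\nu+1)i}$. Set $F(a)=-\frac{\lambda_i}{\Delta_i}\sum_k a(\tau,(x-c_i(k))/\lambda_i)$ with $\tau=(t_i^m-t)/\Delta_i$.

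The gradient gives $\nabla F(a)=-\Delta_i^{-1}\sum_k(\nabla a)(\cdot)$. Since there are $2^{di}$ disjoint copies, $\norm{\nabla F(a)(t)}_{L^r}^r = \Delta_i^{-r}2^{di}\lambda_i^{d}\norm{\nabla a(\tau)}_{L^r}^r$. Hence
\begin{align*}
\norm{\nabla F(a)(t)}_{L^r}\le \Delta_i^{-1}2^{di/r}2^{-(\nu+1)di/r}\norm{\nabla a}_{C_tL^r}= C_\beta\,2^{i(\beta-\nu d/r)}\norm{\nabla a}_{C_tL^r}.
\end{align*}
The condition $r<\nu d/(\nu+\beta)$ gives $\beta-\nu d/r<-\nu<0$. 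The factor is geometric in $i$, and its largest value ($i=1$) is $2^{\beta-\nu d/r}(2/(2^\beta-1))$, which I expect to be below $1$ thanks to $\nu>1-\log_2(2^\beta-1)$ together with $\nu d/r>\nu+\beta$.

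For the Hölder seminorm, the space–time dilation is $(\Delta_i,\lambda_i)$ with amplitude $\lambda_i/\Delta_i$. The spatial Hölder quotient scales by $(\lambda_i/\Delta_i)\lambda_i^{-\alpha}$ and the time quotient by $(\lambda_i/\Delta_i)\Delta_i^{-\alpha}$. Since $\Delta_i\sim 2^{-i\beta}\ge\lambda_i$, the worse one is $\lambda_i^{1-\alpha}/\Delta_i\sim 2^{i(\beta-(\nu+1)(1-\alpha))}$. The constant $2/(2^\beta-1)<2^{\nu-1}$ gets absorbed similarly, again using the $\nu$ condition. This exponent is negative when $\alpha<1-\beta/(\nu+1)$, which is implied by $\alpha<(1-\beta)/(1+\nu)$.

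For the $\bb$-part, Proposition \ref{prop:bb-ex} gives amplitude $2^{-i}/\Delta_i\sim 2^{-i(1-\beta)}$. The gradient in $L^r$ is bounded by $\Delta_i^{-1}C_d s^{-1}(2^{di}\,2^{-di}4^d s^d)^{1/r}$ with $s=2^{-\nu(i+1)-1}$, i.e.\ by $2^{i\beta}2^{\nu i(1-d/r)}$. This decays because $\nu d/r-\nu>\beta$. Interpolating the $L^\infty$ and Lipschitz bounds of $\bb$ gives a Hölder bound of size $\frac{2^{-i}}{\Delta_i}(2^{i}/s)^{\alpha}\sim 2^{i(\beta-1+\alpha(1+\nu))}$, and this decays exactly under $\alpha<(1-\beta)/(1+\nu)$.

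\textbf{Main obstacle.} The step I expect to be hardest is the global Hölder seminorm: we must pass from the per-piece bounds to a bound on all of $[0,1]\times\R^d$. Pairs of points lying in different cubes, or in different time intervals, have to be handled too. I would split by cases. If the two points lie in different pieces, I bound $|f(z)-f(w)|\le |f(z)-f(z')|+|f(w')-f(w)|$, where $z',w'$ are points on the boundary of the pieces, on the segment between $z$ and $w$. There $f$ vanishes, because the copies vanish on the boundaries of their cubes and at the endpoints of their time intervals. This gives at most twice the single-piece constant. The factor $2$ must then also be absorbed into the geometric smallness, which might force restricting to large $i$ or adjusting the constants. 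Continuity in time with values in $W^{1,r}$ at $t=1$ follows from the decay of the gradient bounds.
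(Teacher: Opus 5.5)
\textbf{Gap: the Hölder contraction is never closed.} Your outline follows the paper's proof step for step: the affine splitting of $F$, the factor $\Delta_i^{-1}2^{-\nu d i/r}$ for $\nabla F(a)$ on $[t_i,t_i^m)$, the bound $2^{i(\nu-\nu d/r+\beta)}$ on the $\hat a$-intervals, the per-piece Hölder factor $\theta_i:=\lambda_i^{1-\alpha}/\Delta_i$ with $\lambda_i=2^{-(\nu+1)i}$, and interpolation for the $\hat a$-part. Your check of (\ref{eq:A-properties}) and of time continuity is more careful than the paper's. The missing step is the one you flag yourself: you never show that the Hölder seminorm is contracted globally.

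Gluing disjointly supported pieces that vanish on the boundaries of their convex boxes costs a factor $2^{1-\alpha}$ (your segment argument plus concavity of $s\mapsto s^\alpha$). Under (\ref{eq:bb-contraction-assumptions}) this factor cannot be absorbed. The hypothesis $\nu>1-\log_2(2^\beta-1)$ means $2/(2^\beta-1)<2^{\nu}$, not $<2^{\nu-1}$ as you wrote. With the correct inequality and $\alpha<(1-\beta)/(1+\nu)$ one only gets
\begin{align*}
	\sup_i \theta_i=\theta_1=2^{\beta-(\nu+1)(1-\alpha)}\,\frac{2}{2^\beta-1}<2^{-1+\beta+\alpha(1+\nu)}<1 ,
\end{align*}
and $\theta_1\to 1$ as $\nu\downarrow 1-\log_2(2^\beta-1)$ and $\alpha(1+\nu)\uparrow 1-\beta$. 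Restricting to large $i$ is not available, because $F$ acts on all levels at once and $i=1$ is the worst level. As it stands, your argument proves $F(A)\subset A$ but not that $F$ is a contraction.

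\textbf{How the paper handles this step.} The paper closes it with (\ref{eq:alpha-supp}), but that claim is false as stated. On $\R$, take $f=\operatorname{dist}(\cdot,\R\setminus(-2,-\delta))^\alpha$ and $g=-\operatorname{dist}(\cdot,\R\setminus(\delta,2))^\alpha$. They have disjoint supports and $C^\alpha$-seminorms at most $1$, yet
\begin{align*}
	\frac{|(f+g)(-1)-(f+g)(1)|}{2^\alpha}=2^{1-\alpha}(1-\delta)^\alpha>1
\end{align*}
for small $\delta$. The correct general constant is $2^{1-\alpha}$. So your suspicion is well founded: the paper's proof, through (\ref{eq:alpha-supp}) and the equality in (\ref{eq:alpha-contraction}), has the same hole.

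\textbf{Possible repairs.} The cleanest repair is exactly the inequality you wrote: assume $\nu>2-\log_2(2^\beta-1)$, i.e.\ $2/(2^\beta-1)<2^{\nu-1}$. Then $\theta_i<2^{-\nu i}\cdot 2/(2^\beta-1)<1/2$, and hence
\begin{align*}
	\norm{F(a)-F(a')}_A\le\max\braces*{\sup_i\Delta_i^{-1}2^{-\nu d i/r},\,2^{-\alpha}}\norm{a-a'}_A .
\end{align*}
This costs nothing in Theorem \ref{theorem:nonuniqueness}, where $\nu$ is taken large anyway. Alternatively, keep the stated hypotheses and prove that some iterate $F^n$ is a contraction. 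Its increments are disjointly supported rescaled copies of $a-a'$ with per-piece factor at most $\theta_1^n$, and the gluing loss $2^{1-\alpha}$ enters only once. That suffices for the fixed-point argument, but not for the proposition as literally stated.
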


\begin{proof}
	Let $a \in A$. We first estimate $\norm*{\nabla F(a)(t)}_{L^r_x}$.
	If $t \in [t_i, t_i^m)$, by the chain rule
	\begin{align*}
		\nabla F(a)(t) = \frac 1 {\Delta_i} \sum_{k \in \ind_i^d}
		\nabla a{\paren*{
					\frac{t_i^m - t}{\Delta_i}, \,
					2^{(\nu + 1)i}(x - c_i(k))
				}
			} \,.
	\end{align*}
	Since the summands have disjoint support,
	\begin{align*}
		\norm*{\nabla F(a)(t)}_{L^r}
		= \frac 1 {\Delta_i} \paren*{
			\sum_{k \in \ind_i^d} \int_{\R^d}
			\abs*{\nabla a{\paren*{
							\frac{t_i^m - t}{\Delta_i}, \,
							2^{(\nu + 1)i}(x - c_i(k))
						}
					}
			}^{r} \,dx
		}^{\!1/r} \!.
	\end{align*}
	By changing variables $x' = 2^{(\nu + 1)i}(x - c_i(k))$ and using that $|\ind_i^d| = 2^{id}$, we obtain
	\begin{align}
		\norm*{\nabla F(a)(t)}_{L^r}
		= \frac 1 {\Delta_i} \paren*{
			\sum_{k \in \ind_i^d} \frac 1 {2^{(\nu + 1)id}}
			\norm*{\nabla a((t_i^m - t)/\Delta_i)}_{L^r_x}^r
		}^{\!1/r} \!\!
		= \frac 1 {\Delta_i 2^{\nu di / r}} \norm*{\nabla a}_{C_tL^r_x}  \,.
		\label{eq:Lr-estimate}
	\end{align}
	Substituting the definition of $\Delta_i$ (\ref{eq:Delta-index-def}), the constant on
	the right-hand side is
	\begin{align}
		\frac 1 {\Delta_i 2^{\nu di / r}}
			= 2^{i(\beta - \nu d / r)} \frac 2 {2^{\beta} - 1}
		< 2^{-i\nu} \frac 2 {2^{\beta} - 1}
		< 1 \,.
		\label{eq:Lr-estimate-constant}
	\end{align}
	Here, we used that $\nu d / r > \nu + \beta$ and $\nu > 1 - \log_2(2^\beta - 1)$.

	Let $a, a' \in A$. By definition, $F(a)(t) = F(a')(t)$ for every
	$t$ that is not in $[t_i, t_i^m)$ for some $i \in \N_{\ge 1}$. Therefore,
	by (\ref{eq:Lr-estimate}) and (\ref{eq:Lr-estimate-constant})
	\begin{align}
		\norm*{\nabla (F(a) - F(a'))}_{C_tL^r_x}
		= \norm*{\nabla F(a - a')}_{C_tL^r_x}
		< \norm*{\nabla(a - a')}_{C_tL^r_x} \,.
		\label{eq:Lr-contraction}
	\end{align}

	If $t \in [t_i^m, t_{i + 1})$, then by a similar computation as above and estimates
	for the building block vector field (\ref{eq:building-block-meas})
	\begin{align}
		\begin{split}
			\norm*{\nabla F(a)(t)}_{L^r}
			 & \le \frac 1 {\Delta_i}
			\norm*{\nabla \bb \paren*{
					t, x ; 2^{-\nu(i + 1) - 1}, \,	2^{-\nu i}
				}
			}_{C_t L^r_x}
			\le C_{\nu, d, r} \frac{2^{\nu i(1 - d/r)}}{\Delta_i}                         \\
			 & = C_{\nu, d, r} \, 2^{i(\nu - \nu d / r + \beta)} \, \frac 2 {2^\beta - 1}
			< C_{\nu, d, r} \, \frac 2 {2^\beta - 1} \,.
		\end{split}
		\label{eq:F-bb-Lr-estimate}
	\end{align}
	In the last line, we used that $r < \frac {\nu d}{\nu + \beta}$. From (\ref{eq:Lr-estimate}),
	(\ref{eq:Lr-estimate-constant}) and (\ref{eq:F-bb-Lr-estimate}), we conclude
	\begin{align}
		\norm*{\nabla F(a)}_{C_tL^r_x} \le C_{\nu, d, r, \beta} + \norm*{\nabla a}_{C_tL^r_x} \,.
		\label{eq:bounded-nabla-Lr}
	\end{align}

	We now turn to the Hölder part of the norm (\ref{eq:A-norm}). For this, we note that for
	any two functions $f, g$,
	\begin{align}
		\supp f \cap \supp g = \emptyset \quad \Longrightarrow \quad
		\norm*{f + g}_{C^\alpha} \le \max\{\norm*{f}_{C^\alpha}, \norm*{g}_{C^\alpha}\} \,.
		\label{eq:alpha-supp}
	\end{align}
	Since $\supp_t \bb \subset [1/3, 2/3]$,
	\begin{align}
		\norm*{F(a)}_{C^\alpha([0, 1] \times \R^d)}
		\le \sup_{i \in \N} \paren*{ \max \braces*{
				\norm*{F(a)}_{C^\alpha([t_i, t_i^m] \times \R^d)}, \,
				\norm*{F(a)}_{C^\alpha([t_i^m, t_{i + 1}] \times \R^d)}
			}
		} \,.
		\label{eq:alpha-time-split}
	\end{align}
	We first look at the intervals $[t_i, t_i^m]$. Since $\nu > 0$, the summands in (\ref{eq:F-def-recursive})
	have disjoint support in space. Hence it suffices to estimate them separately by (\ref{eq:alpha-supp}).
	For every $k \in \ind_i^d$,
	\begin{align*}
		 & \hspace{-1em} \norm*{
			a \paren*{\frac{t_i^m - t}{\Delta_i}, 2^{(\nu + 1)i}(x - c_i(k))}
		}_{C^\alpha([t_i, t_i^m] \times \R^d)}                                \\
		\addlinespace
		 & \hspace{1em} = \!\! \sup_{\substack{s, t \in [t_i, t_i^m], s \ne t \\ x \ne y}} \!\!
		\frac{
			\abs*{
				a\paren*{\frac{t_i^m - t}{\Delta_i}, 2^{(\nu + 1)i}(x - c_i(k))} -
				a\paren*{\frac{t_i^m - s}{\Delta_i}, 2^{(\nu + 1)i}(y - c_i(k))}
			}
		}
		{\big(|t - s|^2 + |x - y|^2\big)^{\alpha/2}}
		\\ \addlinespace
		 & \hspace{1em} = \sup_{\substack{s, t \in [0, 1], s \ne t            \\ x \ne y}}
		\frac{
			\abs*{
				a\paren*{t, x} - a\paren*{s, y}
			}
		}
		{\big(\Delta_i^2 |t - s|^2 + 2^{-2(\nu + 1)i} |x - y|^2\big)^{\alpha/2}}
		\\ \addlinespace
		 & \hspace{1em} \le \max\braces*{
			\frac 1 {\Delta_i}, 2^{(\nu + 1)i}
		}^{\!\alpha} \,
		\norm*{a}_{C^\alpha([0, 1] \times \R^d)} \,.
	\end{align*}
	Therefore, by the definition of $F$ on $[t_i, t_i^m)$,
	\begin{align}
		\norm*{F(a)}_{C^\alpha([t_i, t_i^m] \times \R^d)}
		\le \frac 1 {2^{(\nu + 1)i}\Delta_i} \max\braces*{
			\frac 1 {\Delta_i}, 2^{(\nu + 1)i}
		}^{\!\alpha} \,
		\norm*{a}_{C^\alpha([0, 1] \times \R^d)} \,.
		\label{eq:alpha-max-estimate}
	\end{align}
	We estimate the two arguments of the maximum separately. For $1/\Delta_i$, by
	the definition of $\Delta_i$ in (\ref{eq:Delta-index-def}) and since
	$2^\nu > 2/(2^\beta - 1)$,
	\begin{align*}
		\frac 1 {2^{(\nu + 1)i}\Delta_i^{1 + \alpha}}
		< 2^{-(1 + \nu)i} \, 2^{\beta(1 + \alpha)i} \, 2^{\nu(1 + \alpha)} \,.
	\end{align*}
	Note that $\nu > 1$ since $\beta < 1$. Since $\beta(1 + \alpha) < 2$,
	the above expression is largest for $i = 1$. Using that $\beta\alpha < \alpha$, we obtain
	\begin{align}
		\frac 1 {2^{(\nu + 1)i}\Delta_i^{1 + \alpha}} <
		2^{-(1 + \nu) + \beta + \beta\alpha + \nu + \nu \alpha}
		< 2^{-1 + \beta + \alpha + \nu \alpha}
		< 1 \,.
		\label{eq:max-first-arg}
	\end{align}
	The last inequality follows from $\alpha < \frac{1 - \beta}{1 + \nu}$.
	For the argument $2^{(\nu + 1)i}$ of the maximum in (\ref{eq:alpha-max-estimate}),
	by $\alpha < \frac{1 - \beta}{1 + \nu}$ we have
	\begin{align}
		\frac 1 {2^{(\nu + 1)(1 - \alpha)i}\Delta_i}
		< 2^{(1 - \beta)i} \, 2^{-(\nu + 1)i} \, 2^{\beta i} \, \frac 2 {2^\beta-1}
		= 2^{-\nu i} \, \frac 2 {2^\beta-1}
		< 1 \,.
		\label{eq:max-second-arg}
	\end{align}
	From (\ref{eq:alpha-max-estimate}), (\ref{eq:max-first-arg}) and (\ref{eq:max-second-arg}) we conclude
	\begin{align}
		\norm*{F(a)}_{C^\alpha([t_i, t_i^m] \times \R^d)} \le C_{\nu, \beta} \norm*{a}_{C^\alpha([0, 1] \times \R^d)},
		\label{eq:alpha-rec-contraction}
	\end{align}
	where $C_{\nu,\beta} < 1$ is independent of $i$.

	Let $a, a' \in A$. Because $F(a)(t) = F(a')(t)$ for every $t$ that is not in $[t_i, t_i^m)$ for some $i$,
	\begin{align}
		\begin{split}
			\norm*{F(a) - F(a')}_{C^\alpha_{t, x}}
			 & = \sup_{i \in \N} \ \norm*{F(a) - F(a')}_{C^\alpha([t_i, t_i^m] \times \R^d)} \\
			 & = \sup_{i \in \N} \ \norm*{F(a - a')}_{C^\alpha([t_i, t_i^m] \times \R^d)}
			< \norm{a - a'}_{C^\alpha([0, 1] \times \R^d)}
		\end{split}
		\label{eq:alpha-contraction}
	\end{align}

	It remains to estimate the $C^\alpha$-norm of $F(a)$ on
	the intervals $[t_i^m, t_{i + 1}]$. By definition of $C^\alpha$ and (\ref{eq:F-def-bb}),
	\begin{align*}
		\norm*{F(a)}_{C^\alpha([t_i^m, t_{i + 1}] \times \R^d)}
		 & \le 2\norm*{F(a)}_{L^\infty([t_i^m, t_{i + 1}] \times \R^d)}^{1 - \alpha} \,
		\norm*{\nabla_{(t, x)} F(a)}_{L^\infty([t_i^m, t_{i + 1}] \times \R^d)}^\alpha  \\
		\addlinespace
		 & \hspace{-5em} \le \frac 2 {2^i \Delta_i}
		\norm*{\bb}_{L^\infty_{t, x}}^\alpha \,
		\max\braces*{ \frac 1 {\Delta_i}, 2^i}^\alpha
		\norm*{\nabla_{(t, x)}
			\bb \paren*{t, x; 2^{-\nu(i + 1) - 1}, 2^{-\nu i}}
		}_{L^\infty_{t, x}}^\alpha
	\end{align*}
	We estimate $\max\{1/\Delta_i, 2^i\} \le C_\beta 2^i$ and use (\ref{eq:building-block-meas}),
	\begin{align}
		\begin{split}
			\norm*{F(a)}_{C^\alpha([t_i^m, t_{i + 1}] \times \R^d)}
			 & \le C_{d,\alpha,\beta} \, 2^{-i} \, 2^{i\beta} \, 2^{i\alpha} \, 2^{(\nu(i + 1) + 1)\alpha} \\
			 & \le C_{d, \alpha, \nu, \beta} 2^{-i(1 - \alpha - \beta - \alpha \nu)} \,.
		\end{split}
		\label{eq:alpha-bb-estimate}
	\end{align}
	Since $\alpha < \frac{1 - \beta}{1 + \nu}$, this is largest for $i = 1$.

	We can now complete the proof of the Proposition. By (\ref{eq:bounded-nabla-Lr}), (\ref{eq:alpha-time-split}),
	(\ref{eq:alpha-rec-contraction}) and (\ref{eq:alpha-bb-estimate}), $F$ maps $A$ to $A$.
	By (\ref{eq:Lr-contraction}) and (\ref{eq:alpha-contraction}), $F$ is a contraction.
\end{proof}

\subsubsection{Construction of the solution $u$}

Analogously to the construction of the vector field, we obtain $u$ as the fixed point
of a contraction.
Let $1 < \gamma, \lambda < \infty$ and let $U$ be the space of all
\begin{align*}
	u \in L^\gamma(0, 1; L^1(\R^d)) \cap \Lip([0, 1]; W^{-1,\lambda}(\R^d))
\end{align*}
such that
\begin{align*}
	u|_{t = 0} = u^0, \quad u_{t = 1} = u^1, \quad u \ge 0 \ \text{a.e.}, \quad
	\supp u \subset [0, 1] \times Q_1(0) \,.
\end{align*}
Here $\Lip([0, 1]; W^{-1,\lambda}(\R^d))$ is the set of Lipschitz-continuous functions
$[0, 1] \to W^{-1,\lambda}(\R^d)$.
We define the following seminorm on $\Lip([0, 1]; W^{-1,\lambda}(\R^d))$.
\begin{align*}
	\norm*{u}_{\Lip([0, 1]; W^{-1, \lambda}(\R^d))} \colonequals
	\sup_{s, t \in [0, 1], t \ne s} \! \frac{\norm*{u(s,\cdot) - u(t,\cdot)}_{W^{-1,\lambda}(\R^d)}}{|s - t|}
\end{align*}
The norm of $U$ is given by
\begin{align*}
	\norm*{u}_U \colonequals \norm*{u}_{L^\gamma(0, 1; L^1(\R^d))}
	+ \norm*{u}_{\Lip([0, 1]; W^{-1,\lambda}(\R^d))} \,.
\end{align*}

We define a map $G$ on $U$ similarly to $F$. Let $u \in U$. If $t \in [t_i, t_i^m)$, we set
\begin{align}
	G(u)(t, x) = 2^{\nu di} \sum_{k \in \ind_i^d} u\paren*{
		\frac{t_i^m - t}{\Delta_i}, \,
		2^{(\nu + 1)i} (x - c_i(k))
	} \,.
	\label{eq:G-def-recursive}
\end{align}
If $t \in [t_i^m, t_{i + 1})$, we set
\begin{align}
	G(u)(t, x) = 2^{\nu d (i + 1)} \sum_{k \in \ind_i^d} \bbd\paren*{
		\frac{t - t_i^m}{\Delta_i}, \,
		2^i(x - c_i(k)); \,
		2^{-\nu(i + 1) - 1}, \,
		2^{-\nu i}
	} \,,
	\label{eq:G-def-bb}
\end{align}
where $\bbd$ is from Proposition \ref{prop:bb-ex}.
If $t = 1$, we define $G(u)(t, x) = u^1(x) = \chi_{Q_1(0)}$.

\begin{proposition}
	Let $d \ge 2$ and $1 < \gamma, \lambda < \infty$. If $\nu \ge 2$ and
	\begin{align}
		\frac{\nu d}{1 - \beta} < \frac \lambda {\lambda - 1} \,,
		\label{eq:solution-contraction-assumptions}
	\end{align}
	then $G$ maps $U$ to $U$ and is a contraction.
	\label{prop:G-contraction}
\end{proposition}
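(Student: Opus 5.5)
The proof will mirror that of Proposition \ref{prop:F-contraction}. We first check that $G(u)$ lies in $U$, and then prove the contraction estimate separately for the two norms in $\norm*{\cdot}_U$. The structural properties come first.

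\emph{Structural properties.} Nonnegativity of $G(u)$ follows from $u \ge 0$ and $\bbd \ge 0$. The support condition follows because the rescaled copies in (\ref{eq:G-def-recursive}) sit in the cubes $Q_{2^{-(\nu+1)i}}(c_i(k))$, and the copies of $\bbd$ in (\ref{eq:G-def-bb}) sit in $Q_{2^{-i}}(c_i(k))$, all of which lie in $Q_1(0)$. For the boundary values, at $t = 0 = t_1 - $ (the start of the first interval) the rescaled time argument equals $1$. Hence $u(1) = u^1 = \chi_{Q_1(0)}$ gives $2^{\nu d}\sum_k \chi_{Q_{2^{-(\nu+1)}}(c_1(k))} = u^0$, as required by (\ref{eq:u0-def}).

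We also need consistency at the junctions $t_i^m$ and $t_{i+1}$. At $t_i^m$ the recursive part has time argument $0$, so it equals $2^{\nu d i}\sum_k u^0(2^{(\nu+1)i}(x-c_i(k)))$. This matches $2^{\nu d(i+1)}\sum_k \bbd^0$, because $\bbd$ equals $\bbd^0$ on $[0,1/3]$. At $t_{i+1}$ the translating part gives $\bbd^1$ on the generation-$(i+1)$ cubes, which matches the recursive part at $t_{i+1}$, where the time argument is $1$ and $u = u^0$. Since the mass is $1$ at every time and $\supp u \subset Q_1(0)$, the $L^1$ norm stays bounded. The limit $G(u)(t) \to u^1$ as $t\to 1$ must then be obtained in $W^{-1,\lambda}$; this is where the Lipschitz estimate is used.

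\emph{$L^\gamma_tL^1_x$ part.} On $[t_i,t_i^m)$, changing variables gives
\[
\norm*{G(u)(t)}_{L^1} = 2^{\nu d i}\,2^{id}\,2^{-(\nu+1)id}\,\norm*{u((t_i^m-t)/\Delta_i)}_{L^1} = \norm*{u((t_i^m-t)/\Delta_i)}_{L^1}.
\]
Integrating the $\gamma$-th power in $t$ yields the factor $\Delta_i$. So the contribution of $G(u)-G(u')$ is $\paren*{\sum_i \Delta_i}^{1/\gamma}\norm*{u-u'}_{L^\gamma L^1}$, and $\sum_i\Delta_i = 1/2$. This gives a contraction factor $2^{-1/\gamma}<1$. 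On the translating intervals, $G(u)=G(u')$, and there $G(u)$ is bounded in $L^1$ by $1$.

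\emph{Lipschitz $W^{-1,\lambda}$ part.} This is the main obstacle. We use that the $W^{-1,\lambda}$ norm of $f(\sigma\,\cdot)$ scales like $\sigma^{-1-d/\lambda'}$ relative to the $L^1$-type normalization, where $\lambda'=\lambda/(\lambda-1)$. We test against $\phi\in W^{1,\lambda'}$ and rescale. Each of the $2^{id}$ disjoint pieces contributes
\[
2^{\nu d i}\,2^{-(\nu+1)id}\,2^{(\nu+1)i(1-d/\lambda')}\cdot\norm*{\phi}_{W^{1,\lambda'}}\ \text{(localized)},
\]
and summing over $k$ via Hölder recovers the full $W^{1,\lambda'}$ norm. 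The time derivative contributes the factor $1/\Delta_i \sim 2^{i\beta}$. The resulting factor is roughly $2^{i(\beta-(\nu+1)+\nu d/\lambda' \cdot(\ldots))}$. I expect the exponent bookkeeping here to produce precisely the condition (\ref{eq:solution-contraction-assumptions}), which makes the factor less than $1$ uniformly in $i$ when combined with $\nu\ge 2$.

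On the translating intervals we cannot use smallness of $G(u)-G(u')$, since the two agree there. We only need boundedness, which we get directly from (\ref{eq:bb-transport}). The relation $\del_t\bbd = \div(\bb\,\bbd)$ gives $\norm*{\del_t\bbd}_{W^{-1,\lambda}} \le \norm*{\bb\,\bbd}_{L^\lambda}$, and this is controlled by (\ref{eq:building-block-meas}). After rescaling, this gives a bound that decays in $i$, again under (\ref{eq:solution-contraction-assumptions}).

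Finally, we need to pass from these local-in-time bounds to a global Lipschitz bound. Lipschitz constants on consecutive intervals combine via the triangle inequality, so the global Lipschitz seminorm is the supremum of the interval-wise ones. This also gives $G(u)(t)\to u^1$ in $W^{-1,\lambda}$ as $t \to 1$, which completes the check that $G$ maps $U$ to $U$ and is a contraction.
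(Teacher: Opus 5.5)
The paper gives no proof of this proposition; it refers to \cite{BCK26}, Proposition 2.4. So I assess your argument on its own. Your structural checks are fine, as are the matching at $t_i^m$ and $t_{i+1}$, the $L^\gamma_tL^1_x$ factor $2^{-1/\gamma}$, and the divergence-form bound on the translating intervals. The gap is in the Lipschitz $W^{-1,\lambda}$ estimate on $[t_i,t_i^m)$. This is the core of the proposition, and you leave it at ``I expect''.

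\textbf{Missing mass conservation.} Rescaling a piece by $L=2^{(\nu+1)i}$ gains a factor only on $\nabla\phi$. The zero-order part of the test function gives a term of size about $\abs{\int (w(\tau)-w(\sigma))\,dx}$, which does not decay and is then multiplied by $1/\Delta_i\sim 2^{i\beta}$. For the inhomogeneous norm (test functions in $W^{1,\lambda'}$, as you write) this is fatal. Let $\chi\ge 0$ be a bump in $Q_1(0)$ with $\int\chi=1$, and let $f\ge0$ be Lipschitz with $f(0)=f(1)=0$. Then $u+\eps f(t)\chi$ lies in $U$ together with $u$. On $[t_i,t_i^m)$ their images under $G$ differ by $\eps f(\tau)\,2^{\nu di}\sum_k\chi(2^{(\nu+1)i}(x-c_i(k)))$, a profile of mass $\eps f(\tau)$. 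Its Lipschitz seminorm is at least $c\,\eps\Lip(f)/\Delta_i\to\infty$, so $G$ cannot map $U$ into $U$. You need a homogeneous norm, for example $\inf\{\norm{F}_{L^\lambda}:\div F=u(t)-u(s),\ \supp F\subset Q_1(0)\}$. With it, a finite Lipschitz seminorm forces $\int u(t)\,dx$ to be constant, and rescaling $F$ is exact. Your sentence ``the mass is $1$ at every time'' is exactly the missing ingredient, but nothing in your definition of $U$ supplies it.

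\textbf{Wrong exponents.} Your per-piece factor $2^{(\nu+1)i(1-d/\lambda')}$ has the wrong sign; it should be $2^{(\nu+1)i(d/\lambda'-1)}$, equivalently $\norm{g(L\,\cdot)}_{\dot W^{-1,\lambda}}=L^{-1-d/\lambda}\norm{g}_{\dot W^{-1,\lambda}}$. You also drop the constant in $1/\Delta_i=\frac{2}{2^\beta-1}2^{i\beta}$. Done correctly, with $\lambda'=\lambda/(\lambda-1)$,
\[
\Lip_{[t_i,t_i^m]}\big(G(u)-G(u')\big)\le \frac{2}{2^\beta-1}\,2^{i(\beta-\nu-1+\nu d/\lambda')}\,\norm{u-u'}_{\Lip([0,1];W^{-1,\lambda}(\R^d))}.
\]
Condition (\ref{eq:solution-contraction-assumptions}) says $\nu d/\lambda'<1-\beta$, so it only bounds this by $\frac{2}{2^\beta-1}2^{-\nu i}$. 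For $\nu=2$ and small $\beta$ the factor at $i=1$ exceeds $1$, and the scaling is sharp up to constants. So your claim that (\ref{eq:solution-contraction-assumptions}) together with $\nu\ge2$ gives a factor below $1$ is false. You need $2^\nu>2/(2^\beta-1)$, plus some room for a localization constant if your norm is not the localized one above. This is the first condition in (\ref{eq:bb-contraction-assumptions}), which is in force in the proof of Theorem \ref{theorem:nonuniqueness}.

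Where (\ref{eq:solution-contraction-assumptions}) is genuinely needed is on the translating intervals. There $\del_t G(u)=\div H$ with $\norm{H}_{L^\lambda}\le C_{d,\nu,\beta}\,2^{i(\nu d/\lambda'-(1-\beta))}$, which is bounded in $i$ exactly under that condition.
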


We omit the proof of Proposition \ref{prop:G-contraction}.
It can be found in \cite{BCK26}, Proposition 2.4.

\begin{proof}[Proof of Theorem \ref{theorem:nonuniqueness}]
	For every $0 < \beta < 1$, (\ref{eq:bb-contraction-assumptions})
	is satisfied for large enough $\nu$ since $r < d$. By choosing $\lambda > 1$ close enough to $1$, also
	(\ref{eq:solution-contraction-assumptions}) holds.
	By the Banach fixed-point theorem and Propositions \ref{prop:F-contraction} and \ref{prop:G-contraction},
	there exist unique fixed points $a \in A$ and $u \in U$ of $F$ and $G$ respectively.

	Let $\phi \in C^\infty_c(\R^d)$. To prove that $a$ and $u$ satisfy (\ref{eq:a-transport}),
	we need to show that $\int_{\R^d} u \phi \,dx$ is almost everywhere differentiable in $[0, 1]$
	and for a.e. $t \in [0, 1]$
	\begin{align}
		\frac d {dt} \int_{\R^d} u \phi \,dx = - \int_{\R^d} u a \cdot \nabla \phi \,dx \,.
		\label{eq:transport-c-0-weak}
	\end{align}
	Since
	\begin{align*}
		\abs*{\int_{\R^d} u(t, x) \phi \,dx - \int_{\R^d} u(s, x) \phi \,dx}
		\le \norm*{u(t) - u(s)}_{W^{-1,\lambda}} \norm*{\phi}_{W^{1,\lambda}} \,,
	\end{align*}
	and $u \in \Lip([0, 1]; W^{-1,\lambda})$, the derivative on the left-hand side of
	(\ref{eq:transport-c-0-weak}) exists for almost every $t \in [0, 1]$.
	Let $M \subset [0, 1]$ be the set where $\int_{\R^d} u \phi \,dx$ is differentiable
	and (\ref{eq:transport-c-0-weak}) holds. Since $a \in C_c([0, 1] \times \R^d)$ and
	$u \in L^\gamma(0, 1; L^1(\R^d))$, the right-hand side of (\ref{eq:transport-c-0-weak}) is a
	measurable function on $[0, 1]$, hence $M$ is measurable.

	By definitions (\ref{eq:F-def-bb}) and (\ref{eq:G-def-bb}), on $[t_i^m, t_{i + 1})$ for all $i \in \N_{\ge 1}$,
	$F(a)$ and $G(u)$ consist of scaled and translated copies of $\bb$ and $\bbd$.
	Thus $\bigcup_{i \in \N_{\ge 1}} (t_i^m, t_{i + 1}) \subset M$ by Proposition \ref{prop:bb-ex}
	and since $a$ and $u$ are fixed points.
	On the intervals $[t_i, t_i^m)$,
	$F(a)$ and $G(u)$ are defined as scaled and translated copies of $a$ and $u$.
	Because $a$ and $u$ are fixed points and the measure of $\bigcup_{i \in \N_{\ge 1}} [t_i, t_i^m)$
	is $1/2$, we conclude
	\begin{align*}
		\meas^1(M) \ge \frac 1 2 + \frac {\meas^1(M)} 2 \,.
	\end{align*}
	Thus $\meas^1(M) = 1$.

	It remains to show that $u \in L^\infty(0, 1; L^1(\R^d))$. By choosing $\phi$ in
	(\ref{eq:transport-c-0-weak}) such that $\phi|_{Q_1(0)} = 1$, we see that
	\begin{align*}
		\int_{\R^d} u(t, x) \,dx = \int_{\R^d} u^1 \,dx = 1
	\end{align*}
	for every $t \in [0, 1]$. Since $u \ge 0$, also $\norm*{u(t)}_{L^1_x} = 1$ for every $t \in [0, 1]$.
\end{proof}

\subsubsection{Translating cubes}
\label{sec:translating-cubes}

We now show how to construct $\bb$ and $\bbd$ with the properties listed in Proposition \ref{prop:bb-ex}.
The construction here differs from the one in section 7 of \cite{BCK26}.
The extension of the two-dimensional vector field $\boldsymbol{b}_2$ used there is not a
vector field on $\R^d$ when $d \ge 3$.

Let $\xi \in \R^d$ such that $|\xi| = 1$.
Let $\psi = \eta(2|x|) \in C^\infty_c((-1, 1))$, where $\eta$ is
a smooth cutoff function defined in (\ref{eq:eta-R}). With $\Psi(x) \colonequals \prod_{i = 1}^d \psi(x_i)$,
we define the tensor
\begin{align*}
	\omega_{ij}^\xi(x) \colonequals \big(\xi_i x_j - \xi_j x_i \big) \Psi(x) \,,
\end{align*}
and the vector field $\Gamma^\xi : \R^d \to \R^d$ by
\begin{align}
	\Gamma_i^\xi \colonequals \frac 1 {d - 1} \sum_{j = 1}^d \del_j \omega_{ij}^\xi
\end{align}
Since $\supp \Psi \subset Q_2(0)$, also $\Gamma^\xi$ is supported in $Q_2(0)$. Moreover,
\begin{align*}
	\Gamma^\xi_i|_{Q_1(0)} = \frac 1 {d - 1} \sum_{j = 1}^d \del_j \big(\xi_i x_j - \xi_j x_i \big)
	= \frac 1 {d - 1} \sum_{j = 1}^d \big(\xi_i - \xi_j \delta_{ij} \big)
	= \xi_i \,.
\end{align*}
Since $\omega$ is antisymmetric and smooth,
\begin{align*}
	\div \Gamma^\xi = \frac 1 {d - 1} \sum_{i = 1}^d \del_i \sum_{j = 1}^d \del_j \omega_{ij} = 0 \,.
\end{align*}

Next, we define a vector field $\tau(t, x; \cubelen, z^0, z^1)$ that moves a cube of side length $\cubelen$
from $z^0 \in \R^d$ to $z^1 \in \R^d$. Let $\xi \colonequals (z^1 - z^0)/|z^1 - z^0|$ and
$\zeta \in C^\infty([0, 1]; [0, 1])$ such that $\zeta|_{[0, 1/3]} = 0$ and $\zeta_{[2/3, 1]} = 1$.
We set
\begin{align*}
	z(t) \colonequals z^0 + \zeta(t)(z^1 - z^0)
\end{align*}
and define
\begin{align}
	\tau(t, x; \cubelen, z^0, z^1) \colonequals -|z'(t)| \, \Gamma^\xi \paren*{
		\frac{x - z(t)}{\cubelen}
	}
	\qquad \text{for $t \in [0, 1], \, x \in \R^d$} \,.
	\label{eq:tau-def}
\end{align}
Finally, we arrange $2^d$ copies of $\tau$:
\begin{align}
	\bb(t, x; \cubelen, \boxlen) \colonequals
	\sum_{k \in \ind_1^d} \tau\big(t, x; \cubelen, \boxlen c_1(k), c_1(k)\big) \,.
	\label{eq:bb-def}
\end{align}
The solution $\bbd$ is defined by
\begin{align*}
	\bbd(t, x) \colonequals \chi_{P(\cubelen, \boxlen (1 - \zeta(t)) + \zeta(t))}(x) \,,
	\qquad \text{where} \ \,
	P(\cubelen,\boxlen) \colonequals \bigcup_{k \in \{-1, 1\}^d} Q_s(\boxlen c_1(k)) \,.
\end{align*}

By (\ref{eq:tau-def}),
\begin{align*}
	|\del_t \tau|
	\le C_{\zeta, \Gamma^\xi} \paren*{|z_1 - z_0| + \frac{|z_1 - z_0|^2} {\cubelen}} \,, \qquad
	|\nabla_x \tau|
	\le C_{\zeta, \Gamma^\xi} \, \frac{|z_1 - z_0|} \cubelen \,.
\end{align*}
In the definition of $\bb$, we set $z^0 = \boxlen c_1(k)$ and $z_1 = c_1(k)$. Since $|c_1(k)| \le \sqrt {d/2}$
for all $k \in \ind_1^d$, and since $0 < \boxlen < 1$,
\begin{align*}
	|\nabla_{(t, x)} \bb| \le 2^d \, C_{\zeta, \Gamma^\xi} \paren*{
		|1 - \boxlen|\sqrt{d/2} + \frac {|1 - \boxlen|\sqrt{d/2} + |1 - \boxlen|^2 \, d/2}{\cubelen} }
	\le C_{\zeta, \Gamma^\xi, d}\, \frac 1 \cubelen \,.
\end{align*}
The last inequality follows because $0 < \cubelen < 1$.
Since $\meas^d(\supp \Gamma^\xi) \le 2^d$ and $|\ind_1^d| = 2^d$, we have
$\meas^d(\supp \bb(t, \cdot)) \le 4^d \cubelen^d$ for all $t \in [0, 1]$.

\newpage

\appendix

\section*{Appendix}
\addcontentsline{toc}{section}{Appendix}

\setcounter{equation}{0}
\renewcommand{\theequation}{A.\arabic{equation}}

\theoremstyle{theorem}
\newtheorem{appendixproposition}{Proposition}
\renewcommand{\theappendixproposition}{A.\arabic{appendixproposition}}

In the following Proposition, we denote by $L^1(0, T; C^1_\loc(\R^d; \R^d))$ the space of
functions $f : [0, T] \times \R^d \to \R^d$ such that $f \in L^1(0, T; C^1_b(K;\R^d))$
for every compact $K \subset \R^d$.

\begin{appendixproposition}
	Let $b \in L^1(0, T; C^1_{\loc}(\R^d; \R^d))$. Let $\Omega \subset \R^d$
	be open and $X : [0, T] \times \Omega \to \R^d$ be a flow for $b$, i.e. $X(\cdot, x)$
	is an absolutely continuous solution to (\ref{eq:ode}) with $x^0 = x$ for every $x \in \Omega$.
	Then $X(t) \in C^1(\Omega; \R^d)$ for every $t \in [0, T]$. Furthermore, $DX(\cdot, x)$ is
	absolutely continuous on $[0, T]$ for every $x \in \Omega$ and
	\begin{align}
		\del_t DX(t, x) = Db(t, X(t, x)) DX(t, x) \,.
	\end{align}
	\label{prop:smooth-dependence}
\end{appendixproposition}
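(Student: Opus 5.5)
The plan is to obtain $DX$ as the solution of the linearized (variational) equation and to show that it really is the derivative of $X(t,\cdot)$ via a Grönwall estimate on difference quotients. Fix $x \in \Omega$ and a compact neighbourhood. Because $X(\cdot,x)$ is continuous on $[0,T]$, its image is compact. Let $K'$ be a closed $\delta$-neighbourhood of this image. By Theorem \ref{theorem:cauchy-lipschitz}, in the local form of Remark \ref{remark:local-flows}, together with the estimate (\ref{eq:two-sided-lipschitz-local}), trajectories starting at points $y$ close to $x$ stay in $K'$ for all $t \in [0,T]$. Moreover, $|X(t,y)-X(t,x)| \le e^{\ell(T)}|y-x|$ with $\ell(T) = \int_0^T \Lip(b(s)|_{K'})\,ds$, and $\ell(T)$ is finite since $b \in L^1(0,T;C^1_b(K'))$. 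This also shows that $X(t,\cdot)$ is locally Lipschitz.

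Next, define $M(t,x)$ as the solution of the linear matrix ODE $\partial_t M = Db(t,X(t,x))M$, $M(0)=I$. Its coefficient $t\mapsto Db(t,X(t,x))$ is measurable and bounded by $\|Db(t)\|_{C_b(K')} \in L^1$, so the ODE has a unique absolutely continuous solution by Carathéodory theory (or Theorem \ref{theorem:cauchy-lipschitz} applied to a linear field). For $h$ small, set
\[ w(t) = X(t,x+h)-X(t,x)-M(t,x)h. \]
Then $w(0)=0$ and
\[ \dot w = Db(t,X(t,x))\,w + R(t,h), \qquad R = b(t,X(t,x+h))-b(t,X(t,x))-Db(t,X(t,x))\big(X(t,x+h)-X(t,x)\big). \]
Writing $R$ via the mean value formula $\int_0^1 \big(Db(t,X_x+\theta\Delta)-Db(t,X_x)\big)\,d\theta\;\Delta$, where $\Delta = X(t,x+h)-X(t,x)$, we get
\[ |R(t,h)| \le \omega_t\big(e^{\ell(T)}|h|\big)\, e^{\ell(T)}|h|, \]
where $\omega_t$ is the modulus of continuity of $Db(t)$ on $K'$. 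Grönwall then gives
\[ |w(t)| \le e^{\ell(T)} \int_0^T |R(s,h)|\,ds. \]

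The main obstacle is showing $\int_0^T \omega_s(e^{\ell(T)}|h|)\,ds \to 0$ as $h \to 0$, because $Db$ is only integrable in time. For a.e.\ $s$ we have $\omega_s(\rho)\to 0$ as $\rho\to 0$, since $Db(s)$ is uniformly continuous on the compact $K'$. In addition, $\omega_s \le 2\|Db(s)\|_{C_b(K')} \in L^1$. Dominated convergence therefore gives $\int_0^T |R|\,ds = o(|h|)$, so $X(t,\cdot)$ is differentiable at $x$ with $DX(t,x)=M(t,x)$. Absolute continuity in $t$ and the variational equation then hold by construction.

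It remains to prove continuity of $x\mapsto M(t,x)$. Take the difference $M(t,x)-M(t,y)$, which satisfies a linear ODE. Its forcing term is $\big(Db(t,X(t,x))-Db(t,X(t,y))\big)M(t,y)$. The factor $M(t,y)$ is bounded by $e^{\ell(T)}$ via Grönwall. The coefficient difference is dominated by $2\|Db(t)\|_{C_b(K')}$ and tends to zero pointwise in $t$ as $y\to x$, using the Lipschitz estimate above. Grönwall together with dominated convergence then yields $M(t,y)\to M(t,x)$ uniformly in $t$, hence $X(t)\in C^1(\Omega;\R^d)$.
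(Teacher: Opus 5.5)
Your proposal is correct and follows essentially the same route as the paper. You define $DX$ as the solution of the variational equation (the paper's $J$, your $M$). You then bound the remainder $X(t,x+h)-X(t,x)-M(t,x)h$ via the integral mean value formula and the modulus of continuity of $Db(t)$ on a compact neighbourhood of the trajectory, and you close with Gr\"onwall plus dominated convergence in time using the majorant $2\|Db(s)\|_{C_b(K')}$. Your Gr\"onwall/dominated-convergence argument for the continuity of $x \mapsto M(t,x)$, and the estimate keeping nearby trajectories inside $K'$, spell out steps the paper only asserts in a line.
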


\begin{proof}
	For every $x \in \Omega$, let $J(\cdot, x) \in W^{1,1}((0, T); \R^{d \times d})$ be a
	solution of the linear differential equation
	\begin{align}
		\del_t J(t, x) = Db(t, X(t, x)) J(t, x), \qquad J(0, x) = \id_{\R^d} \,.
		\label{eq:J-linear-ode}
	\end{align}
	For the existence of such a solution, note that for fixed $t$ and $x$,
	\begin{align*}
		Db(t, X(t, x)) : \R^{d \times d} \to \R^{d \times d}, \qquad J \mapsto Db(t, X(t, x)) J
	\end{align*}
	is Lipschitz continuous with Lipschitz constant $\norm*{Db(t, X(t, x))}_{\R^d \to \R^d}$.
	Here $\norm*{\cdot}_{\R^d \to \R^d}$ is the matrix norm induced by the euclidean norm on $\R^d$.
	Since $X([0, T], x) \subset \R^d$ is compact, $\norm*{Db(t, X(t, x))}_{\R^d \to \R^d} \in L^1([0, T])$.
	Thus the existence and uniqueness of a
	solution to (\ref{eq:J-linear-ode}) follow from the Cauchy-Lipschitz theorem. Since $Db(t, X(t, x))$
	depends continuously on $x$, also $J(t, x)$ is continuous in $x$.

	Let $x \in \Omega$ and let $\eps > 0$ such that $\overline{B_\eps(x)} \subset \Omega$.
	For $h \in B_\eps(0)$, we define
	\begin{align*}
		r_h(t, x) \colonequals X(t, x + h) - X(t, x) - J(t, x)h \,.
	\end{align*}
	Our goal is to show that $\lim_{h \to 0} r_h(t, x)/|h| = 0$ for all $t\in [0, T]$.
	By (\ref{eq:J-linear-ode}) and since $\del_t X(t, x) = b(t, X(t, x))$,
	\begin{align}
		\frac d {dt} |r_h(t, x)| \le \big|b(t, X(t, x + h)) - b(t, X(t, x)) - Db(t, X(t, x))J(t, x)h\big| \,.
		\label{eq:rh-derivative}
	\end{align}
	Writing $k = X(t, x + h) - X(t, x)$, we have
	\begin{align*}
		b(t, X(t, x) + k) - b(t, X(t, x)) = \int_0^1 Db(t, X(t, x) + sk) k \,ds \,.
	\end{align*}
	Thus
	\begin{align}
		\begin{split}
			|b(t, X(t, x) + k)
			 & - b(t, X(t, x)) -Db(t, X(t, x))k|                                                        \\
			\hspace{0.1em}
			 & \quad \le |k| \int_0^1 \norm*{Db(t, X(t, x) + sk) - Db(t, X(t, x))}_{\R^d \to \R^d} \,ds \\
			 & \quad \le |k| \, A_t(X(t, x), |k|) \,,
		\end{split}
		\label{eq:b+k}
	\end{align}
	where
	\begin{align*}
		A_t(z, \lambda) \colonequals \sup_{y \in B_\lambda(z)} \norm*{Db(t, y) - Db(t, z)}_{\R^d \to \R^d} \,.
	\end{align*}
	Define $K \colonequals X([0, T] \times \overline{B_\eps(x)}) \subset \R^d$.
	Since $X$ is continuous, $K$ is compact.
	By combining (\ref{eq:b+k}) with (\ref{eq:rh-derivative}),
	\begin{align*}
		\frac d{dt} |r_h(t, x)|
		 & \le \big|Db(t, X(t, x))(k - J(t, x)h)\big| + |k| \, A_t(X(t, x), |k|) \\
		 & \le \norm*{Db(t, X(t, x))}_{\R^d \to \R^d}|r_h(t, x)|
		+ C|h| \, A_t(X(t, x), C|h|)
	\end{align*}
	Here $C = \sup_{t \in [0, T]} \Lip(X(t)|_{B_\eps(x)})$, which is finite by Remark \ref{remark:local-flows}.
	Thus, by Grönwall's inequality and $r_h(0, x) = 0$
	\begin{align*}
		|r_h(t, x)| \le \exp\paren*{\norm*{Db}_{L^1(0, T; C_b(K))}}
		\, C |h| \int_0^t A_s(X(s, x), C|h|) \,ds \,.
	\end{align*}
	Since $A_s(X(s, x), C|h|) \le 2\norm*{Db(s)}_{C_b(K')}$, where $K' \colonequals K + B_{C|h|}(0)$,
	by the dominated convergence theorem
	\begin{align*}
		\lim_{h \to 0} \int_0^t A_s(X(s, x), C|h|) \,ds = 0 \,.
	\end{align*}
	Thus $\lim_{h \to 0} r_h/|h| = 0$ and $X$ is differentiable in $x$ with
	derivative $DX(t, x) = J(t, x)$.
\end{proof}

\begin{appendixproposition}
	Let $\Omega \subset \R^d$ be open and bounded, with Lipschitz boundary $\del \Omega$.
	If $u \in BV(\Omega)$, then $u \in W^{s, 1}(\Omega)$
	for every $0 < s < 1$ and
	$\norm*{u}_{W^{s, 1}(\Omega)} \le C_{\Omega, s} \big(\norm*{u}_{BV(\Omega)} + \norm*{u}_{L^1(\Omega)}\big)$.
	\label{prop:BV-Ws1}
\end{appendixproposition}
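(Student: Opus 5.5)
The plan is to bound the Gagliardo seminorm $[u]_{W^{s,1}(\Omega)}$ by an integral of translation differences, and then to control those differences by the total variation. Concretely, I will aim for the estimate
\begin{align*}
	\int_\Omega |u(x + h) - u(x)| \,dx \le |h| \, |Du|(\R^d)
\end{align*}
for an extension of $u$ to all of $\R^d$.

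\textbf{Step 1 (extension).} Since $\Omega$ is bounded with Lipschitz boundary, the standard BV extension theorem applies. It gives $Eu \in BV(\R^d)$ with compact support, $Eu|_\Omega = u$, and
\begin{align*}
	\|Eu\|_{L^1(\R^d)} + |D(Eu)|(\R^d) \le C_\Omega \big(\|u\|_{L^1(\Omega)} + |Du|(\Omega)\big).
\end{align*}
Write $v = Eu$.

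\textbf{Step 2 (translation estimate).} For $v \in BV(\R^d)$ I claim
\begin{align*}
	\|v(\cdot + h) - v\|_{L^1(\R^d)} \le |h| \, |Dv|(\R^d).
\end{align*}
I will prove this first for the mollifications $v * \rho_\eps$ via the fundamental theorem of calculus along the segment from $x$ to $x + h$, combined with Fubini. Then I pass to the limit $\eps \to 0$, using $v * \rho_\eps \to v$ in $L^1$ and $\int |\nabla(v * \rho_\eps)| \,dx \le |Dv|(\R^d)$. Trivially we also have $\|v(\cdot + h) - v\|_{L^1} \le 2\|v\|_{L^1}$.

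\textbf{Step 3 (seminorm).} Substitute $w = z + h$ and enlarge the domain from $\Omega \times \Omega$ to $\R^d \times \R^d$:
\begin{align*}
	[u]_{W^{s,1}(\Omega)} \le \int_{\R^d} \frac{\|v(\cdot + h) - v\|_{L^1(\R^d)}}{|h|^{d + s}} \,dh .
\end{align*}
Split the $h$-integral at $|h| = 1$. On $|h| < 1$, Step 2 bounds the integrand by $|Dv|(\R^d) \, |h|^{1 - d - s}$, which is integrable near $0$ because $s < 1$. On $|h| \ge 1$, the integrand is at most $2\|v\|_{L^1} \, |h|^{-d-s}$, which is integrable at infinity because $s > 0$. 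Combining with Step 1 gives the stated bound with $C_{\Omega,s}$.

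The only nontrivial input is the extension in Step 1, and I expect it to be the main obstacle; it is where the Lipschitz boundary is used. If I wanted to avoid citing it, I would try a different route. Using a partition of unity and bi-Lipschitz flattening charts, I would reduce to $u$ defined on a half-space. There one can either reflect $u$ evenly across the boundary, which produces no jump for $BV$ functions, or directly estimate differences $u(x + h) - u(x)$ for $h$ pointing into the domain within a Lipschitz cone. Citing the standard result is simpler, and that is what I will do.
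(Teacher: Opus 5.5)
Your proposal is correct and follows essentially the same route as the paper: extend $u$ to $\R^d$, substitute $w = z + h$, split the $h$-integral at $|h| = 1$, and use the translation estimate $\|u(\cdot + h) - u\|_{L^1(\R^d)} \le |h| \, |Du|(\R^d)$ obtained by smooth approximation. The only difference is that you allow a general extension operator, whereas the paper extends by zero and controls the jump across $\partial\Omega$ with the BV trace theorem (Theorems 5.6 and 5.8 of Evans--Gariepy); this changes nothing in the rest of the argument.
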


\begin{proof}
	We extend $u$ by 0 to $\R^d$ and denote the extension also by $u$.
	By \cite{EG15}, Theorem 5.6 (p.~204) and Theorem 5.8 (p.~210),
	\begin{align*}
		\|u\|_{BV(\R^d)} \le C_\Omega \|u\|_{BV(\Omega)} \,.
	\end{align*}
	Therefore, it suffices to show that
	\begin{align*}
		\|u\|_{W^{s, 1}(\R^d)} \le C_{d, s} \, \paren*{
			\|u\|_{BV(\R^d)} + \|u\|_{L^1(\R^d)}
		} \,.
	\end{align*}
	First, we change variables $w = z + h$ in the definition of
	$[u]_{W^{s, 1}(\R^d)}$.
	\begin{align*}
		[u]_{W^{s, 1}(\R^d)}
		 & = \int_{\R^d} \int_{\R^d} \frac{|u(z) - u(w)|}{|z - w|^{d + s}} \,dw \,dz
		= \int_{\R^d} \int_{\R^d} \frac{|u(z) - u(z + h)|}{|h|^{d + s}} \,dh \,dz
	\end{align*}
	Next, we split the integral over $h$ for $|h| < 1$ and $|h| \ge 1$.
	\begin{align*}
		[u]_{W^{s, 1}(\R^d)}
		\le \int_{B_1} \frac 1 {|h|^{d + s - 1}} \int_{\R^d} \frac{|u(z) - u(z + h)|}{|h|} \,dz \,dh
		+ 2 \|u\|_{L^1(\R^d)} \int_{\R^d \setminus B_1} \frac 1 {|h|^{d + s}} \,dh
	\end{align*}
	Since $s > 0$, the integral of $1/|h|^{d + s}$ over $\R^d \setminus B_1$ is a constant only
	depending on $d$ and $s$. By
	applying Hölder's inequality to the integral over $B_1$, we get
	\begin{align*}
		[u]_{W^{s, 1}(\R^d)}
		 & \le \int_{B_1} \frac 1 {|h|^{d + s - 1}} \,dh \
		\sup_{h \in B_1} \paren*{
			\int_{\R^d} \frac{|u(z) - u(z + h)|}{|h|} \,dz
		}
		+ C_{s, d}\|u\|_{L^1(\R^d)}                                              \\
		\addlinespace
		 & \le \tilde C_{s, d} \|u\|_{BV(\R^d)} + C_{s, d} \|u\|_{L^1(\R^d)} \,.
	\end{align*}
	Here, we used that
	\begin{align}
		\|u - u(\cdot + h)\|_{L^1(\R^d)} \le |h| \|u\|_{BV(\R^d)} \,.
		\label{eq:BV-finite-difference}
	\end{align}
	The estimate (\ref{eq:BV-finite-difference}) follows from
	an approximation by smooth functions, see \cite{EG15}, p.~199.
	Furthermore, $s < 1$ implies that the integral of $1/|h|^{d + s - 1}$ over $B_1$ is finite.
\end{proof}

\newpage
\printbibliography

@article{DL89,
  author = {Ronald J. DiPerna and Pierre-Louis Lions},
  journal = {Inventiones mathematicae},
  number = {3},
  pages = {511--547},
  title = {Ordinary differential equations, transport theory and Sobolev spaces},
  volume = {98},
  year = {1989},
  doi = {10.1007/BF01393835}
}

@article{CL08,
  title = {Estimates and regularity results for the DiPerna-Lions flow},
  author = {Gianluca Crippa and Camillo de Lellis},
  pages = {15--46},
  volume = {2008},
  number = {616},
  journal = {Journal für die reine und angewandte Mathematik},
  doi = {10.1515/CRELLE.2008.016},
  year = {2008},
}

@article{Amb04,
    title = {Transport equation and Cauchy problem for BV vector fields},
    author = {Luigi Ambrosio},
    year = {2004},
    volume = {158},
    pages = {227--260},
    doi = {10.1007/s00222-004-0367-2},
    journal = {Inventiones mathematicae},
}

@article{AT17,
     title = {Lecture notes on the {DiPerna{\textendash}Lions} theory in abstract measure spaces},
     author = {Luigi Ambrosio and Dario Trevisan},
     journal = {Annales de la Facult\'e des sciences de Toulouse : Math\'ematiques},
     pages = {729--766},
     year = {2017},
     publisher = {Universit\'e Paul Sabatier, Toulouse},
     volume = {Ser. 6, 26},
     number = {4},
     doi = {10.5802/afst.1551},
}

@book{Eva10,
    title = {Partial Differential Equations},
    author = {Lawrence C. Evans},
    year = {2010},
    publisher = {American Mathematical Society},
    edition = {2},
}

@book{EG15,
    title = {Measure Theory and Fine Properties of Functions},
    author = {Lawrence C. Evans and Ronald F. Gariepy},
    year = {2015},
    publisher = {CRC Press},
    edition = {Revised edition}
}

@article{DPV12,
    title = {Hitchhiker's guide to the fractional Sobolev spaces},
    journal = {Bulletin des Sciences Mathématiques},
    volume = {136},
    number = {5},
    pages = {521--573},
    year = {2012},
    issn = {0007-4497},
    doi = {10.1016/j.bulsci.2011.12.004},
    author = {Eleonora {Di Nezza} and Giampiero Palatucci and Enrico Valdinoci}
}

@article{BCK26,
    title = {Sharp nonuniqueness in the transport equation with Sobolev velocity field},
    journal = {Journal of the European Mathematical Society},
    doi = {10.4171/JEMS/1756},
    author = {Elia Bruè and Maria Colombo and Anuj Kumar},
    year = {2026}
}

@book{Gal11,
    title = {An Introduction to the Mathematical Theory of the Navier-Stokes Equations},
    author = {Giovanni P. Galdi},
    year = {2011},
    edition = {2},
    doi = {10.1007/978-0-387-09620-9},
    publisher = {Springer New York, NY}
}

@inbook{AC08,
    title = {Existence, Uniqueness, Stability and Differentiability Properties of the Flow Associated to Weakly Differentiable Vector Fields},
    author = {Luigi Ambrosio and Gianluca Crippa},
    doi = {10.1007/978-3-540-76781-7_1},
    pages = {3--57},
    year = {2008},
    publisher = {Springer, Berlin, Heidelberg},
    series = {Lecture Notes of the Unione Matematica Italiana},
    booktitle = {Transport Equations and Multi-D Hyperbolic Conservation Laws},
    volume = {5}
}

@book{Ste70,
    title = {Singular Integrals and Differentiability Properties of Functions},
    author = {Elias M. Stein},
    publisher = {Princeton University Press},
    year = {1970}
}

@article{FML05,
    title = {Weak Solutions, Renormalized Solutions and Enstrophy Defects in 2D Turbulence},
    author = {Milton C. Lopes Filho and Anna L. Mazzucato and Helena J. Nussenzveig Lopes},
    journal = {Archive for Rational Mechanics and Analysis},
    year = {2005},
    volume = {179},
    pages = {353--387},
    doi = {10.1007/s00205-005-0390-5}
}

@article{CS15,
    title = {Renormalized Solutions of the 2D Euler Equations},
    author = {Gianluca Crippa and Stefano Spirito},
    year = {2015},
    volume = {339},
    pages = {191--198},
    journal = {Communications in Mathematical Physics},
    doi = {10.1007/s00220-015-2411-z}
}
[heading=bibintoc]

\end{document}